\newtheorem{theorem}{\rm\bf Theorem}[section]
\theoremstyle{definition}
\theoremstyle{remark}
\newtheorem{proposition}[theorem]{\rm\bf Proposition}
\newtheorem{construction}[theorem]{\rm\bf Construction}
\newtheorem{lemma}[theorem]{\rm\bf Lemma}
\newtheorem{corollary}[theorem]{\rm\bf Corollary}
\newtheorem{definition}[theorem]{\rm\bf Definition}
\newtheorem{remark}[theorem]{\rm\bf Remark}
\newtheorem{question}[theorem]{\rm\bf Question}
\newcommand{\up}{\underline{p}}
\newcommand{\Sp}{(S,\up)}
\newcommand{\TSp}{\text{Tri}\Sp}
\newcommand{\lAC}{\ell^{(\cAC)}}
\newcommand{\lG}{\ell^{(G)}}
\newcommand{\iG}{i^{(G)}}
\newcommand{\OG}{\Omega_{G}}
\newcommand{\cMF}{\mathcal{MF}}
\newcommand{\cMFp}{\mathcal{MF} (S,\up)}
\newcommand{\cMFo}{\cMF_o(S,\up)}
\newcommand{\cC}{\mathcal{C}}
\newcommand{\cSR}{\mathcal{SR}}
\newcommand{\cR}{\mathcal{R}}
\newcommand{\cA}{\mathcal{A}}
\newcommand{\cAC}{\mathcal{A} \cup \mathcal{C}}
\newcommand{\cCG}{\mathcal{C}_{G}}
\newcommand{\cT}{\mathcal{T}}
\newcommand{\cTp}{\mathcal{T} (S,\up)}
\newcommand{\cTG}{\mathcal{T} (G)}
\newcommand{\cU}{\mathcal{U}}
\newcommand{\bR}{\mathbb{R}}
\newcommand{\bN}{\mathbb{N}}
\newcommand{\bRn}{\mathbb{R}_{\geq 0}}
\newcommand{\bRp}{\mathbb{R}_{> 0}}
\newcommand{\bH}{\mathbb{H}}
\newcommand{\cS}{\mathcal{S}}
\definecolor{energy}{RGB}{114,0,172}
\definecolor{freq}{RGB}{45,177,93}
\definecolor{spin}{RGB}{251,0,29}
\definecolor{signal}{RGB}{203,23,206}
\definecolor{circle}{RGB}{217,86,16}
\definecolor{average}{RGB}{203,23,206}
\newcommand{\DEF}{}    %
\colorlet{shadecolor}{gray!20}
\pgfplotsset{compat=1.9}
\let\c@equation\c@thm
\numberwithin{equation}{section}
\author[Qiyu Chen]{Qiyu Chen}
\address{Qiyu Chen:
School of Mathematics, South China University of Technology, 510641, Guangzhou, P. R. China.}
\email{qiyuchen@scut.edu.cn}
\thanks{Q. Chen was partially supported by NSFC (No. 12471075, No. 12271174) and Guangdong Basic and Applied Basic Research Foundation (No. 2025A1515011281). 
}
\author{Youliang Zhong}
\address{Youliang Zhong:
School of Mathematics, South China University of Technology, 510641, Guangzhou, P. R. China.}
\email{zhongyl@scut.edu.cn}
\thanks{Y. Zhong was partially supported by NSFC (No. 12271174) and Guangdong Basic and Applied Basic Research Foundation (No. 2023A1515010929).} 
\date{\today}
\title[Circular foliations and shear--radius coordinates]{Circular foliations and shear--radius coordinates on Teichm\"uller spaces of hyperbolic cone surfaces}
\keywords{hyperbolic cone surfaces; Teichm\"uller space; circular foliations; shear--radius coordinates; stretch deformations; cusped surfaces; circle packings.}
\begin{document}

\begin{abstract}
We study the Teichm\"uller space \(\cT(S,\underline{p})\) of hyperbolic cone-surfaces of fixed topological type with marked cone singularities.
Fix a combinatorial triangulation \(G\), and let \(\cT(G)\subset \cT(S,\underline{p})\) be the locus where \(G\) admits a geodesic realization; varying \(G\), these loci form an open cover of \(\cT(S,\underline{p})\).
On \(\cT(G)\) we construct a circular foliation adapted to geodesic triangular complementary regions, which is naturally decomposed into interior and peripheral parts.
This decomposition defines shear parameters on edges and radius parameters at the singularities, and yields global coordinates on \(\cT(G)\): the resulting shear--radius map is a homeomorphism onto an explicit open cone in a finite-dimensional real vector space.
In the spirit of Thurston, we then introduce partial stretch and anti-stretch deformations by rescaling the transverse measures of the interior or peripheral components.
Peripheral stretch rays converge, in the simple-curve length-spectrum topology, to the cusped hyperbolic metric determined by the shear data, while interior anti-stretch rays converge to a circle-packed hyperbolic cone metric determined by the radii.
Finally, we give criteria for the realization of prescribed cone angles for fixed \(G\) and prove sharp upper bounds for admissible cone angles on the universally triangulable locus.
\end{abstract}

 \maketitle
\tableofcontents
\section{Introduction}
\label{sec:intro}

The aim of this paper is to develop a geometric parametrization and deformation theory for the Teichm\"uller space \(\cTp\) of hyperbolic cone-surfaces.
Building on Thurston's theory of horocyclic foliations and stretch maps for punctured surfaces, we introduce \emph{circular foliations} and associated \emph{shear--radius coordinates} for cone-surfaces.
These coordinates provide hyperbolic charts that remain valid for arbitrary cone angles and furnish a natural framework for deformations that vary cone angles and relate cone-metrics to cusped metrics and (generalized) circle packings.

\subsection{Hyperbolic cone-surfaces and their Teichm\"uller space}
\label{subsec:Teich}

In this subsection we fix notation, recall the existence and uniqueness theory for hyperbolic cone-metrics, and introduce the Teichm\"uller space \(\cTp\).

Let \(S\) be an oriented closed surface of genus \(g\) with a finite ordered tuple of $n\geq 1$ marked points \(\underline{p} = (p_1,\dots,p_n)\).
We assume throughout that \(2g-2+n > 0\).
A \emph{hyperbolic cone-metric} on \((S,\underline{p})\) is a metric of constant curvature \(-1\) on \(S \setminus \underline{p}\) whose completion has cone singularities at the marked points, with cone angles
\[
  \underline{\theta} = (\theta_1,\dots,\theta_n) \in (0,+\infty)^n.
\]
We denote by \(\dot{S} = S \setminus \underline{p}\) the punctured surface.
The cone angles are constrained by the Gauss--Bonnet condition:
\begin{equation}\label{eq:Gauss-Bonnet condition}
  \chi(\dot{S}) + \sum_{i=1}^n \frac{\theta_i}{2\pi} < 0.
\end{equation}
Under this assumption, there is a unique hyperbolic cone-metric with prescribed cone angles $\underline{\theta}$ in a given conformal class \cite{Hei62, McO88, Tro91, FX25} (see Remark \ref{rmk:ang.class.Teich.space}). 

Beyond the classical case, the Teichm\"uller space of cone-metrics exhibits new analytic, geometric and dynamical features, ranging from local deformation theory and elliptic conic operators \cite{MW17} to counting problems \cite{ES22}, billiards \cite{ELS22}, volumes \cite{DN09, AN22} and identities \cite{TWZ06, BPT25}.
On the three-dimensional side, quasi-Fuchsian hyperbolic manifolds with particles have convex cores whose boundary components carry hyperbolic metrics with cone singularities \cite{KS07, MS09, LS14}; in the anti-de Sitter setting, an analogous picture holds \cite{KS07, BS09, CS20}.

The Teichm\"uller space \(\cTp\) is the space of marked hyperbolic cone-metrics on \((S,\underline{p})\), modulo isotopy fixing \(\underline{p}\) pointwise.
From the conformal viewpoint, \(\cTp\) may be identified with the Teichm\"uller space of Riemann surfaces with prescribed cone angles at $\up$ of a conformal hyperbolic metric (see e.g. \cite{McO88, Tro91, ST05, Mon10, MW17}). 
Our perspective in this paper is hyperbolic and combinatorial: we seek coordinates and deformations on \(\cTp\) that are intrinsic to the hyperbolic geometry and together they yield an atlas on $\cTp$ that applies to every cone-angle vector allowed by the Gauss–Bonnet condition.
 
\subsection{Motivations and guiding questions}
\label{subsec:motivation}

In this subsection we position our work with respect to classical hyperbolic parametrizations and formulate two guiding questions that drive the paper.

On punctured hyperbolic surfaces, Teichm\"uller space admits several robust hyperbolic parametrizations, such as Fenchel--Nielsen coordinates, shear coordinates \cite{Bon96, Thu98} and Penner's \(\lambda\)-lengths \cite{Pen87}.
These descriptions, built from ideal triangulations and geodesic laminations, underpin deformation theories such as earthquakes and stretch maps \cite{Thu84, Thu98}.

As recalled in Section~\ref{subsec:Teich}, hyperbolic cone-metrics have been studied from analytic, dynamical and three-dimensional perspectives. 
From the viewpoint of hyperbolic parametrizations, however, most existing constructions for cone-surfaces either impose an a priori upper bound on cone angles (typically \(\theta_i \in (0,\pi)\)) (see e.g. \cite{DP07, Pan17}) or are phrased in analytic terms rather than in combinatorial hyperbolic coordinates (see e.g. \cite{MW17}).
When a cone angle crosses \(\pi\), simple closed geodesics may cease to be smooth \cite[\S 4.6]{CHK00}.
In particular, there is currently no triangulation-based hyperbolic coordinate system on the Teichm\"uller space \(\cTp\) that yields an atlas for all cone angles allowed by the Gauss--Bonnet condition. 
This leads to our first guiding question.

\begin{question}[Coordinates without angle restriction]
  \label{Q:coor.Teich.}
  \textit{Is there a parametrization of \(\cTp\) from a hyperbolic and combinatorial viewpoint, built from triangulations or foliations, that provides a coordinate atlas for all cone-angle vectors allowed by the Gauss--Bonnet condition, without imposing any a priori upper bound on the angles?}
\end{question}

Turning to deformations, classical earthquake theory along measured laminations provides rich families of deformations that preserve cone angles and is central to the geometry and dynamics of Teichm\"uller space \cite{Thu84, Ker83, BS09}.
Thurston's stretch deformations and their Lipschitz and length-spectrum refinements, including the analysis of lengthening cones on singular tori in \cite{Gue15}, give further distinguished one-parameter families in the punctured or singular setting.
Beyond earthquakes, related deformation phenomena have also been studied in more specialized configurations, such as partial stretch maps on the once-holed torus \cite{HP21} and discrete conformal deformations in triangulation-based settings, including combinatorial Ricci flows \cite{CL03,GLSW18} and their degenerate circle-packing limits \cite{HLTZZ25}. 
In the cone-surface setting we consider, however, the constructions we are aware of typically keep cone angles fixed or rely on specific topological or combinatorial setups.
This leads to our second guiding question.

\begin{question}[Deformations changing cone angles]
  \label{Q:deform.Teich.}
  \textit{For hyperbolic cone-surfaces, can we define natural geometric deformations that change the cone angles in a controlled way, while keeping compatible with a hyperbolic coordinate system on \(\cTp\), and whose asymptotic behaviour can be described in geometric terms (for instance, in terms of cusped or circle-packed limits)?}
\end{question}

The main contribution of this paper is to answer these two questions from a unified perspective, using a combinatorial model space of measured foliations and a circular foliation map (see below).

\subsection{Main results and geometric overview}
\label{subsec:result}

In this subsection we summarise our main results and describe the geometric picture that emerges.
The central idea is to build a bridge between the hyperbolic geometry of cone-surfaces and a model space of measured foliations \(\cMFp\) that are {\em trivial} around the marked points (Definition \ref{def:MF.surf.trivial.}).
This bridge is provided by a circular foliation map
\[
  \cCG : \cT(G) \longrightarrow \cMFp,
\]
which assigns to each hyperbolic cone-metric geodesically realizing a triangulation \(G\in \TSp\) (Definition~\ref{def:triangulation}) a canonical measured foliation class.
On one hand, we endow \(\cMFp\) with shear--radius coordinates adapted to the triangulation \(G\).
On the other hand, we show that \(\cCG\) is compatible with these coordinates and yields charts on \(\cT(G)\).
This leads to partial stretch and anti-stretch deformations with controlled asymptotics, as well as a description of maximal cone angles on a universally triangulable locus.

\subsubsection{Measured foliations and shear--radius parameters}
\label{subsubsec:intro.MF.shear-radius}

We introduce the model space \(\cMFp\) of measured foliations and describe its shear--radius parametrization, which will later serve as the combinatorial target for the circular foliation map. 

The space \(\cMFp\) consists of measured foliation classes on \(\Sp\) that are trivial in a neighbourhood of each marked point, in the sense of the punctured foliation formalism developed by Mosher \cite{Mos88} and by Papadopoulos--Penner \cite{PP93}.
It provides a convenient analogue of the classical space of measured foliations on punctured surfaces, adapted here to the presence of cone points. 
We show that these foliation classes are equivalent to those {\em totally transverse} (Definition \ref{def:MF.trans.triang.surf.}) to a triangulation $G$ on $\Sp$, see Proposition~\ref{prop:t.a.imply.t.t.G}.

Fix a triangulation \(G \in \TSp\). For each foliation class \([F] \in \cMFp\), we associate a \emph{shear--radius vector}
\[
  \mathbf{sr}_G([F]) = \bigl(\mathbf{s}_G([F]), \mathbf{r}_G([F])\bigr) \in \bR^{E(G)} \times \bR^n_{>0},
\]
where the shear component \(\mathbf{s}_G([F])\) records the transverse shear parameters across the edges of \(G\), and the radius component \(\mathbf{r}_G([F])\) records the radii of the peripheral components around the marked points.
The precise definitions of the shear and radius maps are given in Definitions~\ref{def:MF.shear.map} and~\ref{def:MF.radius}. 
The image of \(\mathbf{sr}_G\) is an explicit convex cone cut out by linear constraints at the vertices of \(G\).
More concretely, we define \(\Lambda_G \subset \bR^{E(G)} \times \bR^n_{>0}\), the \emph{vertex-balanced cone}, to be the set of pairs \((\mathbf{s},\mathbf{r})\) whose component \(\mathbf{s}\) satisfies a vertex balance condition at each marked point (Definition~\ref{def:vertex.balanc.part}).
Our first main result on the model space shows that these parameters give global coordinates on \(\cMFp\).

\begin{theorem}[Shear--radius parametrization of \texorpdfstring{\(\cMFp\)}{MF(S,\underline{p})}]
\label{thm:homeo.MFp.shear.radius}
For each triangulation \(G \in \TSp\), the shear--radius map
\[
  \mathbf{sr}_G : \cMFp \longrightarrow \Lambda_G
\]
is a homeomorphism onto the vertex-balanced cone \(\Lambda_G\).
In particular, shear and radius parameters provide a global coordinate system on \(\cMFp\).
\end{theorem}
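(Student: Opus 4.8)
The plan is to establish the three standard properties of a coordinate map — that $\mathbf{sr}_G$ is well defined with image in $\Lambda_G$, that it is injective, and that it is surjective — and then to upgrade the resulting continuous bijection to a homeomorphism. Throughout I would work with the totally transverse representatives supplied by Proposition~\ref{prop:t.a.imply.t.t.G}, which reduce every computation to the restriction of a foliation to the individual triangles of $G$ and to the peripheral collars around the marked points. For the well-definedness step, I fix a totally transverse representative $F$ of a class $[F]\in\cMFp$ and read off the shear $\mathbf{s}_G([F])(e)$ on each edge $e$ and the radius $\mathbf{r}_G([F])(p_i)$ at each marked point from the transverse measure of $F$ near $e$ and near $p_i$; their independence of the chosen representative is exactly the invariance built into Definitions~\ref{def:MF.shear.map} and~\ref{def:MF.radius}. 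The containment in $\Lambda_G$ is then a local closing-up statement: since $F$ is trivial near $p_i$ in the sense of Definition~\ref{def:MF.surf.trivial.}, the peripheral arcs must close up as one turns around $p_i$, and this forces the shears of the edges incident to $p_i$ to balance in the sense of Definition~\ref{def:vertex.balanc.part}. Continuity of $\mathbf{sr}_G$ is immediate, as shears and radii are continuous functions of transverse measures, which depend continuously on $[F]$ in the topology of $\cMFp$.

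For injectivity I would use that a totally transverse measured foliation is determined, up to the equivalence relation defining $\cMFp$ (isotopy together with Whitehead moves), by the isotopy type of its restriction to each triangle and by the gluing data along the edges. The restriction to a triangle is a standard partial foliation whose isotopy type rel boundary is pinned down by the three transverse weights on its sides, while the relative offset with which the two sides of an edge are identified is precisely what the shear on that edge records, together with the peripheral radii. Consequently, two classes sharing the same shear--radius vector admit totally transverse representatives that agree triangle by triangle and glue identically, so they coincide in $\cMFp$.

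The crux of the argument is surjectivity. Given $(\mathbf{s},\mathbf{r})\in\Lambda_G$, I would reconstruct a foliation realizing it by placing the standard partial foliation on each triangle, gluing neighbouring triangles across each edge with the prescribed shear, and using the radii to fix the widths of the peripheral collars. The main obstacle is to show that the vertex-balance condition at $p_i$ is exactly what makes the peripheral collar close up into a genuine trivial foliation around $p_i$, with no stray singularity and with the prescribed radius, and conversely that every positive radius is attainable. This is a holonomy-type computation around each vertex: one verifies that the cyclic composition of the edge gluings returns to the starting position if and only if the incident shears balance, so that $\Lambda_G$ is hit exactly and no further linear constraint is imposed. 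Care is needed here because shears are signed while the transverse weights recovered on the edges must remain admissible, so one must check that the reconstruction produces a bona fide measured foliation (possibly after a Whitehead move where a triangle weight degenerates).

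Finally, $\mathbf{sr}_G$ is a continuous bijection, and the reconstruction in the surjectivity step is manifestly continuous in $(\mathbf{s},\mathbf{r})$, so the inverse is continuous and $\mathbf{sr}_G$ is a homeomorphism. Alternatively, once injectivity and continuity are in hand, identifying $\cMFp$ and $\Lambda_G$ as spaces of equal finite dimension lets invariance of domain deliver the homeomorphism directly. I expect the holonomy/closing-up analysis around the vertices in the surjectivity step to be the genuinely delicate part; the remaining arguments are bookkeeping on triangles.
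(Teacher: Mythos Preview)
Your overall outline is sound and matches the paper in spirit, but the paper takes a different and more explicit route for the bijection step, and this difference is exactly at the point you flag as delicate.

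Rather than directly reconstructing a foliation from $(\mathbf{s},\mathbf{r})$, the paper factors through the edge intersection coordinates. It first proves (Proposition~\ref{prop:edge.length.MF.biject.}) that the edge intersection map $i^{(G)}_{\cdot}:\cMFp\to\Omega_G$ is a bijection, and then builds an explicit map $\mathcal{L}^{(G)}_{sr}:\Lambda_G\to\Omega_G$ (Construction~\ref{const:LG.map}) that recovers edge weights from shear--radius data; the inverse of $\mathbf{sr}_G$ is then simply $(i^{(G)}_{\cdot})^{-1}\circ\mathcal{L}^{(G)}_{sr}$. The content of $\mathcal{L}^{(G)}_{sr}$ is a minimum-partial-sum normalization: at each vertex $v$, form the cyclic partial sums $S_v(k)=\sum_{j\le k}\mathbf{s}(e_j)$, take their minimum $S_{v,\min}$ (well defined thanks to vertex balance), and set the corner weight $a_{f_k}(v)=\mathbf{r}(v)+S_v(k)-S_{v,\min}$. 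This guarantees $a_{f_k}(v)\ge\mathbf{r}(v)>0$, so the edge lengths $l_e=a_f(v)+a_f(v')$ automatically satisfy the strict triangle inequalities and land in $\Omega_G$. This is the concrete replacement for your ``holonomy/closing-up'' argument, and it sidesteps entirely the worry you raise about degenerate triangle weights and Whitehead moves: no triangle ever degenerates.

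For the homeomorphism upgrade, the paper does not invoke invariance of domain. Continuity of $\mathcal{L}^{(G)}_{sr}$ is immediate from the formula above (as $\min$ is continuous), while continuity of $(i^{(G)}_{\cdot})^{-1}$ is established separately (Proposition~\ref{prop:MF.Teich.invers.continuous}) by a face-by-face estimate showing that edge intersection numbers uniformly control intersection numbers with arbitrary arcs and curves.

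In short: your plan would work, but the step you correctly anticipate as the crux --- turning $(\mathbf{s},\mathbf{r})$ back into triangle data with all weights positive --- is precisely where the paper supplies a clean algebraic trick that you have not yet found. What your approach would buy is a more geometric picture of the reconstruction; what the paper's approach buys is that all admissibility checks become automatic and the continuity of the inverse is reduced to that of elementary formulas.
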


The proof of Theorem~\ref{thm:homeo.MFp.shear.radius} will be given at the end of Section~\ref{subsec:invers.contin.}.

\subsubsection{Circular foliations and local coordinates}
\label{subsubsec:intro.circ-local}

We explain how circular foliations produce local coordinate charts on \(\cTp\) by linking hyperbolic cone-metrics to the model space \(\cMFp\).

Given a triangulation \(G \in \TSp\), let \(\cT(G) \subset \cTp\) denote the open locus of hyperbolic cone-metrics for which \(G\) is realized as a geodesic triangulation (Definition~\ref{def:Teich.cone.surf.}).
For each such \(G\), the key geometric tool is the \emph{circular foliation map}
\(\cC_G : \cT(G) \longrightarrow \cMFp\) (Definition~\ref{def:circ.foli.map.}).
For each hyperbolic cone-surface \(X \in \cT(G)\), we obtain a canonical measured foliation class \(\cC_G(X) \in \cMFp\), called the \emph{circular foliation class}, whose leaves are families of concentric circular arcs centered at the cone points. This construction is analogous to Thurston's horocyclic foliation map for punctured surfaces \cite{Thu98} and encodes geometric information about \(X\) in combinatorial and measure-theoretic terms.

Every \(X\in\cTp\) is \(G\)-triangulable for some \(G\in\TSp\) (Remark~\ref{rem:GB.and.triang}); hence the loci \(\cT(G)\) form an open cover of \(\cTp\).
Our first main result shows that the circular foliation maps \(\cC_G\) provide an atlas of local charts on \(\cTp\).
The detailed construction and proof are given in Section~\ref{subsec:circ.foli.surf.coor.}.

\begin{theorem}[Circular foliation coordinates]\label{thm:Teich.chart}
The Teichm\"uller space \(\cTp\) is a manifold of real dimension \(6g-6+3n\).
For each triangulation \(G\in\TSp\), let \(\cT(G)\subset \cTp\) be the locus where \(G\) admits a geodesic realization.
Then the circular foliation map
\[
\cC_G:\cT(G)\longrightarrow \cMFp
\]
is a homeomorphism.
Moreover, the family \(\{(\cT(G),\cC_G)\}_{G\in\TSp}\) provides an atlas of \(\cTp\) modelled on the common space \(\cMFp\cong \bR^{6g-6+3n}\).
\end{theorem}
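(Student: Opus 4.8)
The plan is to factor $\cC_G$ through the shear--radius chart of Theorem~\ref{thm:homeo.MFp.shear.radius}. Since $\mathbf{sr}_G\colon\cMFp\to\Lambda_G$ is already a homeomorphism, the map $\cC_G$ will be a homeomorphism precisely when the composite
\[
  \Phi_G \;:=\; \mathbf{sr}_G\circ\cC_G\colon \cT(G)\longrightarrow \Lambda_G
\]
is one. I would therefore interpret $\Phi_G(X)=\bigl(\mathbf{s}_G(\cC_G(X)),\,\mathbf{r}_G(\cC_G(X))\bigr)$ as the intrinsic \emph{geometric} shear--radius coordinates of the cone-surface $X$: the shear component records the relative sliding of the two geodesic triangles of $G$ meeting along each edge, measured along the circular leaves of $\cC_G(X)$, while the radius component records the transverse measure of the concentric peripheral arcs around each cone point. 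As the circular foliation is assembled from arcs transverse to the edges of $G$, it is totally transverse to $G$ in the sense of Proposition~\ref{prop:t.a.imply.t.t.G}, so these parameters are well defined and $\cC_G(X)\in\cMFp$, which also shows $\cC_G$ is well defined as a map into $\cMFp$.

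For the dimension statement, a triangulation $G\in\TSp$ has $V=n$ vertices, and $3F=2E$ together with $V-E+F=2-2g$ forces $E(G)=6g-6+3n$. By Theorem~\ref{thm:homeo.MFp.shear.radius} we have $\cMFp\cong\Lambda_G$, and $\Lambda_G$ is cut out of $\bR^{E(G)}\times\bR^n_{>0}$, of dimension $E(G)+n=6g-6+4n$, by the $n$ vertex-balance equations of Definition~\ref{def:vertex.balanc.part}; being an open convex cone of dimension $6g-6+3n$ it is homeomorphic to $\bR^{6g-6+3n}$. This yields both the asserted dimension and the identification $\cMFp\cong\bR^{6g-6+3n}$.

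To show $\Phi_G$ is a bijection I would argue by reconstruction in both directions. For injectivity, the shear--radius data pin down each geodesic triangle of $G$ up to isometry together with the edge-gluing isometries; developing $X$ into $\bH^2$ along a spanning tree of the dual graph then exhibits any two metrics with identical data as related by a marking-preserving isometry. For surjectivity I would run the inverse construction: given $(\mathbf{s},\mathbf{r})\in\Lambda_G$, take one hyperbolic geodesic triangle per face, equip a concentric circular arc of the prescribed radius near each vertex, and glue triangles across each edge with the shear dictated by $\mathbf{s}$. The vertex-balance condition is exactly what forces the peripheral arcs to match up into genuine concentric circles, so the developing map closes up around each marked point to a rotation and defines a well-defined cone angle there; the assembled object is then a hyperbolic cone-surface $X\in\cT(G)$ with $\Phi_G(X)=(\mathbf{s},\mathbf{r})$. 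Continuity of $\Phi_G$ holds because geodesic-triangle geometry, and hence the shear and radius data, depends continuously on the metric, and continuity of $\Phi_G^{-1}$ holds because the reconstructed metric depends continuously on the gluing data; thus $\Phi_G$, and therefore $\cC_G$, is a homeomorphism.

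Finally, by Remark~\ref{rem:GB.and.triang} every $X\in\cTp$ lies in some $\cT(G)$, so $\{\cT(G)\}_{G\in\TSp}$ is an open cover. Each $\cC_G$ being a homeomorphism onto $\cMFp\cong\bR^{6g-6+3n}$, the transition maps $\cC_{G'}\circ\cC_G^{-1}$ are homeomorphisms between open subsets of $\cMFp$, so $\{(\cT(G),\cC_G)\}$ is an atlas endowing $\cTp$ with the structure of a topological manifold of dimension $6g-6+3n$. I expect the main obstacle to be the surjectivity/realization step, and specifically the control of the reconstruction once $\theta_i\geq\pi$: there simple closed geodesics may fail to be smooth \cite[\S4.6]{CHK00} and the geodesic triangles can overlap or degenerate, so the delicate point is to verify that arbitrary vertex-balanced data still assemble into an embedded geodesic triangulation realizing the prescribed, a priori unbounded, cone angles, which is precisely the feature distinguishing this atlas from the classical angle-restricted constructions.
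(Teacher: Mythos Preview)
Your route---prove $\Phi_G=\mathbf{sr}_G\circ\cC_G:\cT(G)\to\Lambda_G$ is a homeomorphism and then invoke Theorem~\ref{thm:homeo.MFp.shear.radius}---differs from the paper's, which factors $\cC_G=(\iG_{\cdot})^{-1}\circ\lG_{\cdot}$ through the admissible cone $\Omega_G$ of edge lengths (Definition~\ref{def:circ.foli.map.}) and shows that both factors are homeomorphisms onto $\Omega_G$ (Proposition~\ref{prop:edge.leng.homeo.}). In the paper's factorization, bijectivity is essentially the triangle inequality: a point of $\Omega_G$ is exactly a system of side lengths for which each face is realized by a unique non-degenerate hyperbolic triangle, and gluing these along isometric edges yields a cone surface regardless of how large the resulting angles are (Lemma~\ref{lem:edge.length.Teich.biject.}).

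Your surjectivity step has a real gap. You write ``take one hyperbolic geodesic triangle per face \dots\ and glue triangles across each edge with the shear dictated by $\mathbf s$'', by analogy with the cusped case; but compact hyperbolic triangles, unlike ideal ones, are not all congruent, so \emph{which} triangle to take is undetermined by what you have written. The shear--radius data do not hand you the faces directly: one must first reconstruct the corner weights $a_f(v)$ by normalizing the partial sums of shears around each vertex at their minimum and adding $\mathbf r(v)$, after which $\ell_e=a_f(v)+a_f(v')$ (Construction~\ref{const:LG.map}). In other words, making your argument precise forces you through $\Omega_G$ anyway, at which point you are doing the paper's proof. The vertex--balance condition is not a ``closing-up'' condition for the developing map---the cone metric closes up for \emph{any} edge lengths in $\Omega_G$---but rather what makes those partial sums periodic so that the minimum, and hence the corner weights, exist. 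Correspondingly, the obstacle you flag (embeddedness when $\theta_i\ge\pi$) is a non-issue, since the triangles are glued abstractly rather than embedded in a preexisting surface. The genuine technical content, which you wave through as ``the reconstructed metric depends continuously on the gluing data'', is the continuity of the inverse edge maps with respect to the arc--curve length-spectrum topology on $\cTp$ (Proposition~\ref{prop:MF.Teich.invers.continuous}); this is where most of Section~\ref{sec:circ.foli.coor.app.} is spent, and it is also what establishes that each $\cT(G)$ is open.
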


To obtain explicit coordinates on these charts, we compose the circular foliation maps with the shear--radius parametrization of \(\cMFp\) from Theorem~\ref{thm:homeo.MFp.shear.radius}.
For each triangulation \(G \in \TSp\), we define the \emph{shear--radius map}
\[
  \cSR_G := \mathbf{sr}_G \circ \cC_G : \cT(G) \longrightarrow \bR^{E(G)} \times \bR^n_{>0},
\]
where \(\mathbf{sr}_G : \cMFp \to \bR^{E(G)} \times \bR^n_{>0}\) is the shear--radius map introduced in Section~\ref{subsubsec:intro.MF.shear-radius}.
The following result records this coordinate description and follows directly from Theorems~\ref{thm:homeo.MFp.shear.radius} and~\ref{thm:Teich.chart}.

\begin{corollary}[Shear--radius coordinates]
    \label{cor:homeo.TS.shear.radius}
For each triangulation \(G \in \TSp\), the map
\[
  \cSR_G : \cT(G) \longrightarrow \Lambda_G
\]
is a homeomorphism onto the vertex-balanced cone \(\Lambda_G\).
\end{corollary}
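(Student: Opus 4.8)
The plan is to recognize \(\cSR_G\) as a composition of two maps each already known to be a homeomorphism, and then to invoke the elementary fact that a composition of homeomorphisms is again a homeomorphism, with image equal to the image of the outer map. Recall that, by definition, \(\cSR_G = \mathbf{sr}_G \circ \cC_G\), where \(\cC_G : \cT(G) \to \cMFp\) is the circular foliation map and \(\mathbf{sr}_G : \cMFp \to \bR^{E(G)} \times \bR^n_{>0}\) is the shear--radius map on the model space.

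First I would appeal to Theorem~\ref{thm:Teich.chart}, which gives that \(\cC_G\) is a homeomorphism of \(\cT(G)\) onto \(\cMFp\). Next I would appeal to Theorem~\ref{thm:homeo.MFp.shear.radius}, which gives that \(\mathbf{sr}_G\) is a homeomorphism of \(\cMFp\) onto the vertex-balanced cone \(\Lambda_G\). Composing these, \(\cSR_G\) is continuous with continuous inverse \(\cC_G^{-1} \circ \mathbf{sr}_G^{-1}\), hence a homeomorphism onto its image. Since \(\cC_G\) is surjective onto \(\cMFp\) and \(\mathbf{sr}_G\) carries \(\cMFp\) bijectively onto \(\Lambda_G\), that image is exactly \(\Lambda_G\), and injectivity passes to the composite. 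This identifies \(\cT(G)\) with \(\Lambda_G\) through the explicit coordinates \((\mathbf{s}_G, \mathbf{r}_G)\).

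I expect no genuine obstacle here: all the geometric and analytic content has already been absorbed into the two cited theorems, and the statement is a purely formal consequence of them. The only point demanding any attention is a bookkeeping check, namely that the codomain of \(\cC_G\) and the domain of \(\mathbf{sr}_G\) are literally the same topologized model space \(\cMFp\), so that the composition is well formed and the homeomorphism property transports cleanly; this is immediate from the way \(\cSR_G\) was defined.
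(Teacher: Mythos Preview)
Your proposal is correct and matches the paper's own argument: the corollary is stated as following directly from Theorems~\ref{thm:homeo.MFp.shear.radius} and~\ref{thm:Teich.chart}, and in Definition~\ref{def:shear-radius} the paper explicitly records that \(\cSR_G = \mathbf{sr}_G \circ \cC_G\) is a homeomorphism onto \(\Lambda_G\) by citing exactly these two results. There is nothing more to it than the composition of two homeomorphisms, just as you wrote.
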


In particular, for every \(G \in \TSp\) the shear and radius parameters give global coordinates on the locus \(\cT(G)\), and the family of maps \(\{\cSR_G\}_{G \in \TSp}\) defines a system of local coordinate charts on \(\cTp\), which we refer to as the \emph{shear--radius coordinates}.
These coordinates extend the classical Bonahon--Thurston shear coordinates \cite{Bon96, Thu98} and may be compared with the shear--type frameworks of Pan--Wolf \cite{PW22} and Calderon--Farre \cite{CF24} (see Remark~\ref{rem:MF.shear.classical}).

\subsubsection{Partial stretch and anti-stretch deformations}
\label{subsubsec:intro.partial-stretch}

We use the circular foliation coordinates (Theorem~\ref{thm:Teich.chart}) to define deformations that change cone angles, thereby addressing Question~\ref{Q:deform.Teich.}, and to describe their asymptotic limits in terms of cusped surfaces and circle packings.

By rescaling, for a parameter \(t \in \bR\), the transverse measures on either the interior or the peripheral components of the circular foliation (see Section~\ref{subsubsec:intro.circ-local}) by a factor \(e^t\), we obtain deformations of \(\cT(S,\underline{p})\); for \(t>0\) these are stretches and for \(t<0\) anti-stretches.
This construction yields one-parameter families of partial (interior or peripheral) stretch/anti-stretch deformations on \(\cT(S,\underline{p})\) (Definition~\ref{def:part.stret.}).
These may be regarded as cone-surface analogues of Thurston stretch deformations, but they differ from the partial stretch maps of Huang--Papadopoulos on once-holed tori \cite{HP21} in that our constructions are organized via circular foliation maps rather than via Lipschitz-optimal maps (see Remark~\ref{rem:HP-vs-us} for a detailed comparison).

Our first asymptotic result describes the behaviour of the peripheral stretch rays, along which the radius parameters tend to \(+\infty\) while the shear parameters remain fixed.

\begin{theorem}[Asymptotic behaviour of peripheral stretch rays]\label{thm:asym.behavior}
For each \(G \in \TSp\) and each \(X_0 \in \cT(G)\), the peripheral stretch ray \((X_t)_{t \ge 0}\) based at \(X_0\) converges, in the simple-curve length-spectrum topology, to the hyperbolic surface with cusps at \(\underline{p}\) which is described by the shear coordinates of \(X_0\).
\end{theorem}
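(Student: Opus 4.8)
The plan is to reduce the statement to a computation in the shear--radius coordinates of Corollary~\ref{cor:homeo.TS.shear.radius} and then to prove convergence of the marked length spectrum by a geometric limiting argument. First I would record the coordinate description of the ray. By Definition~\ref{def:part.stret.}, the peripheral stretch multiplies the transverse measure of the peripheral components of the circular foliation \(\cC_G(X_0)\) by \(e^t\); since the radius map \(\mathbf{r}_G\) reads off exactly the total transverse measure carried by these peripheral components while the shear map \(\mathbf{s}_G\) only sees the interior components, this rescaling fixes the shear and scales the radius. Hence, using that \(\cSR_G = \mathbf{sr}_G \circ \cC_G\) is a homeomorphism onto \(\Lambda_G\), the ray is
\[
  \cSR_G(X_t) = \bigl(\mathbf{s}_0,\, e^{t}\,\mathbf{r}_0\bigr), \qquad (\mathbf{s}_0,\mathbf{r}_0) := \cSR_G(X_0),
\]
so that along the ray the shear vector \(\mathbf{s}_0\) is constant and every radius coordinate tends to \(+\infty\). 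The candidate limit \(X_\infty\) is the complete cusped hyperbolic surface obtained by realizing \(G\) as an ideal triangulation (each marked point becoming a cusp) with the classical shear coordinates \(\mathbf{s}_0\); this is well defined because the vertex--balance condition cutting out \(\Lambda_G\) is imposed on the shear component alone and coincides in the cusped case with the horocyclic completeness (zero--sum) condition at each puncture.

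Next I would establish the underlying geometric convergence of the metrics away from the marked points. As \(e^{t}\mathbf{r}_0 \to +\infty\), the peripheral annulus around each \(p_i\) has radius \(R_i(t) \to +\infty\), and I would show that the associated cone angle satisfies \(\theta_i(t) \to 0\): the balance relation forces the circumference \(\theta_i(t)\sinh R_i(t)\) of the foliated circles to stay controlled, which pins down \(\theta_i(t)\) up to exponentially small error. Since a circle of radius \(r\) about a cone point has geodesic curvature \(\coth r \to 1\), the large peripheral circular arcs converge to horocyclic arcs, and each geodesic triangle of \(G\) develops to the ideal triangle prescribed by \(\mathbf{s}_0\). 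Making this precise at the level of developing maps, I would show that the holonomy representations \(\rho_t\colon \pi_1(\dot{S})\to \mathrm{PSL}(2,\bR)\), written as products of the (fixed) shear matrices along edges and the triangle/vertex matrices, converge entrywise to the holonomy \(\rho_\infty\) of \(X_\infty\); the only nontrivial limit is at the punctures, where the elliptic rotation by \(\theta_i(t)\) based at the receding cone point degenerates to the parabolic fixing the corresponding ideal vertex.

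With this convergence in hand, I would prove the length--spectrum convergence by the usual two--sided (thick--thin) estimate. Fix an essential simple closed curve \(\gamma\); for large \(t\) all cone angles are \(<\pi\), so \(\gamma\) has a smooth \(X_t\)--geodesic representative avoiding \(\up\). For the upper bound, transport the \(X_\infty\)--geodesic of \(\gamma\), which lies in a fixed compact part of the thick region, through the geometric convergence to a competitor curve in \(X_t\) whose length tends to \(\ell_{X_\infty}(\gamma)\), giving \(\limsup_{t}\ell_{X_t}(\gamma)\le \ell_{X_\infty}(\gamma)\). For the lower bound, I would argue that a simple closed geodesic cannot penetrate deeply into the peripheral (cusp--forming) regions without a definite length cost, so its \(X_t\)--geodesic representative remains in a compact subset of the thick part on which \(X_t\to X_\infty\) geometrically; comparing lengths there yields \(\liminf_{t}\ell_{X_t}(\gamma)\ge \ell_{X_\infty}(\gamma)\). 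Combining the two inequalities gives \(\ell_{X_t}(\gamma)\to \ell_{X_\infty}(\gamma)\) for every simple closed curve, which is convergence in the simple--curve length--spectrum topology.

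The main obstacle I expect is the control of the degeneration at the marked points, i.e.\ the passage from elliptic holonomy (rotation by \(\theta_i(t)\) about a point at hyperbolic distance \(R_i(t)\)) to the limiting parabolic. One must show that the quantity \(\theta_i(t)\,e^{R_i(t)}\) stabilizes so that these nearly--parabolic factors converge, and that this holds uniformly enough to survive multiplication along an edge--path of \(\gamma\) crossing arbitrarily many triangles; equivalently, one must confine the \(X_t\)--geodesics to a thick part that is geometrically stable as \(t\to\infty\). The remaining ingredients---constancy of the shear matrices, convergence of triangle shapes to ideal triangles, and continuity of length as a function of the holonomy trace---are routine once this cusp--formation estimate is established.
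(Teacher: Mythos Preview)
Your overall strategy---freeze the shear, let the radii blow up, identify the candidate cusped limit, then prove length convergence by a two-sided estimate with a thick-part confinement argument---is correct and matches the paper's in spirit. However, the mechanism you propose is genuinely different from the paper's, and one of your intermediate steps is not on solid ground.

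\textbf{The holonomy route versus the paper's comparison-curve route.} You propose to prove convergence by writing the holonomy \(\rho_t\) of \(X_t\) as a product of ``shear matrices'' (which you claim are fixed) and elliptic vertex factors, and then showing the elliptic factors degenerate to parabolics. The paper does not use holonomy or developing maps at all. Instead it works directly with the geodesic triangles: each face of \(G\) on \(X_t\) is placed in a canonical position in \(\bH^2\) (incircle centred at the origin, Lemma~\ref{lm:estimate.triangle}), and one shows these canonical triangles converge to the canonical ideal triangle. Given a simple closed curve \(\gamma\), its geodesic \(\gamma(X_t)\) is cut by the edges of \(G\) into arcs, and the paper builds an explicit comparison curve on \(X_\infty\) by transporting each crossing point of \(\gamma(X_t)\) with an edge to the point on the corresponding ideal edge at the \emph{same signed distance from the incircle tangency point}. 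Since peripheral stretch moves all tangency points rigidly (the shear measures precisely the offset between adjacent tangency points and is constant along the ray), this transport is well defined. A uniform-continuity lemma for the distance between two such points (Lemma~\ref{lm:distance-close}) then gives the length comparison, \emph{provided} the crossing points stay at uniformly bounded distance from the tangency points.

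\textbf{The step you flag as the main obstacle is indeed the crux, and the paper solves it differently.} You correctly single out the need to confine \(\gamma(X_t)\) to a thick part. The paper does this via Lemma~\ref{lm:dist-tangent-intersect}: for any simple closed geodesic and any \(t\), every intersection with an edge lies within a bounded distance \(D\) (depending only on \(G,X_0\)) of the incircle tangency point. The proof is by contradiction using the collar lemma for cone-surfaces with angles \(<\pi\) (Lemma~\ref{lm:Collar-lemma-for-hyperbolic-cone-surfaces}): if some crossing escaped arbitrarily far toward a vertex, the geodesic would enter a cone neighbourhood of arbitrarily small circumference, violating the collar. This replaces your holonomy stabilization estimate \(\theta_i(t)e^{R_i(t)}\to\text{const}\).

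\textbf{A gap in your argument.} Your claim that the ``shear matrices'' along edges are fixed and equal to those of \(X_\infty\) is not justified. The shear parameters \(s_e\) in this paper are defined via the circular foliation (Definition~\ref{def:MF.shear}), not via a matrix decomposition of the holonomy; for cone-metrics there is no established factorization of \(\rho_t\) into edge shear matrices and vertex rotations analogous to the ideal-triangulation picture. Establishing such a factorization, and showing its edge factors coincide with the cusped shear matrices attached to \(\mathbf{s}_0\), would be a nontrivial additional ingredient. The paper's comparison-curve approach sidesteps this entirely by never leaving the category of lengths.
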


Thus, stretching only the peripheral part of the circular foliation while keeping the shear parameters fixed
realizes the classical cusped surface associated to the shear data of \(X_0\) as a geometric limit within the cone-surface setting.
Here the convergence is understood in the simple-curve length-spectrum topology (Definition~\ref{def:topology.on.T}):
although the peripheral stretch ray typically diverges in the arc-curve length-spectrum topology, its restriction to closed curve lengths admits a well-defined limit.
We defer a more detailed explanation of this point to Remark~\ref{rem:topology.convergence.light};
see also Remark~\ref{rem:HP-vs-us} for a discussion of related work on stretch rays and their interaction with the Thurston asymmetric metric and harmonic stretch lines.

By contrast, if we keep the radius parameters fixed and apply only the interior anti-stretch deformation to the circular foliation, we obtain a limiting description in terms of circle packings.
On the level of coordinates, setting the shear entries in \(\bR^{E(G)}\) equal to zero (denoted by \(\underline{0}\)) selects the subset
\[
  \cSR_G^{-1}\bigl(\{\underline{0}\} \times \bR^n_{>0}\bigr) \subset \cT(G),
\]
whose elements we refer to as \emph{circle-packed hyperbolic cone-surfaces} (see for instance \cite{BHS23}).

\begin{theorem}[Asymptotic behaviour of interior anti-stretch rays]\label{thm:asym.behavior1}
For each \(G \in \TSp\) and each \(X_0 \in \cT(G)\), the interior anti-stretch ray \((X_t)_{t \le 0}\) based at \(X_0\) converges, in \(\cT(G)\), to the hyperbolic circle-packed cone-surface with centers at \(\underline{p}\) which is described by the radius coordinates of \(X_0\).
\end{theorem}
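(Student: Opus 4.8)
The plan is to reduce everything to the shear--radius coordinates of Corollary~\ref{cor:homeo.TS.shear.radius}, in which the interior anti-stretch ray becomes an explicit straight segment whose limit is read off directly. First I would recall from Definition~\ref{def:part.stret.} that the ray $(X_t)_{t\le 0}$ is obtained by rescaling, by the factor $e^t$, the transverse measure carried by the \emph{interior} components of the circular foliation $\cC_G(X_0)$, while leaving the peripheral components untouched. Concretely, writing $F_0=\cC_G(X_0)\in\cMFp$ and letting $F_t$ denote the class with interior transverse measure scaled by $e^t$, the surface is $X_t:=\cC_G^{-1}(F_t)$; since the peripheral part of $F_t$ is unchanged, $F_t$ is still trivial around $\up$, so $F_t\in\cMFp$ and $X_t$ is well defined in $\cT(G)$ because $\cC_G$ is a homeomorphism onto $\cMFp$ (Theorem~\ref{thm:Teich.chart}).

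Next I would translate this into coordinates. By Definitions~\ref{def:MF.shear.map} and~\ref{def:MF.radius}, the shear component $\mathbf{s}_G$ reads off the transverse measure of the interior part across the edges of $G$, whereas the radius component $\mathbf{r}_G$ depends only on the peripheral part. Since the shear parameters are linear in the interior transverse measure and the radius parameters depend only on the unchanged peripheral measure, scaling the interior part by $e^t$ yields
\[
  \cSR_G(X_t)\;=\;\mathbf{sr}_G(F_t)\;=\;\bigl(e^t\,\mathbf{s}_G(X_0),\ \mathbf{r}_G(X_0)\bigr)\in\Lambda_G,
\]
where membership in $\Lambda_G$ holds for every $t$ because the vertex-balance condition of Definition~\ref{def:vertex.balanc.part} is homogeneous in the shear entries. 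Letting $t\to-\infty$ and using $e^t\to 0$, the right-hand side converges in $\bR^{E(G)}\times\bR^n_{>0}$ to $\bigl(\underline{0},\ \mathbf{r}_G(X_0)\bigr)$, which again lies in $\Lambda_G$ since $\underline{0}$ trivially satisfies the homogeneous balance condition and $\mathbf{r}_G(X_0)\in\bR^n_{>0}$.

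Finally, I would invoke continuity of the inverse $\cSR_G^{-1}$ from Corollary~\ref{cor:homeo.TS.shear.radius}: as $\cSR_G(X_t)\to(\underline{0},\mathbf{r}_G(X_0))$ inside $\Lambda_G$, it follows that $X_t\to\cSR_G^{-1}(\underline{0},\mathbf{r}_G(X_0))$ in $\cT(G)$, and by construction this limit is exactly the circle-packed hyperbolic cone-surface with centres at $\up$ and radii $\mathbf{r}_G(X_0)$. In contrast with the peripheral case of Theorem~\ref{thm:asym.behavior}, the limit here stays \emph{inside} $\cT(G)$, so the convergence is in the honest topology of $\cT(G)$ and no passage to the simple-curve length-spectrum topology is needed. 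The main obstacle I anticipate is the second step, namely verifying rigorously that the interior anti-stretch acts on the shear--radius vector exactly by $(\mathbf{s},\mathbf{r})\mapsto(e^t\mathbf{s},\mathbf{r})$; this requires extracting from the explicit definitions that the shear map is genuinely linear in the interior transverse measure and that the peripheral radius data are insensitive to rescaling the interior components.
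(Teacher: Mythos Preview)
Your proposal is correct and follows essentially the same route as the paper: the paper's proof simply observes that, by the definition of the interior anti-stretch, the shear coordinates of \(X_t\) tend to \(0\) while the radius coordinates stay constant, and then invokes the homeomorphism \(\cSR_G\) from Corollary~\ref{cor:homeo.TS.shear.radius} to conclude convergence in \(\cT(G)\). Your version is slightly more explicit in asserting the exact formula \(\cSR_G(X_t)=(e^t\mathbf{s}_G(X_0),\mathbf{r}_G(X_0))\), and you are right that the only point requiring care is checking that the shear is genuinely linear in the interior transverse measure and that the radii depend only on the peripheral components; both follow from the definitions and Remark~\ref{rem:MF.radius.positive}.
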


Thus the peripheral stretch and interior anti-stretch rays interpolate between the cusped regime determined by the shear data and the circle-packed locus determined by the radii, and they provide a concrete way to vary cone angles within our charts.
We turn next to the maximal-cone-angle problem in Section~\ref{subsubsec:intro.max-angle}; definitions and proofs of Theorems~\ref{thm:asym.behavior} and~\ref{thm:asym.behavior1} are deferred to Section~\ref{subsec:application}. \subsubsection{Maximal cone angles and the universally triangulable locus}
\label{subsubsec:intro.max-angle}

We use the circular foliation coordinates on the chart domains \(\cT(\underline G)\) and, in particular, on the universally triangulable locus, to obtain a sharp upper bound on admissible cone angles.

For each triangulation \(G \in \TSp\), write \(\cT(G) \subset \cTp\) for the corresponding coordinate domain as in Section~\ref{subsubsec:intro.circ-local}.
Their common intersection \(\cT_{\rm u.t.}(S,\up) := \bigcap_{G \in \TSp} \cT(G)\) is called the \emph{universally triangulable locus} (see Definition~\ref{def:Teich.cone.surf.}).
As an immediate consequence of Theorem~\ref{thm:Teich.chart} we obtain:

\begin{corollary}[Embedding of the universally triangulable locus]\label{cor:embed}
For each triangulation \(G \in \TSp\), the map
\[
  \cCG : \cT_{\rm u.t.}\Sp \longrightarrow \cMFp
\]
is an embedding.
\end{corollary}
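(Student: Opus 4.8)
The plan is to obtain the corollary as a purely formal consequence of Theorem~\ref{thm:Teich.chart}, using only the elementary fact that the restriction of a homeomorphism to a subspace is a topological embedding.

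First I would record the inclusion $\cT_{\rm u.t.}\Sp \subset \cT(G)$. By definition $\cT_{\rm u.t.}\Sp = \bigcap_{G' \in \TSp} \cT(G')$, so for the fixed triangulation $G$ the locus $\cT_{\rm u.t.}\Sp$ sits inside the chart domain $\cT(G)$, and it carries the subspace topology induced from $\cT(G)$, which is in turn the restriction of the ambient topology on $\cTp$. This containment is the one geometric input, and it holds uniformly: the intersection defining $\cT_{\rm u.t.}\Sp$ ranges over \emph{all} triangulations, hence lies in the chart domain of each individual $G$.

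Next, Theorem~\ref{thm:Teich.chart} asserts that $\cCG : \cT(G) \to \cMFp$ is a homeomorphism onto $\cMFp$. Restricting this homeomorphism to the subset $\cT_{\rm u.t.}\Sp$ produces a map $\cCG|_{\cT_{\rm u.t.}\Sp}$ that is continuous and injective, and that carries $\cT_{\rm u.t.}\Sp$ homeomorphically onto its image $\cCG(\cT_{\rm u.t.}\Sp) \subset \cMFp$ endowed with the subspace topology; the continuous inverse is furnished by the restriction of the continuous inverse $\cCG^{-1}$. By the definition of a topological embedding, this is precisely the conclusion of the corollary, and since $G$ was arbitrary the argument applies verbatim to every $G \in \TSp$.

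As the reasoning is entirely formal, I do not expect any substantive obstacle. The only point meriting an explicit line of justification is that the topology placed on $\cT_{\rm u.t.}\Sp$ coincides with the subspace topology inherited from $\cT(G)$; this is immediate because $\cT_{\rm u.t.}\Sp$, the chart domains $\cT(G)$, and their common intersection are all topologized as subspaces of $\cTp$, so restricting along the chain $\cT_{\rm u.t.}\Sp \subset \cT(G) \subset \cTp$ is consistent. No transversality, balance, or foliation-theoretic estimate from the earlier sections is required beyond the homeomorphism already established in Theorem~\ref{thm:Teich.chart}.
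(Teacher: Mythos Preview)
Your argument is correct and matches the paper's approach exactly: the paper treats this corollary as an immediate consequence of Theorem~\ref{thm:Teich.chart}, observing that \(\cT_{\rm u.t.}\Sp\subset\cT(G)\) and that restricting the homeomorphism \(\cC_G\) yields an embedding.
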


By the shear--radius coordinates (Corollary~\ref{cor:homeo.TS.shear.radius}), 
each $\cT(G)$ is homeomorphic to the vertex-balanced cone $\Lambda_G$.
This concrete cone model makes it natural to study how geometric quantities, and in particular the cone angles at the marked points, behave on a fixed chart and on their common intersection $\cT_{\rm u.t.}(S,\underline{p})$.
In Section~\ref{sec:max.cone.angle} we use this parametrization to address the following guiding question:

\begin{question}[Maximal cone angles under triangulability]
\label{q:maximal.angle}
Given a hyperbolic cone-surface \(X\in\cTp\), how large can the cone
angles at the marked points be under triangulability constraints?
More precisely:
\begin{enumerate}
  \item[(a)] for a fixed triangulation \(G\in\TSp\), how large can the cone angles at the marked points be among cone-metrics \(X\in\cT(G)\)?
  \item[(b)] how large can the cone angles at the marked points be along the
  universally triangulable locus \(\cT_{\rm u.t.}(S,\up)\subset\cTp\)?
\end{enumerate}
\end{question}

As a first step towards Question~\ref{q:maximal.angle},
Proposition~\ref{prop:angle.face} determines, for each pair \((G,p)\) with
\(G\in\TSp\) and \(p\in\up\), the supremum of the cone angles at \(p\) among all
cone-metrics in \(\cT(G)\).

We now turn to the universally triangulable locus. We denote by \(\Theta\Sp\) the \emph{universally maximal cone angle} of \(\Sp\), that is, the supremum of the cone angles which can be realized at any marked point by a cone-metric in \(\cT_{\rm u.t.}(S,\up)\) (see Definition~\ref{def:max.cone.angles} for the precise formulation).
Combining Proposition~\ref{prop:angle.face} with the description of the universally triangulable locus, we obtain the following sharp statement.

\begin{theorem}[Value of the universally maximal cone angle]\label{thm:adm.cone.angle}
The following statements hold:
\begin{enumerate}
  \item If \(n = g = 1\), then \(\Theta\Sp = 2\pi\) and \(\cTp = \cT_{\rm u.t.}(S,\up)\).  Otherwise, \(\Theta\Sp = \pi\).
  \item The value \(\Theta\Sp\) cannot be achieved by any element of \(\cT_{\rm u.t.}(S,\up)\), except in the case \(n = 1\) and \(g \ge 2\).
\end{enumerate}
\end{theorem}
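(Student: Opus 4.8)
The plan is to derive everything from Proposition~\ref{prop:angle.face}, which assigns to each pair $(G,p)$ the value
\[
  \Theta_G(p):=\sup_{X\in\cT(G)}\theta_p(X).
\]
Because $\cT_{\rm u.t.}\Sp=\bigcap_{G\in\TSp}\cT(G)$, any universally triangulable $X$ lies in every chart, so $\theta_p(X)\le\Theta_G(p)$ for all $G$, whence $\Theta\Sp\le\sup_{p\in\up}\inf_{G\in\TSp}\Theta_G(p)$. The reverse inequality is a realizability statement, which I would obtain from a straightening principle: if every cone angle of $X$ is $<\pi$, then every combinatorial triangulation straightens to a geodesic one (the cone points being convex), so $X\in\cT_{\rm u.t.}\Sp$. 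This principle is what furnishes the explicit description of the universally triangulable locus and the matching lower bounds below.

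For the upper bound when $n\ge2$, fix $p$ and build a triangulation $G_0\in\TSp$ in which $p$ has a single corner, namely $p$ is the apex of one triangle whose opposite edge is a loop based at some other marked point $q\ne p$; such a $G_0$ exists precisely because $n\ge2$ provides $q$ and the complement is triangulable. In any geodesic realization the cone angle at $p$ equals that one triangle's angle at $p$, which is $<\pi$, so Proposition~\ref{prop:angle.face} gives $\Theta_{G_0}(p)=\pi$, not attained on $\cT(G_0)$. Since $\cT_{\rm u.t.}\Sp\subset\cT(G_0)$, this forces $\theta_p<\pi$ on the universally triangulable locus; combined with the straightening principle (which realizes all angle vectors with entries $<\pi$) we obtain $\cT_{\rm u.t.}\Sp=\{X:\theta_i(X)<\pi\ \forall i\}$ and $\Theta\Sp=\pi$, strictly, hence not attained.

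The once-punctured case $n=1$, $g\ge2$ is the delicate one, and I expect it to be the main obstacle, because the single vertex carries all $3F=12g-6$ corners, so the degree-one reduction is unavailable. Here $\pi$ is the orbifold value: a cone point of angle $\pi$ is a $\mathbb{Z}/2$ reflector, and the genus-$g$ hyperbolic orbifold with one such point lies in $\cT_{\rm u.t.}\Sp$ (no single corner is forced to carry all of $\theta_p$, so straightening still succeeds at the boundary), realizing $\theta_p=\pi$. For the matching upper bound I would exhibit a distinguished one-vertex triangulation $G_1$ whose geodesic realizability is obstructed exactly as $\theta_p$ crosses $\pi$ (the cone point failing to stay convex, cf.\ \cite[\S4.6]{CHK00}); Proposition~\ref{prop:angle.face} should then certify $\Theta_{G_1}(p)=\pi$, forcing $\theta_p\le\pi$ on $\cT_{\rm u.t.}\Sp$. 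Pinning down this $G_1$ and verifying the sharp threshold $\pi$ (rather than $2\pi$) for the failure of straightening is the technical heart of the argument; it also explains why attainment survives here but not for $n\ge2$, where the angle-$\pi$ metric is already excluded by the chart $G_0$.

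Finally, the genus-one torus is handled separately. Gauss--Bonnet \eqref{eq:Gauss-Bonnet condition} already forces $\theta_p<2\pi$ on all of $\cT\Sp$, so it suffices to prove $\cT\Sp=\cT_{\rm u.t.}\Sp$, after which $\Theta\Sp=\sup_{\cT\Sp}\theta_p=2\pi$, not attained. I would show that each $\cT(G)$ is both open and closed in the connected space $\cT\Sp$: a geodesic one-vertex triangulation of the torus can degenerate only in the limit $\theta_p\uparrow2\pi$, which the Gauss--Bonnet bound excludes, so $\cT(G)$ reaches no boundary and equals $\cT\Sp$. The $\mathrm{SL}_2(\mathbb{Z})$-symmetry of the Farey family of one-vertex triangulations, together with the shear--radius charts of Corollary~\ref{cor:homeo.TS.shear.radius}, organizes this step; the contrast with higher genus is exactly that the torus admits no triangulation obstructed below the Gauss--Bonnet bound.
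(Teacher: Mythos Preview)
Your approach for \(n\ge 2\) matches the paper's and is fine. The genuine gap is in the case \(n=1\), \(g\ge 2\).

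You propose to find a fixed triangulation \(G_1\) for which Proposition~\ref{prop:angle.face} certifies \(\Theta_{G_1}(p)=\pi\). This is impossible. When \(n=1\), every face of every triangulation has all three vertices at \(p_1\), so \(|F_{p_1}(G)|=|F(G)|=4g-2\) for \emph{every} \(G\in\TSp\). Proposition~\ref{prop:angle.face} then gives \(\theta_{p_1}(G)=(4g-2)\pi\ge 6\pi\) uniformly, so the bound \(\inf_G \theta_{p_1}(G)\) that you are trying to invoke is \((4g-2)\pi\), not \(\pi\). No combinatorial choice of triangulation can force the obstruction you want; the upper bound \(\theta_{p_1}\le\pi\) on \(\cT_{\rm u.t.}\Sp\) does not come from Proposition~\ref{prop:angle.face} at all.

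The paper's argument is qualitatively different: it is \emph{metric-dependent}. Given \(X\) with \(\theta_{p_1}(X)>\pi\), one first finds a separating geodesic \(\gamma_X\) at positive distance from \(p_1\), then takes a triangle \(\Delta\) built from generators of a handle on the far side of \(\gamma\), and applies iterated Dehn twists along \(\gamma\) to obtain triangles \(\Delta^{(m)}\). An angle-of-parallelism computation shows that the three edges of \(\Delta^{(m)}(X)\) all leave and enter \(p_1\) at asymptotically the same direction, so one interior angle \(\Theta^{(m)}(X)\) of \(\Delta^{(m)}(X)\) tends to the full cone angle \(\theta_{p_1}(X)\). For large \(m\) this exceeds \(\pi\), so the geodesic realization of \(\Delta^{(m)}\) is degenerate and \(X\notin\cT(G)\) for any \(G\) containing \(\Delta^{(m)}\) as a face. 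The obstructing triangulation depends on \(X\), and this dependence is essential.

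Your torus argument is also incomplete. You assert that a one-vertex triangulation of the torus ``can degenerate only in the limit \(\theta_p\uparrow 2\pi\)'', but this is exactly what needs proving: a priori one of the two faces could have angle sum approaching \(\pi\) while the other has angle sum approaching \(\theta_p-\pi\). The paper's device is the hyperelliptic involution: it is an isometry of any \(X\) (by Schwarz--Pick), and since it reverses each arc class it swaps the two faces of any \(G\). Hence the two faces have equal angle sum \(\theta_{p_1}(X)/2<\pi\), so neither degenerates.
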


As a consequence, we obtain a particularly simple picture in the once-marked torus case.

\begin{corollary}[Global coordinates in the once-marked torus case]
\label{cor:once.punctured.torus.global}
If \(g = n = 1\), then for every triangulation \(G \in \TSp\) the shear--radius map \(\cSR_G : \cTp \longrightarrow \Lambda_G \) is a homeomorphism.  
In other words, the shear--radius coordinates are global coordinates for \(\cT(S,\underline{p})\).
\end{corollary}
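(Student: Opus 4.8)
The plan is to deduce this directly from the sharp cone-angle analysis, observing that the once-marked torus is exactly the degenerate case in which the universally triangulable locus exhausts the whole Teichm\"uller space, so that every chart $\cT(G)$ is already all of $\cTp$.

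First I would invoke Theorem~\ref{thm:adm.cone.angle}(1), which asserts that when $g=n=1$ one has the equality $\cTp=\cT_{\rm u.t.}(S,\up)$. By the definition of the universally triangulable locus, $\cT_{\rm u.t.}(S,\up)=\bigcap_{G\in\TSp}\cT(G)$, so in particular $\cT_{\rm u.t.}(S,\up)\subseteq\cT(G)$ for every $G\in\TSp$. Combining these inclusions with the trivial containment $\cT(G)\subseteq\cTp$ yields
\[
  \cTp=\cT_{\rm u.t.}(S,\up)\subseteq\cT(G)\subseteq\cTp,
\]
which forces $\cT(G)=\cTp$ for \emph{every} triangulation $G\in\TSp$. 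In other words, in the once-marked torus case each coordinate domain is the entire space.

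Finally I would apply Corollary~\ref{cor:homeo.TS.shear.radius}, according to which $\cSR_G:\cT(G)\to\Lambda_G$ is a homeomorphism onto the vertex-balanced cone for each $G$. Since $\cT(G)=\cTp$ in the present regime, this is precisely the statement that $\cSR_G:\cTp\to\Lambda_G$ is a homeomorphism, giving the global coordinates claimed in the corollary.

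There is no genuine analytic or geometric obstacle remaining at this stage: the substantive content has already been absorbed into Theorem~\ref{thm:adm.cone.angle} (the computation $\Theta\Sp=2\pi$ together with $\cTp=\cT_{\rm u.t.}(S,\up)$) and into the homeomorphism of Corollary~\ref{cor:homeo.TS.shear.radius}. The only point meriting care is the logical identification $\cT(G)=\cTp$, which hinges on the coincidence of the universally triangulable locus with the full space---a phenomenon special to $(g,n)=(1,1)$ and one that fails for all other $(g,n)$, where the loci $\cT(G)$ are proper open subsets of $\cTp$.
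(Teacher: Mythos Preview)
Your proof is correct and follows essentially the same approach as the paper: invoke Theorem~\ref{thm:adm.cone.angle}(1) to get $\cTp=\cT_{\rm u.t.}(S,\up)$, use the definition of the universally triangulable locus to deduce $\cT(G)=\cTp$ for every $G$, and then apply Corollary~\ref{cor:homeo.TS.shear.radius}. The structure and the cited ingredients match the paper's own argument almost verbatim.
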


The proofs of Theorem~\ref{thm:adm.cone.angle} and Corollary~\ref{cor:once.punctured.torus.global} will be given at the end of Section~\ref{subsec:universal.max.angle}.

\subsection{Organization of the paper}
\label{subsec:outline}

The paper is organized as follows.
Section~\ref{sec:preliminary} fixes notation for arcs, curves and triangulations, and for the associated function spaces.
Section~\ref{sec:MF.} introduces the space \(\cMFp\) of measured foliation classes trivial around \(\underline{p}\); for each triangulation \(G\) we describe the intersection coordinates \(i^{(G)}_{\cdot}\) and construct the shear--radius map \(\mathbf{sr}_G\).

Section~\ref{sec:Teich.} reviews the Teichm\"uller space \(\cTp\) and its \(G\)-triangulable loci \(\cT(G)\), and develops edge-length coordinates on \(\cT(G)\).
In Section~\ref{sec:circ.foli.coor.app.} we construct the circular foliation map \(\cCG\) and establish the shear--radius coordinate charts on \(\cT(G)\) (Theorem~\ref{thm:Teich.chart} and Corollary~\ref{cor:homeo.TS.shear.radius}).

Section~\ref{subsec:application} defines the partial stretch/anti-stretch deformations and proves the asymptotic behaviour of the associated deformation rays (Theorems~\ref{thm:asym.behavior} and~\ref{thm:asym.behavior1}).
Finally, Section~\ref{sec:max.cone.angle} studies maximal cone angles and the universally triangulable locus, and proves Theorem~\ref{thm:adm.cone.angle}.

The logical dependencies between the main definitions and theorems established in this paper are illustrated in Figure~\ref{fig:pipeline}.

\usetikzlibrary{arrows.meta,positioning,fit,matrix,backgrounds,calc}

\begin{figure}[h]
\centering
\adjustbox{max width=\linewidth, max totalheight=0.85\textheight, center}{\begin{tikzpicture}[
  >=Stealth,
  node distance=1.6cm and 1.0cm, 
  box/.style={
    draw, 
    rounded corners, 
    inner sep=5pt, 
    font=\small, 
    align=center, 
    text width=4.5cm, 
    fill=white
},
group/.style={
    draw=gray!80,       
    dashed,             
    inner sep=10pt,     rounded corners=5pt,
    fill=gray!5,        
label={[anchor=north west, text=gray!80, font=\bfseries\small, xshift=5pt, yshift=-5pt]north west:Section~\ref{#1}} 
  },
  dep/.style={->, thick},            
  use/.style={->, densely dashed, thick}, 
  every path/.style={line cap=round}
]

\node[box] (thm_Teich_chart) 
  {Theorem~\ref{thm:Teich.chart}\\\((\cT(G),\cC_G)_G\) is a chart of \(\cTp\)};

\node[box, left=of thm_Teich_chart, yshift=-6pt] (def_circ_foli_map) 
  {Definition~\ref{def:circ.foli.map.}\\Circular foliation map \(\cC_G\)};

\node[box, right=of thm_Teich_chart] (cor_embed) 
  {Corollary~\ref{cor:embed}\\\(\cC_G:\cT_{\rm u.t.}(S,\up) \hookrightarrow \cMFp\)};

\node[box, above=of thm_Teich_chart, yshift=-12pt] (cor_homeo_TS_shear_radius) 
  {Corollary~\ref{cor:homeo.TS.shear.radius}\\Shear--radius on \(\cT(G)\)};

\node[box, left=of cor_homeo_TS_shear_radius, yshift=-7pt] (prop_edge_leng) 
  {Proposition~\ref{prop:edge.leng.homeo.}\\Edge parametrizations are  homeomorphisms};

\node[box, right=of cor_homeo_TS_shear_radius] (thm_homeo_MFp_shear_radius) 
  {Theorem~\ref{thm:homeo.MFp.shear.radius}\\Shear--radius on \(\cMFp\)};

\node[box, below=of thm_Teich_chart, yshift=8pt] (thm_asym) 
  {Theorems~\ref{thm:asym.behavior} \&~\ref{thm:asym.behavior1}\\Asymptotic limits};

\node[box, left=of thm_asym] (def_part_stret) 
  {Definition~\ref{def:part.stret.}\\Partial stretch / anti-stretch};

\node[box, right=of thm_asym, yshift=12pt] (angle_results) 
  {Proposition~\ref{prop:angle.face} \& Theorem~\ref{thm:adm.cone.angle}\\Max cone angle in \(\cT(G)\) and \(\cT_{\rm u.t.}(S,\up)\)};

\begin{scope}[on background layer]

\path let \p1 = (prop_edge_leng.west), \p2 = (cor_homeo_TS_shear_radius.north) 
        in coordinate (TopLeft) at (\x1 - 10pt, \y2 + 15pt);
        
\path (thm_homeo_MFp_shear_radius.north east) ++(10pt, 15pt) coordinate (TopRight);
  
\path (thm_homeo_MFp_shear_radius.south east) ++(10pt, -10pt) coordinate (StepRightTop);
  
\path ($(thm_Teich_chart.east)!0.5!(cor_embed.west)$) ++(-4pt, 0pt) coordinate (MidX);
  \coordinate (StepInnerCorner) at (MidX |- StepRightTop);
  
\path (thm_Teich_chart.south -| MidX) ++(0pt, -10pt) coordinate (StepRightBottom);
  
\path (def_circ_foli_map.south west) ++(-10pt, -10pt) coordinate (BottomLeft);

\draw[gray!80, dashed, fill=gray!5, rounded corners=5pt] 
    (TopLeft) -- (TopRight) -- (StepRightTop) -- (StepInnerCorner) -- (StepRightBottom) -- (BottomLeft) -- cycle;
    
\node[anchor=north west, text=gray!80, font=\bfseries\small, inner sep=0pt, xshift=15pt, yshift=-5pt] 
    at (TopLeft) {Sections~\ref{sec:circ.foli.coor.app.}};

\path (thm_asym.north) ++(0, 10pt) coordinate (top_sec6);
  \node[group={subsec:application}, fit=(def_part_stret) (thm_asym) (top_sec6)] {};
  
\path (cor_embed.north) ++(0, 10pt) coordinate (top_sec7);
  \node[group={sec:max.cone.angle}, fit=(cor_embed) (angle_results) (top_sec7)] {};
  
\end{scope}
\draw[dep] (def_circ_foli_map) -- (thm_Teich_chart);
\draw[dep] (prop_edge_leng) -- (thm_Teich_chart);
\draw[dep] (thm_Teich_chart) -- (cor_embed);
\draw[dep] (thm_Teich_chart) -- (cor_homeo_TS_shear_radius);
\draw[dep] (thm_homeo_MFp_shear_radius) -- (cor_homeo_TS_shear_radius);
\draw[use] (def_circ_foli_map) -- (def_part_stret);
\draw[dep] (def_part_stret) -- (thm_asym);
\draw[dep] (thm_Teich_chart) -- (thm_asym);
\draw[dep] (thm_Teich_chart) -- (angle_results);

\end{tikzpicture}
}
\caption{Dependency graph of the main theorems, definitions, and propositions, grouped by section. 
Solid arrows indicate logical implications, while dashed arrows indicate conceptual dependencies. 
The layout highlights the progression from the definition of the circular foliation map to its asymptotic and geometric consequences.}
\label{fig:pipeline}
\end{figure} 
 \section{Preliminaries}
    \label{sec:preliminary}

In this section we fix the combinatorial and functional notation used throughout the paper. 
We recall the classes of arcs and curves on the marked surface \(\Sp\), introduce triangulations and the local combinatorics around vertices, and define the weak topology on functions on \(\cAC\). 
The main object of this section is the admissible domain \(\Omega_G\) (Definition~\ref{def:tri.ineq.region}), which will serve as the target of the edge-length parametrization of \(\cT(G)\) and the edge-intersection parametrization of \(\cMFp\)
(Proposition~\ref{prop:edge.leng.homeo.}) in later sections.

\subsection{Arcs and curves on surfaces}

\subsubsection{Arcs on surfaces}

We begin by fixing the notion of (essential) arcs on \(\Sp\) and their homotopy classes relative to \(\up\).

\begin{definition}
An \DEF{\em (oriented) arc} on \(\Sp\) is a continuous map \(\alpha \colon [0,1] \to S\) with endpoints \(\alpha(0), \alpha(1) \in \up\), such that the image \(\alpha((0,1))\) is contained in \(\dot{S} = S \setminus \up\). 
We allow \(\alpha(0) = \alpha(1)\).
The arc \(\alpha\) is \DEF{\em simple} if it is injective on \([0,1)\).
Two arcs \(\alpha_0\) and \(\alpha_1\) on \(\Sp\) are \DEF{\em homotopic relative to \(\up\)} if there exists a continuous family \(\{ f_t \colon [0,1] \to S \}_{t \in [0,1]}\) such that
\begin{itemize}
  \item \(f_0 = \alpha_0\) and \(f_1 = \alpha_1\);
  \item \(f_t(0)\) and \(f_t(1)\) are constant in \(\up\) for all \(t \in [0,1]\);
  \item \(f_t((0,1)) \subset \dot{S}\) for all \(t \in [0,1]\).
\end{itemize}
We say that $\gamma$ is \DEF{\em essential} if it is not homotopic (relative to $\up$) in $S$ to a point.
We denote by \(\cA\Sp\) the set of homotopy classes (relative to \(\up\)) of essential arcs on \(\Sp\).
\end{definition}

Two (distinct) homotopy classes of arcs are \DEF{\em disjoint} if they admit representatives whose interiors are pairwise disjoint in \(\dot{S}\).

\subsubsection{Curves on surfaces}
We now recall the corresponding notation for curves on \(\Sp\).

\begin{definition}
An \DEF{\em (oriented) curve} on \(\Sp\) is a continuous map \(\gamma \colon [0,1] \to S\) with \(\gamma(0) = \gamma(1)\), whose image is contained in $\dot{S}$. 
The curve \(\gamma\) is \DEF{\em simple} if it is injective on \([0,1)\).
We say that \(\gamma\) is \DEF{\em essential} if it is not homotopic in \(\dot{S}\) to a point or to a peripheral curve around a puncture.
Two curves \(\gamma_0\) and \(\gamma_1\) are \DEF{\em homotopic} if they are related by a homotopy \(\{ g_t \colon [0,1] \to \dot{S} \}_{t \in [0,1]}\) with \(g_0 = \gamma_0\) and \(g_1 = \gamma_1\).
We denote by \(\cC\Sp\) (resp. \(\cS\Sp\)) the set of homotopy classes of essential curves (resp. essential simple curves) on \(\Sp\).
\end{definition}

For convenience, we identify a homotopy class of arcs or curves with any of its representatives, and we tacitly assume that all arcs and curves under consideration are essential.
When no confusion arises, we use the same symbol \(\alpha\) (resp. \(\gamma\)) for both an arc (resp. a curve) and its homotopy class. 
\subsection{Triangulations of surfaces}
    \label{subsec:triangulation}
Triangulations of \(\Sp\) will provide the combinatorial scaffolding for our coordinate charts on \(\cTp\). 
In this subsection we fix the terminology for topological triangles, triangulations by arcs, and the local star at a marked point.

\begin{definition}
\label{def:top.triang.}
A \DEF{\em topological triangle} \(\Delta\) is a closed disc \(D \subset S\) together with three distinguished points on \(\partial D\), labelled in counterclockwise order as \(v_1, v_2, v_3\).
These points are the \DEF{\em vertices} of \(\Delta\) and form the set \(V(\Delta)\).
The closure of each connected component of \(\partial D \setminus V(\Delta)\) is called an \DEF{\em edge} of \(\Delta\); the set of edges is denoted by \(E(\Delta)\).
\end{definition}

\begin{definition}
\label{def:triangulation}
A \DEF{\em triangulation} \(G\) of \(\Sp\) is a maximal collection \(\{ \alpha_1, \dots, \alpha_k \}\) of mutually disjoint homotopy classes of essential simple arcs on \(\Sp\). The marked points in \(\up\) are the \DEF{\em vertices} of \(G\), and the homotopy classes \(\alpha_i\) are the \DEF{\em edges} of \(G\).
We write \(E(G)\) for the set of edges.
Given a choice of pairwise (internally) disjoint representatives for all edges of \(G\), the connected components of the complement
\(  \Sp \setminus \bigcup_{i=1}^k \alpha_i \)
are called the \DEF{\em faces} of \(G\); their set is denoted by \(F(G)\).
Each face \(f \in F(G)\) has a naturally associated topological triangle \(\Delta_f\), obtained by taking the closure of \(f\) and recording its vertices and edges.
We denote by \(\TSp\) the set of all triangulations of \(\Sp\).
\end{definition}

\begin{remark}[Terminology and conventions]
Our triangulations are purely topological: they are maximal collections of pairwise disjoint essential simple arc classes on \(S\). We allow self-folded faces (once-punctured monogons) and multiple edges between the same pair of vertices (see Figure \ref{fig:Delta} for instance), so the resulting decomposition is a CW-complex rather than a simplicial complex.
This convention agrees with the standard notion of an \emph{ideal triangulation by arcs} in the literature, see for instance \cite{Pen87, FST08}.
\end{remark}

A simple Euler characteristic computation shows that, for any triangulation \(G \in \TSp\), the number of edges and faces is given by
\[
  \lvert E(G) \rvert = 6g - 6 + 3n, \qquad
  \lvert F(G) \rvert = 4g - 4 + 2n,
\]
see also \cite[Prop.~2.10]{FST08}.

For later use we record the local combinatorics around a marked point.

\begin{definition}
\label{def:face.star}
Let \(G \in \TSp\) and \(p \in \up\).
Let \(E_p(G) \subset E(G)\) denote the set of edges of \(G\) incident to \(p\), and let \(F_p(G) \subset F(G)\) denote the set of faces incident to \(p\).
The set \(E_p(G)\) is called the \DEF{\em star at \(p\) in \(G\)}.
\end{definition}

\subsection{Function spaces on arcs and curves}

We now introduce the function spaces that will support our coordinate systems.
For convenience we write
\[
  \cAC := \cA\Sp \cup \cC\Sp.
\]

\begin{definition}[Weak topology on function spaces]
\label{def:weak.top.func.space}
Let \(\bRn^{\cAC}\) denote the space of functions \(f \colon \cAC \to \bR\).
We equip \(\bRn^{\cAC}\) with the \DEF{\em weak topology} (or topology of pointwise convergence) defined as follows.
For any \(f_0 \in \bRn^{\cAC}\), any finite subset \(\underline{\beta} \subset \cAC\) and any collection of positive numbers \(\underline{\epsilon} = (\epsilon_{\beta})_{\beta \in \underline{\beta}}\), we set
\[
  \cU_{f_0}(\underline{\beta}, \underline{\epsilon})
  :=
  \left\{
    f \in \bRn^{\cAC}
    \;\middle|\;
    \lvert f(\beta) - f_0(\beta) \rvert < \epsilon_{\beta}
    \text{ for all } \beta \in \underline{\beta}
  \right\}.
\]
The sets \(\cU_{f_0}(\underline{\beta}, \underline{\epsilon})\) form a neighbourhood basis of \(f_0\) and hence define a topology on \(\bRn^{\cAC}\), called the weak topology.
Similarly, for the set \(\cS\subset \cAC\) of simple closed curve classes,  we denote by \(\bRn^{\cS}\) the space of functions \(f:\cS\to\bR\),
equipped with the weak topology defined by the analogous neighbourhood basis
\(\cU_{f_0}(\underline{\beta},\underline{\epsilon})\), where \(f_0\in\bR^{\cS}_{\geq 0}\) and \(\underline{\beta}\subset\cS\) is finite.
\end{definition}

\begin{remark}[Relation with the topology in FLP]
The topologies in Definition~\ref{def:weak.top.func.space} are modeled on the weak topology used in the study of simple closed curves and measured foliations on compact orientable surfaces (possibly with boundary). 
In particular, the space of measured foliations is topologized via the corresponding intersection functionals into \(\bRn^{\cS}\) \cite[Expos\'e~4]{FLP12}.
\end{remark}

For each triangulation \(G \in \TSp\), the edge set \(E(G)\) is a subset of \(\cA\Sp\), and we obtain the restriction space \(\bRn^{E(G)}\)  by restricting functions to edges. It inherits the weak topology from \(\bRn^{\cAC}\). 
We will later single out an open cone \(\Omega_G \subset \bRp^{E(G)}\), which will appear in the parametrizations of \(\cMFp\) and of the chart \(\cT(G)\) in Proposition~\ref{prop:edge.leng.homeo.}.

\begin{definition}[Admissible domain]
\label{def:tri.ineq.region}
Let \(G \in \TSp\).
The \DEF{\em admissible domain} \(\Omega_G \subset \bR^{E(G)}_{>0}\) is the subset of edge weightings \(\ell \colon E(G) \to \bR_{>0}\) such that, for every face \(f \in F(G)\) bounded by edges \(e_1, e_2, e_3\), the strict triangle inequalities
\[
  \ell(e_i)+\ell(e_j)>\ell(e_k)
\]
hold for all distinct indices \(i,j,k \in \{1,2,3\}\).
Equivalently, \(\Omega_G\) consists of all assignments of positive lengths to the edges of \(G\) for which the associated triangles \(\Delta_f\) can be realized as hyperbolic geodesic triangles.
In particular, \(\Omega_G\) is an open cone in \(\bR^{E(G)}_{>0}\).
\end{definition}

  \section{Measured foliations}
    \label{sec:MF.}

This section develops the combinatorial model space for our theory: the space \(\cMFp\) of measured foliations that are trivial around the marked points. Relative to a fixed triangulation \(G\in\TSp\), we construct two coordinate systems:
\begin{enumerate}
    \item the \emph{edge intersection coordinates} \(\iG_{\cdot}\) (Proposition~\ref{prop:edge.length.MF.biject.}), which identify \(\cMFp\) with the admissible cone \(\Omega_G\) of edge weights satisfying the triangle inequalities;
    \item the \emph{shear--radius coordinates} \(\mathbf{sr}_G\) (Theorem~\ref{thm:homeo.MFp.shear.radius}), which decompose the foliation data into interior shears and peripheral radii and identify \(\cMFp\) with the vertex-balanced cone \(\Lambda_G\) (Definition~\ref{def:vertex.balanc.part}).
\end{enumerate}
As recalled in the introduction, these coordinates are designed to match the shear--radius coordinates on the $G$-triangulable locus \(\cT(G)\) of Teichm\"uller space (Definition \ref{def:Teich.cone.surf.}) and to parametrize the partial (anti)-stretch deformations introduced in Section~\ref{subsec:application}.

\subsection{Measured foliations near the marked points and intersection functions}
\label{subsec:MF.and.geo.inter.}

We only recall the minimal background needed to fix notation, referring to \cite{FLP12} and \cite{PP93} for details.

    \subsubsection{Foliations on marked surfaces}
\label{subsec:MF.surf.}

Let \(\cMF(S)\) denote the space of Whitehead equivalence classes of measured foliations on the closed surface \(S\).
We consider foliations on the marked surface \(\Sp\) up to isotopy and Whitehead moves that do not cross the marked points.
By \cite[Exp.~5]{FLP12}, any class \([F]\) admits a representative whose singularities in \(\dot{S} = S \setminus \up\) are all three-pronged.
The \emph{geometric intersection number} \(i([F], [\beta])\) is defined in the usual way for any class \([\beta] \in \cAC\).

Our focus is on foliations compatible with the cone singularities.
Following Mosher \cite{Mos88} and Papadopoulos--Penner \cite{PP93}, we say that a foliation is \DEF{\em trivial around a marked point \(p_i\)} if, in a punctured neighbourhood of \(p_i\), it is a foliation by closed leaves forming an annulus.   

\begin{definition}[Foliation classes trivial around marked points]
    \label{def:MF.surf.trivial.}
    A measured foliation class on $\Sp$ is \DEF{\em trivial around \(\up\)} if it has a representative trivial around each marked point in $\up$. Let \DEF{$\cMFp$} denote the space of measured foliation classes on \(\Sp\) that are trivial around \(\up\).
\end{definition}

\begin{remark}[Conventions]
\label{rem:MFp.convention}
For \([F]\in\cMFp\) we adopt the following standing assumptions, which match the domain of our coordinate systems:
\begin{enumerate}
  \item the intersection number \(i([F],[\beta])\) is finite for all \([\beta]\in\cAC\);
  \item for each marked point \(p_i\), the restriction of \(F\) to a neighbourhood of \(p_i\) carries positive transverse measure.
\end{enumerate}
\end{remark}
 \subsubsection{Intersection functions}
\label{subsubsec:inter.func.}

The topology on \(\cMFp\) is the weak topology induced by the geometric intersection functions
\[
i^{(\cAC)}_{\cdot}([F]) = \big(i([F],[\beta])\big)_{[\beta]\in\cAC}.
\]
Restricting these intersection functions to the finite subset of \(\cAC\) given by the edges of a triangulation yields our first coordinate candidate.

\begin{definition}[Edge intersection map]
\label{def:MF.edge.inter.}
For a triangulation \(G\in\TSp\), the \DEF{\em edge intersection map} is
\[
\iG_{\cdot}: \cMFp \longrightarrow \bR_{\ge0}^{E(G)}, \quad [F] \longmapsto \big(i([F],e)\big)_{e\in E(G)}.
\]
\end{definition}

     \subsubsection{The product structure and radii} \label{subsec:prod.str.MF.}

The condition of triviality around \(\up\) induces a natural decomposition.
Any measured foliation \(F\) representing a class in \(\cMFp\) decomposes uniquely into:
\begin{itemize}
    \item \emph{peripheral components} \(\{F^{(i)}\}_{i=1}^n\): foliated annuli by closed leaves around each \(p_i\);
    \item an \emph{interior component} \(F^{(0)}\): the restriction of \(F\) to the complement of the peripheral annuli.
\end{itemize}
This geometric decomposition induces a homeomorphism \cite[Prop.~2.2]{PP93}
\begin{equation}
\label{map:prod.str.MF}
    d: \cMFp \xrightarrow{\cong} \cMFo \times \prod_{i=1}^n \cMF(p_i),
\end{equation}
where \(\cMFo\) consists of foliations with compact support in \(\dot{S}\) (equivalently, with empty peripheral components), and \(\cMF(p_i) \cong \bR_{>0}\) parametrizes the transverse width of the annulus at \(p_i\).

\begin{definition}[Radius]
\label{def:radius.peripheral}
For \([F]\in\cMFp\), the \DEF{\em radius} \(r_i([F])\) at \(p_i\) is the total transverse measure of the peripheral annulus \(F^{(i)}\); equivalently, \(r_i([F])\) is the transverse measure of any arc that joins \(p_i\) to the boundary of the interior component \(F^{(0)}\) and crosses \(F^{(i)}\) exactly once.
\end{definition}

\begin{remark}
    Our `trivial around $\up$' convention matches the decorated measured foliations of \cite{PP93}: 
    the interior component corresponds to a compactly supported foliation carried by the dual null-gon track, while the peripheral radii are the collar weights on the null-gon components (with each equal to the transverse measure of a connecting arc across the respective foliated annulus) \cite[Sec. 2]{PP93}. 
    The resulting map \eqref{map:prod.str.MF} agrees with the fiber-bundle description in \cite[Prop. 2.2]{PP93}.
\end{remark}

\subsection{Measured foliations totally transverse to a triangulation}
    \label{subsec:MF.trans.triang.}

We recall the local models for measured foliations on a single topological triangle; see also \cite[Def.~3.9]{PT07}.

\begin{definition}[Foliations trivial around a point in a triangle]
    \label{def:MF.triv.pt}  
Let $\Delta$ be a topological triangle (Definition~\ref{def:top.triang.}), and let $F$ be a measured foliation on $\Delta$.  
\begin{itemize}
    \item For $x\in\partial\Delta$, we say that $F$ is \DEF{\em trivial around $x$} if there is a neighbourhood $U$ of $x$ in $\Delta$ such that all leaves of $F$ in $U$ are arcs with endpoints on the two sides of $x$ along $\partial\Delta$.
    \item For $x\in\mathrm{int}(\Delta)$, we say that $F$ is \DEF{\em trivial around $x$} if there is a neighbourhood $U$ of $x$ in $\Delta$ on which the leaves of $F$ are simple closed curves encircling $x$ (the standard local model for measured foliations).
\end{itemize}
\end{definition}

See Figure~\ref{fig:MF.triang.}(a) for an example of a foliation trivial around interior and boundary points.

\begin{definition}[Foliation class totally transverse to a triangle]
    \label{def:MF.trans.tri.}
Let $\Delta$ be a topological triangle with edges $e_1,e_2,e_3$.
A measured foliation $F$ on $\Delta$ is \DEF{\em totally transverse to $\Delta$} if
\begin{enumerate}
    \item $F$ is transverse to $\partial\Delta$;
    \item $F$ is trivial exactly around each vertex of $\Delta$ in the sense of Definition~\ref{def:MF.triv.pt};
    \item the geometric intersection number $i(F,e_j)$ is strictly positive for each edge $e_j$.
\end{enumerate}
We denote by $\cMF(\Delta^{\perp})$ the space of measured foliations on $\Delta$ that are totally transverse to $\Delta$, modulo isotopy and Whitehead moves fixing $\partial\Delta$.
\end{definition}

Examples of totally transverse foliations are shown in Figure~\ref{fig:MF.triang.}(b)–(c).

\begin{figure}[htbp]
    \centering
    \begin{subfigure}{0.28\textwidth}
        \centering
\includegraphics[width=\linewidth]{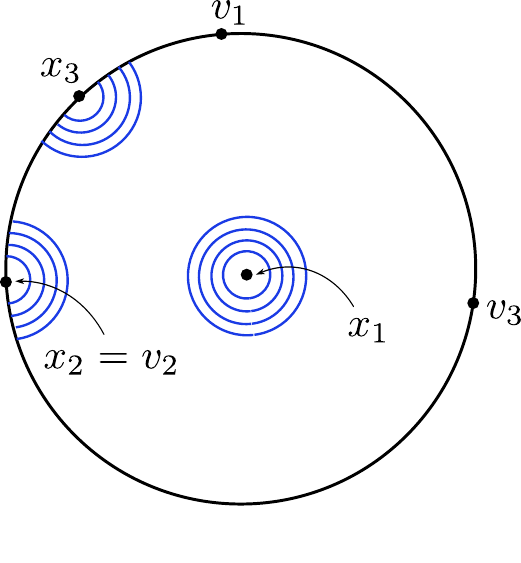}
        \caption{}
\label{subfig:MF.triang.trivial.}
    \end{subfigure}
\begin{subfigure}{0.28\textwidth}
        \centering
\includegraphics[width=\linewidth]{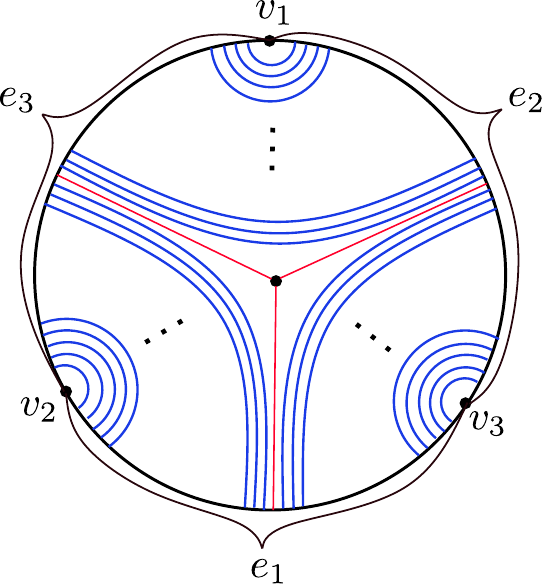}
     \caption{}
\label{subfig1:MF.tria.ineq.}
    \end{subfigure}
\begin{subfigure}{0.28\textwidth}
        \centering     \includegraphics[width=\linewidth]{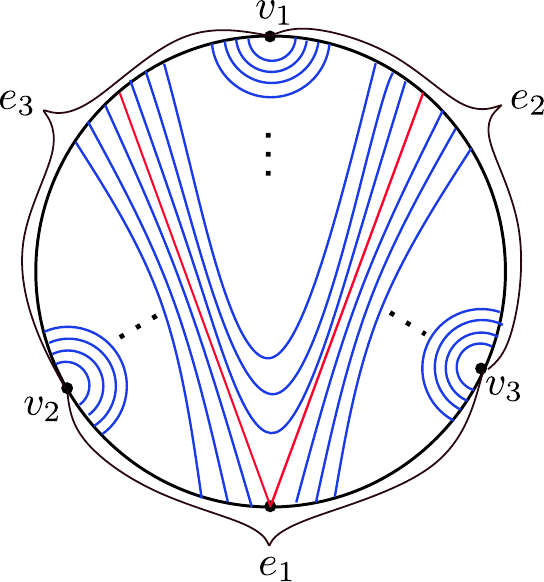}
        \caption{}
        \label{subfig2:MF.tria.ineq.}
    \end{subfigure}
    \caption{The left picture is an example of a measured foliation (partially drawn) on a topological triangle $\Delta$, which is trivial around an interior point $x_1$ and two boundary points $x_2$ and $x_3$, where $x_2$ is a vertex of $\Delta$. The middle and right pictures are two examples of measured foliations (partially drawn) on $\Delta$ which are totally transverse to $\Delta$.}
    \label{fig:MF.triang.}
\end{figure}

\begin{remark}
Condition (1) together with Condition ~(3) ensures that the transverse measure of $F$ on any nondegenerate sub-arc contained in an edge of $\Delta$ is strictly positive.
This matches the terminology “totally transverse (and trivial near punctures)” on punctured surfaces in \cite[Sec.~3]{PT07} and the standard properties of transverse measures in \cite[Exp.~5]{FLP12}.
\end{remark}

We now globalize the previous definition to a triangulation of the marked surface.

\begin{definition}[Foliation class totally transverse to $G$]
    \label{def:MF.trans.triang.surf.}
Let $G\in \TSp$ be a combinatorial triangulation of the marked surface $\Sp$.
A measured foliation $F$ on $\Sp$ is said to be \DEF{\em totally transverse to $G$} if for every triangle $\Delta_f$ associated to a face $f$ of $G$, the restriction $F|_{\Delta_f}$ is totally transverse to $\Delta_f$ in the sense of Definition~\ref{def:MF.trans.tri.}.
We denote the corresponding space of foliation classes by $\cMF(G^{\perp})$.
\end{definition}

\begin{remark}
\label{rem:MF.Gperp.to.MFp}
By construction, a measured foliation totally transverse to a triangulation \(G\in\TSp\) is transverse to every edge of \(G\), trivial around each vertex, and hence trivial around the set \(\up\) of marked points.
In particular, we have \(\cMF(G^{\perp}) \subset \cMFp\).
\end{remark}

\subsection{Triviality near the marked points and transversality to a triangulation}

The aim of this subsection is to identify, for each triangulation \(G \in \TSp\), measured foliations that are trivial around \(\up\) with those that are totally transverse to \(G\).
Combined with the triangular coordinates on a single face (Lemma~\ref{lem:MF.triang.ineq.bij}), this will yield the parametrization of \(\cMFp\) by the edge intersection map \(\iG_{\cdot}\) with values in the admissible cone \(\Omega_G \subset \bR^{E(G)}_{>0}\) (Proposition~\ref{prop:edge.length.MF.biject.}).  This parametrization will be used later in the proof of Theorem~\ref{thm:homeo.MFp.shear.radius} in Section~\ref{subsec:invers.contin.}.

\begin{proposition}[Equivalence of triviality around vertices and transversality to a triangulation]
 \label{prop:t.a.imply.t.t.G}
    For each \(G\in\TSp\), we have
    \[
        \cMFp = \cMF(G^{\perp}).
    \]
\end{proposition}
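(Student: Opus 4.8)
The plan is to prove the two inclusions separately. The inclusion \(\cMF(G^{\perp}) \subseteq \cMFp\) is already recorded in Remark~\ref{rem:MF.Gperp.to.MFp}: a foliation totally transverse to \(G\) is trivial around each vertex and hence around \(\up\). So the substance lies in the reverse inclusion \(\cMFp \subseteq \cMF(G^{\perp})\), for which I must produce, for every class \([F] \in \cMFp\), a representative whose restriction to each face triangle \(\Delta_f\) is totally transverse in the sense of Definition~\ref{def:MF.trans.tri.}.

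First I would fix a representative \(F\) of \([F]\) in FLP normal form, so that all singularities of \(F\) in \(\dot{S}\) are at least three-pronged (in particular there are no center singularities), while near each \(p_i\) the foliation is the trivial annulus of closed leaves of positive width (Definition~\ref{def:MF.surf.trivial.} and Remark~\ref{rem:MFp.convention}). Then I would isotope the edges of \(G\) into efficient position with respect to \(F\): using finiteness of \(i([F],e)\) (Remark~\ref{rem:MFp.convention}(1)) together with the standard bigon/tangency removal for arcs transverse to a measured foliation, each edge \(e\) can be made transverse to \(F\), free of tangencies, and realizing \(i([F],e)\). Near a vertex, where \(F\) is the annular family of closed leaves, these leaves meet the two sides of \(\Delta_f\) adjacent to the vertex transversally and cross the angular sector, giving exactly the ``trivial around the vertex'' local model of Definition~\ref{def:MF.triv.pt}; self-folded faces and multiple edges are handled by the same local analysis, the two sides at a vertex possibly being two strands of one edge.

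It then remains to verify the three conditions of total transversality on each face. Transversality to \(\partial\Delta_f\) (condition (1)) holds by the efficient position. For condition (3), I would observe that any edge emanates from a marked point at the center of a peripheral annulus, so in efficient position it must cross that entire annulus, whence \(i([F],e) \geq r_i([F]) > 0\) by Remark~\ref{rem:MFp.convention}(2) and Definition~\ref{def:radius.peripheral}. For condition (2), triviality at each vertex is the annular picture just described; non-triviality at non-vertex boundary points follows from tangency-free transversality to the edges; and the absence of interior triviality points amounts to the absence of center singularities, which holds by the normal form. The crux, which I would isolate as a separate observation, is that no closed leaf of \(F\) can lie in the interior of a single face triangle: such a leaf would bound a disk inside the triangle, and this disk, lying in \(\dot{S}\) and carrying a foliation with a closed boundary leaf, would contain a center singularity by the Euler--Poincar\'e formula, contradicting the normal form. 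Consequently every component of a leaf within \(\Delta_f\) runs between boundary points or into an interior three-pronged singularity, so \(F|_{\Delta_f}\) is totally transverse and \([F] \in \cMF(G^{\perp})\).

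The main obstacle I expect is the simultaneous efficient positioning of all edges, together with the careful local bookkeeping at the vertices and on self-folded faces: the task is to turn the intuitive pictures of Figure~\ref{fig:MF.triang.} into a rigorous verification that the global normal-form foliation restricts to the triangle models on every face. The underlying topological inputs (bigon and tangency removal, and the Euler--Poincar\'e obstruction to interior center singularities) are standard, but they must be assembled with attention to the CW-complex rather than simplicial structure of \(G\).
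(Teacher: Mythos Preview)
Your proposal is correct but follows a genuinely different route from the paper. The paper argues via the dual train track \(\tau_G\): by Lemma~\ref{lem:int.comp.carried by.dual.tt.} (which rests on the Papadopoulos--Penner fact that compactly supported foliations are carried by \(\tau_G\)), any \([F]\in\cMFp\) is carried by \(\tau_G\); the switch conditions then force the edge intersection numbers to satisfy the strict triangle inequalities (Lemma~\ref{lem: suff.card.tt.G}(i)), and a comparison foliation \(F_0\) totally transverse to \(G\) is built from these edge numbers via Lemma~\ref{lem:MF.triang.ineq.bij}; finally, branch-weight uniqueness (Lemma~\ref{lem:wtt.ml}) identifies \([F]=[F_0]\). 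Your argument instead works directly with a representative: put the edges of \(G\) in efficient position relative to an FLP normal form of \(F\), and verify Definition~\ref{def:MF.trans.tri.} face by face using the annular model at vertices, transversality along edge interiors, and the Euler--Poincar\'e obstruction to closed leaves (hence to interior triviality points) inside a disk face. Your approach is more geometric and self-contained, avoiding the train-track machinery, but the simultaneous efficient positioning of all edges (including self-folded configurations) is genuinely delicate, as you acknowledge. The paper's approach, by contrast, bypasses this positioning issue entirely and has the collateral benefit of establishing \(i^{(G)}_{[F]}\in\Omega_G\) along the way, which is precisely what is needed for the edge-intersection parametrization in Proposition~\ref{prop:edge.length.MF.biject.}.
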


To prove this proposition we work in the train track framework and, throughout, adopt the conventions of Penner--Harer for switches, branches and the equivalence relation of train tracks\cite{PH92}.

\begin{definition}[Train track]
    \label{def:train.track}
    A \DEF{\em train track} \(\tau\) on \(\Sp\) is a closed \(C^1\) 1--complex in \(\dot{S}\) whose vertices are called \DEF{\em switches} and whose edges are called \DEF{\em branches}, such that at each switch \(s\) there is a well-defined tangent line and a neighbourhood of \(s\) in \(\tau\) is a union of smoothly embedded open arcs.
    Throughout the paper, all train tracks are assumed to be \DEF{\em trivalent}, i.e. each switch has exactly three adjacent branches.

    A measured foliation class \([F]\in\cMF(S)\) is \DEF{\em (weakly) carried} by \(\tau\) if there is a representative of \([F]\) whose leaves can be \(C^1\)-isotoped into an arbitrarily small neighbourhood of \(\tau\) so that all tangent directions are arbitrarily close to those of \(\tau\).

    Two train tracks \(\tau,\tau'\) on \(\Sp\) are \DEF{\em equivalent} if one can be obtained from the other by a finite sequence of shifts, splittings, collapses (inverse splittings) and isotopies of \((S,\up)\) fixing each marked point (see \cite[Sec.~2.1]{PH92}).
\end{definition}

\begin{definition}[Train track dual to a triangulation]
\label{dual.tt.G}
Let \(G\in\TSp\).
A train track \(\tau\) on \((S,\underline{p})\) is said to be \DEF{\em dual} to \(G\) if, for each face \(f\in F(G)\), the intersection \(\tau\cap f\) has exactly three branches in the interior of \(f\) bounding a trigon (a disk with three spikes) and exactly one branch intersecting each edge of \(f\), as illustrated in Figure~\ref{fig:one.junction}.
By \cite[Sec.~2]{PP93} and \cite[Sec.~5]{PP05}, there exists a unique train track dual to \(G\) up to equivalence.
We denote this equivalence class, and any fixed representative in it, by \(\tau_G\).

\begin{figure}[htp]
    \centering
    \includegraphics[width=4cm]{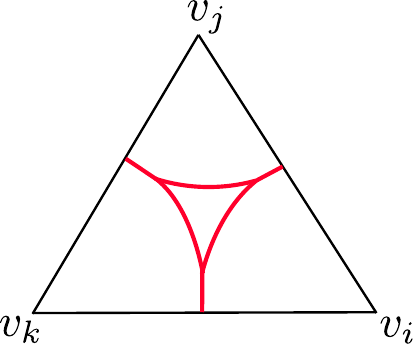}
    \caption{The intersection of the dual train track \(\tau_G\) with a face \(f\) of \(G\).}
    \label{fig:one.junction}
\end{figure}
\end{definition}

Given any \(G\in\TSp\) and any \([F]\in\cMFp\), each peripheral component \([F^{(i)}]\) (\(1\le i\le n\)) around \(p_i\in\up\) is naturally carried by \(\tau_G\).
Moreover, Papadopoulos--Penner show that every measured foliation with compact support in \(\dot{S}\) is carried by the train track \(\tau_G\) dual to \(G\) \cite[Lemma~2.1]{PP93}.
In particular, the interior component \([F^{(0)}]\) of a class \([F]\in\cMFp\) has compact support, hence is carried by \(\tau_G\).
The following is an immediate consequence. 

\begin{lemma}[Measured foliations are carried by the dual train track]
 \label{lem:int.comp.carried by.dual.tt.}
 For each \(G\in\TSp\) and each \([F]\in\cMFp\), \([F]\) is carried by the dual train track \(\tau_G\). In particular, each $[F]\in \cMF(\Delta^{\perp})$ is carried by the dual train track $\tau_{\Delta}$ (as shown in Figure \ref{fig:one.junction}).
\end{lemma}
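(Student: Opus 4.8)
The plan is to deduce the lemma from the product decomposition \eqref{map:prod.str.MF} together with the two carrying facts recorded just above its statement, exploiting that ``carrying'' by a trivalent train track is governed by \emph{nonnegative} branch weights subject to \emph{linear homogeneous} switch conditions, and hence behaves additively. For the global assertion I would first apply the homeomorphism \(d:\cMFp \xrightarrow{\cong} \cMFo \times \prod_{i=1}^n \cMF(p_i)\) to write a representative \(F\) of \([F]\) as the superposition of its interior component \(F^{(0)}\), which is compactly supported in \(\dot{S}\), and the peripheral annuli \(F^{(i)}\) around the marked points \(p_i\in\up\). By the discussion preceding the lemma, each \(F^{(i)}\) is carried by \(\tau_G\) (its closed leaves wind along the branches of \(\tau_G\) encircling \(p_i\)), while \(F^{(0)}\) is carried by \(\tau_G\) by \cite[Lemma~2.1]{PP93}. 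Each such carrying is recorded by an assignment of nonnegative weights to the branches of \(\tau_G\) satisfying the switch conditions in the sense of \cite{PH92}, yielding weight vectors \(w^{(0)}\) and \(w^{(i)}\).

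The second step is to assemble these carryings into a single one. Here I would use that the components have essentially disjoint supports: \(F^{(0)}\) lies in the complement of the peripheral collars and meets each annulus \(F^{(i)}\) only along its boundary leaf, which carries no transverse measure in the normal direction. Consequently the transverse measure of \(F\) across any branch of \(\tau_G\) is exactly the sum of the contributions of \(F^{(0)}\) and of the \(F^{(i)}\). Since nonnegativity is preserved under addition and the switch conditions are linear and homogeneous, the summed weight vector \(w^{(0)}+\sum_{i=1}^n w^{(i)}\) is again an admissible weight vector on \(\tau_G\); by construction it is precisely the weight vector determined by \([F]\) under \(d\). This realizes \([F]\) as carried by \(\tau_G\) and proves the first statement.

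For the local assertion (``in particular''), I would run the same reasoning inside a single face, where \(\tau_\Delta\) is the trigon track of Figure~\ref{fig:one.junction}. A class \([F]\in\cMF(\Delta^{\perp})\) is transverse to the three edges and trivial around each vertex in the sense of Definition~\ref{def:MF.triv.pt}, so near each corner its leaves join the two adjacent sides. Such a foliation is carried by the trigon spine: the three edge-crossing branches receive the transverse measures \(i(F,e_j)\), the corner pieces are pushed onto the three interior branches bounding the trigon, and the switch condition at the junction is exactly the compatibility of these three measures. The same additivity bookkeeping then exhibits \([F]\) as carried by \(\tau_\Delta\).

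The only point that genuinely needs care is the compatibility in the second step: a priori one knows only that the interior and peripheral parts are each carried, and one must check that their carryings can be realized \emph{simultaneously} by a single \(C^1\)-isotopy into a neighbourhood of \(\tau_G\), rather than merely separately. This is precisely what the disjointness of supports and the linearity of the switch conditions provide; once carrying is phrased through branch weights, the passage from ``each component carried'' to ``\(F\) carried'' reduces to the additivity of transverse measures branch by branch, which is why the lemma is an immediate consequence of the facts already established.
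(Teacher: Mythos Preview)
Your proposal is correct and follows essentially the same approach as the paper: the paper presents the lemma as an ``immediate consequence'' of the product decomposition together with the two carrying facts (peripheral annuli are naturally carried by \(\tau_G\), and the compactly supported interior component is carried by \(\tau_G\) by \cite[Lemma~2.1]{PP93}), and your proposal simply fleshes out this implication via additivity of branch weights. The paper gives no further argument, so your elaboration is exactly the intended one.
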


The following lemma will be used repeatedly to identify measured foliations carried by a given train track from their induced branch weights.

\begin{lemma}[Uniqueness from branch weights]\label{lem:wtt.ml}
Let \(\Sigma\) be either the marked surface \((S,\underline p)\) or a marked triangle \(\Delta\).
Let \(\tau\) be a train track on \(\Sigma\).
If two measured foliation classes \([F_1]\) and \([F_2]\) on \(\Sigma\) are carried by \(\tau\) and induce the same non-negative weight on every branch of \(\tau\), then \([F_1]=[F_2]\).
Equivalently, the branch-weight map is injective on the set of measured foliation classes carried by \(\tau\).
\end{lemma}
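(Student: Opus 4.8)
The plan is to identify the branch-weight map with the standard parametrization of carried foliations by admissible weight vectors, and then to extract injectivity from the explicit reconstruction of a foliation out of its weights. Concretely, fix a fibered (tie) neighbourhood $N(\tau)$ of $\tau$ in $\Sigma$, with retraction $\pi\colon N(\tau)\to\tau$ whose fibres are the \emph{ties}. Since $[F_1]$ and $[F_2]$ are carried by $\tau$, I may choose representatives $F_1,F_2$ supported in $N(\tau)$ and transverse to every tie. The transverse measure of $F_k$ across a tie lying over an interior point of a branch $b$ is, by invariance of the transverse measure under sliding along the ties, independent of the chosen tie and equals the prescribed weight $w(b)$ on $b$; by hypothesis this common value agrees for $k=1,2$ on every branch.

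Next I would recall the reconstruction. The weight vector $w=(w(b))_b$ necessarily satisfies the switch (vertex) conditions coming from additivity of the transverse measure at each trivalent switch, and from it one builds a \emph{standard} model $F_w$ on $N(\tau)$: over each branch $b$ place the product-foliated band $b\times[0,w(b)]$ with horizontal foliation and transverse measure $dt$, and glue these bands across each switch. At a trivalent switch the switch condition guarantees that the total incoming width equals the total outgoing width, so the bands glue into a measured foliation; the only ambiguity is how the incoming bands are matched to the outgoing ones, and any two matchings differ by Whitehead moves supported near the switch. Thus $[F_w]\in\cMF(\Sigma)$ is well defined by $w$ alone.

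I would then show that each $F_k$ is Whitehead-equivalent to $F_w$. Over the interior of each branch $b$, after an isotopy respecting the ties, $F_k$ is a product foliation transverse to the ties of total transverse width $w(b)$, hence isotopic to the corresponding band of $F_w$; near each switch the two foliations are junction (trigon) models with the same incident widths, so they agree up to a Whitehead move. Branches with $w(b)=0$ carry no leaves and are simply discarded by passing to the subtrack on the support of $w$, which does not affect the argument. Patching these local isotopies and Whitehead moves gives $[F_k]=[F_w]$ for $k=1,2$, whence $[F_1]=[F_w]=[F_2]$. The same reasoning applies verbatim when $\Sigma=\Delta$ is a marked triangle, with all isotopies and Whitehead moves taken to fix $\partial\Delta$ and using the dual track $\tau_\Delta$ of Figure~\ref{fig:one.junction}.

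The step requiring the most care is the local analysis at the switches: one must verify that the transverse measure across a tie is genuinely independent of the tie, so that the branch weight is well defined, and that the finitely many ways of assembling incoming and outgoing bands at a trivalent switch are all related by Whitehead moves in the sense fixed by the Penner--Harer conventions \cite{PH92}. Once this is settled, the global statement follows by the standard gluing, and indeed the whole lemma can be read as the injectivity half of the classical correspondence between the cone of admissible branch weights and the measured foliation classes carried by $\tau$.
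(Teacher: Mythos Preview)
Your proposal is correct but takes a different route from the paper. The paper's proof is a two-line citation argument: it passes from measured foliation classes to measured laminations via Levitt's correspondence \cite[Thm.~1]{Lev83}, and then invokes the known injectivity of the branch-weight map for measured laminations carried by a train track \cite[Lem.~1.7.11]{PH92}. You instead work directly with foliations, building the standard band model $F_w$ from the weight vector and showing by local analysis (product structure over branch interiors, Whitehead-equivalence of junction models at switches) that any carried foliation with weights $w$ is equivalent to $F_w$. Your approach is more self-contained and makes the reconstruction explicit, at the cost of carrying out the local switch analysis you flag at the end; the paper's approach is shorter but imports the foliation/lamination dictionary as a black box. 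Either argument is standard and acceptable here.
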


\begin{proof}
Via the one-to-one correspondence between measured foliation classes and measured laminations \cite[Thm.~1]{Lev83},
the measured foliation classes \([F_1]\) and \([F_2]\) determine measured laminations carried by \(\tau\) with the same
branch weights. By the uniqueness of a carried measured lamination from its branch weights
\cite[Lem.~1.7.11]{PH92}, these laminations agree, hence \([F_1]=[F_2]\).
\end{proof}

We now express totally transverse foliations on $\Delta$ in terms of their edge intersection numbers.
Let $\Omega_{\Delta}\subset \bR_{>0}^{E(G)}$ be the open cone of triples $(\ell_1,\ell_2,\ell_3)$ satisfying the triangle inequalities.

\begin{lemma}[Triangular coordinates]
\label{lem:MF.triang.ineq.bij}
Let $\Delta$ be a triangle with edges $e_1,e_2,e_3$.
The following map is a bijection:
\[
i^{(\Delta)}_{\cdot} : \cMF(\Delta^{\perp}) \longrightarrow \Omega_{\Delta},
\qquad
[F] \longmapsto \big(i([F],e_j)\big)_{j=1}^3,
\]
where $i([F],e_j)$ denotes the geometric intersection number of $F$ with $e_j$.
\end{lemma}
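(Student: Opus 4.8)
The plan is to parametrize totally transverse foliations on $\Delta$ by the three ``corner weights'' recorded by the dual train track $\tau_{\Delta}$, and then to check that the edge intersection numbers are an invertible linear function of these weights whose image is cut out precisely by the strict triangle inequalities. First I would recall from Lemma~\ref{lem:int.comp.carried by.dual.tt.} that every $[F]\in\cMF(\Delta^{\perp})$ is carried by $\tau_{\Delta}$, the train track with a central trigon and one branch crossing each edge (Figure~\ref{fig:one.junction}). Reading off the switch conditions, a carried weight system is determined by three non-negative weights $x_1,x_2,x_3$ on the trigon branches; geometrically, $x_k$ is the transverse measure of the family of leaves running across the corner $v_k$ from one adjacent edge to the other. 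The branch crossing $e_k$ then carries the sum of the two corner families meeting it, so that $i([F],e_k)=x_i+x_j$ for $\{i,j,k\}=\{1,2,3\}$. This linear map $(x_1,x_2,x_3)\mapsto(\ell_1,\ell_2,\ell_3)$ is invertible, with inverse $x_k=\tfrac12(\ell_i+\ell_j-\ell_k)$.

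Next I would identify total transversality with strict positivity of all three corner weights, thereby pinning down the image. Condition~(3) of Definition~\ref{def:MF.trans.tri.} forces each $\ell_k>0$, while triviality \emph{exactly} around each vertex $v_k$ (Condition~(2)) forces the corner family there to be nonempty, i.e. $x_k>0$: if $x_k=0$ then no leaf joins the two edges adjacent to $v_k$, the two remaining families abut at $v_k$, and a separatrix emanates from $v_k$ into the interior, contradicting triviality around that vertex. Hence the image of $i^{(\Delta)}_{\cdot}$ consists exactly of those $(\ell_1,\ell_2,\ell_3)$ with $x_k=\tfrac12(\ell_i+\ell_j-\ell_k)>0$ for all $k$, which is precisely the open cone $\Omega_{\Delta}$ defined by the strict triangle inequalities.

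It then remains to verify injectivity and surjectivity. For injectivity, if $[F_1],[F_2]\in\cMF(\Delta^{\perp})$ have the same edge intersection numbers, inverting the linear system gives them the same corner weights, hence the same weight on every branch of $\tau_{\Delta}$; since both are carried by $\tau_{\Delta}$, Lemma~\ref{lem:wtt.ml} yields $[F_1]=[F_2]$. For surjectivity, given $(\ell_1,\ell_2,\ell_3)\in\Omega_{\Delta}$ I would set $x_k=\tfrac12(\ell_i+\ell_j-\ell_k)>0$ and take the foliation carried by $\tau_{\Delta}$ with these branch weights, equivalently the explicit model consisting of $x_k$ parallel corner arcs across $v_k$ for each $k$. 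By construction it is transverse to $\partial\Delta$, trivial exactly around each vertex, and realizes the prescribed intersection numbers, so it lies in $\cMF(\Delta^{\perp})$ and maps to the given triple.

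The step I expect to be the main obstacle is the second one: one must confirm that the switch conditions of $\tau_{\Delta}$ reduce exactly to the three corner weights with the stated edge relations, and, more delicately, that ``trivial exactly around each vertex'' is genuinely equivalent to the \emph{strict} positivity $x_k>0$ rather than to the weaker condition $\ell_k>0$. This equivalence, argued from the local leaf structure at each corner, is precisely what forces the image to be the open cone $\Omega_{\Delta}$ and not its closure, and so it is the crux of the lemma.
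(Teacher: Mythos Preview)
Your proposal is correct and follows essentially the same route as the paper: both arguments pass through the dual train track $\tau_\Delta$, use the switch conditions to translate between edge intersections and the three interior (corner) branch weights via the linear system $\ell_k=x_i+x_j$, and invoke Lemma~\ref{lem:wtt.ml} for injectivity together with an explicit tripod model for surjectivity. Your write-up is in fact a bit more careful than the paper's on well-definedness, since you explicitly argue that ``trivial exactly around each vertex'' forces $x_k>0$ and hence that the image is the \emph{open} cone $\Omega_\Delta$; the paper leaves this point largely implicit here and only spells out the strict triangle inequalities in the subsequent Lemma~\ref{lem: suff.card.tt.G}.
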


\begin{proof}
Surjectivity follows by explicitly constructing the standard tripod foliation
for any triple of positive edge-weights satisfying the triangle inequalities;
see Figure~\ref{subfig1:MF.tria.ineq.} for the local model and
Construction~\ref{con:circ.foli.surf.} or \cite[Case~0]{Mos88} for details.
It remains to show the injectivity. Let \([F_1], [F_2]\in\cMF(\Delta^{\perp})\) with the same image under $i^{(\Delta)}_{\cdot}$.  By Lemma \ref{lem:int.comp.carried by.dual.tt.}, both \([F_1]\) and \([F_2]\) are carried by the dual train track $\tau_{\Delta}$.
Denote by \(b_i\) the branch of \(\tau_{\Delta}\) crossing \(e_i\),
and by \(b^{(0)}_i\) the interior branch of \(\tau_{\Delta}\) surrounding the vertex opposite to \(e_i\) (\(i=1,2,3\), indices modulo \(3\)), see e.g. Figure \ref{fig:MF.tota.trans.}.
Let \(w_j(\cdot)\) be the non-negative weight function on branches induced by a representative of \([F_j]\) (\(j=1,2\)).
By definition of the edge intersection numbers for foliation classes transverse to \(\Delta\), the weights on the crossing branches satisfy
\[
w_1(b_i)= i([F_1],e_i) = i([F_2],e_i)= w_2(b_i), \qquad i=1,2,3.
\]
The switch conditions of \(\tau_{\Delta}\) determine uniquely the interior weights \(w_j(b^{(0)}_i)\) from the three crossing weights \(w_j(b_1),w_j(b_2),w_j(b_3)\); concretely, for each \(j=1,2\), \(w_j(b^{(0)}_1),w_j(b^{(0)}_2),w_j(b^{(0)}_3)\) are the unique solution to
\[
w_j(b^{(0)}_i) + w_j(b^{(0)}_{i+1}) = w_j(b_{i+2}), \qquad i=1,2,3.
\]
Since \(w_1(b_i)=w_2(b_i)\) for \(i=1,2,3\), we obtain \(w_1(b^{(0)}_i)=w_2(b^{(0)}_i)\) for \(i=1,2,3\). Therefore,  \([F_1]\) and \([F_2]\)  induce the same branch weights.
Applying Lemma \ref{lem:wtt.ml} yields that $[F_1]=[F_2]$. This concludes the lemma.
\end{proof}

The next lemma is the key combinatorial ingredient: it translates triviality around \(\up\) into the triangle inequalities and total transversality to \(G\).

\begin{lemma}
    \label{lem: suff.card.tt.G}
Let \(G\in\TSp\) and let \([F]\in\cMFp\).
Write
\[
i^{(G)}_{[F]} := \big(i([F],e)\big)_{e\in E(G)} \in \bR^{E(G)}_{\ge 0}
\]
for the vector of geometric intersection numbers with the edges of \(G\).
Then:
\begin{itemize}
\item[(i)] \(i^{(G)}_{[F]} \in \Omega_G\);
\item[(ii)] \([F]\) is totally transverse to \(G\), i.e. \([F]\in \cMF(G^{\perp})\).
\end{itemize}
\end{lemma}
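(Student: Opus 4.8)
The plan is to work throughout with a single fixed representative of \([F]\) that is carried by the dual train track \(\tau_G\), whose existence is guaranteed by Lemma~\ref{lem:int.comp.carried by.dual.tt.}, and to read off both assertions from the induced branch weights face by face. Fix such a representative \(F\) with non-negative branch-weight function \(w\). For a face \(f\in F(G)\) with triangle \(\Delta_f\) and edges \(e_1,e_2,e_3\), the intersection \(\tau_G\cap\Delta_f\) is the dual train track \(\tau_{\Delta_f}\) of Figure~\ref{fig:one.junction}: three crossing branches \(b_1,b_2,b_3\) (with \(b_i\) meeting \(e_i\)) together with three interior branches \(b_1^{(0)},b_2^{(0)},b_3^{(0)}\) bounding the central trigon, where \(b_i^{(0)}\) surrounds the vertex \(v_i\) opposite \(e_i\). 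By definition of the edge intersection numbers the crossing weights are \(\ell_i:=w(b_i)=i([F],e_i)\), and since these are computed from \(F\) near the boundary edge they coincide with the intersection numbers of the restriction \(F|_{\Delta_f}\).

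First I would establish assertion (i). The switch relations at the three switches of \(\tau_{\Delta_f}\) read \(w(b_i^{(0)})+w(b_{i+1}^{(0)})=w(b_{i+2})\) (indices mod \(3\)), exactly as in the proof of Lemma~\ref{lem:MF.triang.ineq.bij}; solving this linear system gives \(w(b_i^{(0)})=\tfrac12(\ell_{i+1}+\ell_{i+2}-\ell_i)\). The key geometric input is the local model near a marked point: by hypothesis \([F]\in\cMFp\) is trivial around each \(p\in\up\), and by convention~(2) of Remark~\ref{rem:MFp.convention} the peripheral annulus at \(p\) carries positive transverse measure. Read inside a face \(f\) having \(p\) as the vertex \(v_i\), the closed leaves of this annulus are precisely the corner arcs carried by \(b_i^{(0)}\), so their total transverse measure equals \(w(b_i^{(0)})\); positivity of the peripheral measure therefore forces \(w(b_i^{(0)})>0\) at every vertex, i.e. the strict inequalities \(\ell_{i+1}+\ell_{i+2}>\ell_i\). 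Moreover each edge \(e_i\) terminates at a marked point and must cross the surrounding annulus, whence \(\ell_i=i([F],e_i)>0\). Together these give \(i^{(G)}_{[F]}\in\Omega_G\).

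For assertion (ii) I would verify the three conditions of Definition~\ref{def:MF.trans.tri.} on each \(\Delta_f\). Transversality to \(\partial\Delta_f\) is immediate because \(F\) is carried by \(\tau_G\), which crosses every edge transversally; condition~(3) is the positivity \(\ell_i>0\) just obtained. For condition~(2), triviality around each vertex is exactly the annular corner structure coming from triviality around \(\up\), while the complementary region of \(\tau_{\Delta_f}\) inside \(\Delta_f\) is a single trigon, so the only interior singularity of \(F\) is three-pronged and \(F\) is not trivial around any interior or non-vertex boundary point; hence \(F\) is trivial exactly around the vertices. Thus \(F|_{\Delta_f}\) is totally transverse to \(\Delta_f\) for every \(f\), and \([F]\in\cMF(G^{\perp})\).

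I expect the main obstacle to be the geometric identification in the second paragraph: matching the positive transverse measure of the peripheral annulus around each marked point with the strict positivity of the around-vertex weight \(w(b_i^{(0)})\), and thereby with the strict triangle inequality. Care is also needed for faces that are self-folded or meet an edge twice, where one must check that the crossing weights are still read off correctly; I would handle this by phrasing everything in terms of the fixed \(\tau_G\)-carried representative and its weights, for which the switch relations hold regardless of these degeneracies.
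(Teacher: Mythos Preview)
Your argument for (i) is essentially identical to the paper's: both use that \([F]\) is carried by \(\tau_G\), apply the switch relations on each face, and deduce the strict triangle inequalities from positivity of the corner weights \(w(b_i^{(0)})\) coming from the peripheral annuli.

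For (ii) you take a genuinely different route. The paper does not verify the conditions of Definition~\ref{def:MF.trans.tri.} directly on the carried representative. Instead it uses the surjectivity direction of the triangular coordinate map (Lemma~\ref{lem:MF.triang.ineq.bij}) to build, face by face, a comparison foliation \(F_0\) that is manifestly totally transverse to \(G\) and has the same edge intersection numbers as \([F]\); it then observes that \([F]\) and \([F_0]\) are both carried by \(\tau_G\) with identical branch weights, and invokes the uniqueness lemma (Lemma~\ref{lem:wtt.ml}) to conclude \([F]=[F_0]\in\cMF(G^{\perp})\). Your approach is more direct and avoids the detour through a comparison foliation, but it leans on having a specific \(\tau_G\)-carried representative whose singular structure in each face (a single three-prong from the central trigon, corner arcs at the vertices, nothing else) is exactly as you describe; this is essentially the \(G\)-normal form construction that the paper carries out only later (Lemma~\ref{lem:G-normal-form-existence}). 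The paper's route is more modular---it reuses already-proved lemmas and sidesteps any discussion of what the carried representative actually looks like---whereas yours is shorter once one grants the standard realization of a weighted train track as a foliation with trigon complementary regions collapsed to tripods. One small imprecision: the complement of \(\tau_{\Delta_f}\) in \(\Delta_f\) is not a single trigon but a trigon together with three corner bigons; you clearly have this picture in mind, but the sentence as written could be tightened.
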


\begin{proof}
We begin with (i).
Let \(f\) be a face of \(G\) and let \(\Delta_f\) be the corresponding triangle.
Label its vertices by \(v_1,v_2,v_3\) in anticlockwise order and denote by \(e_i\) the edge opposite to \(v_i\) (\(i=1,2,3\)).

By Lemma~\ref{lem:int.comp.carried by.dual.tt.}, \([F]\) is carried by the dual train track \(\tau_G\), while the restriction \([F]|_{\Delta_f}\) of $[F]$ to $\Delta_f$ is carried by $\tau_G|_{\Delta_f}=\tau_{\Delta_f}$. Keep the notation of the branches of $\tau_{\Delta_f}$ the same as those for $\tau_{\Delta}$ in the proof of Lemma \ref{lem:MF.triang.ineq.bij}. Then for \(i=1,2,3\), we have
\[
w(b_i)=i([F], e_i), \quad 
w(b^{(0)}_i) + w(b^{(0)}_{i+1}) = w(b_{i+2}),
\]
with indices modulo $3$, and
\begin{align*}
w(b_i) + w(b_{i+1}) 
&= w(b^{(0)}_{i+1})+w(b^{(0)}_{i+2}) +w(b^{(0)}_{i+2})+w(b^{(0)}_{i})\\
&=w(b_{i+2})+2w(b^{(0)}_{i+2})\\
&>w(b_{i+2}),
\end{align*}
where the last inequality follows from \(w(b^{(0)}_i)>0\) for \(i=1,2,3\), since each peripheral component $[F^{(i)}]$ around $v_i$ of $[F]$ has positive radius (Remark~\ref{rem:MFp.convention}) and its restriction to $\Delta_f$ is carried by the branch $b^{(0)}_i$. 
Thus the triple \((w(b_1),w(b_2),w(b_3))\) satisfies the strict triangle inequalities.

As this holds for every face, we conclude that \(i^{(G)}_{[F]} \in \Omega_G\).

\begin{figure}[htp]
    \centering
    \includegraphics[width=7.5cm]{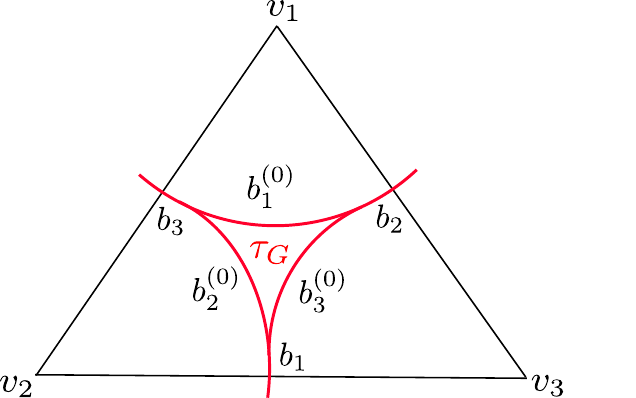}
    \caption{A measured foliation (partially drawn in a face) carried by \(\tau_G\). }
    \label{fig:MF.tota.trans.}
\end{figure}

For (ii), we use the triangular coordinates from Lemma~\ref{lem:MF.triang.ineq.bij}.
By (i), the restriction of \(i^{(G)}_{[F]}\) to the three edges of any face \(f\) of \(G\) defines an element of \(\Omega_{\Delta_f}\).
Hence Lemma~\ref{lem:MF.triang.ineq.bij} yields a unique measured foliation \(F_{\Delta_f}\) on \(\Delta_f\) which is totally transverse to \(\Delta_f\) and satisfies
\[
i(F_{\Delta_f},e) = i([F],e)
\]
for each edge \(e\) of \(\Delta_f\).
Gluing the foliations \(F_{\Delta_f}\) along the edges of \(G\) produces a measured foliation \(F_0\) on \(\Sp\) which is totally transverse to \(G\) by construction.
We denote its class by \([F_0]\).

The foliation \(F_0\) is carried by \(\tau_G\) and induces on each branch of \(\tau_G\) the same weights as \([F]\): this is immediate on the branches crossing the edges of \(G\), and on the interior branches of each face it follows from the switch conditions (as discussed in the proof of Lemma \ref{lem:MF.triang.ineq.bij}). Therefore \([F]\) and \([F_0]\) define the same non-negative weight on every branch of \(\tau_G\).
By Lemma \ref{lem:wtt.ml}, \([F] = [F_0]\).
In particular, \([F]\) is totally transverse to \(G\), proving (ii).
\end{proof}

We are now ready to prove Proposition~\ref{prop:t.a.imply.t.t.G}, which identifies the notion of triviality around \(\up\) and the notion of total transversality to an arbitrary triangulation $G\in\TSp$.

\begin{proof}[Proof of Proposition~\ref{prop:t.a.imply.t.t.G}]
We show the two inclusions separately.

First, the inclusion \(\cMF(G^{\perp}) \subseteq \cMFp\) is exactly Remark~\ref{rem:MF.Gperp.to.MFp}.

For the reverse inclusion, let \([F]\in\cMFp\).
Lemma~\ref{lem: suff.card.tt.G}(ii) applied to \(G\) shows that \([F]\) is totally transverse to \(G\), hence \([F]\in\cMF(G^{\perp})\).

Thus \(\cMFp = \cMF(G^{\perp})\) for every \(G\in\TSp\), as claimed.
\end{proof}

We can now deduce the parametrization of \(\cMFp\) by edge intersection numbers.

\begin{proposition}[Parametrization of measured foliations by edge intersections]
\label{prop:edge.length.MF.biject.}
For any \(G\in\TSp\), the edge intersection map
\[
\iG_{\cdot} : \cMFp \longrightarrow \OG \subset \bR^{E(G)}_{\geq 0},
\]
defined in Definition~\ref{def:MF.edge.inter.} by
\[
\iG_{\cdot}([F]) := \big(i([F],e)\big)_{e\in E(G)},
\]
is a bijection.
\end{proposition}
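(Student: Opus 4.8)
The plan is to deduce bijectivity from the single-triangle model of Lemma~\ref{lem:MF.triang.ineq.bij}, glued across the faces of $G$, together with the identification $\cMFp = \cMF(G^{\perp})$ from Proposition~\ref{prop:t.a.imply.t.t.G}. Well-definedness of the target is already in hand: Lemma~\ref{lem: suff.card.tt.G}(i) shows that $\iG_{[F]} \in \OG$ for every $[F] \in \cMFp$, so the map indeed lands in the admissible cone $\OG$. It then remains to establish injectivity and surjectivity.

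For injectivity, I would suppose $[F_1], [F_2] \in \cMFp$ satisfy $\iG_{[F_1]} = \iG_{[F_2]}$. By Lemma~\ref{lem:int.comp.carried by.dual.tt.} both classes are carried by the dual train track $\tau_G$. The common edge intersection numbers fix the weights on the branches of $\tau_G$ crossing the edges of $G$, and the switch conditions at each trigon then determine the interior branch weights uniquely from these crossing weights, exactly as in the computation in the proof of Lemma~\ref{lem:MF.triang.ineq.bij}. Hence $[F_1]$ and $[F_2]$ induce the same non-negative weight on every branch of $\tau_G$, and Lemma~\ref{lem:wtt.ml} forces $[F_1] = [F_2]$.

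For surjectivity, I would fix $\ell \in \OG$. For each face $f$, the restriction of $\ell$ to the three edges of $f$ lies in $\Omega_{\Delta_f}$, since membership in $\OG$ is precisely the strict triangle inequalities on every face. By the surjectivity half of Lemma~\ref{lem:MF.triang.ineq.bij}, there is a unique totally transverse foliation $F_{\Delta_f}$ on $\Delta_f$ with $i(F_{\Delta_f}, e) = \ell(e)$ for each edge $e$ of $f$. Two faces sharing an edge $e$ assign it the same total transverse measure $\ell(e)$, and each $F_{\Delta_f}$ is transverse to $\partial \Delta_f$; after matching the transverse measures along each edge, the local pieces glue to a global measured foliation $F$ on $\Sp$ that is totally transverse to $G$ --- this is the same gluing construction already used in the proof of Lemma~\ref{lem: suff.card.tt.G}(ii). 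By Proposition~\ref{prop:t.a.imply.t.t.G}, $[F] \in \cMF(G^{\perp}) = \cMFp$, and by construction $\iG_{[F]} = \ell$.

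I expect the gluing in the surjectivity step to be the main obstacle. The per-face foliations agree only in the total transverse measure along a shared edge, so one must first reparametrize the transverse measure along each edge to a common standard before gluing, so that individual leaves match up transversally and produce an honest measured foliation class (up to Whitehead equivalence rel $\partial$). One must also verify that the strict triangle inequalities keep the interior branch weights strictly positive, equivalently that every peripheral radius is positive (Remark~\ref{rem:MFp.convention}), so that the glued foliation is genuinely trivial around $\up$ and hence lies in $\cMFp$ rather than on its boundary.
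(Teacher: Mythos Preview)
Your proposal is correct and follows essentially the same route as the paper: well-definedness via Lemma~\ref{lem: suff.card.tt.G}(i), surjectivity by gluing the per-face foliations from Lemma~\ref{lem:MF.triang.ineq.bij} and then invoking Proposition~\ref{prop:t.a.imply.t.t.G}, and injectivity by passing to branch weights on $\tau_G$ and applying Lemma~\ref{lem:wtt.ml}. Your cautionary remarks about the gluing step are well taken but are already implicitly handled by the paper's earlier train-track normalization, so no extra work is needed.
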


\begin{proof}
By Proposition~\ref{prop:t.a.imply.t.t.G} we have \(\cMFp=\cMF(G^{\perp})\).
Thus we may work throughout with representatives that are totally transverse to \(G\).
We prove that \(\iG_{\cdot}\) is well-defined, surjective and injective.

\emph{(1) Well-definedness.}
Let \([F]\in\cMFp=\cMF(G^\perp)\).
By Lemma~\ref{lem:int.comp.carried by.dual.tt.}, \([F]\) is carried by \(\tau_G\).
Moreover, Lemma~\ref{lem: suff.card.tt.G}(i) implies that the edge-intersection vector
\(
\big(i([F],e)\big)_{e\in E(G)}
\)
satisfies the required inequalities of \(\Omega_G\), i.e.
\[
\iG_{\cdot}([F]) = i^{(G)}_{[F]} \in \Omega_G.
\]
Hence \(\iG_{\cdot}\) is well-defined as a map \(\cMFp\to\Omega_G\).

\emph{(2) Surjectivity.}
Let \((a_e)_{e\in E(G)}\in\Omega_G\).
For each face \(f\in F(G)\), the restriction of \((a_e)\) to the three edges of \(f\) lies in \(\Omega_{\Delta_f}\).
By Lemma~\ref{lem:MF.triang.ineq.bij} there exists a unique measured foliation class \([F_{\Delta_f}]\) on \(\Delta_f\), totally transverse to \(\Delta_f\), such that
\[
i([F_{\Delta_f}],e)=a_e \qquad\text{for each edge } e\subset \Delta_f.
\]
These local foliations \(F_{\Delta_f}\) glue consistently along the edges of \(G\), producing a measured foliation class
\(
[F]\in \cMF(G^\perp).
\)
By Proposition~\ref{prop:t.a.imply.t.t.G}, we have \([F]\in\cMFp\), and by construction
\(\iG_{\cdot}([F])=(a_e)_{e\in E(G)}\).
Thus \(\iG_{\cdot}\) is surjective onto \(\Omega_G\).

\emph{(3) Injectivity.}
Suppose \([F_1],[F_2]\in\cMFp\) satisfy
\(\iG_{\cdot}([F_1])=\iG_{\cdot}([F_2])\), i.e.
\[
i([F_1],e)=i([F_2],e)\ \ \text{for all } e\in E(G).
\]

By Lemma~\ref{lem:int.comp.carried by.dual.tt.}, both classes are carried by the dual train track \(\tau_G\). Using the switch conditions on $\tau_G$ restricted to each face of $G$ (as discussed in the proof of Lemma \ref{lem:MF.triang.ineq.bij}), one can show that \([F_1]\) and \([F_2]\) induce the same branch weights on $\tau_G$. Combined with Lemma~\ref{lem:wtt.ml}, \([F_1]=[F_2]\).
Hence \(\iG_{\cdot}\) is injective.

Combining (1)--(3), \(\iG_{\cdot}:\cMFp\to\Omega_G\) is a bijection.

\end{proof}

\subsection{Shear-radius coordinates of \texorpdfstring{\(\cMFp\)}{MF(S,\underline p)}}
\label{subsec:MF.shear.radius}

While the edge intersection map \(\iG_{\cdot}\) (Proposition~\ref{prop:edge.length.MF.biject.}) provides global coordinates on \(\cMFp\), it mixes the contribution of the interior component and of the peripheral annuli.
For our purposes---in particular for defining deformations that change the cone angles---it is more convenient to split the data into \emph{shear} parameters along the edges and \emph{radius} parameters at the marked points.

In this subsection we introduce the \emph{shear--radius map}
\[
\mathbf{sr}_G = (\mathbf{s}_G,\mathbf{r}_G): \cMFp \longrightarrow \bR^{E(G)} \times \bR_{>0}^n
\]
associated to a triangulation \(G\in\TSp\).
We first put measured foliations in a normal form \(F_G\) totally transverse to \(G\) and define the shear parameters \(s_e([F])\) along each edge \(e\).
We then define the radii \(r_i([F])\) at the marked points and the vertex-balanced cone \(\Lambda_G\subset\bR^{E(G)}\times\bR_{>0}^n\).
Finally we construct an explicit map \(\mathcal{L}^{(G)}_{sr}:\Lambda_G\to\Omega_G\) and use the edge intersection parametrization to prove that \(\mathbf{sr}_G\) is a bijection (Proposition~\ref{prop:bijection.srG}), with continuity deferred to Subsection~\ref{subsec:invers.contin.}.
This will be the combinatorial model for the circular foliation coordinates and for the partial stretch/anti-stretch deformations in Section~\ref{subsec:application}.

\subsubsection{Normal forms and singular feet}

The shear parameters along the edges of \(G\) are defined by comparing canonical
foot points on each edge coming from the two adjacent faces.  This is analogous to
Thurston's shear (shift) coordinates defined using the horocyclic foliation on ideal
triangles, where the non-foliated core is collapsed onto a tripod and each edge carries
a distinguished point. 

\begin{definition}[Tripod normal form on a triangle]
\label{def:normal.form.triangle}
Let \(\Delta\) be a marked triangle and let \([F_\Delta]\in \cMF(\Delta^\perp)\).
A representative \(F_\Delta\) is in \DEF{\em (tripod) normal form} if its singular locus in \(\Delta\) is an embedded tripod with a unique \(3\)-prong singularity in the interior and three singular leaves meeting the three edges transversely at three points.  
These points are called the \DEF{\em singular feet} of \(F_\Delta\) on \(\partial\Delta\). 
(e.g. Figure~\ref{subfig1:MF.tria.ineq.})
\end{definition}

\begin{definition}[\(G\)-normal form]
\label{def:G.normal.form}
Let \(G\) be a triangulation of \((S,\underline p)\) and let \([F]\in \cMFp\).
A representative \(F\) is in \DEF{\em \(G\)-normal form} if for every face \(f\in F(G)\), the restriction \(F|_f\) is in tripod normal form.
\end{definition}

\begin{lemma}[Existence of the \(G\)-normal form]
\label{lem:G-normal-form-existence}
Fix a triangulation \(G\) of \((S,\underline p)\).
Any $[F] \in \cMFp$ admits a $G$-normal form $F_G$. 
\end{lemma}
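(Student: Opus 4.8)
The plan is to build $F_G$ by assembling tripod normal forms face by face and then gluing along the edges of $G$, in the same way as the surjectivity argument of Proposition~\ref{prop:edge.length.MF.biject.}, but now keeping track of the normal form. First I would record that, by Proposition~\ref{prop:t.a.imply.t.t.G}, $[F]\in\cMFp=\cMF(G^{\perp})$, so $[F]$ is totally transverse to $G$ and, by Lemma~\ref{lem: suff.card.tt.G}(i), its edge-intersection vector $a=(a_e)_{e\in E(G)}:=(i([F],e))_{e\in E(G)}$ lies in $\Omega_G$. In particular, for every face $f\in F(G)$ the triple of intersection numbers on the three edges of $\Delta_f$ satisfies the strict triangle inequalities, i.e. it belongs to $\Omega_{\Delta_f}$.

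Next, for each face $f$ I would invoke Lemma~\ref{lem:MF.triang.ineq.bij}: the triple $(a_e)_{e\subset\Delta_f}\in\Omega_{\Delta_f}$ is realized by a unique totally transverse class on $\Delta_f$, and the explicit tripod foliation used in the surjectivity part of that lemma (see Figure~\ref{subfig1:MF.tria.ineq.} and Construction~\ref{con:circ.foli.surf.}) provides a representative $F_f$ in \emph{tripod normal form} in the sense of Definition~\ref{def:normal.form.triangle}: a single interior $3$-prong with three singular leaves meeting the edges transversely. The remaining point is to glue these $F_f$ into a global foliation. Along an edge $e$ shared by faces $f,f'$, each of $F_f,F_{f'}$ is transverse to $e$ and induces on $e$ a non-atomic, fully supported transverse measure of the same total mass $a_e$; any two such measures on the interval $e$ are carried to one another by a homeomorphism fixing the endpoints (the marked points), so after an isotopy supported in a collar of $e$ the two induced measures agree while the interior tripods are left untouched. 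The local product structures transverse to $e$ then match up, and the leaves concatenate across $e$ into a genuine measured foliation. Carrying this out at every edge (respecting the edge identifications coming from self-folded faces and from multiple edges joining the same pair of vertices) produces a measured foliation $F_G$ on $\Sp$ that restricts to $F_f$ on each face, hence is in $G$-normal form by Definition~\ref{def:G.normal.form}, and is totally transverse to $G$ by construction.

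Finally, I would identify $[F_G]$ with $[F]$. By construction $i([F_G],e)=a_e=i([F],e)$ for every $e\in E(G)$, so $\iG_{[F_G]}=\iG_{[F]}$; since the edge intersection map $\iG_{\cdot}:\cMFp\to\Omega_G$ is a bijection (Proposition~\ref{prop:edge.length.MF.biject.}), and in particular injective, it follows that $[F_G]=[F]$. Thus $F_G$ is a representative of $[F]$ in $G$-normal form, as required. The main obstacle is the gluing step: one has to verify that matching the transverse measures along each shared edge actually yields a well-defined measured foliation (a fact already implicit in the surjectivity proof of Proposition~\ref{prop:edge.length.MF.biject.}) and that the bookkeeping is consistent at edges lying on the boundary of a single self-folded face. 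I expect everything else to be routine, given the triangular coordinates of Lemma~\ref{lem:MF.triang.ineq.bij} and the injectivity of $\iG_{\cdot}$ in Proposition~\ref{prop:edge.length.MF.biject.}.
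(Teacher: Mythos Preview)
Your proof is correct and is essentially the same construction as the paper's, just framed differently: the paper realizes the branch weights on the dual train track $\tau_G$ and collapses the complementary trigons to get a tripod in each face, then identifies $[F_G]=[F]$ via Lemma~\ref{lem:wtt.ml}; you build the same tripod face by face via Lemma~\ref{lem:MF.triang.ineq.bij}, glue along edges, and identify via the injectivity of $\iG_{\cdot}$ in Proposition~\ref{prop:edge.length.MF.biject.} (whose injectivity is itself proved using Lemma~\ref{lem:wtt.ml}). The resulting $F_G$ is the same foliation in both cases, so the difference is only in packaging.
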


\begin{proof}
Let \(G\) be fixed and let \(\tau_G\) be the dual train track.
By Lemma~\ref{lem: suff.card.tt.G}, the class \([F]\in\cMFp\) determines a branch-weight system
\(w=w([F])\) on \(\tau_G\) whose weights are positive and satisfy the switch conditions.
Equivalently, for each face \(f\in F(G)\), the induced triple of edge-intersection numbers
\(i^{(G)}_{[F|_f]}\) lies in the open cone \(\Omega_\Delta\), hence satisfies the strict triangle inequalities.

Realize \((\tau_G,w)\) by a measured foliation on a foliated neighborhood \(N(\tau_G)\)
whose leaves run parallel to the branches and whose transverse measures are given by \(w\).
Since \(\tau_G\) is maximal, each complementary component of \(S\setminus N(\tau_G)\)
is either a topological triangle (a trigon) or a disk with a marked point in its interior. Collapse each complementary triangle to a tripod.
The resulting measured foliation on \((S,\underline p)\) has, in each face \(f\),
a unique interior \(3\)-prong singularity coming from the collapsed trigon; denote this foliation by \(F_G\).
By construction, \((F_G)|_f\) is a tripod normal form for every face \(f\), i.e.\ \(F_G\) is in \(G\)-normal form.

Moreover, \(F_G\) is carried by \(\tau_G\) and induces exactly the same branch weights \(w\).
Therefore, by Lemma~\ref{lem:wtt.ml} (injectivity of branch weights on carried classes),
we have \([F_G]=[F]\). Hence \(F_G\) is a \(G\)-normal form representative of \([F]\).
\end{proof}

\begin{definition}[Singular feet and dual arc]
    \label{def:feet.dual.arc}
    Let \(F_G\) be the $G$-normal form of \([F]\in\cMFp\) for a triangulation \(G\in\TSp\).
    For each face \(f\in F(G)\), let \(q_f\) be the unique interior singularity of \(F_G\) in \(f\); the singular locus in \(f\) is a tripod with center \(q_f\).

    For each edge \(e\in E(G)\), choose an orientation of \(e\) and denote by \(f^{(\mathrm{l})}_e\) and \(f^{(\mathrm{r})}_e\) the faces to the left and to the right of \(e\) (they may coincide, see Figure~\ref{subfig:shear_same_face}).
    We define:
    \begin{itemize}
        \item the \DEF{\em left} and \DEF{\em right singular feet} \(c^{(\mathrm{l})}_e, c^{(\mathrm{r})}_e \in e\) as the intersection points of \(e\) with the singular leaves issuing from \(q_{f^{(\mathrm{l})}_e}\) and \(q_{f^{(\mathrm{r})}_e}\), respectively; 
        \item the \DEF{\em dual arc} \(e^*\) of \(e\) as the unique (up to isotopy) arc contained in \(f^{(\mathrm{l})}_e \cup f^{(\mathrm{r})}_e\) which joins \(q_{f^{(\mathrm{l})}_e}\) to \(q_{f^{(\mathrm{r})}_e}\) and crosses \(e\) exactly once (and meets no other edge of \(G\)).
    \end{itemize}
\end{definition}

This is the usual dual picture for shear coordinates: the dual arc \(e^*\) records the relative position of the singular tripod in the two adjacent faces.

\subsubsection{Shear along edges and vertex balance}

We now define the shear of a measured foliation along an edge in terms of the transverse measure on the dual arc.
The definition uses a coordinate function on \(e\), but the resulting shear is independent of this choice and the orientation of \(e\).

\begin{definition}[Shear along an edge]
\label{def:MF.shear}
\label{def:MF.shear.map}
Let \(G\in\TSp\), \([F]\in\cMFp\) and \(F_G\) be the \(G\)-normal form of \([F]\).
For an edge \(e\in E(G)\), fix an orientation of \(e\) and a base point \(b_e\in e\).
We define a coordinate map
\[
\iota_e : e \longrightarrow \bR
\]
as follows: for any \(p\in e\), the value \(\iota_e(p)\) is the \DEF{\em signed distance} from \(b_e\) to \(p\), where the distance is the intersection number \(i(F_G,[b_e,p])\) (with \([b_e,p]\) the homotopy class of the segment in \(e\) joining \(b_e\) to \(p\)), taken with positive sign if the segment is oriented in the same direction as \(e\) and negative otherwise.

The \DEF{\em shear of \(F_G\) along \(e\)} is
\begin{equation}
    \label{eq:shear}
    s_e(F_G) := \iota_e\bigl(c^{(\mathrm{l})}_e\bigr) - \iota_e\bigl(c^{(\mathrm{r})}_e\bigr).
\end{equation}
This quantity is independent of the choice of base point \(b_e\) and of the choice of orientation of \(e\); its absolute value coincides with the intersection number \(i(F_G,e^*)\).
The \DEF{\em shear of \([F]\) along \(e\)} is defined by
\[
s_e([F]) := s_e(F_G).
\]
\end{definition}

Note that \(s_e([F]) = 0\) if and only if the dual arc \(e^*\) is isotopic to a saddle connection of $F_G$ joining the singularities \(q_{f^{(\mathrm{l})}_e}\) and \(q_{f^{(\mathrm{r})}_e}\) (Figure~\ref{subfig:shear_same_face}).

\begin{figure}[htp]
\begin{subfigure}[b]{0.48\textwidth}
  \centering
  \includegraphics[width=0.6\linewidth]{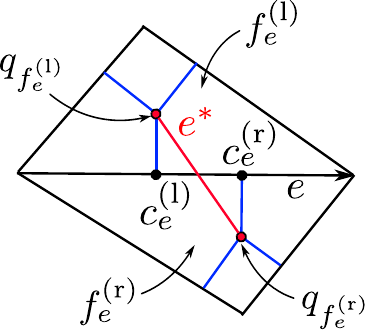}
  \caption{\small\(f^{(\mathrm{l})}_e \neq f^{(\mathrm{r})}_e\), \(s_e<0\).}
  \label{subfig:def_shear_diff_face}
\end{subfigure}
\hspace{0.3cm}
\begin{subfigure}[b]{0.48\textwidth}
  \centering
  \includegraphics[width=0.69\linewidth]{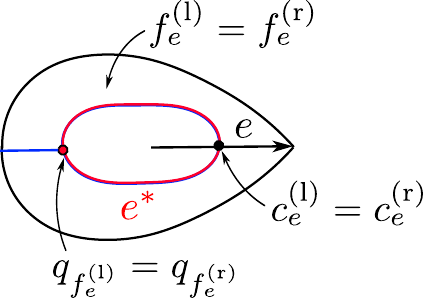}
  \caption{\small\(f^{(\mathrm{l})}_e=f^{(\mathrm{r})}_e\), \(s_e=0\).}
  \label{subfig:shear_same_face}
\end{subfigure}
\caption{Examples of shears along an edge \(e\).}
\label{fig:def_shear}
\end{figure}

\begin{definition}[Shear map]
    \label{def:MF.shear.map.global}
    For each triangulation \(G\in\TSp\), the \DEF{\em shear map} is the function
    \[
    \mathbf{s}_G : \cMFp \longrightarrow \bR^{E(G)}, \qquad
    [F] \longmapsto \bigl(s_e([F])\bigr)_{e\in E(G)}.
    \]
    Upon choosing an ordering \(E(G)=\{e_1,\dots,e_{6g-6+3n}\}\), we may view \(\mathbf{s}_G\) as a map into \(\bR^{6g-6+3n}\).
\end{definition}

\begin{remark}[Comparison with classical shear coordinates]
\label{rem:MF.shear.classical}
Our sign convention for \(s_e([F])\) is chosen so that, the collection \(\bigl(s_e([F])\bigr)_{e\in E(G)}\) agrees with Bonahon's shear cocycle \cite{Bon96} and with Thurston's shear coordinates for hyperbolic structures \cite{Thu98}.
Related shear-type coordinate systems have also been developed from different viewpoints: Pan--Wolf introduce shear--type coordinates associated to a fixed geodesic lamination via harmonic maps \cite{PW22}, and Calderon--Farre define shear--shape cocycles that extend the Bonahon--Thurston shear formalism to arbitrary measured laminations \cite{CF24}.
\end{remark}

In particular, it satisfies the following linear balance condition at each vertex.

\begin{lemma}[Vertex balance condition]
    \label{lem:balanc.cond.}
    For any triangulation \(G \in \TSp\) and any \([F] \in \cMFp\), the sum of shears around any marked point \(p \in \up\) vanishes:
    \begin{equation}\label{eq:shear-cond}
    \sum_{e \in E_p(G)} s_{e}([F]) = 0,
    \end{equation}
    where \(E_p(G)\) is the star at \(p\) in \(G\) (see Definition~\ref{def:face.star}).
\end{lemma}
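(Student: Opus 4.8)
The plan is to put $[F]$ in $G$-normal form and then telescope a family of local identities around the vertex $p$. First I would fix a $G$-normal form representative $F_G$ (Lemma~\ref{lem:G-normal-form-existence}), so that in each face $f$ the singular locus is a tripod with center $q_f$ and three singular feet, and $s_e([F]) = s_e(F_G)$ is computed from these feet. I would then list the data at $p$ in counterclockwise cyclic order: the ends $h_1,\dots,h_m$ of the edges incident to $p$, separated by the corners $\gamma_1,\dots,\gamma_m$ at $p$, where $\gamma_k$ is the corner of a face $f_k$ whose two sides at $p$ carry the ends $h_k$ and $h_{k+1}$.

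The geometric heart of the argument is a product structure in each corner. Because $F_G$ is trivial around the vertex $p$ (Definition~\ref{def:MF.triv.pt}), the sub-region of $f_k$ cut off near $p$ by the two singular leaves running from $q_{f_k}$ to the two bounding edges is foliated by arcs joining those two edges; it is therefore a product-foliated corner in which every leaf meets each of the two transverse segments from $p$ to the singular feet exactly once. Equating the transverse measures of these two cross-sections gives the corner identity $d_k^- = d_k^+$, where $d_k^-$ and $d_k^+$ denote the intersection numbers $i(F_G,\cdot)$ measured along the trailing and the leading side of $\gamma_k$, from $p$ to the respective foot.

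Next I would express the shear in terms of these foot-distances. Orienting the edge at $h_k$ away from $p$ and placing the base point of $\iota_e$ at $p$, the left and right feet on that edge are the feet of the two adjacent faces; matching \emph{left/right} with \emph{counterclockwise-after/before} at $p$ identifies the end contribution with $s_{e(h_k)} = d_k^- - d_{k-1}^+$. Summing over the ends at $p$ and reindexing the leading terms gives $\sum_{k}(d_k^- - d_{k-1}^+) = \sum_k (d_k^- - d_k^+)$, which vanishes termwise by the corner identity; this is exactly the asserted balance \eqref{eq:shear-cond}.

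The step I expect to need the most care is the orientation bookkeeping together with the treatment of loops and self-folded faces: when an edge has both ends at $p$ it occupies two positions in the cyclic order, and I must check that its two end-contributions are combined consistently, so that the telescoping sum over the ends at $p$ agrees with $\sum_{e\in E_p(G)} s_e([F])$. I would also state the product-corner lemma carefully, since the foliated region degenerates at $p$ and the relevant transverse measures are finite and strictly positive precisely because the peripheral radius $r_p$ is positive (Remark~\ref{rem:MFp.convention}); only then are the two segments genuine cross-sections of a single foliated product whose transverse measures may legitimately be equated.
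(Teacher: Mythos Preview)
Your proposal is correct and follows essentially the same telescoping argument as the paper: both put $[F]$ in $G$-normal form, establish a corner identity (that the two transverse segments across a corner have equal measure), and sum cyclically over the ends at $p$ so that everything cancels. The only cosmetic difference is the choice of reference point---you measure the foot positions from $p$ itself, whereas the paper measures them from the intersection points $b_j$ of the edges with a fixed peripheral leaf $C$; your careful treatment of loops/self-folded faces and of the finiteness of the transverse measure at $p$ addresses exactly the points the paper leaves implicit.
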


\begin{proof}
Let \(F_G\) be the \(G\)-normal form of \([F]\).
Fix a vertex \(p\in\up\) and enumerate the edges incident to \(p\) as \(e_1,\dots,e_m\) in counter-clockwise order around \(p\).
For each \(j\), let \(f_j\) be the face between \(e_j\) and \(e_{j+1}\) (indices taken modulo \(m\)).
Orient each \(e_j\) outward from \(p\).

For each \(j\), let \(c_j^{(\mathrm{l})}\) and \(c_j^{(\mathrm{r})}\) be the left and right singular feet of \(F_G\) on \(e_j\), and choose a simple closed leaf \(C\) of the peripheral component around \(p\).
Let \(b_j:=C\cap e_j\) and let \(\iota_{e_j}\) be the coordinate map on \(e_j\) with base point \(b_j\) as in Definition~\ref{def:MF.shear}.
Then
\begin{equation}
    \label{eq:balanc.cond.shear}
    s_{e_j}([F]) = s_{e_j}(F_G) = \iota_{e_j}\bigl(c_j^{(\mathrm{l})}\bigr) - \iota_{e_j}\bigl(c_j^{(\mathrm{r})}\bigr).
\end{equation}

The segments \([c_j^{(\mathrm{l})},b_j]\) and \([c_{j+1}^{(\mathrm{r})},b_{j+1}]\) are homotopic through leaves of \(F_G\) contained in the union of the two adjacent faces (see Figure~\ref{fig:shear}), hence they have the same transverse measure.
This implies
\begin{equation}
    \label{eq:balanc.cond.l2r}
    \iota_{e_{j}} \bigl(c_j^{(\mathrm{l})}\bigr) - \iota_{e_{j}} (b_j)
    = \iota_{e_{j+1}}\bigl(c^{(\mathrm{r})}_{j+1}\bigr) - \iota_{e_{j+1}} (b_{j+1}).
\end{equation}

Summing \eqref{eq:balanc.cond.shear} over \(j=1,\dots,m\) and inserting \eqref{eq:balanc.cond.l2r} we obtain
\begin{align*}
    \sum_{j=1}^{m} s_{e_j}([F])
    &= \sum_{j=1}^{m} \bigl(\iota_{e_j}(c_j^{(\mathrm{l})}) - \iota_{e_j}(c_j^{(\mathrm{r})})\bigr) \\
    &= \sum_{j=1}^{m} \bigl(\iota_{e_j}(c_j^{(\mathrm{l})}) - \iota_{e_j}(b_j)\bigr)
       - \sum_{j=1}^{m} \bigl(\iota_{e_j}(c_j^{(\mathrm{r})}) - \iota_{e_j}(b_j)\bigr) \\
    &= \sum_{j=1}^{m} \bigl(\iota_{e_{j+1}}(c_{j+1}^{(\mathrm{r})}) - \iota_{e_{j+1}}(b_{j+1})\bigr)
       - \sum_{j=1}^{m} \bigl(\iota_{e_j}(c_j^{(\mathrm{r})}) - \iota_{e_j}(b_j)\bigr) \\
    &= 0,
\end{align*}
as required.
\end{proof}

\begin{figure}[htp]
    \centering
    \includegraphics[width=8cm]{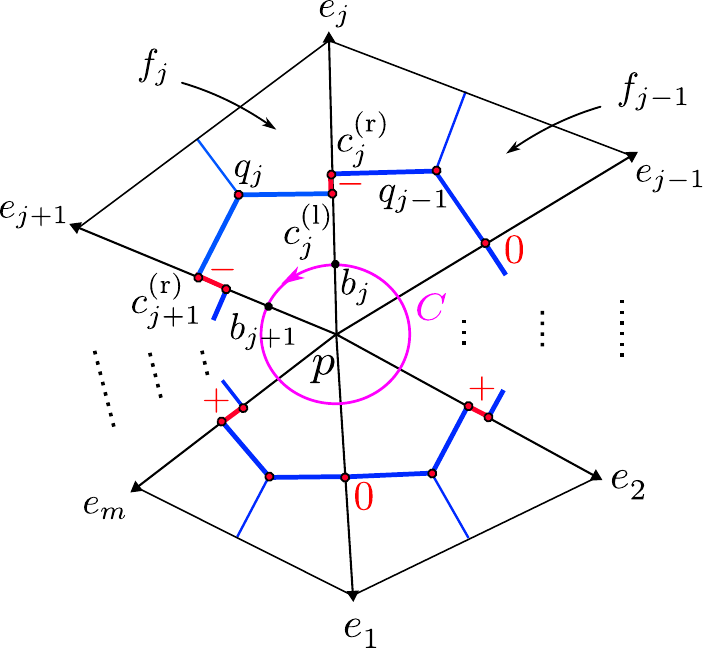}
    \caption{Shears along the edges incident to a vertex \(p\), with signs determined by the relative positions of the singular feet.}
    \label{fig:shear}
\end{figure}

 \begin{remark}
The vertex balance condition \eqref{eq:shear-cond} is the measured--foliation analogue of the cusp completeness relation in classical shear coordinates (see \cite[Sec.~3]{PT07}).
It also matches the cocycle (closedness) condition in Bonahon's shear theory \cite[Sec.~12]{Bon96} and the vanishing of the metric residue in the shear--shape formalism \cite[Lem.~7.9]{CF24}.
\end{remark}

\subsubsection{Radius parameters and the shear--radius map}

\begin{definition}[Radius and radius map]
    \label{def:MF.radius}
    Let \(G\in\TSp\) and \([F]\in\cMFp\), and let \(F_G\) be the \(G\)-normal form of \([F]\).
    For each vertex \(p_i\in\up\) and each face \(f\) adjacent to \(p_i\), let \([p_i,q_f]\) denote the homotopy class of a simple arc joining \(p_i\) to the singularity \(q_f\) of \(F_G\) in \(f\).
    The \DEF{\em radius} of \([F]\) at \(p_i\) is
    \[
    r_i([F]) := \min_{f\in F_{p_i}(G)} i\bigl(F_G,[p_i,q_f]\bigr),
    \]
    where \(F_{p_i}(G)\) is the set of faces of $G$ adjacent to \(p_i\).
    The \DEF{\em radius map} associated to \(G\) is
    \[
    \mathbf{r}_G : \cMFp \longrightarrow \bR_{>0}^n, \qquad
    [F] \longmapsto \bigl(r_j([F])\bigr)_{j=1}^n.
    \]
\end{definition}

\begin{remark}[Radius and peripheral components]
\label{rem:MF.radius.positive}
By Remark~\ref{rem:MFp.convention} we restrict to classes \([F]\in\cMFp\) for which each \(r_i([F])\) is strictly positive, ruling out the degenerate case where the peripheral annulus about \(p_i\) has zero transverse width. 
For such \([F]\), the value \(r_i([F])\) is intrinsic (it depends only on \([F]\) and \(p_i\), not on the choice of triangulation \(G\)) and agrees with the radius of the peripheral component \([F^{(i)}]\) around \(p_i\) in the decomposition of Definition~\ref{def:MF.surf.trivial.}, i.e.\ the transverse measure of any arc crossing that annulus once. 
All measured foliations arising later in the paper—in particular those coming from circular foliations and from the partial (anti)-stretch deformations—satisfy this positivity condition.
\end{remark}

\begin{definition}[Shear--radius map]
\label{def:MF-shear-radius}
For each triangulation \(G\in\TSp\), the \DEF{\em shear--radius map} is
\[
\mathbf{sr}_G : \cMFp \longrightarrow \bR^{E(G)} \times \bR_{>0}^n, \qquad
[F] \longmapsto \bigl(\mathbf{s}_G([F]),\mathbf{r}_G([F])\bigr).
\]
\end{definition}

\begin{remark}[Shear--radius coordinates]
\label{rem:MF.sr.coordinates}
The map \(\mathbf{sr}_G = (\mathbf{s}_G,\mathbf{r}_G)\) provides a shear-type parametrization of \(\cMFp\) adapted to our circular foliation picture.
It is formally analogous to the classical shear coordinates for hyperbolic metrics on a punctured surface with respect to an ideal triangulation, where the shears describe the interior geometry and the horocyclic radii control the behaviour at the punctures; see \cite{PP93,PP05,Bon96,CF24}.
Here the roles are played by the interior component and the peripheral radii of the measured foliation.
\end{remark}

\begin{definition}[Vertex-balanced cone]
    \label{def:vertex.balanc.part}
    Let \(\Lambda_G\) denote the subset of \(\bR^{E(G)} \times \bR^n_{>0}\) consisting of pairs \((\mathbf{s},\mathbf{r})\) whose first component \(\mathbf{s}\) satisfies the vertex balance condition \eqref{eq:shear-cond} at every marked point:
    \[
    \Lambda_G := \Bigl\{ (\mathbf{s},\mathbf{r}) \in \bR^{E(G)} \times \bR^n_{>0} \;\Big|\;
    \sum_{e\in E_p(G)} s_e = 0 \text{ for all } p\in\up \Bigr\}.
    \]
    We call \(\Lambda_G\) the \DEF{\em vertex-balanced cone}.
\end{definition}

\subsubsection{Recovering edge intersections from shear–radius data}

We now construct a map \(\mathcal{L}^{(G)}_{sr}:\Lambda_G\to\Omega_G\) which recovers the edge intersection numbers from the shear--radius data.
Combined with the bijectivity of the edge intersection map \(\iG_{\cdot}\) (Proposition~\ref{prop:edge.length.MF.biject.}), this will yield the bijectivity of \(\mathbf{sr}_G\).

\begin{construction}[Construction of \(\mathcal{L}^{(G)}_{sr}\)]
\label{const:LG.map}
Let \((\mathbf{s},\mathbf{r})\in\Lambda_G\).
We construct \(\mathcal{L}^{(G)}_{sr}(\mathbf{s},\mathbf{r})\in\Omega_G\) in four steps.

\medskip\noindent
\emph{Step 1: The set \(E_v^*\).}
For each vertex \(v\in\up\), let \(E_v(G)\) be the star at \(v\) in $G$ and write its edges in counter-clockwise order as \((e_1,\dots,e_m)\), with indices taken modulo \(m\).
For \(k\in \mathbb{N}^+\), set
\[
S_v(k) := \sum_{0<j\le k} \mathbf{s}(e_j).
\]
The vertex balance condition \(\sum_{j=1}^m \mathbf{s}(e_j)=0\) implies that the minimum of \(\{S_v(k)\}_{k\in\mathbb{N}^+}\) is attained.
We define the non-empty subset
\[
E_v^* := \bigl\{ e_k\in E_v(G) \;\big|\; S_v(k) = \min_{1\le l\le m} S_v(l) \bigr\}.
\]
It is straightforward to check that \(E_v^*\) is independent of the choice of the starting edge in the cyclic ordering of \(E_v(G)\).

\medskip\noindent
\emph{Step 2: Corner weights \(a_f(v)\).}
For each \(v\in\up\) and each face \(f\) adjacent to \(v\), we assign a \emph{corner weight} \(a_f(v)\) as follows.
If \(f=f_k\) is the face bounded by \(e_k\) and \(e_{k+1}\), choose any \(e_{k_0}\in E_v^*\) and set
\begin{equation}
    \label{eq:def.a_f}
    a_{f_k}(v) := \mathbf{r}(v) + \sum_{k_0<j\le k} \mathbf{s}(e_j),
\end{equation}
where \(\mathbf{r}(v):=\mathbf{r}_i\) if \(v=p_i\), and the sum is taken along the cyclic order of edges.
By construction, the difference \(a_{f_k}(v)-\mathbf{r}(v)\) is a partial sum of the sequence \(\{\mathbf{s}(e_j)\}\) shifted so that its minimum value is \(0\); in particular,
\[
a_{f_k}(v) \ge \mathbf{r}(v) > 0
\]
for all faces \(f_k\) adjacent to \(v\), and the value \(a_{f_k}(v)\) is independent of the choice of \(e_{k_0}\in E_v^*\).

\medskip\noindent
\emph{Step 3: Edge lengths \(l_e\).}
For each edge \(e\in E(G)\) with endpoints \(v\) and \(v'\), choose one of the two incident faces, say \(f\), and define
\begin{equation}
    \label{eq:def.edge.sr}
    l_e := a_f(v) + a_f(v').
\end{equation}
One checks directly from the definition of the \(a_f(\cdot)\) that this value does not depend on the choice of the incident face \(f\): if \(f\) and \(f'\) are the two faces adjacent to \(e\), then the differences \(a_{f'}(v)-a_{f}(v)\) and \(a_{f'}(v')-a_f(v')\) have the same absolute value \(|\mathbf{s}(e)|\) but with opposite signs. This implies that $l_e$ is well-defined.

\medskip\noindent
\emph{Step 4: Definition of \(\mathcal{L}^{(G)}_{sr}\).}
We define
\[
\mathcal{L}^{(G)}_{sr} : \Lambda_G \longrightarrow \bR_{\ge0}^{E(G)}, \qquad
(\mathbf{s},\mathbf{r}) \longmapsto (l_e)_{e\in E(G)}.
\]
By construction, the restriction of \((l_e)\) to the three edges of any face of \(G\) satisfies the strict triangle inequalities: this follows from the positivity of the corner weights \(a_f(v)\) and from the formula \eqref{eq:def.edge.sr}.
Hence \(\mathcal{L}^{(G)}_{sr}(\mathbf{s},\mathbf{r})\in\Omega_G\).
\end{construction}

\subsubsection{Bijectivity of the shear--radius map}

We can now relate the shear--radius data to the edge intersection coordinates.

\begin{proposition}[Bijectivity of the shear--radius map]
\label{prop:bijection.srG}
For each triangulation \(G\in\TSp\), the shear--radius map
\[
\mathbf{sr}_G : \cMFp \longrightarrow \Lambda_G
\]
is a bijection.
Moreover its inverse is the composition
\[
\mathbf{sr}_G^{-1} = \bigl(i^{(G)}_{\cdot}\bigr)^{-1} \circ \mathcal{L}^{(G)}_{sr},
\]
where \(i^{(G)}_{\cdot}:\cMFp\to\Omega_G\) is the edge intersection map and \(\mathcal{L}^{(G)}_{sr}:\Lambda_G\to\Omega_G\) is given by Construction~\ref{const:LG.map}.
\end{proposition}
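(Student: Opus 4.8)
The plan is to factor the shear--radius map through the edge intersection map and reduce the statement to two facts. Since \(\iG_{\cdot}:\cMFp\to\OG\) is already a bijection (Proposition~\ref{prop:edge.length.MF.biject.}), it suffices to prove the commutation relation
\[
\mathcal{L}^{(G)}_{sr}\circ\mathbf{sr}_G = \iG_{\cdot}
\]
together with injectivity of \(\mathcal{L}^{(G)}_{sr}\); the inverse formula then drops out formally. First I would fix \([F]\in\cMFp\) with its \(G\)-normal form \(F_G\) (Lemma~\ref{lem:G-normal-form-existence}) and, for each face \(f\) adjacent to a vertex \(v\), introduce the geometric corner quantity \(\tilde a_f(v):=i\bigl(F_G,[v,q_f]\bigr)\), the transverse measure of an arc joining \(v\) to the tripod centre \(q_f\) in \(f\).

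The geometric heart is to identify \(\tilde a_f(v)\) with the corner weight \(a_f(v)\) produced from \(\mathbf{sr}_G([F])\) by Construction~\ref{const:LG.map}. This rests on two observations. First, by Definition~\ref{def:MF.radius} we have \(\min_f \tilde a_f(v) = r_i([F]) = \mathbf{r}(v)\), matching the normalization built into \eqref{eq:def.a_f}. Second, walking counter-clockwise around \(v\) and crossing an edge \(e_j\) changes \(\tilde a_{\cdot}(v)\) by exactly \(s_{e_j}([F])\): this is precisely the definition of the shear as the signed gap between the two singular feet on \(e_j\) (Definition~\ref{def:MF.shear}), and the vertex balance condition (Lemma~\ref{lem:balanc.cond.}) makes the accumulation consistent around the full cycle. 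Consequently \(\tilde a_{f_k}(v) = \tilde a_{f_0}(v) + S_v(k)\), so the combinatorial minimizer \(E_v^*\) of the partial sums \(S_v(k)\) is exactly the face where the foot sits on the peripheral boundary (where \(\tilde a_f(v)=\mathbf{r}(v)\)), and \eqref{eq:def.a_f} reproduces \(\tilde a_f(v)\). Finally I would show the edge measure splits at the singular foot: for \(e\) with endpoints \(v,v'\) and adjacent face \(f\), the leg of the tripod from \(q_f\) meets \(e\) at a single foot \(c_e^{(f)}\); as this leg is a leaf carrying no transverse measure, the two subsegments of \(e\) have measures \(\tilde a_f(v)\) and \(\tilde a_f(v')\), whence \(i([F],e)=\tilde a_f(v)+\tilde a_f(v')=l_e\) by \eqref{eq:def.edge.sr}. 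This is the commutation relation.

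Next I would verify injectivity of \(\mathcal{L}^{(G)}_{sr}\). Given \((l_e)=\mathcal{L}^{(G)}_{sr}(\mathbf{s},\mathbf{r})\), within each face the three relations \(l_e=a_f(v)+a_f(v')\) form a linear system with unique solution \(a_f(v_i)=\tfrac12\bigl(l_{e_j}+l_{e_k}-l_{e_i}\bigr)\), so the corner weights are determined by \((l_e)\). One then recovers \(\mathbf{r}(v)=\min_f a_f(v)\) and each shear \(\mathbf{s}(e)\) as the signed difference of corner weights across \(e\) at a common endpoint; thus \((\mathbf{s},\mathbf{r})\) is recovered uniquely and \(\mathcal{L}^{(G)}_{sr}\) is injective.

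The two facts then assemble. Set \(\Psi:=(\iG_{\cdot})^{-1}\circ\mathcal{L}^{(G)}_{sr}:\Lambda_G\to\cMFp\). The commutation relation gives \(\Psi\circ\mathbf{sr}_G=(\iG_{\cdot})^{-1}\circ\iG_{\cdot}=\mathrm{id}\), so \(\mathbf{sr}_G\) is injective. For surjectivity, given \((\mathbf{s},\mathbf{r})\in\Lambda_G\) put \([F]:=\Psi(\mathbf{s},\mathbf{r})\); then \(\mathcal{L}^{(G)}_{sr}(\mathbf{sr}_G([F]))=\iG_{\cdot}([F])=\mathcal{L}^{(G)}_{sr}(\mathbf{s},\mathbf{r})\), and injectivity of \(\mathcal{L}^{(G)}_{sr}\) forces \(\mathbf{sr}_G([F])=(\mathbf{s},\mathbf{r})\). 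Hence \(\mathbf{sr}_G\) is a bijection with inverse \(\Psi=(\iG_{\cdot})^{-1}\circ\mathcal{L}^{(G)}_{sr}\). I expect the main obstacle to be the identity \(a_f(v)=\tilde a_f(v)\): pinning down the sign conventions so that crossing \(e_j\) adds exactly \(s_{e_j}([F])\), and checking carefully that the combinatorial minimizer \(E_v^*\) coincides with the geometric face where \(\tilde a_f(v)=\mathbf{r}(v)\); the injectivity of \(\mathcal{L}^{(G)}_{sr}\) and the final assembly are then routine.
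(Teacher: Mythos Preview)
Your proposal is correct and takes essentially the same approach as the paper: both hinge on identifying the geometric corner quantities \(\tilde a_f(v)=i(F_G,[v,q_f])\) with the combinatorial corner weights \(a_f(v)\) of Construction~\ref{const:LG.map}, and both deduce the commutation relation \(\mathcal{L}^{(G)}_{sr}\circ\mathbf{sr}_G=\iG_{\cdot}\) from this. The only difference is organizational: the paper verifies the other one-sided inverse \(\mathbf{sr}_G\circ\Psi=\mathrm{id}_{\Lambda_G}\) directly (reading off shears and radii from the same corner-weight identity) rather than routing through injectivity of \(\mathcal{L}^{(G)}_{sr}\) as you do.
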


\begin{proof}
We prove the two identities
\[
\mathbf{sr}_G \circ \bigl(i^{(G)}_{\cdot}\bigr)^{-1} \circ \mathcal{L}^{(G)}_{sr} = \mathrm{id}_{\Lambda_G}
\quad\text{and}\quad
\bigl(i^{(G)}_{\cdot}\bigr)^{-1} \circ \mathcal{L}^{(G)}_{sr} \circ \mathbf{sr}_G = \mathrm{id}_{\cMFp}.
\]

\emph{First identity.}
Let \((\mathbf{s},\mathbf{r})\in\Lambda_G\) and set \([F]:=\bigl(i^{(G)}_{\cdot}\bigr)^{-1}\circ\mathcal{L}^{(G)}_{sr}(\mathbf{s},\mathbf{r})\).
Let \(F_G\) be the \(G\)-normal form of \([F]\) and, for each face \(f\), let \(q_f\) be its singularity.
By construction of \(i^{(G)}_{\cdot}\) and \(\mathcal{L}^{(G)}_{sr}\), the corner weights satisfy
\begin{equation}
    \label{eq:inter.num.vs.a_f}
    i\bigl(F_G,[p,q_f]\bigr) = a_f(p)
\end{equation}
for every vertex \(p\) of \(f\).
Taking the minimum over the faces adjacent to \(p_i\) gives
\[
r_i([F])
= \min_{f\in F_{p_i}(G)} i\bigl(F_G,[p_i,q_f]\bigr)
= \min_{f\in F_{p_i}(G)} a_f(p_i)
= \mathbf{r}_i,
\]
using \eqref{eq:def.a_f} and the normalization by the minimum of the partial sums.

Similarly, for an edge \(e\) joining vertices \(v\) and \(v'\), with left and right faces \(f^{(\mathrm{l})}_e\) and \(f^{(\mathrm{r})}_e\), we may compute the shear using \(v\) as base point:
\begin{align*}
    s_e([F])
    &= i\bigl(F_G,[v,q_{f^{(\mathrm{l})}_e}]\bigr) - i\bigl(F_G,[v,q_{f^{(\mathrm{r})}_e}]\bigr)
      &&\text{by \eqref{eq:shear}} \\
    &= a_{f^{(\mathrm{l})}_e}(v) - a_{f^{(\mathrm{r})}_e}(v)
      &&\text{by \eqref{eq:inter.num.vs.a_f}} \\
    &= \mathbf{s}(e),
\end{align*}
where the last equality follows from the definition \eqref{eq:def.a_f} and the fact that the faces around \(v\) are ordered in accordance with the cyclic order of edges.
Thus \(\mathbf{sr}_G([F]) = (\mathbf{s},\mathbf{r})\), proving the first identity.

\emph{Second identity.}
We must show that
\[
\mathcal{L}^{(G)}_{sr} \circ \mathbf{sr}_G = i^{(G)}_{\cdot}.
\]
Let \([F]\in\cMFp\), and let \(F_G\) and \(q_f\) be as above.
For an edge \(e\) with endpoints \(v,v'\) and an adjacent face \(f\), we have
\begin{align*}
\bigl(\mathcal{L}^{(G)}_{sr}\circ\mathbf{sr}_G\bigr)([F])(e)
&= l_e = a_f(v) + a_f(v') &&\text{by \eqref{eq:def.edge.sr}} \\
&= i\bigl(F_G,[v,q_f]\bigr) + i\bigl(F_G,[v',q_f]\bigr)
   &&\text{by \eqref{eq:inter.num.vs.a_f}} \\
&= i(F_G,e) \\
&= i^{(G)}_{[F]}(e),
\end{align*}
since the concatenation of \([v,q_f]\) and \([q_f,v']\) is homotopic to the edge \(e\).
This proves the second identity.

The two identities imply that \(\mathbf{sr}_G\) is bijective, with inverse \(\bigl(i^{(G)}_{\cdot}\bigr)^{-1}\circ\mathcal{L}^{(G)}_{sr}\).
\end{proof}

 \section{Teichm\"uller space}
\label{sec:Teich.}

Having defined our combinatorial model space $\cMFp$ in the previous section, we now
introduce its geometric counterpart: the Teichm\"uller space of hyperbolic
cone-metrics $\cTp$.  We begin with the local models, and then pass to the global Teichm\"uller space and the triangulable loci
$\cT(G)$ that will serve as coordinate charts in Section~\ref{sec:circ.foli.coor.app.}.

\subsection{Teichm\"uller space of hyperbolic cone metrics}
\label{subsubsec:Teich.space.surf.}

This subsection recalls the basic objects on the geometric side: hyperbolic
triangles, hyperbolic cone-metrics on $(S,\up)$, and the induced Teichm\"uller
space together with the loci $\cT(G)$.

\begin{definition}[Teichm\"uller space of a triangle]
\label{def:Teich.of.hyp.triang.}
Let $\Delta$ be a topological triangle (Definition~\ref{def:top.triang.}). 
The \DEF{\em Teichm\"uller space} of $\Delta$, denoted by $\mathcal{T}(\Delta)$, is
the set of hyperbolic metrics on $\Delta$ for which all edges of $\Delta$ are
geodesic segments of finite length, taken up to isotopy fixing each vertex.
\end{definition}

\begin{definition}[Hyperbolic cone-metrics]
\label{def:tri-admis}
A \DEF{\em hyperbolic cone-metric} on $\Sp$ is a Riemannian metric which has
constant curvature $-1$ on $\dot S$ and such that each marked point $p_i$ has a
neighbourhood isometric to a neighbourhood of the singular point in the model
$\bH^2_{\theta_i}$ for some $\theta_i>0$, where $\bH^2_{\theta_i}$ denotes the
singular disk (with singular point at the center) endowed with the metric
\[
g_{\theta_i} = dr^2 + \sinh^2 r\,d\alpha^2,
\]
with polar coordinates $(r,\alpha) \in \bR_{\geq 0} \times \bR/(\theta_i\mathbb{Z})$.
We call the $\theta_i$ the \DEF{\em cone angle} at $p_i$.
\end{definition}

\begin{remark}[Angles and classical Teichm\"uller space]
\label{rmk:ang.class.Teich.space}
The cone-metrics considered in this paper always have positive cone angles at all
marked points.  Under the condition
\[
\chi(\dot S) + \sum_{i=1}^n\frac{\theta_i}{2\pi} < 0,
\]
there exists a unique hyperbolic cone-metric in each conformal class on $S$
with prescribed cone angles $\theta_i$ at $p_i$
(see \cite{Hei62,McO88,Tro91}).  In particular, for each admissible angle vector
$\underline{\theta} = (\theta_1,\dots,\theta_n)$, the Teichm\"uller space
$\cT(S,\up;\underline{\theta})$ is canonically identified with the classical
Teichm\"uller space $\cT(\dot S)$ of marked conformal structures on the punctured
surface $\dot S$.  An alternative proof of this uniformization statement, via
rank-two stable parabolic Higgs bundles (also allowing cusps), is given in
\cite{FX25}.
\end{remark}

\begin{definition}[$G$-triangulable]
Let $G\in \TSp$. A hyperbolic cone-surface $X$ (that is, the surface $\Sp$
equipped with a hyperbolic cone-metric) is said to be \DEF{\em $G$-triangulable} if
the geodesic representatives on $X$ of the edges of $G$ are smooth with mutually
disjoint interiors and decompose $X$ into non-degenerate hyperbolic geodesic
triangles (cf.~\cite[Sec.~2]{Pro25}).  
\end{definition}

\begin{remark}\label{rem:GB.and.triang}
Izmestiev~\cite[Prop.~4]{Izm15} proves the following: if \((M,g)\) is a euclidean or hyperbolic cone-surface (possibly with boundary) and \(V\subset M\) is a finite non-empty set that contains the singular locus of $g$ and contains at least one point on each boundary component of $M$, then \((M,g)\) admits a geodesic triangulation whose vertex set is exactly \(V\).
In particular, in our setting one may take \(V=\underline p\), so every hyperbolic cone-surface \(X\) is \(G\)-triangulable for some \(G\in\TSp\).
Equivalently, \(X\) can be obtained by gluing finitely many hyperbolic geodesic triangles with vertices in \(\underline p\) along their edges.
\end{remark}

\begin{definition}[Teichm\"uller space of hyperbolic cone-metrics]
\label{def:Teich.cone.surf.}
Let ${\mathcal{M}}_{-1}(S,\up)$ denote the space of hyperbolic cone-metrics on $(S,\up)$ and let ${\rm Homeo}_0(S,\up)$ denote the group of orientation-preserving homeomorphisms of $S$ isotopic to the identity and fixing each marked point in $\up$, acting on ${\mathcal{M}}_{-1}(S,\up)$ by pull-back.
\begin{itemize}
\item The \DEF{\em Teichm\"uller space of hyperbolic cone-metrics on $\Sp$} is the
quotient
\[
\cTp := {\mathcal{M}}_{-1}(S,\up)/{\rm Homeo}_0(S,\up).
\]
\item For $G\in\TSp$, the \DEF{\em $G$-triangulable locus} is the subset
\[
\cT(G) \subset \cTp
\]
consisting of $G$-triangulable cone-metrics.
\item If $\underline{G}$ is a non-empty subset of $\TSp$, the \DEF{\em
$\underline{G}$-triangulable locus} is
\[
\cT(\underline{G}) := \bigcap_{G\in\underline{G}}\cT(G).
\]
In particular, for $\underline{G}=\TSp$ we call
\[
\cT_{u.t.}\Sp := \cT(\TSp)
\]
the \DEF{\em universally triangulable locus}.
\item For $0\le a<b$, let $\cT_{(a,b)}\Sp$ denote the subspace of $\cTp$ consisting
of cone-metrics for which each cone angle lies in $(a,b)$.  In particular, we call
$\cT_{(0,\pi)}\Sp$ the \DEF{\em admissible locus} (see e.g.~\cite{DP07}).
\end{itemize}
\end{definition}

\begin{remark}[Covering $\cTp$ by triangulable loci]
\label{rem:charts_cover_space}
The $G$-triangulable loci $\cT(G)$ will be the basic coordinate domains in our
parametrization of $\cTp$.  By Izmestiev's result mentioned in
Remark~\ref{rem:GB.and.triang}, every $X\in\cTp$ is $G$-triangulable for some
$G\in\TSp$, so the collection $\{\cT(G)\}_{G\in\TSp}$ covers $\cTp$.  We shall see
in Proposition~\ref{prop:edge.leng.homeo.} that each $\cT(G)$ is open in $\cTp$,
so this is in fact an open cover.
\end{remark}

While the definition of being \emph{universally triangulable} requires, a priori, checking all combinatorial triangulations, this condition can be verified more easily in certain regimes.  
The following lemma records a simple sufficient criterion.

\begin{lemma}[A sufficient condition for being universally triangulable]
\label{lem:adm_universal_triang}
Let \(X\in\cT_{(0,\pi)}\Sp\) be a hyperbolic cone-surface whose cone angles at all
marked points are strictly less than \(\pi\).
Then for every combinatorial triangulation \(G\in\TSp\) there exists a geodesic
triangulation \(G_X\) of \(X\) which is combinatorially isomorphic to \(G\).
In particular,
\[
\cT_{(0,\pi)}\Sp\subset\cT_{u.t.}\Sp.
\]
\end{lemma}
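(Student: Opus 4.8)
The plan is to realize the prescribed combinatorial triangulation $G$ geodesically on $X$ by replacing each edge by its geodesic representative, and then to check that the resulting arc system is a genuine geodesic triangulation combinatorially isomorphic to $G$. Since $S$ is closed, the metric completion of $X$ is compact, so every essential arc class rel $\up$ contains a length-minimizing representative $e^*$, which is nonconstant because the class is essential. As all cone angles satisfy $\theta_i < \pi < 2\pi$, no length-minimizer can pass through a cone point in its interior: near a cone point of angle $<2\pi$ one can always shortcut around it. Hence each $e^*$ is a smooth geodesic arc meeting $\up$ only at its two endpoints, and I would first record these basic properties.

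Next I would show that $\{e^*\}_{e\in E(G)}$ is an embedded, pairwise internally disjoint system of simple arcs isotopic to $G$. The main tool is Gauss--Bonnet applied to bigons and monogons lying in $\dot S$: an embedded disk in $\dot S$ (so with no interior cone point) with geodesic boundary, Euler characteristic $1$, and at most two boundary corners of interior angles $\beta_j$ would have area $\sum_j(\pi-\beta_j)-2\pi\le 0$, which is impossible. This rules out a geodesic monogon (so each $e^*$ is simple), rules out a geodesic bigon between two representatives homotopic rel $\up$ (so $e^*$ is unique), and forbids a transverse crossing of representatives of disjoint classes, since an innermost crossing would bound such a bigon in $\dot S$ (so the $e^*$ have disjoint interiors). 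Because the $e^*$ are pairwise disjoint simple arcs carrying the same homotopy classes as the maximal arc system $G$, the system $\{e^*\}$ is again maximal and hence isotopic to $G$; cutting $X$ along $\bigcup_e e^*$ therefore produces complementary regions in bijection with the faces $F(G)$, each an embedded disk with geodesic boundary and no interior cone point.

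It remains to verify that each such region is a \emph{non-degenerate} hyperbolic geodesic triangle, and this is the step where the hypothesis $\theta_i<\pi$ is essential and where I expect the main difficulty. For an ordinary face the three corner angles are positive sub-angles of the respective cone angles and automatically sum to less than $\pi$, so the region is a bona fide triangle and only $\theta<2\pi$ is used. The delicate case is a self-folded face (once-punctured monogon) at a marked point $q$ incident to this single face: the associated triangle $\Delta_f$ has its two angles at the glued vertex summing to the \emph{entire} cone angle $\theta_q$, and a non-degenerate hyperbolic triangle forces that sum to be $<\pi$. I would therefore treat the self-folded faces with care, using $\theta_q<\pi$ to rule out the angle-$\pi$ degeneration, to prevent the loop edge from collapsing onto the doubled radial edge, and to ensure $\Delta_f$ has positive area. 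This is precisely the geometric content of the bound $\pi$, consistent with the sharp value of the universally maximal cone angle in Theorem~\ref{thm:adm.cone.angle}.

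Assembling the geodesic triangles then yields a geodesic triangulation $G_X$ whose underlying arc system is isotopic to $G$, hence combinatorially isomorphic to $G$; this is exactly the statement that $X\in\cT(G)$. Since $G\in\TSp$ was arbitrary, $X\in\bigcap_{G\in\TSp}\cT(G)=\cT_{u.t.}\Sp$, which gives the inclusion $\cT_{(0,\pi)}\Sp\subset\cT_{u.t.}\Sp$.
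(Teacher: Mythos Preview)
Your approach via Gauss--Bonnet on bigons and monogons is natural and differs from the paper's sketch (which invokes the $\mathrm{CAT}(-1)$ framework and the absence of geodesic bigons therein). However, there is a genuine gap, and it is not where you locate it.

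You claim that for $\theta_i<2\pi$ the length-minimizer of an arc class cannot pass through a cone point, because ``one can always shortcut around it''. This is only correct when the limiting minimizer meets the cone point \emph{transversally}: if the two angles at $p$ are $\alpha,\beta>0$ with $\alpha+\beta=\theta_p<2\pi$, then one of them is $<\pi$ and the chord shortcut applies. But the argument fails when the limit has a \emph{cusp} at $p$, i.e.\ approaches $p$ and retreats along the same direction, so the angles are $0$ and $\theta_p$; there is no shortcut on the $0$-side, and a shortcut on the $\theta_p$-side needs $\theta_p<\pi$. Concretely, take a simple loop $\ell$ based at $q$ encircling a single cone point $p$ with $\theta_p\ge\pi$. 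In the universal cover of the punctured cone (coordinates $(r,\phi)$, $\phi\in\bR$, metric $dr^2+\sinh^2 r\,d\phi^2$), the two relevant lifts of $q$ differ by $\Delta\phi=\theta_p\ge\pi$, yet every geodesic in this incomplete curvature~$-1$ surface has angular extent $<\pi$. Hence no geodesic joins them: the infimum of lengths over $[\ell]$ equals $2\,d(q,p)$ and is realized only by the degenerate path through $p$. So for $\theta_p\ge\pi$ the class $[\ell]$ contains \emph{no} length-minimizing representative at all, and your step already fails before any Gauss--Bonnet is applied.

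Thus the hypothesis $\theta_i<\pi$ is needed right at the start---to guarantee that each arc class has a smooth geodesic representative in $\dot S$---and not at the self-folded-face step you single out. In fact that last step is automatic: once the geodesic edges are simple with pairwise disjoint interiors, each complementary region is an open disk with no interior cone point, so Gauss--Bonnet gives $\mathrm{Area}=\pi-\sum\alpha_j>0$ and every corner angle is $<\pi$ with no further hypothesis. (Your description of the self-folded triangle is also slightly off: the vertex occurring twice in $\Delta_f$ is the base of the loop, not the inner point $q$; the inner point contributes a \emph{single} corner of angle $\theta_q$.) A smaller point: your bigon/monogon argument needs the region to lie in $\dot S$, and ``innermost'' alone does not ensure this; you should invoke the bigon criterion for surfaces with marked points to produce a bigon avoiding all marked points before applying Gauss--Bonnet.
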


\begin{proof}[Sketch of proof]
The argument is standard in the \(\mathrm{CAT}(-1)\) setting; we include it for
completeness.
It is well known that a hyperbolic cone-surface all of whose cone angles are at
most \(\pi\) is a locally \(\mathrm{CAT}(-1)\) geodesic metric space (see, for instance,
\cite{TWZ06} together with the general \(\mathrm{CAT}(-1)\) theory in
Bridson--Haefliger~\cite{BH99}).
In particular, every homotopy class of essential arcs and curves on \(X\)
contains a unique length-minimizing geodesic representative, and geodesic
bigons do not occur.

The local model of a cone neighbourhood of angle \(\theta<\pi\) shows that
a minimizing geodesic arc with endpoints outside a sufficiently small
cone neighbourhood cannot pass through the cone point in its interior.
A detailed discussion of this phenomenon for hyperbolic cone-surfaces can be
found, for example, in \cite[Section~2]{Nai25} (in particular Prop.~2.5,
2.6 and Lem.~2.8 there).
Applying this to the cone points of \(X\), we see that if \(\alpha\) is
an essential arc on \(X\) with endpoints at marked points and whose interior is disjoint from the other cone points, then its unique geodesic representative \(\alpha_X\) is a simple arc whose interior remains disjoint from the cone locus.

Now fix \(G\in\TSp\), and choose a topological realization of \(G\) on \(X\)
whose vertex set is \(\underline p\) and whose edges are simple
arcs with pairwise disjoint interiors.
For each edge \(e\) of \(G\), let \(e_X\) denote its geodesic representative in the corresponding rel-endpoint homotopy class.
By the absence of geodesic bigons in a \(\mathrm{CAT}(-1)\) surface,
distinct edges \(e,e'\) of \(G\) give rise to geodesics
\(e_X, e'_X\) whose interiors are still disjoint.
Hence the collection \(\{e_X\}_{e\in E(G)}\) forms a geodesic cell
decomposition of \(X\).

Finally, each two-cell of this decomposition is bounded by three simple
geodesic edges, so it is a non-degenerate geodesic triangle.
Thus we obtain a geodesic triangulation \(G_X\) combinatorially
isomorphic to \(G\), proving the lemma.
\end{proof}

\subsection{Length functions on Teichm\"uller space}
\label{subsubsec:length.func.}

We now introduce the length functions on \(\cT(S,\underline p)\) and on the triangulable loci \(\cT(G)\). These will be used repeatedly in the construction of our Teichm\"uller charts and in the comparison with measured foliations.

\begin{definition}[Arc--curve length function]
\label{def:arc.curve.leng.Teich.}
The \DEF{\em arc--curve length function} on \(\cT(S,\underline p)\) is the map
\begin{equation}\label{eq:edge.length.Teich}
\lAC_{\cdot}: \cTp \to \bR^{\cA\cup\cC}_{>0},\quad
X \mapsto \bigl(\ell_X([\beta])\bigr)_{[\beta]\in\cAC}.
\end{equation}
where \(\ell_X([\beta])\) is the infimum of the lengths (with respect to \(X\)) of all curves or arcs in the homotopy class \([\beta]\) relative to \(\underline p\).
Any representative realizing this infimum is called the \DEF{\em length–minimizing} curve or arc for \([\beta]\) with respect to \(X\), and is denoted by \(\beta_X\). 
Note that this length-minimizer might fail to be homotopic to $\beta$ when its interior contains a marked point, but we still use the name `representative' for simplicity. 
\end{definition}

It is standard that every homotopy class \([\beta]\in\cAC\) admits a length–minimizing representative, and that whenever this minimizer is (piecewise) geodesic it is unique in its homotopy class; see for instance \cite[Lem.~7.3]{Mon10}. In particular, for the cone metrics considered here we always have a well-defined length \(\ell_X([\beta])\).

\begin{definition}[Edge length function]
\label{def:edge.leng.Teich.}
Let \(G\in\TSp\).
The \DEF{\em edge length function} associated to \(G\) is the map
\[
\ell^{(G)}_{\cdot} : \cT(S,\underline p) \to \bR^{E(G)}_{>0},\quad
X \mapsto \bigl(\ell_X(e)\bigr)_{e\in E(G)},
\]
obtained by composing the arc--curve length function \(\ell^{(\cAC)}_{\cdot}\) with the restriction map
\(\mathrm{res} : \bR^{\cAC}_{>0}\to\bR^{E(G)}_{>0}\).
Here \(\ell_X(e)\) denotes the length of the length–minimizing representative of the edge class \(e\) on \(X\).
When we view \(\ell^{(G)}_{\cdot}\) as a coordinate map, we will restrict it to the triangulable locus \(\cT(G)\subset\cT(S,\underline p)\).
\end{definition}

It might be well known that $\cT(G)$ admits a parametrization by edge lengths.
We record the statement and a self-contained proof for later use.

\begin{lemma}[Edge length coordinates on \(\cT(G)\)]
\label{lem:edge.length.Teich.biject.}
Let \(G\in\TSp\). 
Then the edge length map
\[
\lG_{\cdot}:\cT(G)\longrightarrow\Omega_G\subset\bR^{E(G)}_{>0},\qquad
X\longmapsto\big(\ell_X(e)\big)_{e\in E(G)},
\]
is a bijection.
\end{lemma}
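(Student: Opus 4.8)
The plan is to prove bijectivity by reducing everything to a single classical fact of hyperbolic trigonometry: three positive reals $\ell_1,\ell_2,\ell_3$ are the side lengths of a non-degenerate hyperbolic geodesic triangle, unique up to isometry, if and only if they satisfy the strict triangle inequalities. Since a triangulation $G$ cuts $\Sp$ into faces, each carrying a single hyperbolic triangle, I expect the global statement to follow by assembling this triangle-by-triangle data: the gluing is controlled by the shared edge lengths, and the marking is controlled by the combinatorics of $G$.

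First I would check that $\lG_{\cdot}$ lands in $\OG$. For $X\in\cT(G)$ the edges of $G$ are realized by smooth geodesics with disjoint interiors cutting $X$ into non-degenerate hyperbolic triangles; each face $f$ bounded by $e_1,e_2,e_3$ is then an honest geodesic triangle, so its side lengths $\ell_X(e_1),\ell_X(e_2),\ell_X(e_3)$ obey the strict triangle inequalities. As this holds for every face, $\lG_X\in\OG$ by Definition~\ref{def:tri.ineq.region}. One point to pin down here is that, on $\cT(G)$, the length-minimizing representative of each edge coincides with its geodesic realization in the triangulation, so that $\ell_X(e)$ really is the geodesic side length; this is immediate from the definition of $G$-triangulable, since the geodesic edges are smooth and do not pass through the marked points.

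For surjectivity I would invert this construction. Given $\ell=(\ell_e)_{e\in E(G)}\in\OG$, for each face $f$ with edges $e_1,e_2,e_3$ the triple $(\ell_{e_1},\ell_{e_2},\ell_{e_3})$ satisfies the triangle inequalities, so there is a unique hyperbolic geodesic triangle $\Delta_f$ with these sides. Gluing the $\Delta_f$ along shared edges by the length-preserving identifications prescribed by the edge-gluings of $G$ produces a surface carrying a metric that is hyperbolic away from the vertices and has, at each marked point, a cone singularity whose angle is the sum of the incident triangle angles. This is a hyperbolic cone-metric on $\Sp$; by construction its geodesic triangulation is combinatorially $G$ with disjoint geodesic edges, so its class lies in $\cT(G)$, and its edge lengths are exactly $\ell$.

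The main work, and the hardest step, is injectivity, where the marking must be handled. Suppose $X,X'\in\cT(G)$ satisfy $\lG_X=\lG_{X'}$. On each face the two geodesic triangles have identical side lengths, hence are isometric by a unique isometry matching corresponding vertices and edges; these face isometries agree along shared edges and so glue to a global isometry $h\colon X\to X'$ fixing $\up$. To conclude $X=X'$ in $\cTp$ I must show $h\in\mathrm{Homeo}_0(S,\up)$, i.e.\ is isotopic to the identity rel $\up$. By construction $h$ carries the geodesic realization in $X$ of each edge of $G$ to the corresponding one in $X'$, preserving the rel-$\up$ homotopy class of every edge. The hard part will be this final isotopy argument: one must show that an orientation-preserving self-homeomorphism of $\Sp$ fixing each marked point and fixing the isotopy class of every edge of $G$ is isotopic to the identity rel $\up$. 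This is where the maximality of $G$ enters—since $G$ cuts $\Sp$ into triangles, hence into simply connected pieces, a homeomorphism fixing all edge classes is determined up to isotopy on each piece, and the standard rigidity of triangulations pins down the mapping class as trivial.
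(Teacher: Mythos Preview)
Your proof is correct and follows essentially the same approach as the paper: well-definedness from the triangle inequalities, surjectivity by gluing hyperbolic triangles with prescribed side lengths, and injectivity from the uniqueness of a hyperbolic triangle with given sides together with the gluing pattern fixed by $G$. You are in fact more careful than the paper about the marking in the injectivity step---the paper simply asserts that the cone-metric is uniquely determined, whereas you spell out the isotopy argument via rigidity of the triangulation.
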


\begin{remark}
Lemma~\ref{lem:edge.length.Teich.biject.} is a special case of the general
theory of hyperbolic polyhedral surfaces.
Given a triangulated surface \((S,G)\) and an edge length function
\(\ell:E(G)\to\bR_{>0}\) such that on each face the three edge lengths satisfy
the strict triangle inequalities, one can glue hyperbolic triangles with
these side lengths along isometric edges to obtain a hyperbolic polyhedral
metric on \(S\).
This construction is standard; see for instance Luo's work on the rigidity and
deformation theory of polyhedral surfaces~\cite{Luo14,Luo11}.
We include a short proof below for completeness and to keep the notation
self-contained.
\end{remark}

\begin{proof}
The map is well defined because the edge lengths of any hyperbolic geodesic
triangle satisfy the strict triangle inequalities, hence the image of
\(\lG_{\cdot}\) is contained in \(\Omega_G\).

It remains to prove surjectivity and injectivity.

\begin{enumerate}
\item \emph{Surjectivity.}
Given \(l\in\Omega_G\), for each face \(f\in F(G)\) the three values of \(l\) on
its edges satisfy the strict triangle inequalities.
Thus there exists a hyperbolic geodesic triangle with these side lengths.
Gluing these triangles along isometric edges according to the combinatorics of
\(G\) produces a hyperbolic cone-surface \(X\) together with a geodesic
triangulation isomorphic to \(G\), and by construction \(\lG_X=l\).

\item \emph{Injectivity.}
For a given triple of edge lengths satisfying the strict triangle inequalities,
the corresponding hyperbolic triangle is unique up to isometry.
Since the gluing pattern is prescribed by \(G\), the resulting cone-metric on
\(\Sp\) is uniquely determined by its edge lengths.
Hence two points of \(\cT(G)\) with the same edge length data must coincide.
\end{enumerate}
\end{proof}

By Lemma~\ref{lem:edge.length.Teich.biject.}, it is natural to define topologies on \(\cTp\)
via length data, by pulling back the product/weak topology on appropriate function spaces
(Definition~\ref{def:weak.top.func.space}).

\begin{definition}[Length-spectrum topologies on Teichm\"uller space]
\label{def:topology.on.T}
The \DEF{\em arc--curve length-spectrum topology} on \(\cTp\) is defined as the pullback of the product topology on
\(\bR^{\cAC}_{>0}\) by the arc--curve length function \(\lAC_{\cdot}: \cTp\longrightarrow \bR^{\cAC}_{>0}\).
The subspaces \(\cT(G)\), \(\cT_{(a,b)}\Sp\), and \(\cT_{u.t.}\Sp\) are equipped with the corresponding subspace topology.

The \DEF{\em simple-curve length-spectrum topology} on \(\cTp\) is defined as the pullback of the weak topology on
\(\bR^{\cS}_{>0}\) via the length-spectrum map
\[
  \ell^{(\cS)}_{\cdot}:\cTp\longrightarrow \bR^{\cS}_{>0},\qquad
  X\longmapsto \bigl(\ell_\gamma(X)\bigr)_{\gamma\in\cS}.
\]
\end{definition}

\smallskip
\noindent\textbf{Convention.}
Unless explicitly stated otherwise, we equip \(\cTp\) with the \emph{arc--curve length-spectrum topology}.
To study the convergence of peripheral stretch rays, we instead use the \emph{simple-curve length-spectrum topology}, since peripheral stretch rays diverge in the arc--curve length-spectrum topology (Remark~\ref{rem:topology.convergence.light}).
In particular, the arc--curve length-spectrum topology is finer than the simple-curve length-spectrum topology.

\begin{remark}[Length-spectrum viewpoint]
Our choice of topology on \(\cTp\), defined via the arc--curve length map
\(\lAC_{\cdot}\), follows the standard length-spectrum approach for bordered or
marked hyperbolic surfaces (see for instance \cite{LPST10}) and is compatible
with the classical weak-topology viewpoint on spaces of measured foliations and
laminations \cite{FLP12}.  
On the metric side, Liu--Papadopoulos--Su--Th\'eret
introduced and studied length-spectrum and arc-length-spectrum metrics on
Teichm\"uller spaces of surfaces with geodesic boundary \cite{LPST10},
showing in particular that these metrics are almost isometric to the
Teichm\"uller metric on appropriate thick parts; related comparisons between
the Lipschitz metric and the Teichm\"uller metric in the classical punctured
case were obtained by Choi--Rafi \cite{CR07}.  
Although we will not use these metric comparison results directly, this
viewpoint provides background and motivation for treating arc and curve length
functions as global coordinate data on~\(\cTp\).
\end{remark}

 \section{Circular foliation coordinates}
\label{sec:circ.foli.coor.app.}

In this section we connect the combinatorial model space $\cMFp$ from
Section~\ref{sec:MF.} with the Teichm\"uller space $\cTp$ of hyperbolic
cone-metrics.  Fix a triangulation $G\in\TSp$.  On the foliation side we
have the edge intersection map (Definition~\ref{def:MF.edge.inter.})
\[
\iG_{\cdot} : \cMFp \longrightarrow \Omega_G,
\]
while on the geometric
side we have the edge length map (Definition~\ref{def:edge.leng.Teich.})
\[
\lG_{\cdot} : \cT(G) \longrightarrow \bR_{>0}^{E(G)}.
\] 
Both maps are bijections (Proposition~\ref{prop:edge.length.MF.biject.} and Lemma \ref{lem:edge.length.Teich.biject.}). Furthermore, they will be shown to be homeomorphisms onto the same admissible cone $\Omega_G$
(Proposition~\ref{prop:edge.leng.homeo.}).  This allows us to define the
\emph{circular foliation map}
\[
\cC_G
:= \big(\iG_{\cdot}\big)^{-1}\circ \lG_{\cdot}
: \cT(G) \longrightarrow \cMFp,
\]
and then to extract shear--radius coordinates for $\cT(G)$ by composing
with the parametrization $\mathbf{sr}_G$ from
Theorem~\ref{thm:homeo.MFp.shear.radius}.

The main technical task of this section is to show that edge data control the geometry of all arcs inside each face of the triangulation.  For measured
foliations, edge intersection numbers determine the
intersection with any interior arc (Lemma \ref{clm:arc.inter.triang.}); for hyperbolic metrics, edge lengths determine the length of any interior geodesic arc (Lemma \ref{clm:express.length}).  Once this
\emph{local rigidity on triangles} is established, we can promote the edge
parametrizations $\iG_{\cdot}$ and $\lG_{\cdot}$ to global coordinate
charts and construct the circular foliation map.

We briefly outline the structure of the section.
\begin{itemize}
  \item In Section~\ref{subsec:arc.triang.} we set up the notation for
  marked arcs in a topological triangle and for boundary markings induced
  by measures.
  \item In Section~\ref{subsec:MF.triang.} we analyse measured foliations on
  a triangle and show that edge intersection data control intersection
  numbers with arbitrary interior arcs
  (Proposition~\ref{lem:edge.control.arc.MF.triang.}).
  \item In Section~\ref{subsec:hyp.triang.} we prove the analogous statement
  for hyperbolic triangles: edge lengths control the lengths of interior
  geodesic arcs (Proposition~\ref{prop:edge.control.arc.hyp.triang.}).
  \item In Section~\ref{subsec:invers.contin.} we use these local results to
  establish the continuity of the inverse maps
  $\Omega_G\to\cMFp$ and $\Omega_G\to\cT(G)$
  (Proposition~\ref{prop:MF.Teich.invers.continuous}), and hence prove that
  $\iG_{\cdot}$ and $\lG_{\cdot}$ are homeomorphisms
  (Proposition~\ref{prop:edge.leng.homeo.}).
  \item Finally, in Section~\ref{subsec:circ.foli.surf.coor.} we define the
  circular foliation map $\cC_G$ and the associated shear--radius
  coordinates on $\cT(G)$, by realizing $\cC_G(X)$ as a circular foliation
  on each hyperbolic triangle (Construction~\ref{con:circ.foli.surf.}) and
  then passing to the global surface.
\end{itemize}

\subsection{Markings on triangle boundaries}
\label{subsec:arc.triang.}

We begin by setting up a convenient way to compare boundary measures on different
triangles.  Given a Radon measure on the boundary of a topological triangle
$\Delta$ and a fixed ``model'' triangle $\Delta_0$ with parametrized boundary,
we obtain a canonical identification $f_\mu:\partial\Delta_0\to\partial\Delta$
(Definition~\ref{def:edge.marking.triang.}).  This marking will be used in the
continuity estimates for arcs and foliations in the following subsections.

\begin{definition}[Radon measures on the boundary of a triangle]
\label{def:R(Delta)}
Let $\Delta$ be a topological triangle with consecutive vertices
$v_1,v_2,v_3$ in counterclockwise order and opposite edges
$e_1,e_2,e_3\subset\partial\Delta$.  We denote by $\cR(\partial\Delta)$ the set
of Radon measures $\mu$ on $\partial\Delta$ such that:
\begin{enumerate}
\item[(a)] (\emph{full support on edges}) for every nonempty open sub-arc
  $I\subset e_i$ we have $\mu(I)>0$ for $i=1,2,3$;
\item[(b)] (\emph{non-atomic on edges}) $\mu(\{x\})=0$ for every
  $x\in\partial\Delta$.
\end{enumerate}
Equivalently, for each oriented edge $e_i$ parametrized by a path
$e_i(t)$, \(t\in[0,1]\), with initial point $e_i(0)$, the cumulative function
\[
F_i(t):=\mu\big([\,e_i(0),e_i(t)\,]\big), \qquad t\in[0,1],
\]
is continuous and strictly increasing; in particular $F_i$ admits a continuous
inverse $F_i^{-1}$ on its range.
\end{definition}

\begin{definition}[Based topological triangle]
\label{def:based.triang}
Let $\Delta_0$ be a fixed topological triangle with vertex set
$\{v_1,v_2,v_3\}$ and edges $e_i$ opposite to $v_i$ for $i=1,2,3$.  Each edge
$e_i$ is parametrized by a path $e_i(t)$, \(t\in[0,1]\), such that
$e_i(0)=v_{i+1}$ and $e_i(1)=v_{i+2}$, where the indices are taken cyclically
in $\{1,2,3\}$.  We call $\Delta_0$ the \DEF{\em based triangle with
parameterized boundary}.
\end{definition}

\begin{definition}[Marking induced by a measure]
\label{def:edge.marking.triang.}
Let $\mu\in\cR(\partial\Delta)$.  A map
$f_{\mu}:\partial\Delta_0\rightarrow\partial\Delta$ is called a
\DEF{\em $\mu$-marking of $\partial\Delta$} if
\begin{itemize}
\item[(i)] $f_{\mu}$ is an orientation-preserving homeomorphism sending the
  $i$-th vertex of $\Delta_0$ to the $i$-th vertex of $\Delta$ for $i=1,2,3$;
\item[(ii)] for each edge $e_i\subset\partial\Delta_0$ and each $t\in[0,1]$ we have
\begin{equation}\label{eq:def.mark.triang.}
\frac{\mu\big(f_{\mu}([e_i(0),e_i(t)])\big)}
     {\mu\big(f_{\mu}([e_i(0),e_i(1)])\big)}
= t,
\end{equation}
where $[a,b]$ denotes the closed sub-arc of $e_i$ joining $a$ to $b$.
\end{itemize}
\end{definition}

\subsection{From edge intersections to arc intersections on triangles}
\label{subsec:MF.triang.}

The goal of this subsection is to prove Proposition~\ref{lem:edge.control.arc.MF.triang.}, which states that the intersection number of an \emph{arc on a topological triangle} (Definition~\ref{def:arc.triang.}) varies continuously with the intersection numbers of the three edges.
To organize the argument we introduce an \(i_{[F]}\)-marking of the boundary (Definition~\ref{def:i-marking}) and associated boundary \emph{coordinates} (Definition~\ref{def:coord.pt.marking}), together with a signed intersection quantity \(I_p([F])\) (Definition~\ref{def:I(p)}).
We then express the total intersection number \(i_{[F]}([\alpha])\) of an arc \(\alpha\) in terms of the data at its endpoints (Lemma~\ref{clm:arc.inter.triang.}), which yields the desired continuity.

\begin{definition}[Arcs on a triangle]
\label{def:arc.triang.}
An \DEF{\em (oriented) arc} on a topological triangle \(\Delta\) is a continuous map \(\alpha:[0,1]\to\Delta\) with \(\alpha((0,1))\) contained in the interior of $\Delta$ and \(\alpha(0),\alpha(1)\) lying on \(\partial\Delta\).
We denote by \(\cA(\Delta)\) the set of homotopy classes of arcs in \(\Delta\) relative to their endpoints, and by \(\cA_0(\Delta)\subset\cA(\Delta)\) the subset of classes of simple arcs.
\end{definition}

\begin{definition}[Marking induced by a measured foliation]
\label{def:i-marking}
Let \([F]\in\mathcal{MF}(\Delta^{\perp})\) and let \(\mu:=i_{[F]}(\cdot)\vert_{\partial\Delta}\in\cR(\partial\Delta)\) be the restriction of the intersection function to the boundary.
The \DEF{\em \(i_{[F]}\)-marking} of \(\partial\Delta\) is the \(\mu\)-marking
\[
f_{[F]}:\partial\Delta_0\longrightarrow\partial\Delta
\]
given by Definition~\ref{def:edge.marking.triang.}.
\end{definition}

\begin{definition}[Coordinates on \(\partial\Delta\)]
\label{def:coord.pt.marking}
Let \([F]\in\mathcal{MF}(\Delta^{\perp})\) and let \(p\in\partial\Delta\) lie on the edge \(e_j\) of \(\Delta\), \(j\in\{1,2,3\}\).
\begin{enumerate}
\item The \DEF{\em coordinate} of \(p\) with respect to the \(i_{[F]}\)-marking is the unique number \(t_p^{[F]}\in[0,1]\) such that
\[
f_{[F]}(e_j(t_p^{[F]})) = p,
\]
where \(e_j(t)\) is the fixed parametrization of the \(j\)-th edge of the based triangle \(\Delta_0\) (Definition~\ref{def:based.triang}).
\item If \(p\) is the \DEF{\em singular foot} of \([F]\) on \(e_j\), that is, the intersection of the singular locus of the tripod normal form of \([F]\) with \(e_j\), we denote it by \(c_j^{[F]}\) and write its coordinate as \(t_j^{[F]}:=t_{c_j^{[F]}}^{[F]}\).
\end{enumerate}
\end{definition}

The coordinate of a singular foot is given by the formula
\begin{equation}\label{eq:center}
    t^{[F]}_{j}
    = \frac{i_{[F]}(e_{j+2}) + i_{[F]}(e_j) - i_{[F]}(e_{j+1})}
           {2 \, i_{[F]}(e_j)},
\end{equation}
for $j \in \{1,2,3\}$, where the indices are taken cyclically.
In particular, $t^{[F]}_j$ depends continuously on the triple of edge
intersection numbers $\big(i_{[F]}(e_1),i_{[F]}(e_2),i_{[F]}(e_3)\big)$.

From now on we fix, for each $[F] \in \cMF(\Delta^{\perp})$, the tripod normal form $F_\Delta$ of Definition~\ref{def:normal.form.triangle} and its unique interior singularity $q_f$.

\begin{definition}[Signed intersection number]
\label{def:I(p)}
Let $[F]\in\cMF(\Delta^{\perp})$ and let $p\in\partial\Delta$.
Assume that $p$ lies on the $j$-th edge of $\Delta$.
Let $t^{[F]}_p\in [0,1]$ be the coordinate of $p$ with respect to the
$i_{[F]}$-marking of $\partial\Delta$. The \DEF{\em (signed) intersection number} of $[F]$ at $p$ is defined by
\[
    I_p([F]) := (t^{[F]}_p - t^{[F]}_{j}) \, i_{[F]}(e_j),
\]
where $t_j^{[F]}$ is the coordinate of the singular foot $c_j^{[F]}$.
\end{definition}

By construction, $I_p([F])$ is linear in the edge intersection numbers and
therefore varies continuously with these data.

\begin{lemma}[Explicit formula for the signed intersection number]
\label{clm:express.I(p)}
Let $[F]\in\cMF(\Delta^{\perp})$ and let $p$ be a point on the $j$-th edge of
$\Delta$. Then the signed intersection number $I_p([F])$ is given by
\begin{equation}
    I_p([F])
    = \frac{1}{2}\Big( (2t^{[F]}_p-1) \, i_{[F]}(e_j)
                      + i_{[F]}(e_{j+1}) - i_{[F]}(e_{j+2}) \Big),
\end{equation}
where the indices are taken cyclically in $\{1,2,3\}$.
\end{lemma}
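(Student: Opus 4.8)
The plan is to derive the formula by direct substitution, since $I_p([F])$ is defined in Definition~\ref{def:I(p)} purely in terms of the coordinate $t^{[F]}_p$ and the coordinate $t^{[F]}_j$ of the singular foot, and the latter is given explicitly by the displayed formula~\eqref{eq:center}. Concretely, Definition~\ref{def:I(p)} reads
\[
I_p([F]) = \bigl(t^{[F]}_p - t^{[F]}_{j}\bigr)\, i_{[F]}(e_j),
\]
so the only input I would feed in is
\[
t^{[F]}_{j} = \frac{i_{[F]}(e_{j+2}) + i_{[F]}(e_j) - i_{[F]}(e_{j+1})}{2\, i_{[F]}(e_j)}.
\]

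First I would substitute this into the definition; multiplying through by $i_{[F]}(e_j)$ cancels the denominator of $t^{[F]}_j$, leaving
\[
I_p([F]) = t^{[F]}_p\, i_{[F]}(e_j) - \tfrac{1}{2}\bigl(i_{[F]}(e_{j+2}) + i_{[F]}(e_j) - i_{[F]}(e_{j+1})\bigr).
\]
Then I would collect the two terms carrying $i_{[F]}(e_j)$, using $t^{[F]}_p\, i_{[F]}(e_j) - \tfrac{1}{2}\,i_{[F]}(e_j) = \tfrac{1}{2}(2t^{[F]}_p - 1)\, i_{[F]}(e_j)$, and gather the remaining contributions $+\tfrac{1}{2}\,i_{[F]}(e_{j+1}) - \tfrac{1}{2}\,i_{[F]}(e_{j+2})$. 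This produces exactly
\[
I_p([F]) = \frac{1}{2}\Bigl((2t^{[F]}_p - 1)\, i_{[F]}(e_j) + i_{[F]}(e_{j+1}) - i_{[F]}(e_{j+2})\Bigr),
\]
which is the asserted identity.

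Since the argument is a one-line algebraic manipulation, there is no genuine obstacle; the only point requiring care is the consistent use of the cyclic indexing in $\{1,2,3\}$, so that the roles of $e_{j+1}$ and $e_{j+2}$ coming from \eqref{eq:center} are not accidentally interchanged in the sign of the final expression. It is worth emphasizing that the substantive geometric content sits entirely in the prior formula~\eqref{eq:center}, which encodes the transverse position of the tripod's singular foot on $e_j$ in terms of the three edge intersection numbers; the present lemma merely repackages $I_p([F])$ in a form that is manifestly affine in $t^{[F]}_p$ and continuous in the triple $\bigl(i_{[F]}(e_1), i_{[F]}(e_2), i_{[F]}(e_3)\bigr)$, which is the shape needed for the continuity estimates in the subsequent arc-intersection results.
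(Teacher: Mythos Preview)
Your proof is correct and follows exactly the same approach as the paper: substitute the expression for $t^{[F]}_j$ from~\eqref{eq:center} into the definition $I_p([F]) = (t^{[F]}_p - t^{[F]}_j)\, i_{[F]}(e_j)$ and simplify. The paper's proof is essentially the same two-line computation you give.
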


\begin{proof}
By Definition~\ref{def:I(p)} we have
\[
    I_p([F]) = (t_p^{[F]} - t_j^{[F]}) \, i_{[F]}(e_j).
\]
Substituting the expression for $t_j^{[F]}$ from~\eqref{eq:center} yields
\begin{align*}
    I_p([F])
    &= \Big( t_p^{[F]}
       - \frac{i_{[F]}(e_{j+2}) + i_{[F]}(e_j) - i_{[F]}(e_{j+1})}
              {2 \, i_{[F]}(e_j)} \Big) \, i_{[F]}(e_j) \\
    &= \frac{1}{2}\Big( (2t_p^{[F]} - 1) \, i_{[F]}(e_j)
                      + i_{[F]}(e_{j+1}) - i_{[F]}(e_{j+2}) \Big),
\end{align*}
as required.
\end{proof}

\begin{lemma}[Explicit formula for the arc intersection number]
\label{clm:arc.inter.triang.}
Let $[F]\in\cMF(\Delta^{\perp})$.
For any $[\alpha]\in\cA_0(\Delta)$ with endpoints $p\in e_i$ and $q\in e_j$,
we have:
\begin{itemize}
\item
if $p$ and $q$ lie on the same side of the singular locus of $[F]$, then
\[
    i_{[F]}([\alpha]) = \big||I_p([F])| - |I_q([F])|\big|;
\]
\item
if $p$ and $q$ lie on different sides of the singular locus of $[F]$, then
\[
    i_{[F]}([\alpha]) = |I_p([F])| + |I_q([F])|.
\]
\end{itemize}
\end{lemma}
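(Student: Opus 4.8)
The plan is to read the intersection number $i_{[F]}([\alpha])$ directly off the leaf structure of the tripod normal form $F_\Delta$ (Definition~\ref{def:normal.form.triangle}) and to interpret $|I_p([F])|$ as a transverse distance to the singular locus. Recall that the singular locus of $F_\Delta$ is a tripod with center $q_f$ and three legs ending at the singular feet $c_1,c_2,c_3$; removing it from $\Delta$ leaves three regions $R_1,R_2,R_3$, where $R_k$ is bounded by two legs and by the two boundary sub-arcs adjacent to the vertex $v_k$, and is foliated by the family of regular leaves joining the two edges incident to $v_k$. A point $p$ in the relative interior of $e_j$ lies on the boundary of exactly one region, and the sign of $I_p([F]) = (t_p^{[F]}-t_j^{[F]})\,i_{[F]}(e_j)$ (Definition~\ref{def:I(p)}) records on which side of the foot $c_j$ the point $p$ sits, hence which region, equivalently which leaf family, it belongs to. Accordingly I would make precise that $p$ and $q$ are on the \emph{same side of the singular locus} when they lie on the boundary of the same region $R_k$, and on \emph{different sides} otherwise.

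First I would identify $|I_p([F])|$ with the transverse $F_\Delta$-measure, along $e_j$, of the segment from $c_j$ to $p$: by the defining property of the $i_{[F]}$-marking this measure equals $|t_p^{[F]}-t_j^{[F]}|\,i_{[F]}(e_j)$. Since the regular leaves inside a region $R_k$ are nested arcs parallel to the two singular legs bounding $R_k$, the leaf through $p$ sits at transverse level $|I_p([F])|$ counted from the singular leaves of $R_k$, and likewise for $q$ within its own region.

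The core computation is then the two-case analysis. If $p$ and $q$ lie on the same side (same region $R_k$), a taut representative of $\alpha$ can be homotoped into $R_k$, where it crosses precisely the leaves lying strictly between the levels $|I_p([F])|$ and $|I_q([F])|$, of total transverse measure $\big||I_p([F])|-|I_q([F])|\big|$. If $p$ and $q$ lie on different sides, every representative of $\alpha$ must cross the singular locus; the cheapest route peels off the leaves from $p$ down to the singular locus (measure $|I_p([F])|$) and then from the singular locus up to $q$ (measure $|I_q([F])|$), yielding $|I_p([F])|+|I_q([F])|$.

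To make the minimality rigorous I would pass to the dual $\bR$-tree $T$ obtained by collapsing each leaf of $F_\Delta$ to a point, with the metric induced by the transverse measure; this $T$ is a tripod whose three edges correspond to the three leaf families and whose center is the image of the singular locus. The projection $\pi:\Delta\to T$ sends each boundary point $p$ to the point at distance $|I_p([F])|$ from the center along the edge of its region. For any representative $\alpha'$ of $[\alpha]$ rel endpoints, the composite $\pi\circ\alpha'$ is a path in $T$ from $\pi(p)$ to $\pi(q)$, so the total transverse measure of $\alpha'$ is at least $d_T(\pi(p),\pi(q))$, with equality for the nested-leaf representatives described above; hence $i_{[F]}([\alpha]) = d_T(\pi(p),\pi(q))$. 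The two displayed identities are then exactly the two evaluations of this tree distance (within a single edge, or through the center). I expect the main obstacle to be precisely this tautness step: verifying that the infimum over the homotopy class is actually realized, and equals the tree distance rather than merely being bounded below by it, which is where the collapse-to-tree argument and the explicit nested-leaf representatives carry the real weight.
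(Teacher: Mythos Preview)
Your proposal is correct and follows essentially the same approach as the paper: both interpret $|I_p([F])|$ as the transverse measure of the leaf family between $p$ and the singular foot on its edge, and both split into the same two cases according to whether $p$ and $q$ lie in the same or different complementary regions of the tripod. The paper phrases this via the leaf sets $\mathcal{F}_p,\mathcal{F}_q$ (nested in the same-side case, disjoint in the different-side case) and simply asserts that a representative arc crosses exactly the appropriate leaves; your dual-tree argument is a clean way to certify the minimality step that the paper leaves implicit.
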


\begin{proof}
The formulas follow directly from the geometric meaning of the signed
intersection number $I_p([F])$.
The quantity $|I_p([F])|$ is the transverse measure of the set
$\mathcal{F}_p$ of leaves whose endpoints on the edge $e_i$ containing $p$
lie between $p$ and the singular foot $c_i^{[F]}$.

\emph{Case~1: $p$ and $q$ lie on the same side of the singular locus.}
In this case, one of the sets $\mathcal{F}_p$ and $\mathcal{F}_q$ is contained
in the other; assume $\mathcal{F}_p \subseteq \mathcal{F}_q$.
An arc $[\alpha]$ connecting $p$ and $q$ crosses exactly the leaves in the set
difference $\mathcal{F}_q \setminus \mathcal{F}_p$
(Figure~\ref{fig:MF.inter.edge.case1}),
whose transverse measure is the absolute difference of the measures of
$\mathcal{F}_p$ and $\mathcal{F}_q$.
Thus
\[
    i_{[F]}([\alpha])
    = \big|\mathrm{measure}(\mathcal{F}_p)
             - \mathrm{measure}(\mathcal{F}_q)\big|
    = \big||I_p([F])| - |I_q([F])|\big|.
\]

\emph{Case~2: $p$ and $q$ lie on different sides of the singular locus.}
Here the sets $\mathcal{F}_p$ and $\mathcal{F}_q$ are disjoint, and any arc
$[\alpha]$ connecting $p$ and $q$ crosses all leaves in both sets
(Figure~\ref{fig:MF.inter.edge.case2}).
Therefore,
\[
    i_{[F]}([\alpha])
    = \mathrm{measure}(\mathcal{F}_p) + \mathrm{measure}(\mathcal{F}_q)
    = |I_p([F])| + |I_q([F])|.
\]

\begin{figure}[htp]
\begin{subfigure}[b]{0.49\textwidth}
\centering
\includegraphics[width=0.49\linewidth]{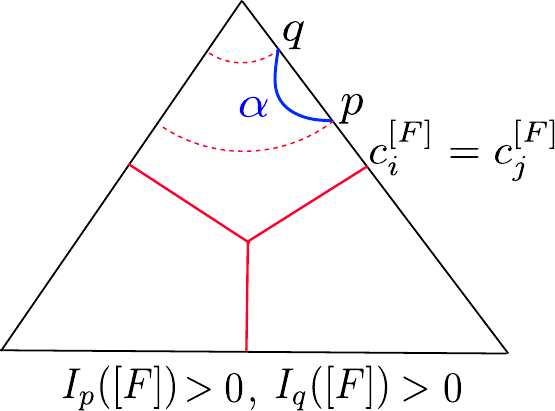}
\includegraphics[width=0.49\linewidth]{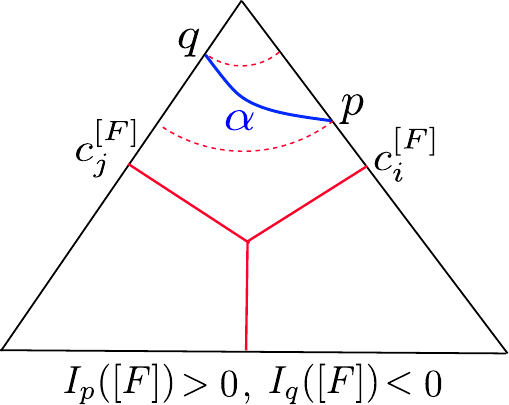}
\caption{Endpoints on the same side.}
\label{fig:MF.inter.edge.case1}
\end{subfigure}
\begin{subfigure}[b]{0.49\textwidth}
\centering
\includegraphics[width=0.495\linewidth]{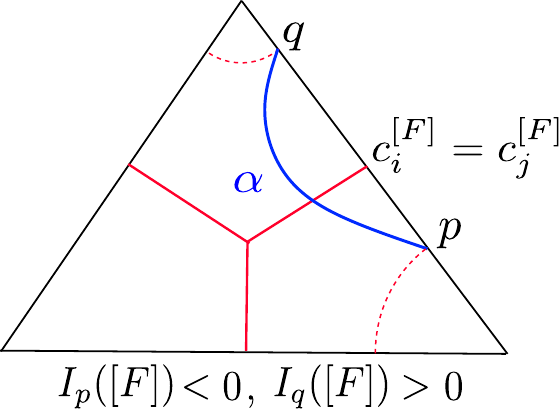}
\includegraphics[width=0.485\linewidth]{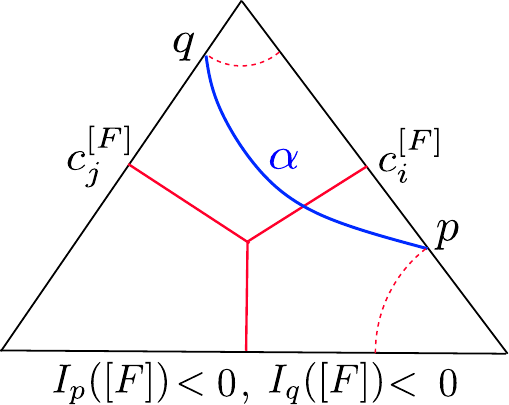}
\caption{Endpoints on different sides.}
\label{fig:MF.inter.edge.case2}
\end{subfigure}
\caption{Relative position of endpoints with respect to the singular locus
of $[F]$.}
\label{fig:MF.inter.edge.cases}
\end{figure}
\end{proof}

\begin{proposition}[Edge intersections control arc intersections]
\label{lem:edge.control.arc.MF.triang.}
Let $\alpha$ be an arc in $\Delta_0$ with distinct endpoints
$p\in e_i$ and $q\in e_j$ for $i,j\in\{1,2,3\}$.
Let $[F_1], [F_2] \in \mathcal{MF} (\Delta^{\perp})$, and let $\alpha_k$ be
a simple arc with endpoints $p_k=f_{[F_k]}(p)$ and $q_k=f_{[F_k]}(q)$
for $k=1,2$.
Then for any $\epsilon > 0$ there exists $\delta = \delta (\epsilon) > 0$
such that if
\[
    \big| i_{[F_1]} (e) - i_{[F_2]} (e) \big| < \delta
\]
for all edges $e$ of $\Delta_0$, then
\[
    \big|i_{[F_1]} (\alpha_1) - i_{[F_2]} (\alpha_2)\big| < \epsilon.
\]
\end{proposition}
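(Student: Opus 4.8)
The plan is to reduce the statement to the elementary continuity of the two closed-form expressions for $i_{[F]}(\alpha)$ furnished by Lemma~\ref{clm:arc.inter.triang.}, after exploiting the key normalization provided by the marking: it makes the endpoint coordinates independent of $k$. Concretely, write $p = e_i(\tau_p)$ and $q = e_j(\tau_q)$ for the \emph{fixed} parameters $\tau_p,\tau_q\in[0,1]$ of the endpoints of the fixed arc $\alpha$ on the based triangle $\Delta_0$. Since $p_k = f_{[F_k]}(p) = f_{[F_k]}(e_i(\tau_p))$, Definition~\ref{def:coord.pt.marking}(1) gives at once $t_{p_k}^{[F_k]} = \tau_p$ and $t_{q_k}^{[F_k]} = \tau_q$, for both $k=1,2$. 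Thus the only remaining $[F_k]$-dependence enters through the edge intersection numbers.

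First I would feed these fixed coordinates into Lemma~\ref{clm:express.I(p)}. With $t_{p_k}^{[F_k]}=\tau_p$, this expresses $I_{p_k}([F_k])$ as an affine-linear function of the triple $\bigl(i_{[F_k]}(e_1),i_{[F_k]}(e_2),i_{[F_k]}(e_3)\bigr)$ whose coefficients $\tfrac{2\tau_p-1}{2},\tfrac12,-\tfrac12$ depend only on $\tau_p$ (and symmetrically for $q$). Since $|2\tau_p-1|\le 1$, the absolute values of these coefficients sum to at most $\tfrac32$, so
\[
\bigl|I_{p_1}([F_1]) - I_{p_2}([F_2])\bigr| \le \tfrac32 \max_{e}\bigl|i_{[F_1]}(e)-i_{[F_2]}(e)\bigr|,
\]
and the same estimate holds for $q$. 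Hence, when all edge intersection numbers differ by less than $\delta$, the signed intersection numbers $I_{p_k},I_{q_k}$ differ by less than $\tfrac32\delta$.

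Next I would package the two cases of Lemma~\ref{clm:arc.inter.triang.} into a single function $g(x,y)$ of $(x,y)=(I_p,I_q)$, equal to $\bigl||x|-|y|\bigr|$ when the endpoints lie on the same side of the singular locus and to $|x|+|y|$ otherwise, and compute $i_{[F_k]}(\alpha_k)=g\bigl(I_{p_k},I_{q_k}\bigr)$. The structural point is that which of the two cases occurs depends on $(I_p,I_q)$ only through the \emph{signs} of $I_p$ and $I_q$ (i.e.\ on which side of the corresponding singular foot each endpoint lies), together with the combinatorial data ($i$, $j$ and the homotopy type of $\alpha$) that is fixed once and for all, hence identical for $k=1,2$; so the same $g$ computes both arc intersection numbers. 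On the coordinate axes, where one of $I_p,I_q$ vanishes, the two formulas agree (both equal $|I_q|$, resp.\ $|I_p|$), so $g$ is continuous across the boundaries between the sign-regions. Since $g$ restricts to one of the two $1$-Lipschitz formulas on each closed quadrant and the formulas agree on the shared axes, a straight-segment argument shows that $g$ is globally $1$-Lipschitz for the $\ell^1$-norm. Combining with the previous estimate,
\[
\bigl|i_{[F_1]}(\alpha_1)-i_{[F_2]}(\alpha_2)\bigr| = \bigl|g(I_{p_1},I_{q_1})-g(I_{p_2},I_{q_2})\bigr| \le \bigl|I_{p_1}-I_{p_2}\bigr| + \bigl|I_{q_1}-I_{q_2}\bigr| < 3\delta,
\]
so $\delta := \epsilon/3$ finishes the argument, uniformly in $[F_1],[F_2]$.

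I expect the main obstacle to be the verification that the case distinction (nested versus disjoint leaf-fans $\mathcal{F}_p,\mathcal{F}_q$) is governed purely by the signs of $I_p,I_q$ together with the fixed combinatorial data, and in particular that it cannot flip without one of $I_p,I_q$ passing through $0$. I would justify this by revisiting the description of $\mathcal{F}_p$ in the proof of Lemma~\ref{clm:arc.inter.triang.}: the sign of $I_p$ records on which side of the foot $c_i^{[F]}$ the endpoint $p$ sits, hence which of the two adjacent edges the fan $\mathcal{F}_p$ joins $e_i$ to, and the resulting nesting or disjointness of $\mathcal{F}_p$ and $\mathcal{F}_q$ is then a purely combinatorial consequence of the tripod structure, which is common to all classes in $\cMF(\Delta^\perp)$. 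Once this is established, the continuity of $g$ across the axes and the Lipschitz estimate are routine.
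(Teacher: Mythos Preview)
Your proposal is correct and follows essentially the same route as the paper: fix the endpoint coordinates via the marking, use Lemma~\ref{clm:express.I(p)} to bound $|I_{p_1}-I_{p_2}|$ linearly in the edge differences, and then apply Lemma~\ref{clm:arc.inter.triang.} together with the $1$-Lipschitz nature of the two formulas to conclude with $\delta=\epsilon/3$. The only difference is cosmetic: the paper handles the transition between the two cases by splitting into a generic case (signs preserved for small $\delta$) and a boundary case ($I_{p_1}=0$, where both formulas agree), whereas you package this into a single globally $1$-Lipschitz function $g$; your version has the minor advantage of giving a $\delta$ that is uniform in $[F_1]$.
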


\begin{proof}
By Lemmas~\ref{clm:express.I(p)} and~\ref{clm:arc.inter.triang.}, the
intersection number of an arc is determined by the edge intersection numbers
and the coordinates of its endpoints, and this dependence is given by explicit formulas.
We now show that this determines a continuous function of the edge data.

Let $t_p$ and $t_q$ be the parameter values of the endpoints $p$ and $q$ on the
boundary of the base triangle $\Delta_0$
(Definition~\ref{def:based.triang}) respectively.
By construction, these are also the coordinates of the endpoints $p_k$ and $q_k$
of each arc $\alpha_k$ with respect to the $i_{[F_k]}$-marking, for $k=1,2$.

The key observation is that the coordinates $t_i^{[F]}$ of the singular feet
on each edge $e_i$ are continuous functions of the edge intersection numbers
$\big(i_{[F]}(e_1), i_{[F]}(e_2), i_{[F]}(e_3)\big)$,
by the explicit expression~\eqref{eq:center}.

\emph{Case~1: The endpoints are not singular feet for $[F_1]$.}
If $t_p \neq t_i^{[F_1]}$ and $t_q \neq t_j^{[F_1]}$, then by continuity of
the foot coordinates we may choose $\delta>0$ so that for any $[F_2]$ with
$\big|i_{[F_1]}(e) - i_{[F_2]}(e)\big| < \delta$ for all $e$, the relative
ordering of the endpoints and the singular feet on each edge is the same for
$[F_1]$ and $[F_2]$.
In particular, the pairs $\{p_1,q_1\}$ and $\{p_2,q_2\}$ have the same
positional relationship with respect to their singular loci, so the same sign
choice in Lemma~\ref{clm:arc.inter.triang.} applies to both $[F_1]$ and $[F_2]$.
Using Lemma~\ref{clm:express.I(p)} and the fact that $t_p,t_q\in[0,1]$, we obtain
the estimate
\begin{equation}\label{eq:estimate-i}
\begin{split}
\big|i_{[F_1]}([\alpha_1])-i_{[F_2]}([\alpha_2])\big|
&= \Big|\big||I_{p_1}([F_1])|\pm|I_{q_1}([F_1])|\big|
       -\big||I_{p_2}([F_2])|\pm|I_{q_2}([F_2])|\big|\Big| \\
&\leq \big|I_{p_1}([F_1])-I_{p_2}([F_2])\big|
      +\big|I_{q_1}([F_1])-I_{q_2}([F_2])\big| \\
&\leq \sum_{l=1}^3 \big|i_{[F_1]}(e_l)-i_{[F_2]}(e_l)\big|.
\end{split}
\end{equation}

\emph{Case~2: An endpoint is a singular foot for $[F_1]$.}
If $t_p = t_i^{[F_1]}$, then $I_{p_1}([F_1])=0$.
In this situation both formulas in Lemma~\ref{clm:arc.inter.triang.} reduce to
\[
    i_{[F_1]}([\alpha_1])
    = \big||I_{p_1}([F_1])|-|I_{q_1}([F_1])|\big|
    = |I_{p_1}([F_1])|+|I_{q_1}([F_1])|.
\]
On the other hand, Lemma~\ref{clm:arc.inter.triang.} always gives
\[
    i_{[F_2]}([\alpha_2]) = \big||I_{p_2}([F_2])|\pm|I_{q_2}([F_2])|\big|
\]
for some choice of sign~$\pm$.
Thus the estimate~\eqref{eq:estimate-i} still holds.
Since the right-hand side of~\eqref{eq:estimate-i} is linear in the differences
of edge intersections, given any $\epsilon>0$ we may take
$\delta=\epsilon/3$ to conclude the proof.
\end{proof}

\subsection{From edge lengths to arc lengths on triangles}
\label{subsec:hyp.triang.}

In this subsection we show that, inside a fixed hyperbolic triangle, the length of any arc depends continuously on the edge lengths (Proposition~\ref{prop:edge.control.arc.hyp.triang.}).
Our strategy is to express the length of an arc in terms of the edge lengths and the boundary coordinates of its endpoints, using the $\ell_X$–marking (Definition~\ref{def:l-marking}).

\begin{definition}[Arc length]
\label{def:length.arc}
Let $X\in\cT(\Delta)$.
For any $[\alpha]\in\cA(\Delta)$, we define $\ell_X([\alpha])$ to be the length,
with respect to $X$, of the (unique) length-minimizing geodesic representative in the homotopy class $[\alpha]$ relative to its endpoints.
\end{definition}

\begin{definition}[$\ell_X$-marking and coordinates on $\partial \Delta$]
\label{def:l-marking}
Let $X\in\cT(\Delta)$ and let $\mu:=\ell_X(\cdot)\big|_{\partial\Delta}$ be the restriction of the length function to $\partial\Delta$.
We denote by
\[
f_X:\partial\Delta_0\longrightarrow\partial\Delta
\]
the associated $\mu$-marking (Definition~\ref{def:edge.marking.triang.}), called the \DEF{\em $\ell_X$-marking} of $\partial\Delta$.
If $p$ lies on the $i$-th edge of $\Delta$, we define its \DEF{\em coordinate} with respect to the $\ell_X$-marking to be the number $t_p^X\in[0,1]$ such that
\[
f_X(e_i(t_p^X))=p.
\]
\end{definition}

\begin{lemma}[Explicit formula for arc length]
\label{clm:express.length}
Let $X\in\cT(\Delta)$.
For any $[\alpha]\in\cA_0(\Delta)$ with endpoints $p\in e_i$ and $q\in e_j$, the following holds:
\begin{enumerate}
\item If $j=i$, then
\[
\ell_X([\alpha]) = |t_p^X - t_q^X|\cdot \ell_X(e_i).
\]
\item If $j=i+1 \,(\mathrm{mod}\,3)$, then
\[
\begin{split}
\cosh \ell_X([\alpha])
&= \cosh\big((1-t_p^X)\,\ell_X(e_i)\big)\,
   \cosh\big(t_q^X\,\ell_X(e_j)\big) \\
&\quad - \sinh\big((1-t_p^X)\,\ell_X(e_i)\big)\,
          \sinh\big(t_q^X\,\ell_X(e_j)\big)\,\zeta_{p,q}(X),
\end{split}
\]
\item If $j=i+2 \,(\mathrm{mod}\,3)$, then
\[
\begin{split}
\cosh \ell_X([\alpha])
&= \cosh\big((1-t_p^X)\,\ell_X(e_j)\big)\,
   \cosh\big(t_q^X\,\ell_X(e_i)\big) \\
&\quad - \sinh\big((1-t_p^X)\,\ell_X(e_j)\big)\,
          \sinh\big(t_q^X\,\ell_X(e_i)\big)\,\zeta_{p,q}(X),
\end{split}
\]
\end{enumerate}
where
\[
\zeta_{p,q}(X)
:= \frac{\cosh\big(\ell_X(e_j)\big)\cosh\big(\ell_X(e_i)\big)
        - \cosh\big(\ell_X(e_k)\big)}
       {\sinh\big(\ell_X(e_j)\big)\,\sinh\big(\ell_X(e_i)\big)},
\]
and $k=\{1,2,3\}\setminus\{i,j\}$.
\end{lemma}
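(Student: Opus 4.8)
The plan is to reduce the whole statement to elementary hyperbolic trigonometry inside the convex geodesic triangle underlying \(X\in\cT(\Delta)\), keeping careful track of the based-triangle parametrization of Definition~\ref{def:based.triang}. The first step is to record what the coordinate \(t_p^X\) means geometrically. Since the \(\ell_X\)-marking \(f_X\) (Definition~\ref{def:l-marking}) is the \(\mu\)-marking for \(\mu=\ell_X(\cdot)\big|_{\partial\Delta}\), which on each geodesic edge is just arc-length measure, the normalization \eqref{eq:def.mark.triang.} says precisely that the sub-arc of \(e_i\) from the initial vertex \(e_i(0)\) to \(p\) has length \(t_p^X\,\ell_X(e_i)\), and hence the sub-arc from the terminal vertex \(e_i(1)\) to \(p\) has length \((1-t_p^X)\,\ell_X(e_i)\). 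This dictionary between boundary coordinates and genuine distances is what turns the combinatorial marking into metric data. For Case~(1), with \(p,q\in e_i\), both points lie on the single geodesic line carrying \(e_i\), so the length-minimizing representative is the sub-segment of \(e_i\) between them; its length is \(|t_p^X-t_q^X|\,\ell_X(e_i)\) by the dictionary. (Here the minimizer degenerates onto the boundary edge, which is permitted by the convention in Definition~\ref{def:length.arc}.)

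For Cases~(2) and~(3) the endpoints lie on two distinct edges \(e_i,e_j\) that meet at a common vertex \(v\). The geodesic representative \(\alpha_X\) is the hyperbolic geodesic joining \(p\) to \(q\); since \(\Delta\) is a convex geodesic triangle this geodesic lies inside \(\Delta\) and, together with the two boundary sub-segments \([v,p]\subset e_i\) and \([v,q]\subset e_j\), bounds a (possibly degenerate) geodesic sub-triangle with one vertex at \(v\). I would then apply the hyperbolic law of cosines to this sub-triangle: writing \(\varphi\) for the interior angle of \(\Delta\) at \(v\),
\[
\cosh\ell_X([\alpha])
=\cosh\big(\mathrm{d}(v,p)\big)\cosh\big(\mathrm{d}(v,q)\big)
-\sinh\big(\mathrm{d}(v,p)\big)\sinh\big(\mathrm{d}(v,q)\big)\cos\varphi,
\]
where \(\mathrm{d}(v,p)\) and \(\mathrm{d}(v,q)\) are read off from the dictionary above. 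The split into \(j=i+1\) and \(j=i+2\) simply corresponds to the two cyclic possibilities for which vertex is shared, which in turn dictates whether \(t_\bullet^X\) or \(1-t_\bullet^X\) enters each side length.

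It remains to identify \(\cos\varphi\) with \(\zeta_{p,q}(X)\). The angle \(\varphi\) is an interior angle of the full triangle \(\Delta\); tracing the conventions \(e_i(0)=v_{i+1},\ e_i(1)=v_{i+2}\) shows that the shared vertex \(v\) is always \(v_k\) with \(k\in\{1,2,3\}\setminus\{i,j\}\), so \(\varphi\) is the angle opposite the remaining edge \(e_k\). Applying the law of cosines to \(\Delta\) itself gives
\[
\cosh\ell_X(e_k)
=\cosh\ell_X(e_i)\cosh\ell_X(e_j)
-\sinh\ell_X(e_i)\sinh\ell_X(e_j)\cos\varphi,
\]
and solving for \(\cos\varphi\) yields exactly the expression defining \(\zeta_{p,q}(X)\). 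Substituting into the sub-triangle formula completes both Cases~(2) and~(3).

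I do not expect a serious conceptual obstacle: the entire content is the two-fold use of the hyperbolic law of cosines, once in the corner sub-triangle and once in \(\Delta\) to evaluate the angle, together with the convexity remark guaranteeing that \(\alpha_X\) stays inside \(\Delta\). The one genuinely delicate point is the index and orientation bookkeeping: one must decide, from the based-triangle conventions, which end of each edge coincides with \(v\), since this is precisely what assigns \(t_p^X\) versus \(1-t_p^X\) (and pairs each coordinate with \(\ell_X(e_i)\) or \(\ell_X(e_j)\)) to the two sides emanating from \(v\). This has to be verified case by case for \(j=i+1\) and \(j=i+2\) to land on the stated displayed formulas, and is where I would spend the most care.
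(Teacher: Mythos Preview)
Your proposal is correct and follows exactly the paper's own argument: apply the hyperbolic law of cosines to the sub-triangle with vertices \(p\), \(q\), and the shared vertex \(v=v_k\), and then apply it again to \(\Delta\) itself to identify \(\cos\varphi\) with \(\zeta_{p,q}(X)\). The paper also notes that Case~(3) is obtained from Case~(2) by interchanging the roles of \(i\) and \(j\), which aligns with your index bookkeeping.
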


\begin{remark}
The formulas above are a convenient explicit form of the dependence of
$\ell_X([\alpha])$ on the edge lengths and the boundary coordinates of $p$ and $q$.
We will only use them to deduce the continuity of arc lengths with respect to the
edge data in Proposition~\ref{prop:edge.control.arc.hyp.triang.}.
\end{remark}

\begin{proof}
If $j=i$, the unique geodesic representative of $[\alpha]$ is the boundary segment
between $p$ and $q$, so (1) follows directly from the definition of the $\ell_X$–marking.

Assume now that $j=i+1 \,(\mathrm{mod}\,3)$, so $e_j$ follows $e_i$ in the
counterclockwise order on $\partial\Delta$.
Consider the geodesic triangle with vertices $p$, $q$ and the vertex between
$e_i$ and $e_j$.
Writing the side lengths of this triangle in terms of $t_p^X$, $t_q^X$ and
$\ell_X(e_i)$, $\ell_X(e_j)$, $\ell_X(e_k)$ and applying the hyperbolic law of
cosines twice (see, for instance, \cite[§2.1]{Bus92}) yields the expression in~(2).
The case $j=i+2 \,(\mathrm{mod}\,3)$ is obtained from (2) by exchanging the roles of
$i$ and $j$, which gives~(3).
\end{proof}

\begin{proposition}[Edge lengths control arc lengths]
\label{prop:edge.control.arc.hyp.triang.}
Let $\alpha$ be a simple arc in $\Delta_0$ with distinct endpoints $p=e_i(t_p)$ and $q=e_j(t_q)$ for some $i,j\in\{1,2,3\}$.
Let $X_1,X_2\in\cT(\Delta)$ and let $\alpha_k$ be a simple arc in $\Delta$ with endpoints
\[
p_k:=f_{X_k}(p),\qquad q_k:=f_{X_k}(q)\qquad (k=1,2).
\]
Then for any $\epsilon>0$ there exists $\delta=\delta(\epsilon)>0$ such that if, for all $l=1,2,3$, 
\[
\big|\ell_{X_1}(e_l)-\ell_{X_2}(e_l)\big|<\delta,
\]
then
\[
\big|\ell_{X_1}([\alpha_1])-\ell_{X_2}([\alpha_2])\big|<\epsilon.
\]
\end{proposition}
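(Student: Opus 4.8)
The plan is to follow the same strategy as in the proof of Proposition~\ref{lem:edge.control.arc.MF.triang.}, replacing the intersection-number formulas by the explicit length formulas of Lemma~\ref{clm:express.length}. The decisive observation is that the $\ell_X$-marking is built precisely so that the boundary coordinates of the transported endpoints are fixed: by Definition~\ref{def:l-marking}, since $p_k=f_{X_k}(e_i(t_p))$ and $q_k=f_{X_k}(e_j(t_q))$, one has $t_{p_k}^{X_k}=t_p$ and $t_{q_k}^{X_k}=t_q$ for both $k=1,2$. Thus the only data that vary with $k$ are the three edge lengths $\ell_{X_k}(e_1),\ell_{X_k}(e_2),\ell_{X_k}(e_3)$. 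Moreover, which of the three cases of Lemma~\ref{clm:express.length} applies is determined solely by the combinatorial pair $(i,j)$ attached to the fixed arc $\alpha$ in $\Delta_0$, so the same formula governs both $\ell_{X_1}([\alpha_1])$ and $\ell_{X_2}([\alpha_2])$. This is in fact cleaner than the foliation case, where one first had to rule out a change in the side of the singular locus.

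First I would fix the combinatorial case $j\in\{i,\,i+1,\,i+2\}$ and substitute the fixed values $t_p,t_q$ into Lemma~\ref{clm:express.length}, thereby writing
\[
\ell_{X_k}([\alpha_k]) = \Phi\bigl(\ell_{X_k}(e_1),\ell_{X_k}(e_2),\ell_{X_k}(e_3)\bigr),
\]
where $\Phi$ is an explicit function of the three edge lengths depending on the fixed parameters $t_p,t_q$. In the case $j=i$ this reads $\Phi=|t_p-t_q|\,\ell(e_i)$, which is manifestly continuous. In the cases $j=i\pm 1$, the right-hand sides are assembled from $\cosh$, $\sinh$ and the auxiliary function $\zeta_{p,q}$, and one recovers $\ell_{X_k}([\alpha_k])$ by applying the inverse of $\cosh$ to the resulting expression.

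The remaining point is the continuity of $\Phi$ on the open admissible cone $\Omega_\Delta$, and here lies the only genuine technical care. Since $X_1,X_2\in\cT(\Delta)$, all edge lengths are strictly positive, so the denominator $\sinh(\ell(e_i))\,\sinh(\ell(e_j))$ of $\zeta_{p,q}$ stays bounded away from $0$ in a neighbourhood of the edge-length vector of $X_1$; hence $\zeta_{p,q}$, and therefore the full expression for $\cosh\ell_{X_k}([\alpha_k])$, is continuous in the edge lengths. As this expression is the hyperbolic cosine of an actual length, it always lies in $[1,\infty)$, where $\cosh^{-1}$ is continuous, so the composition $\Phi$ is continuous at the edge-length vector of $X_1$. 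Continuity at this point yields, for every $\epsilon>0$, a $\delta>0$ (depending on $\epsilon$ and on $X_1$) for which $|\ell_{X_1}(e_l)-\ell_{X_2}(e_l)|<\delta$ for all $l$ forces $|\ell_{X_1}([\alpha_1])-\ell_{X_2}([\alpha_2])|<\epsilon$. The main obstacle to a \emph{uniform} $\delta$ is the unbounded derivative of $\cosh^{-1}$ as its argument tends to $1$ (that is, as the arc length tends to $0$); but since the proposition only requires continuity at a fixed reference point $X_1$—which is all that is needed for the continuity of the inverse map in Proposition~\ref{prop:MF.Teich.invers.continuous}—this causes no real difficulty.
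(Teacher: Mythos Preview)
Your argument is correct and follows essentially the same approach as the paper: both note that the boundary coordinates $t_{p_k}^{X_k}=t_p$ and $t_{q_k}^{X_k}=t_q$ are fixed independently of $k$, invoke Lemma~\ref{clm:express.length} to express $\ell_{X_k}([\alpha_k])$ as a single continuous function of the three edge lengths, and conclude by continuity at the edge-length vector of $X_1$. Your version is slightly more explicit about the continuity of $\cosh^{-1}$ and the non-vanishing of the denominator of $\zeta_{p,q}$, but the structure is identical.
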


\begin{proof}
By Definition~\ref{def:l-marking}, the coordinates of $p_k$ and $q_k$ with respect to the $\ell_{X_k}$-marking are independent of $k$, namely
\[
t_{p_1}^{X_1}=t_{p_2}^{X_2}=t_p,\qquad
t_{q_1}^{X_1}=t_{q_2}^{X_2}=t_q.
\]
Combining this with Lemma~\ref{clm:express.length}, we see that $\ell_{X_k}([\alpha_k])$ can be written as a function
\[
\ell_{X_k}([\alpha_k])=F\big(\ell_{X_k}(e_1),\ell_{X_k}(e_2),\ell_{X_k}(e_3)\big),
\]
where $F:(0,\infty)^3\to(0,\infty)$ is obtained from the expressions in Lemma~\ref{clm:express.length} by composing finitely many smooth functions (sums, products, $\cosh$, $\sinh$) with the fixed parameters $t_p,t_q$.
In particular, $F$ is continuous.
Hence for any $\epsilon>0$ there exists $\delta>0$ such that
\[
\big\|(\ell_{X_1}(e_1),\ell_{X_1}(e_2),\ell_{X_1}(e_3))
-(\ell_{X_2}(e_1),\ell_{X_2}(e_2),\ell_{X_2}(e_3))\big\|<\delta
\]
implies
\[
\big|\ell_{X_1}([\alpha_1])-\ell_{X_2}([\alpha_2])\big|
<\epsilon.
\]
This is exactly the desired estimate.
\end{proof}

\subsection{Continuity of the inverse maps}
\label{subsec:invers.contin.}

We now apply the local triangle estimates from Subsections~\ref{subsec:MF.triang.} and~\ref{subsec:hyp.triang.} to prove the global continuity of the inverse edge maps.
This shows that our edge-based parametrizations control the geometry of \emph{all} arcs and curves.

\begin{proposition}[Continuity of the inverse edge maps]
\label{prop:MF.Teich.invers.continuous}
For any \(G\in\TSp\), the inverse maps
\[
(\iG_{\cdot})^{-1} : \Omega_G \longrightarrow \cMFp
\quad\text{and}\quad
(\ell^G_{\cdot})^{-1} : \Omega_G \longrightarrow \cT(G)
\]
are continuous.
\end{proposition}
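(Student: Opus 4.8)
The plan is to exploit that both target topologies are the weak topologies induced by the \(\cAC\)-indexed functionals: a net converges in \(\cMFp\) iff all geometric intersection numbers converge, and a net converges in \(\cT(G)\) iff all arc--curve lengths converge (Definitions~\ref{def:weak.top.func.space} and~\ref{def:topology.on.T}). Since \(\OG\subset\bR^{E(G)}\) carries the (finite-dimensional) Euclidean topology, it therefore suffices to fix an arbitrary class \([\beta]\in\cAC\) and show that the two composite functions
\[
\OG \xrightarrow{(\iG_{\cdot})^{-1}} \cMFp \xrightarrow{i(\cdot,[\beta])} \bR_{\ge 0}, \qquad \OG \xrightarrow{(\lG_{\cdot})^{-1}} \cT(G) \xrightarrow{\ell_{\cdot}([\beta])} \bR_{>0}
\]
are continuous in the edge data \(\ell\in\OG\).

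First I would fix, once and for all, a representative of \([\beta]\) in minimal position with respect to the \(1\)-skeleton of \(G\). This yields a decomposition of \([\beta]\) into a finite sequence of elementary arcs \(\alpha_1,\dots,\alpha_m\), each contained in a single face \(\Delta_{f_s}\) and joining a prescribed pair of edges of that face (for \([\beta]\in\cA\) the two extreme pieces emanate from a marked point). The essential point is that both the number \(m\) of pieces and the combinatorial pattern (the sequence of faces and edge-pairs, together with, for each shared edge, the identification of the two endpoints meeting there) are purely topological: they depend only on \([\beta]\) and \(G\), not on the foliation or the metric. Thus as \(\ell\) varies I will be comparing the same finite family of per-face problems.

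Next I would express the global quantity as a minimum of per-face contributions over the positions of the crossing points. Parametrizing each crossing by its proportional boundary coordinate (the \(i_{[F]}\)- or \(\ell_X\)-marking of Definitions~\ref{def:i-marking} and~\ref{def:l-marking}), the consistency of these markings along shared edges lets me encode admissible crossing data by a vector \(\vec t\) ranging over a fixed compact cube \(P=\prod[0,1]\). Tautness/quasi-transversality then gives
\[
i\bigl((\iG_{\cdot})^{-1}(\ell),[\beta]\bigr) = \min_{\vec t\in P}\sum_{s=1}^m i_{[F]}\bigl(\alpha_s(\vec t)\bigr), \qquad \ell_{(\lG_{\cdot})^{-1}(\ell)}([\beta]) = \min_{\vec t\in P}\sum_{s=1}^m \ell_X\bigl(\alpha_s(\vec t)\bigr),
\]
where \([F]=(\iG_{\cdot})^{-1}(\ell)\) and \(X=(\lG_{\cdot})^{-1}(\ell)\). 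Each summand is jointly continuous in \((\ell,\vec t)\): this is essentially the content of Propositions~\ref{lem:edge.control.arc.MF.triang.} and~\ref{prop:edge.control.arc.hyp.triang.}, whose explicit formulas (through Lemmas~\ref{clm:express.I(p)},~\ref{clm:arc.inter.triang.} and~\ref{clm:express.length}) depend continuously not only on the edge data but also on the endpoint parameters. A standard lemma---the minimum of a jointly continuous function over a fixed compact set depends continuously on the external parameter---then shows that each global quantity is continuous in \(\ell\), and since \([\beta]\) was arbitrary both inverse maps are continuous.

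I expect the main obstacle to be the identification of each global quantity with the per-face minimum, i.e.\ verifying that the minimal-position decomposition with per-face taut (respectively geodesic) pieces really computes \(i([F],[\beta])\) and \(\ell_X([\beta])\). For lengths this amounts to observing that a fixed choice of crossing coordinates only yields a piecewise-geodesic, hence an upper bound, so the true length is recovered by minimizing over \(\vec t\); the reverse inequality follows by extracting, from the genuine geodesic representatives \(\beta_X\), a convergent sequence of crossing parameters in the compact cube \(P\) and invoking joint continuity. For intersection numbers the analogous point is quasi-transversality; alternatively, since every \([F]\in\cMFp\) is carried by the dual train track \(\tau_G\) (Lemma~\ref{lem:int.comp.carried by.dual.tt.}), one may bypass the minimization entirely by writing \(i([F],[\beta])=\sum_b w_F(b)\,c_b([\beta])\) as a fixed non-negative combination of the branch weights \(w_F(b)\), which are linear in the edge intersection numbers via the switch conditions, giving continuity directly. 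A secondary technical point, easily handled, is the treatment of the extreme pieces of an arc class whose endpoints sit at marked points rather than in edge interiors.
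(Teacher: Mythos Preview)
Your approach is correct and rests on the same per-face inputs as the paper (Propositions~\ref{lem:edge.control.arc.MF.triang.} and~\ref{prop:edge.control.arc.hyp.triang.}), but is organized differently. Rather than expressing the functional as a minimum over a fixed compact cube, the paper fixes a realizing representative \(\beta^{[F_0]}\) at the base point \([F_0]\), builds for each nearby \([F]\) a comparison curve \(\beta^{[F]}\) by transporting the crossing points via the change-of-marking \(f_{[F]}\circ f_{[F_0]}^{-1}\), and bounds \(i_{[F]}([\beta])-i_{[F_0]}([\beta])\) from above by \(i_{[F]}(\beta^{[F]})-i_{[F_0]}(\beta^{[F_0]})\), which the per-piece estimates then control; the reverse inequality is obtained by swapping the roles of \([F]\) and \([F_0]\). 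This two-sided comparison sidesteps the need to verify your minimum identity (the obstacle you correctly flagged), and it also accommodates realizing curves containing sub-arcs running \emph{along} edges of \(G\)---the paper's decomposition explicitly lists these as a second type of piece---whereas your transverse-crossing parametrization would need a short extra argument to absorb them. Your train-track alternative for \((\iG_{\cdot})^{-1}\), writing \(i([F],[\beta])\) as a fixed linear functional in the branch weights, is more direct than either argument and is not used in the paper.
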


\begin{proof}
The proof for \((\ell^G_{\cdot})^{-1}\) is entirely analogous to that for \((\iG_{\cdot})^{-1}\); we focus on the foliation case.

Fix \([F_0]\in\cMFp\) and a homotopy class \([\beta]\in\cA\Sp\cup\cC\Sp\).
Let \(E([\beta])\subset E(G)\) be the finite set of edges of faces of \(G\) that meet a representative of \([\beta]\).
Choose a representative \(\beta^{[F_0]}\) which realizes the intersection number, i.e.
\[
i_{[F_0]}([\beta]) = i_{[F_0]}(\beta^{[F_0]}).
\]

We claim that for every \(\epsilon>0\) there exists \(\delta>0\) such that
\begin{equation}\label{eq:continuity_inverse}
\bigl|i_{[F]}([\beta]) - i_{[F_0]}([\beta])\bigr| < \epsilon
\end{equation}
whenever \(|i_{[F]}(e)-i_{[F_0]}(e)|<\delta\) for all \(e\in E([\beta])\).
This is precisely the continuity of \((\iG_{\cdot})^{-1}\) with respect to the weak topology.

\smallskip
\noindent
\emph{Step 1: Decomposing \(\beta^{[F_0]}\) into arcs.}
Decompose \(\beta^{[F_0]}\) into a finite union of sub-arcs of two types:
\begin{itemize}
\item
\(\{\beta^{[F_0]}_m\}_{m=1}^{l_1}\): maximal sub-arcs contained in the interior of faces of \(G\);
\item
\(\{\beta^{[F_0]}_{(k)}\}_{k=1}^{l_2}\): maximal sub-arcs contained in edges of \(G\).
\end{itemize}
Thus
\[
i_{[F_0]}(\beta^{[F_0]})
= \sum_{m=1}^{l_1} i_{[F_0]}(\beta^{[F_0]}_m)
  + \sum_{k=1}^{l_2} i_{[F_0]}(\beta^{[F_0]}_{(k)}).
\]

For any other foliation \([F]\in\cMFp\), we construct a comparison curve
\(\beta^{[F]}\in[\beta]\) as follows: for each endpoint of the sub-arcs above,
we transport it along the boundary of the corresponding face via the marking
\(f_{[F]}\circ f_{[F_0]}^{-1}\) (cf.\ Definition~\ref{def:i-marking}),
and join the resulting endpoints by arcs inside the same faces or edges.
This yields sub-arcs \(\beta^{[F]}_m\) and \(\beta^{[F]}_{(k)}\) with the same combinatorics (Figure \ref{fig:curv.inter.minimum}). 

\begin{figure}[htp]
\begin{subfigure}[b]{0.49\textwidth}
\centering
\includegraphics[width=0.75\linewidth]{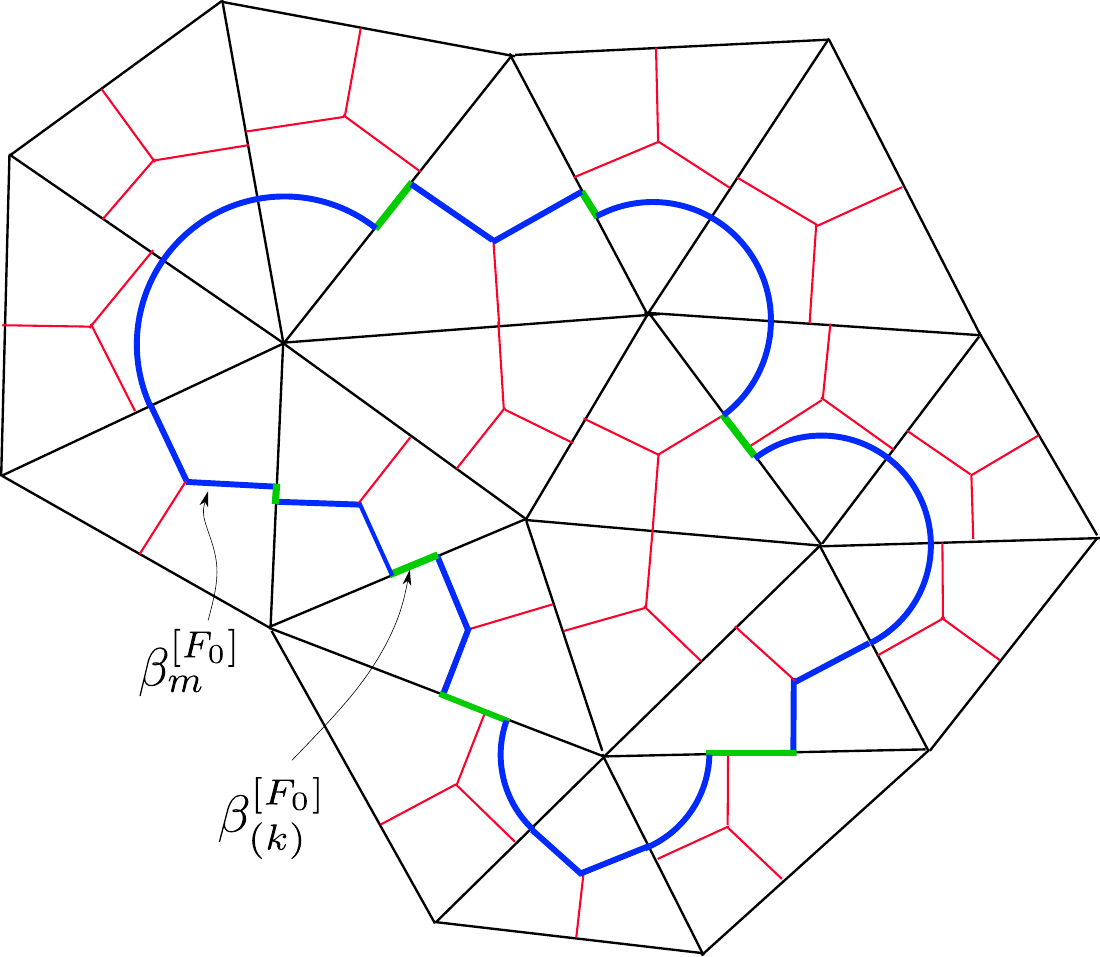}
\caption{$\beta^{[F_0]}: i([F_0],[\beta])=i([F_0],\beta^{[F_0]})$.}
    \label{fig:curv.inter.minimum}
\end{subfigure}
\begin{subfigure}[b]{0.5\textwidth}
\centering
  \includegraphics[width=0.76\linewidth]{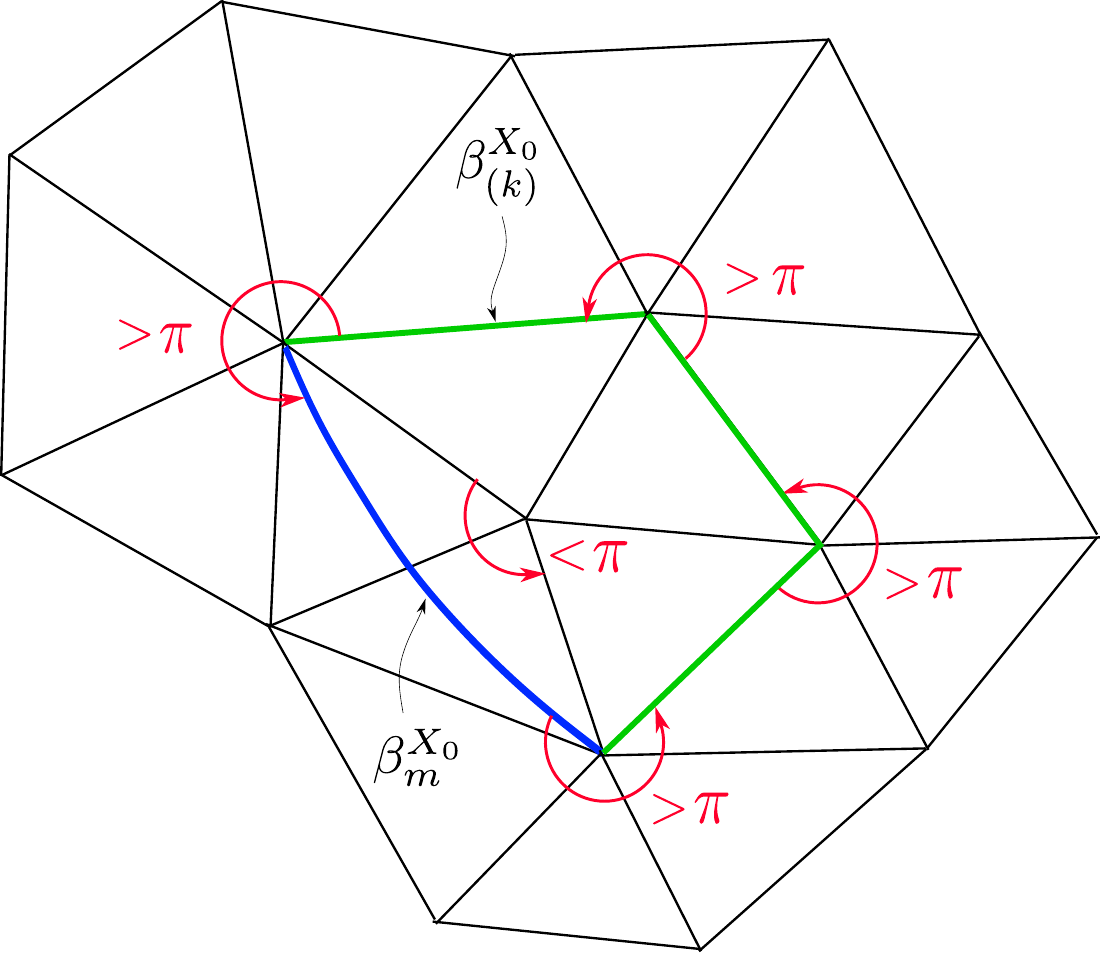}
  \caption{$\beta^{X_0}: \ell_{X_0}([\beta])=\ell_{X_0}(\beta^{X_0})$.}
    \label{fig:curv.length.mimimum}
\end{subfigure}
\caption{Example: a  curve $\beta^{[F_0]}$ (resp. $\beta^{X_0}$) associated to $[\beta]$ for $[F_0]$ (resp. $X_0$).}
\end{figure}
\smallskip
\noindent
\emph{Step 2: Local estimates and the case \(i_{[F]}([\beta])\ge i_{[F_0]}([\beta])\).}
Assume first that \(i_{[F]}([\beta])\ge i_{[F_0]}([\beta])\).
Then
\begin{align*}
0
&\le i_{[F]}([\beta]) - i_{[F_0]}([\beta])\\
&= \inf_{\beta'\in[\beta]} i_{[F]}(\beta') - i_{[F_0]}(\beta^{[F_0]})\\
&\le i_{[F]}(\beta^{[F]}) - i_{[F_0]}(\beta^{[F_0]})\\
&= \sum_{m=1}^{l_1}\bigl(i_{[F]}(\beta^{[F]}_m)-i_{[F_0]}(\beta^{[F_0]}_m)\bigr)
 + \sum_{k=1}^{l_2}\bigl(i_{[F]}(\beta^{[F]}_{(k)})-i_{[F_0]}(\beta^{[F_0]}_{(k)})\bigr)\\
&\le \sum_{m=1}^{l_1}\bigl|i_{[F]}(\beta^{[F]}_m)-i_{[F_0]}(\beta^{[F_0]}_m)\bigr|
 + \sum_{k=1}^{l_2}\bigl|i_{[F]}(\beta^{[F]}_{(k)})-i_{[F_0]}(\beta^{[F_0]}_{(k)})\bigr|.
\end{align*}
Each sub-arc lies in a single face or edge of \(G\), so by
Proposition~\ref{lem:edge.control.arc.MF.triang.} there exists, for any
\(\epsilon_1>0\), a \(\delta_1>0\) such that
\(|i_{[F]}(e)-i_{[F_0]}(e)|<\delta_1\) for all \(e\in E([\beta])\) implies
\[
\bigl|i_{[F]}(\beta^{[F]}_m)-i_{[F_0]}(\beta^{[F_0]}_m)\bigr|<\epsilon_1,
\quad
\bigl|i_{[F]}(\beta^{[F]}_{(k)})-i_{[F_0]}(\beta^{[F_0]}_{(k)})\bigr|<\epsilon_1
\]
for all \(m,k\).
Taking \(\epsilon_1=\epsilon/(l_1+l_2)\) gives
\[
0\le i_{[F]}([\beta]) - i_{[F_0]}([\beta]) < \epsilon.
\]

\smallskip
\noindent
\emph{Step 3: The case \(i_{[F]}([\beta])< i_{[F_0]}([\beta])\).}
Choose instead a representative \(\beta^{[F]}_*\) which realizes
\(i_{[F]}([\beta])\).
Repeating the decomposition and comparison construction with roles of
\([F]\) and \([F_0]\) interchanged, and again applying
Proposition~\ref{lem:edge.control.arc.MF.triang.}, we obtain
\[
0 < i_{[F_0]}([\beta]) - i_{[F]}([\beta]) < \epsilon
\]
under the same smallness condition on
\(|i_{[F]}(e)-i_{[F_0]}(e)|\) for \(e\in E([\beta])\).

\smallskip
Combining the two cases, we see that for any \(\epsilon>0\) there exists
\(\delta>0\) such that \eqref{eq:continuity_inverse} holds whenever
\(|i_{[F]}(e)-i_{[F_0]}(e)|<\delta\) for all \(e\in E([\beta])\).
Since the weak topology on \(\cMFp\) is defined by the intersection functions
\([\beta]\mapsto i_{[F]}([\beta])\), this proves the continuity of
\((\iG_{\cdot})^{-1}\).

The continuity of \((\ell^G_{\cdot})^{-1}\) is proved in exactly the same way,
replacing \([F]\) by \(X\in\cT(G)\), intersection numbers by lengths, and
Proposition~\ref{lem:edge.control.arc.MF.triang.} by
Proposition~\ref{prop:edge.control.arc.hyp.triang.}
(see Figure~\ref{fig:curv.length.mimimum}).
\end{proof}

We are now ready to prove Theorem~\ref{thm:homeo.MFp.shear.radius}.

\begin{proof}[Proof of Theorem~\ref{thm:homeo.MFp.shear.radius}]
By Proposition~\ref{prop:bijection.srG}, the map
\(\mathbf{sr}_G : \cMFp\to\Lambda_G\) is a bijection with inverse
\[
\mathbf{sr}_G^{-1} = (\iG_{\cdot})^{-1}\circ\mathcal{L}^{(G)}_{sr}.
\]

The continuity of \(\mathbf{sr}_G\) follows from the product structure
\eqref{map:prod.str.MF} and the definition of the shear and radius parameters:
the shears \((s_e([F]))_{e\in E(G)}\) depend continuously on the interior
component \([F^{(0)}]\), and the radii \(r_i([F])\) depend continuously on the
peripheral components \([F^{(i)}]\); the decomposition map
\(d:\cMFp\to\cMF_o\Sp\times\prod_{i=1}^n\cMF(p_i)\) is a homeomorphism.

For the inverse, Proposition~\ref{prop:MF.Teich.invers.continuous} gives the
continuity of \((\iG_{\cdot})^{-1}\).
It remains to show that the reconstruction map
\(\mathcal{L}^{(G)}_{sr}:\Lambda_G\to\Omega_G\) is continuous.

Recall from Construction~\ref{const:LG.map} that \(\mathcal{L}^{(G)}_{sr}\)
associates to \((\mathbf{s},\mathbf{r})\in\Lambda_G\) a family of edge weights
\((\ell_e)_{e\in E(G)}\).
For each vertex \(v\), list the incident edges in counterclockwise order
\(e_1,\dots,e_m\) and set
\[
S_v(k) := \sum_{1\le j\le k} \mathbf{s}(e_j),\qquad
S_{v,\min} := \min_{1\le k\le m} S_v(k).
\]
Both \(S_v(k)\) and \(S_{v,\min}\) are continuous in \(\mathbf{s}\).
The corner weights are then given by the explicit formula
\[
a_{f_k}(v) := \mathbf{r}(v) + S_v(k) - S_{v,\min},
\]
which is independent of the choice of an index where the minimum is achieved.
Thus \(a_{f_k}(v)\) depends continuously on \((\mathbf{s},\mathbf{r})\).
Finally, for an edge \(e\) with adjacent faces \(f,f'\) and endpoints \(v,v'\),
we have
\[
\ell_e = a_f(v)+a_f(v'),
\]
so each \(\ell_e\) is a continuous function of \((\mathbf{s},\mathbf{r})\).
Hence \(\mathcal{L}^{(G)}_{sr}\) is continuous.

Since \(\mathbf{sr}_G^{-1}=(\iG_{\cdot})^{-1}\circ\mathcal{L}^{(G)}_{sr}\) is a
composition of continuous maps, it is continuous.
Together with the continuity of \(\mathbf{sr}_G\), this shows that
\(\mathbf{sr}_G\) is a homeomorphism, completing the proof.
\end{proof}

\subsection{Circular foliation charts on the $G$-triangulable locus}
\label{subsec:circ.foli.surf.coor.}

With the continuity of the inverse maps established in
Proposition~\ref{prop:MF.Teich.invers.continuous}, we can now define the
circular foliation map and prove our main coordinate theorems.
In this subsection we first record that the edge parametrizations are
homeomorphisms, then define the circular foliation map and its normal form, and
finally deduce Theorem~\ref{thm:Teich.chart} and the shear--radius coordinates
on~\(\cTp\).

\begin{proposition}[Edge parametrizations are homeomorphisms]
\label{prop:edge.leng.homeo.}
For each \(G \in \TSp\), both the edge intersection map
\(\iG_{\cdot}\) (Definition~\ref{def:MF.edge.inter.})
\[
\iG_{\cdot} : \cMFp \longrightarrow \Omega_G
\]
and the edge length map \(\ell^{(G)}_{\cdot}\) (Definition~\ref{def:edge.leng.Teich.})
\[
\ell^{(G)}_{\cdot}: \cT(G) \longrightarrow \Omega_G
\]
are homeomorphisms.
\end{proposition}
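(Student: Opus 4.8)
The plan is to assemble the statement from results already in hand, since the two maps have been shown to be bijections and their inverses have been shown to be continuous. Indeed, Proposition~\ref{prop:edge.length.MF.biject.} gives that $\iG_{\cdot}:\cMFp\to\Omega_G$ is a bijection and Lemma~\ref{lem:edge.length.Teich.biject.} gives that $\ell^{(G)}_{\cdot}:\cT(G)\to\Omega_G$ is a bijection, while Proposition~\ref{prop:MF.Teich.invers.continuous} supplies the continuity of both inverses $(\iG_{\cdot})^{-1}$ and $(\ell^{(G)}_{\cdot})^{-1}$ on $\Omega_G$. Thus the only thing left to check is that the two forward maps are continuous; a continuous bijection with continuous inverse is a homeomorphism, and the proposition then follows for both maps simultaneously.

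First I would note that $\Omega_G\subset\bR^{E(G)}$ carries the subspace topology, and that on the finite-dimensional space $\bR^{E(G)}$ the weak topology of Definition~\ref{def:weak.top.func.space} agrees with the usual Euclidean topology, so convergence in $\Omega_G$ is coordinatewise convergence of the finitely many edge values. For $\iG_{\cdot}$, the topology on $\cMFp$ is by definition the weak topology induced by the intersection functionals $[F]\mapsto i([F],[\beta])$, $[\beta]\in\cAC$; since each edge $e\in E(G)$ is a single element of $\cAC$, every component $[F]\mapsto i([F],e)$ is continuous, and $\iG_{\cdot}$ is the map into $\bR^{E(G)}$ assembled from these finitely many continuous components, hence continuous. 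The argument for $\ell^{(G)}_{\cdot}$ is identical: by our convention $\cT(G)$ carries the subspace of the arc--curve length-spectrum topology, namely the pullback under $\lAC_{\cdot}$ of the product topology on $\bR^{\cAC}_{>0}$, so each $X\mapsto\ell_X(e)$ with $e\in E(G)\subset\cAC$ is continuous and therefore so is $\ell^{(G)}_{\cdot}$.

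I do not expect a genuine obstacle at this stage: the substantive analytic content---controlling the intersection numbers and lengths of arbitrary arcs and curves by the finitely many edge data---has already been carried out in the local triangle estimates (Propositions~\ref{lem:edge.control.arc.MF.triang.} and~\ref{prop:edge.control.arc.hyp.triang.}) and packaged into the continuity of the inverse maps. The forward continuity is immediate precisely because the source topologies on $\cMFp$ and $\cT(G)$ are defined as the weak, respectively length-spectrum, topologies generated by the intersection and length functionals, so restriction to the finite edge set is automatically continuous. The only mild point to keep track of is that the targets in Proposition~\ref{prop:edge.length.MF.biject.}, Lemma~\ref{lem:edge.length.Teich.biject.}, and Proposition~\ref{prop:MF.Teich.invers.continuous} are all the same set $\Omega_G$ with the same topology, so the four maps compose and invert consistently.
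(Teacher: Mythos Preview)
Your proposal is correct and matches the paper's proof essentially line for line: the paper also invokes Proposition~\ref{prop:edge.length.MF.biject.} and Lemma~\ref{lem:edge.length.Teich.biject.} for bijectivity, notes that forward continuity is immediate from the definitions of the topologies, and cites Proposition~\ref{prop:MF.Teich.invers.continuous} for the inverses. Your write-up is if anything more explicit about why the forward maps are continuous, which is fine.
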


\begin{proof}
By Proposition~\ref{prop:edge.length.MF.biject.} and
Lemma~\ref{lem:edge.length.Teich.biject.}, the maps \(\iG_{\cdot}\) and
\(\ell^G_{\cdot}\) are bijective.
Their continuity follows directly from the definitions of the topologies on
\(\cMFp\) and \(\cT(G)\).
The continuity of the inverses \((\iG_{\cdot})^{-1}\) and
\((\ell^G_{\cdot})^{-1}\) is precisely Proposition~\ref{prop:MF.Teich.invers.continuous}.
\end{proof}

We now define the central transition map of the paper.

\begin{definition}[Circular foliation map]
\label{def:circ.foli.map.}
For \(G \in \TSp\), the \DEF{\em circular foliation map}
\[
\cC_G : \cT(G) \longrightarrow \cMFp
\]
is defined as the composition
\[
\cC_G := (\iG_{\cdot})^{-1} \circ \ell^{(G)}_{\cdot},
\]
where \(\ell^{(G)}_{\cdot}\) is the edge length map and
\((\iG_{\cdot})^{-1}\) is the inverse edge intersection map.
\end{definition}

For each \(X \in \cT(G)\), the image \(\cC_G(X)\) is the unique measured foliation
class whose intersection number with every edge \(e \in E(G)\) equals the length
\(\ell_X(e)\).
We now construct a canonical representative of this class, which we call the
\emph{circular foliation}.
Geometrically, this construction generalises Thurston's construction of horocyclic
foliations in the cusped case \cite[Sec.~4]{Thu98}; here we work with circles
centered at cone points rather than horocycles centered at cusps
(see also \cite[Def.~3.7]{PT07}).

\begin{construction}[Circular foliation on a hyperbolic triangle]
\label{con:circ.foli.surf.}
Let \((\Delta,h) \in \cT(\Delta)\) be a hyperbolic geodesic triangle with vertices
\(p_1,p_2,p_3\) and opposite edges \(e_1,e_2,e_3\) of lengths
\(\ell_1,\ell_2,\ell_3\), respectively.
Define
\[
r_i := \frac{\ell_j + \ell_k - \ell_i}{2},
\qquad \{i,j,k\} = \{1,2,3\},
\]
so that each \(r_i\) is the distance from \(p_i\) to the incircle tangency points on the two edges adjacent to \(p_i\).

The \DEF{\em circular foliation} associated to \((\Delta,h)\) is a measured foliation
\(F\) on \(\Delta\) defined as follows.
\begin{itemize}
\item \emph{Leaves.}
For each vertex \(p_i\), foliate a neighbourhood of \(p_i\) by arcs of hyperbolic
circles centered at \(p_i\), with radii between \(0\) and \(r_i\).
There is a unique triple of circles of radii \(r_1,r_2,r_3\) (centered at $p_1$, $p_2$, $p_3$) that are pairwise
tangent and bound a central region \(R\subset\Delta\); outside \(R\), the leaves
are circular arcs centered at the vertices (see Figure~\ref{fig:tri-circ-foliation}).

\begin{figure}[htp]
    \centering  \includegraphics[width=12cm]{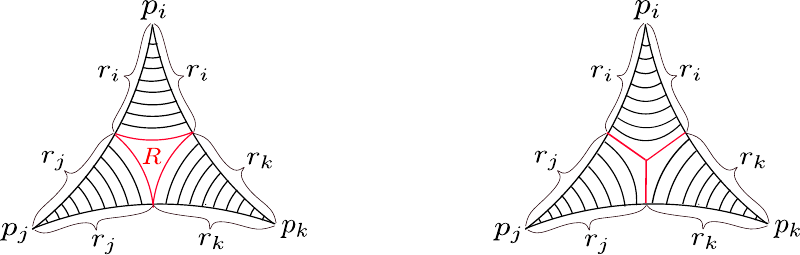}
    \caption{The circular foliation  associated to a hyperbolic geodesic triangle, where the right one is equivalent to the left one, by collapsing the unfoliated region $R$ to a tripod.}
    \label{fig:tri-circ-foliation}
\end{figure}

\item \emph{Transverse measure.}
Let \(\pi:T\Delta\to T\Delta\) denote the projection onto the orthogonal
complement of the leaf directions, and declare \(\pi(v)=0\) whenever the base
point of \(v\) lies in \(R\).
For any \(C^1\) arc \(\alpha:[0,1]\to\Delta\), the transverse measure of
\(\alpha\) is defined by
\[
F(\alpha) := \int_0^1 \|\pi(\alpha'(t))\|_h\,dt,
\]
where \(\|\cdot\|_h\) is the norm induced by the hyperbolic metric \(h\).
If two arcs are isotopic through arcs staying on the same leaves, they have the
same transverse measure.
\end{itemize}

This construction gives a partial measured foliation on \(\Delta\) supported in
\(\Delta\setminus R\).
By modifying the foliation in a small neighbourhood of \(R\) and collapsing
\(R\) to a tripod (see the right-hand picture in
Figure~\ref{fig:tri-circ-foliation}), one obtains a measured foliation with full
support on \(\Delta\) (still denoted \(F\)), which lies in \(\cMF(\Delta^{\perp})\)
and is unique up to equivalence.
\end{construction}

\begin{remark}[Relation to classical horocyclic foliations]
\label{rem:circular.cusp.relation}
In the classical setting of hyperbolic surfaces with cusps, Thurston's stretch
deformations are naturally described using horocyclic foliations centered at the
cusps \cite[Sec.~4]{Thu98}; see also the exposition in \cite[Def.~3.7]{PT07}.
Our circular foliations on hyperbolic triangles should be viewed as cone-surface
analogues of these horocyclic foliations, with cone points replacing cusps and
circle radii determined by the edge length data \(\ell^{(G)}_{\cdot}(X)\).
When cone angles tend to \(0\), the radii \(r_i\) tend to infinity and the
construction recovers the usual horocyclic foliations on cusp neighbourhoods.
\end{remark}

For each triangle \(\Delta_f\) of \(G\), a hyperbolic cone-metric
\(X\in\cT(G)\) restricts to a hyperbolic metric \(h_f := X|_{\Delta_f}\) on
\(\Delta_f\).
Applying Construction~\ref{con:circ.foli.surf.} to each \((\Delta_f,h_f)\) and
gluing the resulting foliations along the edges of \(G\), we obtain a measured
foliation on \(\Sp\) which is totally transverse to \(G\).
This is precisely the $G$-normal form of \(\cC_G(X)\) with respect to \(G\), in the
sense of Definition \ref{def:G.normal.form}.

\medskip

Recall that Theorem~\ref{thm:Teich.chart} asserts that \(\cTp\) is a manifold of
(real) dimension \(6g-6+3n\), with local charts
\(\big(\cT(G),\cC_G:\cT(G)\to\cMFp\big)\) for \(G\in\TSp\).

\begin{proof}[Proof of Theorem \ref{thm:Teich.chart}]
Fix \(G \in \TSp\).
By Proposition~\ref{prop:edge.leng.homeo.}, the map
\(\ell^{(G)}_{\cdot}: \cT(G)\to\Omega_G\) is a homeomorphism onto the open cone
\(\Omega_G\) (Definition~\ref{def:tri.ineq.region}).
In particular, \(\cT(G)\) is open in \(\cTp\).
Using Definition~\ref{def:Teich.cone.surf.} and
Remark~\ref{rem:charts_cover_space}, we see that
\(\{\cT(G)\}_{G\in\TSp}\) is an open cover of~\(\cTp\).

By Definition~\ref{def:circ.foli.map.} and
Proposition~\ref{prop:edge.leng.homeo.}, the circular foliation map
\(\cC_G = (\iG_{\cdot})^{-1}\circ \ell^{(G)}_{\cdot}: \cT(G)\to\cMFp\) is a
homeomorphism, so each pair \((\cT(G),\cC_G)\) is a coordinate chart modelled on
\(\cMFp\cong\bR^{6g-6+3n}\).
Since these charts cover \(\cTp\), the theorem follows.
\end{proof}

By definition, every hyperbolic cone-surface in \(\cT_{u.t.}\Sp\) is
\(G\)-triangulable for all \(G\in\TSp\).
For each \(G\), the circular foliation map
\(\cC_G:\cT_{u.t.}(S,\up)\rightarrow \cMFp\) is therefore an embedding
(see Corollary~\ref{cor:embed}).

\begin{definition}[Shear--radius coordinates of Teichm\"uller space]
\label{def:shear-radius}
For each triangulation \(G\in\TSp\), the \DEF{\em shear--radius map} of \(\cT(G)\) is
\[
\cSR_G := \mathbf{sr}_G \circ \cC_G : \cT(G) \longrightarrow \Lambda_G,
\]
where \(\mathbf{sr}_G\) is the shear--radius map on \(\cMFp\)
(Definition~\ref{def:MF-shear-radius}) and \(\cC_G\) is the circular foliation
map (Definition~\ref{def:circ.foli.map.}).
We also write \(\cS_G := \mathbf{s}_G\circ\cC_G\) and
\(\cR_G := \mathbf{r}_G\circ\cC_G\) for the shear and radius maps on \(\cT(G)\).

By Theorem~\ref{thm:homeo.MFp.shear.radius} and Theorem~\ref{thm:Teich.chart},
the map \(\cSR_G\) is a homeomorphism onto \(\Lambda_G\)
(see Corollary~\ref{cor:homeo.TS.shear.radius}).
The coordinates given by \(\cSR_G\) are called the \DEF{\em shear--radius coordinates}
on~\(\cTp\).
\end{definition}

 \section{Stretch-type deformations and asymptotic limits}
\label{subsec:application}

In this section we use the circular foliation coordinates developed in Section~\ref{sec:circ.foli.coor.app.} to address Question~\ref{Q:deform.Teich.} on natural deformations of hyperbolic cone-surfaces.
Classically, Thurston's stretch deformations provide geodesics for the Thurston (Lipschitz) metric on Teichm\"uller space \cite{Thu98}, and related constructions have been developed on the once-holed torus \cite{HP21} and via harmonic map rays \cite{PW22}; see also Robertson--Rupflin \cite{RR20} for a flow-based perspective in which degeneration of Teichm\"uller harmonic map flow is governed by the stretching of thin collars.

Our aim here is to construct cone-surface analogues of these deformations, formulated in the circular foliation coordinates, and to describe their asymptotic behaviour. More precisely, we introduce two families of deformations on each chart \(\cT(G)\):
\begin{itemize}
  \item \emph{Peripheral stretch deformations}, along which the cone angles decrease and the surfaces converge to a hyperbolic surface with cusps, in the topology induced by length spectrum on simple closed curves (Theorem~\ref{thm:asym.behavior}); and
  \item \emph{Interior anti-stretch deformations}, along which cone angles increase and the surfaces converge to a circle-packed hyperbolic cone-surface (Theorem~\ref{thm:asym.behavior1}).
\end{itemize}

The starting point is the product structure \eqref{map:prod.str.MF} of the model space \(\cMFp\), which allows us to define \emph{partial stretch deformations} (Definition~\ref{def:part.stret.}) by rescaling only the peripheral components or only the interior component of the circular foliation.
Via the circular foliation map, these rescalings induce geometric deformations of cone-metrics on \(\cT(G)\) that provide a controlled way of varying cone angles within our coordinate charts.

The rest of the section is devoted to the asymptotic analysis of these deformations.
The proof of the peripheral stretch theorem (Theorem~\ref{thm:asym.behavior}) is technically more involved: in Subsections~\ref{subsec:asymp.triang.} and~\ref{subsec:bounding-geodesics} we establish a key geometric control estimate (Lemma~\ref{lm:dist-tangent-intersect}) on the behaviour of geodesics under peripheral stretch.
The interior anti-stretch case (Theorem~\ref{thm:asym.behavior1}) is then treated in a parallel, but simpler version.

\subsection{Partial (anti-)stretch deformations of Teichm\"uller space}
\label{subsec:part.stretch.deform.}

Recall that for each $G\in\TSp$, the circular foliation of a hyperbolic cone-surface $X\in\cT(G)$ is decomposed into peripheral components $[F^{(i)}]$ ($1\leq i\leq n$) and an interior component $[F^{(0)}]$, as in the product structure \eqref{map:prod.str.MF}.

\begin{definition}[Partial scaling actions]
\label{def:peripheral.scaling}
For any $t \in \mathbb{R}$, we define two self-homeomorphisms of the space $\cMFp$:
\begin{itemize}
    \item The \DEF{\em peripheral scaling action} ${\rm exp}_{\mathrm{per}}(t): \cMFp \to \cMFp$ scales the transverse measure of each peripheral component $[F^{(i)}]$ by the factor $e^t$, while keeping the interior component $[F^{(0)}]$ fixed.
    \item The \DEF{\em interior scaling action} ${\rm exp}_{\mathrm{int}}(t): \cMFp \to \cMFp$ scales the transverse measure of the interior component $[F^{(0)}]$ by the factor $e^t$ (when $[F^{(0)}]\neq 0$), while keeping all peripheral components $[F^{(i)}]$ fixed; if $[F^{(0)}]=0$, then ${\rm exp}_{\mathrm{int}}(t)$ is the identity.
\end{itemize}
\end{definition}

For each $t\in\bR$, the peripheral scaling action ${\rm exp}_{\mathrm{per}}(t)$ and the interior scaling action ${\rm exp}_{\mathrm{int}}(t)$ are both homeomorphisms of $\cMFp$.

\begin{definition}[Partial (anti-)stretch deformations]
\label{def:part.stret.}
For a given triangulation $G \in \TSp$, we define two one-parameter families of deformations on the space $\cT(G)$, called \DEF{\em partial (anti-)stretch deformations}. 
Both are obtained by pulling back the corresponding scaling actions on $\cMFp$ via the circular foliation map $\cC_G$:
\[
 \xymatrix{
  \cT(G) \ar[d]_{\cC_G} \ar[r]^{{\rm stretch}_G^{\cdot}} & \cT(G) \ar[d]^{\cC_G}  \\
  \cMFp \ar[r]^{{\rm exp}_{\cdot}(t)} &  \cMFp .
 }
\]

\begin{itemize}
    \item The \DEF{\em peripheral (anti-)stretch deformation} is the map
    \[
      {\rm stretch}^{\mathrm{per}}_G : \cT(G) \times \mathbb{R} \longrightarrow \cT(G),\qquad
      {\rm stretch}^{\mathrm{per}}_G(X,t) := \cC_G^{-1} \circ {\rm exp}_{\mathrm{per}}(t) \circ \cC_G(X).
    \]
    
    \item The \DEF{\em interior (anti-)stretch deformation} is the map
    \[
      {\rm stretch}^{\mathrm{int}}_G : \cT(G) \times \mathbb{R} \longrightarrow \cT(G),\qquad
      {\rm stretch}^{\mathrm{int}}_G(X,t) := \cC_G^{-1} \circ {\rm exp}_{\mathrm{int}}(t) \circ \cC_G(X).
    \]
\end{itemize}
For both types of deformation, the case $t > 0$ is called a \DEF{\em stretch}, and the case $t < 0$ is called an \DEF{\em anti-stretch}.
\end{definition}

\begin{remark}[Stretch deformations and our variants]
\label{rem:HP-vs-us}
On the classical Teichm\"uller space of hyperbolic surfaces, Thurston's \emph{stretch deformations} associated to complete geodesic laminations produce \emph{stretch paths}, which are geodesics for the asymmetric Thurston (Lipschitz) metric and are naturally described in terms of horocyclic foliations in the complementary ideal triangles \cite{Thu98}. 
Variants of this picture include the \emph{partial stretch maps} of Huang--Papadopoulos on the once-holed torus, constructed from partial horocyclic foliations and yielding optimal Lipschitz maps and Thurston geodesic rays \cite{HP21}, as well as the harmonic map rays of Pan--Wolf, which select canonical Thurston geodesics and describe ray structures for the Thurston metric \cite{PW22}; for the Thurston metric on Teichm\"uller spaces of hyperbolic cone-surfaces with fixed cone angles and boundary lengths, see also \cite{Pan17}.

The partial stretch deformations introduced here are of a different nature: 
they act on the Teichm\"uller space of hyperbolic cone-surfaces, are defined via the circular foliation coordinates from Theorem~\ref{thm:Teich.chart}, and allow the cone angles to vary along the deformation. 
\end{remark}

\subsection{Asymptotic geometry of hyperbolic triangles}
\label{subsec:asymp.triang.}

The goal of this subsection is to analyse the asymptotic geometry of hyperbolic triangles as their vertices tend to infinity. 
This is a necessary step towards proving Theorem~\ref{thm:asym.behavior}: along a peripheral stretch ray \(X_t\) (as \(t \to +\infty\)), the radii \(r_i(X_t)\) diverge, and the geodesic triangles \(f(X_t)\) on $X_t$ associated to the face $f$ of the triangulation $G$ degenerate towards ideal triangles.
To understand the limit of \(X_t\), we first make precise the limiting object (the ideal triangle), choose canonical representatives for all triangles, and then prove convergence in this canonical model.

\begin{definition}[Extended Teichm\"uller space of a triangle]
\label{def:extend.Teich.of.hyp.triang.}
Let \(\Delta\) be a topological triangle (Definition~\ref{def:top.triang.}). 
The \DEF{\em extended Teichm\"uller space of \(\Delta\)}, denoted \(\bar{\cT}(\Delta)\), is the set of isotopy classes of hyperbolic metrics on \(\Delta\) that realize it as a geodesic hyperbolic triangle with edges of possibly infinite length.

This space contains a distinguished element \(T_{\infty}\), corresponding to the metric that realizes \(\Delta\) as an \DEF{\em ideal hyperbolic triangle}, i.e.\ a triangle whose vertices lie on \(\partial\bH^2\) and whose interior angles are all zero.
\end{definition}

\begin{definition}[Canonical representative of a hyperbolic triangle]
\label{def:canonical-triangles}
For each \(T\in\bar{\cT}(\Delta)\), we associate a \DEF{\em canonical marked hyperbolic triangle} \(\Delta_T\subset\bH^2\) (in the Poincar\'e disk model) satisfying:
\begin{enumerate}
  \item The center of the hyperbolic circle \(C_T\) tangent to all three edges of \(\Delta_T\) locates at the origin \(O\) of \(\bH^2\).
  \item The first marked vertex \(v_1(T)\) lies on the positive \(x\)-axis.
  \item The vertices \(v_1(T),v_2(T),v_3(T)\) occur in anticlockwise order.
\end{enumerate}

In particular, the canonical representative \(\Delta_{T_{\infty}}\) of the ideal triangle \(T_{\infty}\) has vertices \(v^{\infty}_1,v^{\infty}_2,v^{\infty}_3\) on \(\partial\bH^2\) which divide the unit circle into three arcs of equal Euclidean length (see the right-hand panel of Figure~\ref{fig:canonical-triangle}).

For any \(T\in\bar{\cT}(\Delta)\) and \(k=1,2,3\), we adopt the following notation for its canonical representative \(\Delta_T\subset\bH^2\):
\begin{itemize}
  \item \(v_k(T)\) is the \(k\)-th vertex and \(e_k(T)\) is the edge opposite to \(v_k(T)\).
  \item \(l_k^T\) is the geodesic ray from \(O\) to \(v_k(T)\).
  \item \(\theta_k(T)\) is the angle at \(O\) between the rays \(l_k^T\) and \(l_{k+1}^T\) (indices taken modulo \(3\)).
  \item \(d_H\) and \(d_E\) denote the hyperbolic and Euclidean distances on \(\bH^2\), respectively.
\end{itemize}
\end{definition}

\begin{lemma}[Asymptotic geometry of the canonical triangle]
\label{lm:estimate.triangle}
Let \(T\in\bar{\cT}(\Delta)\). Then, as the three vertices of \(\Delta_T\) tend to the ideal boundary of \(\bH^2\), i.e.
\(d_H\bigl(O,v_k(T)\bigr)\to\infty \) for \(k=1,2,3\), we have
\[
d_E\bigl(v_k(T),v_k^{\infty}\bigr)\to 0\quad \text{and} \quad
\theta_k(T)\to \frac{2\pi}{3},
\]
for each \(k=1,2,3\).
\end{lemma}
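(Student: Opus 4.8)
The plan is to work entirely with the canonical representative $\Delta_T\subset\bH^2$ and to exploit that its incircle $C_T$ is centered at $O$. Write $\rho(T)$ for the hyperbolic inradius, i.e. the common distance $d_H(O,e_k(T))$ from $O$ to each edge (the same for all three edges, since all are tangent to $C_T$). Because every hyperbolic triangle has inradius at most that of an ideal triangle, $\rho(T)$ stays in a fixed bounded interval $[0,\rho_\infty]$ with $\rho_\infty=\operatorname{arctanh}(1/2)$, so $\tanh\rho(T)$ remains bounded away from $1$. The first observation is that $O$, being the incenter, lies on the internal bisector at each vertex; hence the ray $l_k^T$ is exactly this bisector, and $d_k:=d_H(O,v_k(T))\to\infty$ for each $k$ by hypothesis.

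First I would compute $\theta_k(T)$ via right triangles. Fix $k$ and let $t$ be the point where $C_T$ touches the edge $e_{k+2}(T)$ joining $v_k(T)$ and $v_{k+1}(T)$; then $t$ is the foot of the perpendicular from $O$, so $d_H(O,t)=\rho(T)$, $Ot\perp e_{k+2}(T)$, and the direction of $t$ seen from $O$ lies between $l_k^T$ and $l_{k+1}^T$. This splits the angle additively as $\theta_k(T)=\beta_k+\beta_{k+1}'$, where $\beta_k$ (resp. $\beta_{k+1}'$) is the angle at $O$ in the right triangle with vertices $O,t,v_k(T)$ (resp. $O,t,v_{k+1}(T)$), right-angled at $t$. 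The standard hyperbolic relation for a right triangle (adjacent leg $Ot=\rho(T)$, hypotenuse $d_k$) then gives
\[
\cos\beta_k=\frac{\tanh\rho(T)}{\tanh d_k},\qquad
\cos\beta_{k+1}'=\frac{\tanh\rho(T)}{\tanh d_{k+1}}.
\]

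Then I would pass to the limit. As $d_k,d_{k+1}\to\infty$ we have $\tanh d_k,\tanh d_{k+1}\to 1$, so both $\beta_k$ and $\beta_{k+1}'$ converge to $\beta(T):=\arccos\bigl(\tanh\rho(T)\bigr)$, whence $\theta_k(T)=2\beta(T)+o(1)$ for each $k$. Summing the three angles around $O$ and using the identity $\theta_1(T)+\theta_2(T)+\theta_3(T)=2\pi$ forces $6\beta(T)\to 2\pi$, i.e. $\beta(T)\to\pi/3$ (and in fact $\tanh\rho(T)\to\tfrac12$). Substituting back yields $\theta_k(T)\to\tfrac{2\pi}{3}$ for each $k$, which is the second assertion.

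Finally the vertex convergence follows from the angular data together with the moduli. By the normalization $v_1(T)$ lies on the positive $x$-axis, so $\arg v_1(T)=0=\arg v_1^\infty$, while the anticlockwise ordering gives $\arg v_{k+1}(T)=\theta_1(T)+\dots+\theta_k(T)\to \tfrac{2\pi k}{3}=\arg v_{k+1}^\infty$; on the other hand $|v_k(T)|=\tanh\bigl(d_k/2\bigr)\to 1=|v_k^\infty|$. Since both the arguments and the moduli converge, $d_E\bigl(v_k(T),v_k^\infty\bigr)\to 0$, proving the first assertion. The main obstacle I expect is the bookkeeping justifying the additive splitting $\theta_k(T)=\beta_k+\beta_{k+1}'$ — one must check that the tangency foot $t$ genuinely separates the two bisector rays — together with the uniformity needed to promote the pointwise estimates $\theta_k(T)=2\beta(T)+o(1)$ into the angle-sum relation that pins down $\beta(T)$; here the boundedness $\rho(T)\le\rho_\infty$ is exactly what keeps $\tanh\rho(T)$ away from $1$ and makes the limiting step legitimate.
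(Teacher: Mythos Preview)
Your proof is correct and takes a genuinely different route from the paper's. The paper argues more qualitatively: since the three extended edge-geodesics $\bar e_k(T)$ are all tangent to the same circle $C_T$ centered at $O$, they subtend equal angles $\xi_k(T)$ at $O$ on $\partial\bH^2$; as the vertices run to the boundary they approach shared endpoints of these geodesics, and the equality $\xi_1=\xi_2=\xi_3$ then forces the limiting ideal triangle to be the rotationally symmetric one, whence $\theta_k(T)\to 2\pi/3$ and $v_k(T)\to v_k^\infty$. By contrast, you compute the angles at $O$ explicitly via the right triangles $O\,t\,v_k(T)$, obtain $\cos\beta_k=\tanh\rho(T)/\tanh d_k$, and then pin down $\beta(T)\to\pi/3$ from the constraint $\sum_k\theta_k(T)=2\pi$. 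Your approach is more quantitative and, as a bonus, directly yields $\tanh\rho(T)\to\tfrac12$ (equivalently $\rho(T)\to\operatorname{arcosh}(2\sqrt3/3)$), a fact the paper only invokes later in the proof of Theorem~\ref{thm:asym.behavior}. Incidentally, the a priori inradius bound $\rho(T)\le\rho_\infty$ that you cite is not even needed as an external input: since the six $\beta$-angles at $O$ sum to $2\pi$, at least one of them is $\ge\pi/3$, and then $\tanh\rho(T)=\cos\beta_k\cdot\tanh d_k\le\cos(\pi/3)=\tfrac12$ follows directly from your own right-triangle formula.
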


\begin{proof}
For each \(k\), let \(\bar e_k(T)\) be the complete geodesic in \(\bH^2\) extending the edge \(e_k(T)\), and let \(\xi_k(T)\) be the angle at \(O\) subtended by the endpoints of \(\bar e_k(T)\) on \(\partial\bH^2\).
By Condition (1) in Definition~\ref{def:canonical-triangles} the circle \(C_T\) is centered at \(O\), so the three geodesics \(\bar e_1(T),\bar e_2(T),\bar e_3(T)\) are pairwise symmetric with respect to the rays \(l_k^T\) through the vertices.
Hence
\begin{equation}\label{eq:edge.angle.equal}
  \xi_1(T)=\xi_2(T)=\xi_3(T)\, .
\end{equation}

As \(d_H\bigl(O,v_k(T)\bigr)\to\infty\) for all \(k\), the Euclidean radii \(d_E\bigl(O,v_k(T)\bigr)\) tend to \(1\), so each \(v_k(T)\) converges to a point of the boundary \(\partial\bH^2\) of the unit disk model \(\bH^2\).
By Condition (2) in Definition~\ref{def:canonical-triangles}, the first vertex tends to \(v_1^{\infty}\), and by symmetry we deduce from \eqref{eq:edge.angle.equal} that, in the limit, the three arcs of \(\partial\bH^2\) cut out by the geodesics \(\bar e_k(T)\) all have angle \(2\pi/3\).
Thus
\begin{equation}\label{eq:edge.angle.equal.ideal}
  \xi_1(T)=\xi_2(T)=\xi_3(T)\longrightarrow \frac{2\pi}{3}
\end{equation}
as \(d_H\bigl(O,v_k(T)\bigr)\to\infty\) for all \(k\).

The limiting configuration in \eqref{eq:edge.angle.equal.ideal} is exactly the canonical ideal triangle \(\Delta_{T_{\infty}}\), whose vertices are \(v_k^{\infty}\) and whose central angles are \(2\pi/3\).
Therefore each \(v_k(T)\) converges to \(v_k^{\infty}\) in the Euclidean metric, and the angles \(\theta_k(T)\) between the rays \(l_k^T\) and \(l_{k+1}^T\) converge to \(2\pi/3\).
\end{proof}

\subsection{Bounding geodesics under peripheral stretch}
\label{subsec:bounding-geodesics}

This subsection contains the main technical parts for the proof of Theorem~\ref{thm:asym.behavior}.
To show that the length \(\ell_{X_t}([\gamma])\) converges to \(\ell_{X_{\infty}}([\gamma])\) for each curve class \([\gamma]\in\cS\Sp\), we need uniform control on how the geodesic representative \(\gamma(X_t)\) sits inside the triangles \(f(X_t)\) as they degenerate under peripheral stretch.

Our key tool is Lemma~\ref{lm:dist-tangent-intersect}, which provides a uniform bound: along a peripheral stretch ray, any simple closed geodesic crosses each edge essentially near the incircle tangency points, with a deviation that is uniformly bounded in both time and in the homotopy class of the geodesic.

In order to control the behavior of geodesics during the peripheral stretch deformation, it suffices to consider the hyperbolic cone-surfaces whose cone angles are strictly less than \(\pi\).
In this range a collar lemma holds around each cone point \cite[Thm.~3]{DP07}, providing embedded neighborhoods where simple closed geodesics and geodesic arcs behave in a controlled manner.
When some cone angle reaches or exceeds \(\pi\), such a collar lemma fails and, for suitable hyperbolic metrics, there may even be homotopy classes without a smooth geodesic representative \cite[Lem.~7.3]{Mon10}.
This dichotomy will play a crucial role in the proof of Theorem~\ref{thm:asym.behavior}.

Let
\[
  \cT_{[0,\pi)}(S,\up)
  := \cT_{(0,\pi)}(S,\up) \cup \cT_{0}(S,\up),
\]
where \(\cT_0(S,\up)\) is the Teichm\"uller space of hyperbolic metrics on \((S,\up)\) with cusps at \(\up\).

\begin{lemma}[Collar lemma for hyperbolic cone-surfaces {\cite[Thm.~3]{DP07}}]
\label{lm:Collar-lemma-for-hyperbolic-cone-surfaces}
Let \(X \in \cT_{[0,\pi)}(S,\up)\) and let \(\theta_{\max}(X)\) denote the largest cone angle of \(X\).
Then there exists \(C \in (0,1]\), depending only on \(\theta_{\max}(X)\), such that the disks \(D_i\) centered at \(p_i\) \((1\le i\le n)\) with circumference less than \(C\) are pairwise disjoint and do not meet any simple closed geodesic of \(X\).
\end{lemma}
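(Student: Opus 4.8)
The plan is to reduce the statement to a single cone point and to separate its two assertions---avoidance by simple closed geodesics and pairwise disjointness---treating the avoidance estimate as the geometric core and the disjointness as a consequence obtained by shrinking the constant.

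First I would fix a cone point $p_i$ of angle $\theta_i<\pi$ (the cusp case $\theta_i=0$ is the classical cusp collar and follows by the same argument in the limit). Since all cone angles are $\le\pi$, the surface is locally $\mathrm{CAT}(-1)$, so the distance function $d(\cdot,p_i)$ is strictly convex along any geodesic not meeting $p_i$; in particular a simple closed geodesic $\gamma$ meets a small embedded disk $D(p_i,R)$ in finitely many convex excursions, each a simple geodesic arc with both endpoints on $\partial D(p_i,R)$ and a unique closest point $x$ to $p_i$, at which $\gamma\perp[p_i,x]$. I would then work in the local model $\bH^2_{\theta_i}$ and develop such an excursion into $\bH^2$: writing $\rho=d(p_i,x)$ and using the hyperbolic right-triangle relation $\cos\beta(r)=\tanh\rho/\tanh r$ for the angle $\beta(r)$ swept at $p_i$ out to radius $r$, the two halves of the excursion are angularly symmetric about the ray $[p_i,x]$ and meet (in the angular coordinate mod $\theta_i$) exactly when $\beta(r)=\theta_i/2$, i.e.\ at the radius $r$ with $\tanh r=\tanh\rho/\cos(\theta_i/2)$.

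The key point---and where the hypothesis $\theta_i<\pi$ is used---is that $\cos(\theta_i/2)>0$, so this coincidence radius is finite; hence if $\tanh\rho<\cos(\theta_i/2)\tanh R$ the excursion would self-intersect inside $D(p_i,R)$, contradicting simplicity of $\gamma$. Consequently every simple closed geodesic stays at distance at least $\rho_0$ from $p_i$, where $\tanh\rho_0=\cos(\theta_i/2)\tanh R$, so the concentric disk of radius $\rho_0$ is disjoint from all simple closed geodesics; translating into circumference via $\theta_i\sinh\rho_0$ gives a threshold $C_i$ below which $D_i$ is avoided. (As $\theta_i\to0$ one has $\rho_0\to\infty$, recovering the cusp collar.)

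Finally I would arrange pairwise disjointness and the uniform constant. Embeddedness of $D(p_i,R)$ for small $R$ is automatic once $R$ is below half the length of the shortest geodesic loop at $p_i$; to make the avoided disks pairwise disjoint I would argue that if two of them, $D_i$ and $D_j$, overlapped, then the essential simple closed curve encircling $p_i$ and $p_j$ would have its geodesic representative forced through the overlap region, contradicting avoidance. Taking $C=\min_i C_i\le 1$ (shrinking if necessary) then yields the claim. The main obstacle I anticipate is precisely the \emph{uniformity}: the naive avoidance radius $\rho_0$ above still depends on the embedded radius $R$, hence a priori on the global geometry of $X$, so to obtain a constant depending only on $\theta_{\max}(X)$ one must upgrade the local estimate---either by using the complete-geodesic self-intersection threshold $\tanh\rho\ge\cos(\theta_i/2)$ together with a geometric lower bound on the size of the embedded cone neighbourhood, or by a compactness/extremal-length argument bounding how short the connecting curves in the disjointness step can be.
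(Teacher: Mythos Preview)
The paper does not prove this lemma: it is quoted verbatim from Dryden--Parlier \cite[Thm.~3]{DP07} and used as a black box in the proof of Lemma~\ref{lm:dist-tangent-intersect}. So there is no ``paper's own proof'' to compare against; your proposal is an independent attempt to reprove the cited result.

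As such an attempt, the local analysis is correct: the self-intersection mechanism in the model cone, the right-triangle formula $\cos\beta(r)=\tanh\rho/\tanh r$, and the conclusion that a simple geodesic excursion reaching radius $R$ must have $\tanh\rho\ge\cos(\theta_i/2)\tanh R$ are all fine, and your remark that this degenerates to the cusp collar as $\theta_i\to 0$ is the right sanity check. You also correctly locate the genuine difficulty: the constant you obtain depends on the embedded radius $R$ at $p_i$, hence on the global injectivity radius of $X$, not only on $\theta_{\max}(X)$. This is exactly the gap between a ``soft'' collar and the uniform collar of \cite{DP07}, and your proposal does not close it---you only list possible strategies. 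The disjointness step has the same issue (your argument via an encircling simple closed geodesic presupposes the avoidance radius is already uniform).

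If you want to close the gap yourself rather than cite \cite{DP07}, the cleanest route is to dispense with the embedded disk altogether: develop the closed geodesic $\gamma$ from its nearest point $x$ into the \emph{infinite} model cone $\bH^2_{\theta_i}$ (equivalently, lift to the universal branched cover of $X$ at $p_i$), where the full angular sweep $2\arctan(1/\sinh\rho)$ is available. Simplicity of $\gamma$ in $X$ forces the lift and its rotate by $\theta_i$ to be disjoint, giving $\sinh\rho\ge\cot(\theta_i/2)$ with no reference to $R$; the pairwise disjointness of the resulting collars then follows from a standard area/Gauss--Bonnet count. For the purposes of this paper, though, simply citing \cite{DP07} is what the authors do and is entirely appropriate.
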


We now fix some notation that will be used throughout this subsection.
Let \(G \in \TSp\) be a triangulation and \([\gamma] \in \cS(S,\up)\) a simple closed curve.
Choose a representative \(\gamma_G : [0,1] \to \dot{S}\) that intersects \(G\) transversely, and let
\(E([\gamma]) \subset E(G)\) be the set of edges crossed by \(\gamma_G\).

For any hyperbolic cone-surface \(X\), let \(\gamma(X)\) denote the unique shortest geodesic representative of \([\gamma]\) on \(X\).
For each edge \(e \in E(G)\) and face \(f \in F(G)\), write \(e(X)\) and \(f(X)\) for their geodesic realizations on \(X\).
For each \(e \in E(G)\) and each adjacent face \(f \in F(e)\), we denote by
\(\tau_{e,f}(X) \in e(X)\) the tangency point of the incircle of the hyperbolic triangle
\(f(X)\) on the edge \(e(X)\).

\begin{lemma}[Uniformly bounded deviation of geodesics]
\label{lm:dist-tangent-intersect}
Let \(G \in \TSp\) be a triangulation and \(X_0 \in \cT(G)\) an initial surface.
Let \((X_t)_{t \ge 0}\) be the peripheral stretch ray starting from \(X_0\) with respect to \(G\).
Then there exists a constant \(D > 0\), depending only on \(G\) and \(X_0\), such that for every time \(t \ge 0\), every simple closed curve class \([\gamma] \in \cS(S,\up)\), every edge \(e \in E([\gamma])\), every face \(f \in F(e)\), and every intersection point
\(q(X_t) \in e(X_t) \cap \gamma(X_t)\), we have
\[
  d_{X_t}\bigl(\tau_{e,f}(X_t), q(X_t)\bigr) \le D.
\]
In other words, along the peripheral stretch ray, each intersection of \(\gamma(X_t)\) with an edge of \(G\) stays within a uniformly bounded distance of the corresponding incircle tangency points \(\tau_{e,f}(X_t)\).
\end{lemma}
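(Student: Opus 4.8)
The plan is to trap each intersection point $q(X_t)$ between the two embedded collars that the collar lemma (Lemma~\ref{lm:Collar-lemma-for-hyperbolic-cone-surfaces}) provides around the endpoints of the edge $e$, and then to show that these collars reach up to within a uniformly bounded distance of the incircle tangency point $\tau_{e,f}(X_t)$. Since the collar lemma needs cone angles below $\pi$, I would first record the reduction announced before the lemma: along the peripheral stretch ray the radius data scale by $e^t$ while the shear data are frozen, so every edge length $\ell_{X_t}(e)=a_f(v)+a_f(v')$ diverges, and by Lemma~\ref{lm:estimate.triangle} every geodesic triangle $f(X_t)$ degenerates to the ideal triangle and every cone angle tends to $0$; hence, after re-basing the ray, I may assume $\theta_{\max}(X_t)\le\theta_{\max}(X_0)<\pi$ for all $t\ge0$, and the collar lemma applies with a constant $C=C(\theta_{\max}(X_0))$. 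For each cone point $p_i=v$ I would take $D_v$ to be the metric disk of radius $\rho_v:=\operatorname{arcsinh}\!\bigl(C/\theta_v(X_t)\bigr)$, so that its boundary has circumference $C$; inside $D_v$ the metric is the model cone metric, the edge $e(X_t)$ is radial, and $\gamma(X_t)\cap D_v=\varnothing$ forces $d_e\bigl(q(X_t),v\bigr)\ge\rho_v$, and symmetrically $d_e\bigl(q(X_t),v'\bigr)\ge\rho_{v'}$.

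The heart of the argument is the comparison $a_f(v)\le\rho_v+D'$ with $D'$ independent of $t$, $[\gamma]$, $e$ and $f$. Here $a_f(v)$ coincides with the hyperbolic distance from $v$ to $\tau_{e,f}(X_t)$ along $e$, namely the radius $r_v$ of Construction~\ref{con:circ.foli.surf.} in the triangle $f(X_t)$. I would use three ingredients. First, the differences $a_f(v)-a_{f'}(v)$ over the faces around $v$ are partial sums of the (frozen) shears, hence bounded by a constant $S_0=S_0(G,X_0)$; in particular $\max_{f}a_f(v)\le\min_f a_f(v)+S_0$. Second, in $f(X_t)$ the right triangle formed by $v$, the incenter, and the tangency point gives $\sinh\bigl(a_f(v)\bigr)=\tanh(\iota_f)/\tan(\alpha^{(f)}_v/2)<\cot(\alpha^{(f)}_v/2)$, where $\iota_f$ is the inradius and $\alpha^{(f)}_v$ the angle at $v$, using only $\tanh\iota_f<1$. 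Third, writing $N_v$ for the fixed number of faces of $G$ incident to $v$, the cone angle satisfies $\theta_v=\sum_{f}\alpha^{(f)}_v\le N_v\max_f\alpha^{(f)}_v$, so the face realizing the minimal foot distance has $\min_f a_f(v)\le\operatorname{arcsinh}\bigl(\cot(\theta_v/2N_v)\bigr)$. Comparing with $\rho_v=\operatorname{arcsinh}(C/\theta_v)$, the function $\theta\mapsto\operatorname{arcsinh}\bigl(\cot(\theta/2N_v)\bigr)-\operatorname{arcsinh}(C/\theta)$ is continuous on $(0,\theta_{\max}(X_0)]$ with finite limit $\ln(2N_v/C)$ as $\theta\to0^+$, hence bounded above by some $M=M(G,X_0)$; combining with $S_0$ yields $a_f(v)\le\rho_v+D'$ with $D':=M+S_0$.

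With this in hand the conclusion is immediate. Since $q(X_t)$ avoids both collars,
\[
\rho_v\ \le\ d_e\bigl(q(X_t),v\bigr)\ \le\ \ell_{X_t}(e)-\rho_{v'}=a_f(v)+a_f(v')-\rho_{v'},
\]
and the comparison applied at $v$ and at $v'$ squeezes $d_e\bigl(q(X_t),v\bigr)$ into the interval $[\,a_f(v)-D',\,a_f(v)+D'\,]$. As $\tau_{e,f}(X_t)$ sits at distance $a_f(v)$ from $v$ along $e$, this gives $d_{X_t}\bigl(\tau_{e,f}(X_t),q(X_t)\bigr)\le d_e\bigl(\tau_{e,f}(X_t),q(X_t)\bigr)\le D'=:D$, as desired.

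I expect the main obstacle to be precisely the uniform comparison $a_f(v)\le\rho_v+D'$: both the foot distance $a_f(v)$ and the collar radius $\rho_v$ diverge like $\ln(1/\theta_v)$ as $t\to\infty$, and the content of the lemma is that their difference stays bounded uniformly in $t$ and in the class $[\gamma]$. This is what forces the use of two structural facts—that the shear data are frozen along a peripheral stretch ray (so the foot distances around a vertex differ by a fixed amount) and that the number of faces around each vertex is a fixed combinatorial quantity—together with the universal bound $\tanh\iota_f<1$. A conceptually cleaner but analytically equivalent route would be to pass to the canonical representatives of Definition~\ref{def:canonical-triangles}: by Lemma~\ref{lm:estimate.triangle} each $f(X_t)$ converges to the fixed ideal triangle with incircle centered at $O$, the collars converge to fixed horoball neighborhoods of the ideal vertices, and the crossing points are trapped in a fixed compact region near $O$, with uniformity for bounded $t$ following from continuity and compactness of the triangle-shape parameters.
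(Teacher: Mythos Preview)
Your argument is correct and takes a genuinely different route from the paper. The paper proceeds by contradiction: assuming the distance is unbounded, it extracts a subsequence along which the intersection point \(q^{(n)}\) drifts towards a cone point \(p\), shows (using the shear-invariance \(d_{X_n}(\tau_{i,i}^{(n)},c^{(n)})=A_i^{(0)}\), the explicit ratio \(L^{(n)}/l^{(n)}\to0\), and Lemma~\ref{lm:estimate.triangle} to bound \(l^{(n)}\)) that the circle through \(q^{(n)}\) has circumference tending to \(0\), and then invokes the collar lemma for a contradiction. You instead give a direct quantitative bound: the collar lemma traps \(q(X_t)\) outside a disk of radius \(\rho_v=\operatorname{arcsinh}(C/\theta_v)\), while elementary trigonometry in the right triangle \((v,\text{incenter},\tau_{e,f})\) combined with the pigeonhole \(\alpha_v^{(f^*)}\ge\theta_v/N_v\) and the shear-frozen gaps \(\max_f a_f(v)-\min_f a_f(v)\le S_0\) bounds the tangency distance by \(\rho_v+D'\). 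Both proofs rest on the same two structural ingredients---the collar lemma and the fact that along a peripheral stretch the corner-weight differences \(a_f(v)-a_{f'}(v)\) are constant---but your route is constructive and yields an explicit constant, while the paper's route is more geometric and integrates naturally with the canonical-triangle framework of Lemma~\ref{lm:estimate.triangle} used downstream in Theorem~\ref{thm:asym.behavior}. Two small points to tidy: the collar lemma gives disks of circumference \emph{less than} \(C\), so work with \(C/2\) (say) to get a clean closed inequality; and ``\(\theta_{\max}(X_t)\le\theta_{\max}(X_0)\)'' is not obviously monotone---what you actually need (and have) is \(\sup_{t\ge0}\theta_{\max}(X_t)<\pi\), which follows from continuity on a compact initial interval together with \(\theta_{\max}(X_t)\to0\).
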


\begin{proof}
The function
\[
  t \;\longmapsto\;
  d_{X_t}\!\bigl(\tau_{e,f}(X_t), q(X_t)\bigr)
\]
is continuous for \(t \ge 0\).
Since at \(t=0\) this distance is bounded above (for all \([\gamma]\) and intersection points) by a constant depending only on \(X_0\), it suffices to obtain a uniform bound for all sufficiently large \(t\).

Assume by contradiction that no such uniform bound exists.
Then for each integer \(n > 0\) there exist:
\[
  t_n > n,\quad
  [\gamma_n] \in \cS(S,\up),\quad
  e_n \in E([\gamma_n]),\quad
  f_n \in F(e_n),
\]
and a point
\[
  q_n(X_{t_n}) \in e_n(X_{t_n}) \cap \gamma_n(X_{t_n})
\]
such that
\begin{equation}
\label{eq:max.number}
  d_{X_{t_n}}\!\bigl(\tau_{e_n,f_n}(X_{t_n}), q_n(X_{t_n})\bigr) > n.
\end{equation}
By reparametrizing the stretch ray if necessary we may assume \(t_n = n\) and write \(X_n := X_{t_n}\).

\medskip\noindent
\emph{Step 1: Passing to a subsequence.}
Since \(G\) has finitely many edges, there exists \(e \in E(G)\) that coincides with \(e_n\) for infinitely many \(n\).
Passing to a subsequence, we may assume \(e_n = e\) for all \(n\).
Every edge \(e\) is adjacent to at most two faces; after possibly passing to another subsequence, we can assume that \(f_n = f\) is the face on a fixed side of \(e\) (say, the left side) for all \(n\).

Each point \(q_n(X_n) \in e(X_n)\cap\gamma_n(X_n)\) lies either to the left or to the right of \(\tau_{e,f}(X_n)\) on \(e(X_n)\).
Passing to a further subsequence if needed, we may suppose that all points \(q_n(X_n)\) lie on the same side (say, the left side) of \(\tau_{e,f}(X_n)\) for all \(n\).

Set
\[
  \gamma^{(n)} := \gamma_n(X_n),\quad
  e^{(n)} := e(X_n),\quad
  f^{(n)} := f(X_n),\quad
  q^{(n)} := q_n(X_n),\quad
  \tau_{e,f}^{(n)} := \tau_{e,f}(X_n).
\]
Then \eqref{eq:max.number} becomes
\begin{equation}
\label{eq:assump}
  d_{X_n}\!\bigl(\tau_{e,f}^{(n)}, q^{(n)}\bigr) > n.
\end{equation}

\medskip\noindent
\emph{Step 2: Local notation near a cone point.}
We now introduce local notation around the endpoint (denoted by $p$) of \(e\) that lies on the same side of \(\tau_{e,f}^{(n)}\) as \(q^{(n)}\) (see Figure~\ref{fig:estimate-dist}).

\begin{itemize}
\item \(C^{(n)}\): the hyperbolic circle of \(X_n\) centered at \(p\) and passing through \(q^{(n)}\).
    \item \(r^{(n)}:= r_p(X_n)\): the radius coordinate of \(X_n\) at \(p\) (Definition~\ref{def:shear-radius}). \item \(c^{(n)}\): the circle of radius \(r^{(n)}\) centered at \(p\).
    \item \((e_1,\dots,e_m)\): the edges of \(G\) incident to \(p\), ordered counterclockwise and chosen with \(e_1 = e\). 
    \item \(e_i^{(n)}\): the geodesic realizations of $e_i$ on \(X_n\).
    \item \(f_i\): the face bounded by the consecutive edges \(e_i\) and \(e_{i+1}\) (indices taken modulo \(m\)). \item \(f_i^{(n)}\): the geodesic realization of $f_i$ on \(X_n\).
    \item \(\tau_{j,i}^{(n)}\): the tangency point of the incircle of \(f_i^{(n)}\) with the edge \(e_j^{(n)}\), where \(j \in \{i,i+1\}\).
\end{itemize}

\begin{figure}[htp]
    \centering
    \includegraphics[width=13cm]{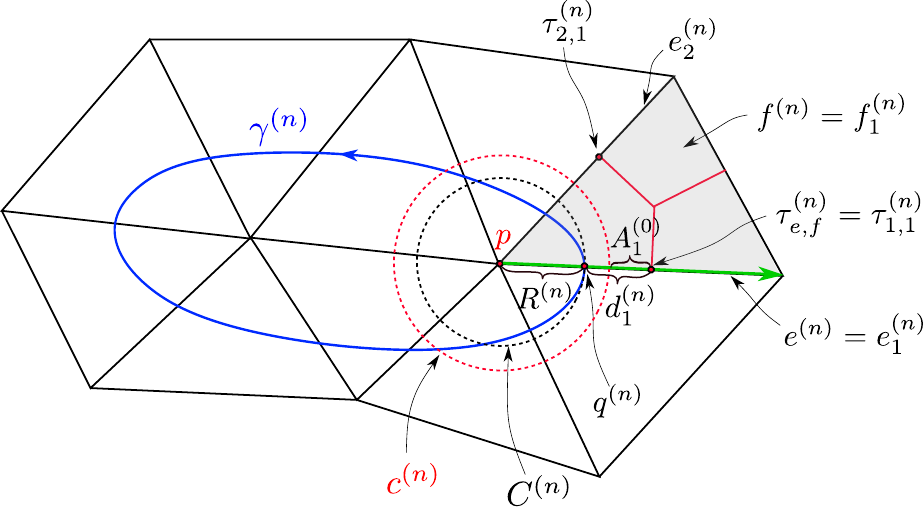}
    \caption{An example of \(\gamma^{(n)}\) on \(X_n\) with \(d_{X_n}(\tau_{1,1}^{(n)}, q^{(n)})>n\).}
    \label{fig:estimate-dist}
\end{figure}

\medskip\noindent
\emph{Step 3: A quantity invariant under peripheral stretch.}
By Definition~\ref{def:part.stret.}, the shear coordinates of \(X_n\) are independent of \(n\).
For each fixed \(i\), the position of the tangent point \(\tau_{i,i}^{(n)}\) relative to the vertex \(p\) and the circle \(c^{(n)}\) depends only on the shear data and the radius coordinate \(r^{(n)}\).
Under peripheral stretch, the shears remain fixed and the radius coordinate at \(p\) is scaled uniformly, so the signed distance from \(\tau_{i,i}^{(n)}\) to the circle \(c^{(n)}\) is independent of \(n\).
Thus there exists a constant \(A_i^{(0)}\), depending only on \(G\) and \(X_0\), such that
\begin{equation}
\label{eq:const}
  d_{X_n}\!\bigl(\tau_{i,i}^{(n)}, c^{(n)}\bigr) = A_i^{(0)}
  \quad\text{for all }n\text{ and }1\le i\le m.
\end{equation}

\medskip\noindent
\emph{Step 4: Shrinking of the circumference.}
Let \(L^{(n)}\) be the length of the circle \(C^{(n)}\).
We claim that
\begin{equation}
\label{eq:shrink.circum.}
  L^{(n)} \longrightarrow 0
  \quad\text{as }n\to\infty.
\end{equation}

Set \(d_i^{(n)} := d_{X_n}(\tau_{i,i}^{(n)}, C^{(n)})\).
In particular, by \eqref{eq:assump} we have
\begin{equation}
\label{eq:assump-dist-infty}
  d_1^{(n)} = d_{X_n}(\tau_{1,1}^{(n)}, C^{(n)})
           = d_{X_n}(\tau_{1,1}^{(n)}, q^{(n)}) \to \infty
  \quad\text{as }n\to\infty.
\end{equation}
Let \(R^{(n)} := d_{X_n}(p, q^{(n)})\) be the radius of \(C^{(n)}\).
Then
\[
  d_{X_n}(p, \tau_{1,1}^{(n)})
  = d_{X_n}(p, q^{(n)}) + d_{X_n}(q^{(n)}, \tau_{1,1}^{(n)})
  = R^{(n)} + d_1^{(n)}.
\]
The radius \(r^{(n)}\) of \(c^{(n)}\) satisfies
\[
  r^{(n)}
  = d_{X_n}(p, c^{(n)})
  = d_{X_n}(p, \tau_{1,1}^{(n)}) - d_{X_n}(\tau_{1,1}^{(n)}, c^{(n)})
  = R^{(n)} + d_1^{(n)} - A_1^{(0)},
\]
using \eqref{eq:const} for \(i=1\).

Write \(l^{(n)}\) for the length of \(c^{(n)}\).
Using elementary hyperbolic trigonometry for circles of radius \(R^{(n)}\) and \(r^{(n)}\), and combining \eqref{eq:assump-dist-infty}, we obtain
\begin{equation}
\label{eq:ratio_l_L_tend_to_0}
  \frac{L^{(n)}}{l^{(n)}}
  = \frac{\theta^{(n)} \sinh R^{(n)}}{\theta^{(n)} \sinh r^{(n)}}
  = \frac{1 - e^{-2R^{(n)}}}
         {e^{d_1^{(n)}-A_1^{(0)}} - e^{-(2R^{(n)}+d_1^{(n)}-A_1^{(0)})}}
  \le
  \frac{1}{e^{d_1^{(n)}-A_1^{(0)}} - e^{-(d_1^{(n)}-A_1^{(0)})}}
  \longrightarrow 0
\end{equation}
as \(n\to\infty\), where \(\theta^{(n)}\) is the cone angle of \(X_n\) at \(p\).

\begin{figure}[htp]
    \centering  \includegraphics[width=15.5cm]{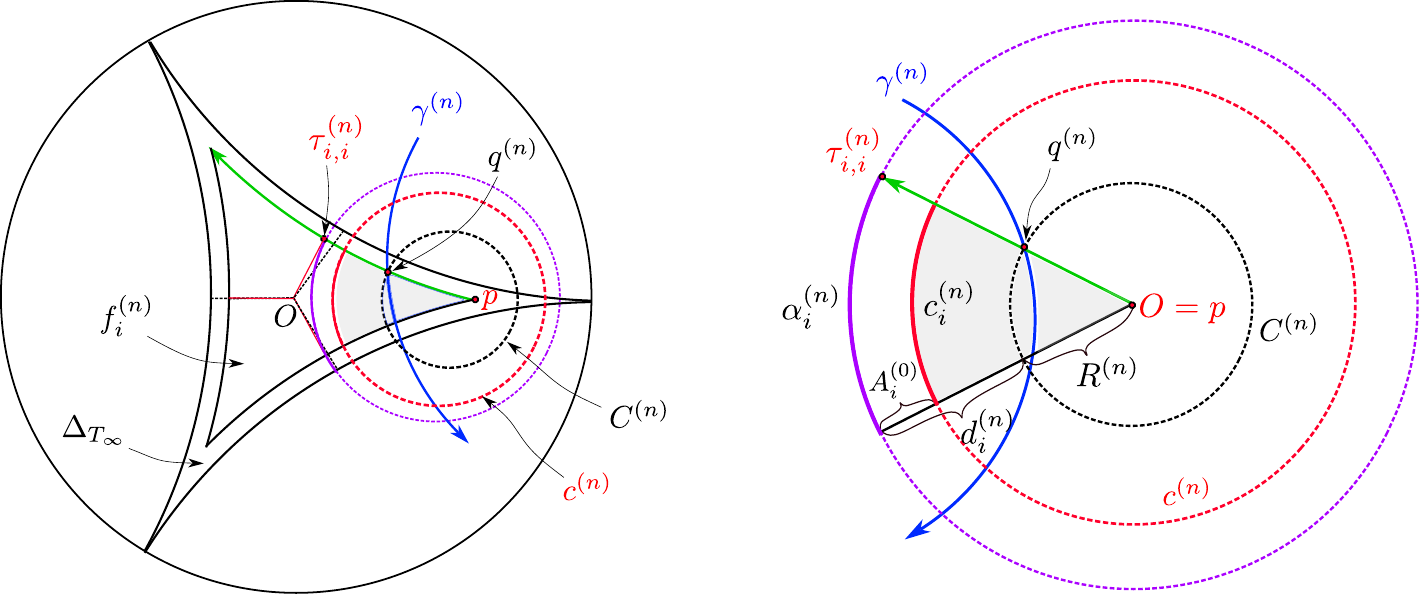}
    \caption{The left picture is the canonical representative of the geodesic triangle $f_i^{(n)}$, while the right one is a picture near $p$, with $p$ put in the origin $O$ of $\bH^2$.}
\label{fig:contradiction-dist}
\end{figure}

It remains to show that the sequence \((l^{(n)})_n\) is uniformly bounded above.
For each fixed \(i\), the triangles \(f_i^{(n)}\) converge, in the canonical normalization (see for instance Figure \ref{fig:contradiction-dist}) of Subsection~\ref{subsec:asymp.triang.}, to an ideal triangle \(f_i^{(\infty)}\).
In that normalization, the circle \(c^{(n)}\) has radius \(r^{(n)} \to \infty\), so its intersection \(c_i^{(n)} := c^{(n)} \cap f_i^{(n)}\) converges to a horocyclic arc \(h_i := h \cap f_i^{(\infty)}\), where \(h\) is a horocycle centered at the limiting ideal vertex of \(p\).

By the convergence of the canonical representatives and the fact that the distance from \(\tau_{i,i}^{(n)}\) to \(c^{(n)}\) is constant (cf.\ \eqref{eq:const}), the limiting horocycle \(h\) lies at a fixed distance \(A_i^{(0)}\) from a reference horocycle through the limiting tangency point.
Thus the limiting arc \(h_i\) has a finite length \(l_i^{(\infty)}\) depending only on the limiting ideal triangle \(f_i^{(\infty)}\) and the constant \(A_i^{(0)}\).
In particular, \(\ell_{X_n}(c_i^{(n)}) \to l_i^{(\infty)}\) as \(n\to\infty\), so the sequence \((\ell_{X_n}(c_i^{(n)}))_n\) is convergent and therefore uniformly bounded above.

Summing over the finitely many faces incident to \(p\), we obtain
\[
  l^{(n)} = \sum_{i=1}^m \ell_{X_n}(c_i^{(n)})
\]
as a finite sum of uniformly bounded sequences, hence \((l^{(n)})_n\) is uniformly bounded above.
Combining this with \eqref{eq:ratio_l_L_tend_to_0} yields \(L^{(n)} \to 0\), proving the claim \eqref{eq:shrink.circum.}.

\medskip\noindent
\emph{Final step: contradiction with the collar lemma.}
By Lemma~\ref{lm:Collar-lemma-for-hyperbolic-cone-surfaces}, there exists \(C \in (0,1]\) such that every disk centered at \(p\) with circumference less than \(C\) is disjoint from all simple closed geodesics on \(X_n\).
However, by \eqref{eq:shrink.circum.} there exists \(N>0\) such that for all \(n \ge N\) we have \(L^{(n)} < C\).
Thus, for \(n\ge N\), the circle \(C^{(n)}\) bounds a closed disk of circumference less than \(C\) that contains the point $q^{(n)}$ of a simple closed geodesic $\gamma^{(n)}$, contradicting the collar lemma.
This contradiction shows that our initial assumption was false, and the lemma follows.
\end{proof}

\subsection{Asymptotic limits of peripheral stretch and interior anti-stretch rays}

Recall that Theorem~\ref{thm:asym.behavior} states that for each \(G\in\TSp\) and each \(X_0\in\cTG\), the peripheral stretch ray \((X_t)_{t\geq 0}\) starting from \(X_0\) converges, in the length spectrum of simple closed curves, to the hyperbolic surface with cusps at each marked point \(p_i\in \up\) whose shear coordinates with respect to \(G\) are \(\cS_G(X_0)\), where \(\cS_G:=s_G\circ\cC_G\) (Definition~\ref{def:shear-radius}).

Before entering the technical estimates, we clarify the topology of convergence appearing in
Theorem~\ref{thm:asym.behavior}: it is the simple-curve length-spectrum topology, which is weaker than
the arc--curve length-spectrum topology induced by the full length data on \(\cAC\) (Definition~\ref{def:topology.on.T}).

\begin{remark}[Divergence in the arc--curve length-spectrum topology]
\label{rem:topology.convergence.light}
Let \((X_t)_{t\ge 0}\) be the peripheral stretch ray in \(\cT(G)\) based at \(X_0\).
By Definition~\ref{def:part.stret.}, the peripheral components of \(\cC_G(X_t)\) are scaled by \(e^t\),
hence for each \(p\in\underline p\) we have \(r_p(X_t)=e^t r_p(X_0)\to +\infty\).
Fix any edge \(e\in E(G)\) incident to \(p\).
Since \(\cC_G=(\iota_{G,\cdot})^{-1}\circ \ell^{(G)}_{\cdot}\), we have the identity
\(\ell_{X_t}(e)= i(\cC_G(X_t),e)\).
Moreover, because \(e\) crosses the peripheral annulus at \(p\) at least once,
its intersection with the peripheral component contributes at least \(r_p(X_t)\), so
\[
\ell_{X_t}(e)\ge r_p(X_t)\longrightarrow +\infty .
\]
Therefore \((X_t)_{t \geq 0}\) diverges in the arc--curve length-spectrum topology.
This is why Theorem~\ref{thm:asym.behavior} only asserts convergence after restricting
to the length spectrum of simple closed curves.
\end{remark}

To prove Theorem~\ref{thm:asym.behavior}, we first need an estimate (Lemma~\ref{lm:distance-close} below) on the distance between two points lying on two distinct edges of a hyperbolic triangle. Before stating this estimate, we introduce some notation in the Poincaré disk model \(\bH^2\) of the hyperbolic plane (see Figure~\ref{fig:dist-cts}). 

\begin{itemize}
\item Let \(r>0\), \(x,y\in \bR\) and \(\theta\in(-\pi,\pi)\).  Let \(O\) be the origin of \(\bH^2\) and let \(A\) be the point on the positive half of the \(x\)-axis with \(d_{\bH^2}(O,A)=r\). 
\item Let \(p=p(r,x)\) be the point on the geodesic ray intersecting the \(x\)-axis orthogonally at \(A\) with \(d_{\bH^2}(p,A)=|x|\). We take \(x>0\) (resp.\ \(x<0\)) if \(p\) lies strictly on the left (resp.\ right) of \(\overrightarrow{OA}\), and \(x=0\) if and only if \(p=A\). 
\item Let \(B\) be the point on the hyperbolic circle of radius \(r\) such that the directed segment \(\overrightarrow{OB}\) bounds an angle \(\theta\) with \(\overrightarrow{OA}\) in the anti-clockwise direction. 
\item Let \(q=q(\theta, r, y)\) be the point on the geodesic ray intersecting \(\overrightarrow{OB}\) orthogonally at \(B\) with \(d_{\bH^2}(q,B)=|y|\), where \(y>0\) (resp.\ \(y<0\)) if \(q\) lies on the left (resp.\ right) of \(\overrightarrow{OB}\), and \(y=0\) if and only if \(q=B\).
\end{itemize}

\begin{figure}[htp]
    \centering  \includegraphics[width=8cm]{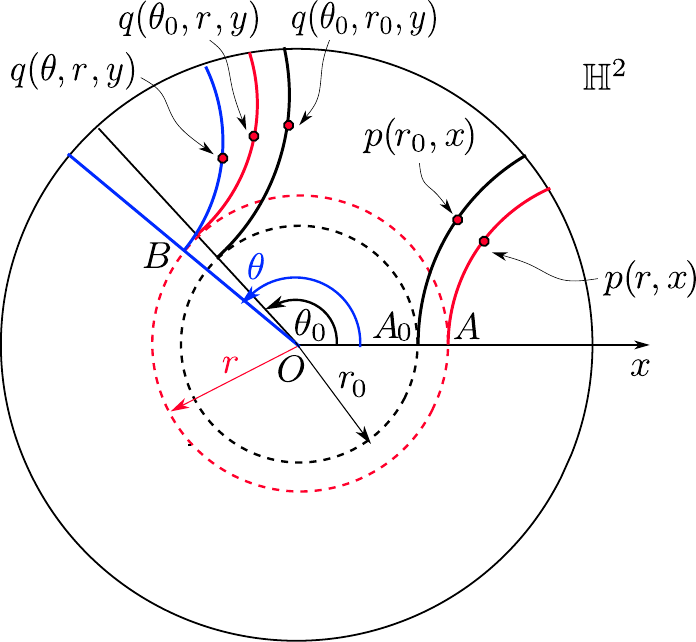}
    \caption{The point \(p(r,x)\) lies on the left of \(\overrightarrow{OA}\) with \(d_{\bH^2}(p(r,x),A)=x>0\), while the point \(q(\theta, r, y)\) lies on the right of \(\overrightarrow{OB}\) with \(d_{\bH^2}(q(\theta,r,y),B)=-y>0\). Here \(d_{\bH^2}(O,A)=d_{\bH^2}(O,B)=r\), and \(\overrightarrow{OB}\) bounds an angle \(\theta>0\) with \(\overrightarrow{OA}\) in the anti-clockwise direction.}
    \label{fig:dist-cts}
\end{figure}

\begin{lemma}\label{lm:distance-close}
Let \(r_0>0\), \(\theta_0\in (-\pi,\pi)\) and \(M>0\). For any \(\epsilon>0\), there exists \(\delta>0\) (depending on \(r_0\), \(\theta_0\), \(M\) and \(\epsilon\)) such that for all \(\theta,r>0\) with \(|\theta-\theta_0|<\delta\), \(|r-r_0|<\delta\) and all \(x,y\in [-M,M]\), we have 
\[
    \bigl|d_{\bH^2}(p(r,x),q(\theta, r, y))-d_{\bH^2}(p(r_0,x),q(\theta_0, r_0, y))\bigr|<\epsilon.
\]
\end{lemma}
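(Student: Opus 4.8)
The plan is to realize the quantity $d_{\bH^2}(p(r,x),q(\theta,r,y))$ as the restriction of a single jointly continuous function of the four real parameters $(r,\theta,x,y)$, and then to upgrade pointwise continuity to the desired uniform estimate by a compactness argument. The statement is, in essence, a uniform-continuity assertion, so the real content is the joint continuity together with uniformity in $(x,y)$ over the compact interval $[-M,M]$.

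First I would make the dependence of the two points on their parameters explicit in terms of isometries and the exponential map of $\bH^2$, which are smooth and hence continuous. Parametrize the positive $x$-axis by arclength, so that $A=A(r)$ is a smooth function of $r$, and let $N_A(r)$ be the unit normal to this geodesic at $A(r)$ pointing to the side where $x>0$; then $p(r,x)=\exp_{A(r)}\!\bigl(x\,N_A(r)\bigr)$ depends smoothly on $(r,x)$ (the sign of $x$ being absorbed into the direction of $N_A(r)$). The point $B=B(\theta,r)$ is the image of $A(r)$ under the rotation $R_\theta$ about $O$, an isometry depending smoothly on $\theta$, so it too is smooth in $(\theta,r)$; writing $N_B(\theta,r)$ for the unit normal to $\overrightarrow{OB}$ at $B$ on the side where $y>0$, we obtain $q(\theta,r,y)=\exp_{B(\theta,r)}\!\bigl(y\,N_B(\theta,r)\bigr)$, smooth in $(\theta,r,y)$. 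Since the hyperbolic distance $d_{\bH^2}\colon\bH^2\times\bH^2\to\bR_{\ge 0}$ is continuous (no care near the diagonal $p=q$ is needed, as it is continuous everywhere), the composite
\[
\Phi(r,\theta,x,y):=d_{\bH^2}\bigl(p(r,x),q(\theta,r,y)\bigr)
\]
is continuous in all four variables jointly.

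Next I would fix $\eta>0$ small enough that $r_0-\eta>0$ and $[\theta_0-\eta,\theta_0+\eta]\subset(-\pi,\pi)$, and restrict $\Phi$ to the compact box
\[
K:=[r_0-\eta,r_0+\eta]\times[\theta_0-\eta,\theta_0+\eta]\times[-M,M]\times[-M,M].
\]
As $\Phi$ is continuous on the compact set $K$, it is uniformly continuous there; hence, given $\epsilon>0$, there is $\delta_0>0$ such that any two points of $K$ at Euclidean distance $<\delta_0$ have $\Phi$-values differing by $<\epsilon$. Now take $\delta:=\min\{\eta,\delta_0/\sqrt2\}$. For $|\theta-\theta_0|<\delta$, $|r-r_0|<\delta$ and $x,y\in[-M,M]$, the two points $(r,\theta,x,y)$ and $(r_0,\theta_0,x,y)$ both lie in $K$, agree in their last two coordinates, and are separated by at most $\sqrt{(r-r_0)^2+(\theta-\theta_0)^2}<\delta\sqrt2\le\delta_0$; therefore
\[
\bigl|\Phi(r,\theta,x,y)-\Phi(r_0,\theta_0,x,y)\bigr|<\epsilon,
\]
which is exactly the claimed inequality.

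I expect the main (and essentially only) obstacle to be the careful verification of the continuous dependence of $p(r,x)$ and $q(\theta,r,y)$ on their parameters, in particular tracking the sign conventions for $x,y$ and for the normals $N_A,N_B$ described in the setup preceding the lemma. This is routine once the exponential-map description above is in place, after which the uniformity in $(x,y)$ follows automatically from compactness of $K$, with no delicate hyperbolic-trigonometric estimate required.
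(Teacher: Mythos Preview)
Your proof is correct and follows essentially the same approach as the paper: establish joint continuity of $(r,\theta,x,y)\mapsto d_{\bH^2}(p(r,x),q(\theta,r,y))$, restrict to a compact box containing $(r_0,\theta_0)\times[-M,M]^2$, and invoke uniform continuity there. Your version is in fact slightly more careful than the paper's, since you choose $\eta$ small enough that the box stays inside the domain $(0,\infty)\times(-\pi,\pi)$, whereas the paper simply takes $[r_0-1,r_0+1]\times[\theta_0-1,\theta_0+1]$ without comment.
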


\begin{proof}
The map
\[
(r,\theta,x,y)\longmapsto d_{\bH^2}\bigl(p(r,x),q(\theta,r,y)\bigr)
\]
is continuous on \((0,\infty)\times(-\pi,\pi)\times\bR\times\bR\), since \(p(r,x)\) and \(q(\theta,r,y)\) depend continuously on \(r,\theta,x,y\) and the hyperbolic distance is continuous in its arguments.

Fix \(r_0>0\), \(\theta_0\in(-\pi,\pi)\) and \(M>0\). Consider the compact set
\[
K:= [r_0-1,r_0+1]\times[\theta_0-1,\theta_0+1]\times[-M,M]\times[-M,M].
\]
By continuity, the distance function is uniformly continuous on \(K\). Hence for any \(\epsilon>0\) there exists \(\delta>0\) such that whenever \(|r-r_0|<\delta\), \(|\theta-\theta_0|<\delta\) and \(x,y\in[-M,M]\), we have
\[
\bigl|d_{\bH^2}(p(r,x),q(\theta,r,y)) - d_{\bH^2}(p(r_0,x),q(\theta_0,r_0,y))\bigr|<\epsilon,
\]
as required.
\end{proof}

We are now ready to prove Theorem~\ref{thm:asym.behavior}.

\begin{proof}[Proof of Theorem \ref{thm:asym.behavior}]
For each \(G\in\TSp\) and each \(X_0\in\cTG\), let
\[
X_t:=\text{stretch}^{\mathrm{per}}_G(X_0,t),\qquad t\ge 0,
\]
denote the peripheral stretch ray in \(\cTG\) starting from \(X_0\). Let \(X_{\infty}\in\cT_0(S,\up)\) denote the hyperbolic surface with cusps at each marked point \(p_i\in \up\), whose shear coordinates with respect to the triangulation \(G\) are \(\cS_G(X_0)\), where \(\cS_G=s_G\circ\cC_G\). 
It suffices to show that for any simple closed curve class \([\gamma]\in\cS(S,\up)\),
\[
\ell_{X_t}([\gamma])\longrightarrow \ell_{X_{\infty}}([\gamma])
\]
as \(t\rightarrow +\infty\). 

By Lemma~\ref{lm:estimate.triangle}, the cone angle at each marked point of \(X_t\) tends to zero as \(t\to\infty\). Without loss of generality, we may assume that the largest cone angle \(\theta_{\max}(X_t)\) is less than \(\pi\) for all sufficiently large \(t>0\).

\medskip\noindent\emph{Step 1: Notation.}
Fix \([\gamma]\in\cS(S,\up)\). We introduce the following notation:
\begin{itemize}
\item \(\gamma_G : [0, 1] \to \dot{S}\): a representative of \([\gamma]\) which intersects \(G\) transversely, with \(0 \leq t_1 < \dots < t_{m_{[\gamma]}} < 1\) such that each \(\gamma_{G} (t_i)\) lies in an edge (denoted by $e_i$) of \(G\) (note that \(e_i\) may coincide with \(e_j\) for \(i\ne j\)).
\item \(\gamma^{(t)}\) (resp.\ \(\gamma^{(\infty)}\)): the shortest geodesic representative of \([\gamma]\) on \(X_t\) (resp.\ \(X_{\infty}\)).
\item \(e_i^{(t)}\) (resp.\ \(e_i^{(\infty)}\)): the geodesic realization of \(e_i\) on \(X_t\) (resp.\ \(X_{\infty}\)).
\item \(q_i^{(t)}\) (resp.\ \(q_i^{(\infty)}\)): the intersection point of  \(\gamma^{(t)}\) (resp.\ \(\gamma^{(\infty)}\)) with the edge $e_i^{(t)}$ (resp. $e_i^{(\infty)}$). \item \(f_i^{(t)}\) (resp.\ \(f_i^{(\infty)}\)): the geodesic realization on \(X_t\) (resp.\ \(X_{\infty}\)) of the face bounded by \(e_i\) and \(e_{i+1}\).
\item \(\tau_{j,i}^{(t)}\)  (resp.\ \(\tau_{j,i}^{(\infty)}\)): the tangent point on the edge \(e_j^{(t)}\) (resp.\ \(e_j^{(\infty)}\)) to the incircle of the geodesic triangle \(f^{(t)}_i\) (resp.\ \(f^{(\infty)}_i\)), where \(j\in\{i,i+1\}\). 
\end{itemize}

\medskip\noindent\emph{Step 2: Construction of a comparison curve.}
For each \(t\) and \([\gamma]\), we construct a comparison simple closed curve \(\gamma^{(t,\infty)}\) on \(X_{\infty}\), homotopic to \([\gamma]\), as follows. For each \(i\), let \(q_{i}^{(t,\infty)}\in e_i^{(\infty)}\) and \(q_{i+1}^{(t,\infty)}\in e_{i+1}^{(\infty)}\) be the points such that their signed distances from the tangent points \(\tau_{i,i}^{(\infty)}\in e_i^{(\infty)}\), \(\tau_{i+1,i}^{(\infty)}\in e_{i+1}^{(\infty)}\) agree with those of \(q_i^{(t)}\), \(q_{i+1}^{(t)}\) from \(\tau_{i,i}^{(t)}\), \(\tau_{i+1,i}^{(t)}\), respectively (see Figure~\ref{fig:construct-curve}). Thus
\[
\begin{split}
     d_{X_{\infty}}(\tau_{i,i}^{(\infty)}, q_{i}^{(t,\infty)})
    &=d_{X_t}(\tau_{i,i}^{(t)}, q_{i}^{(t)})=:x_t,\\
     d_{X_{\infty}}(\tau_{i+1,i}^{(\infty)}, q_{i+1}^{(t,\infty)})
    &=d_{X_t}(\tau_{i+1,i}^{(t)},q_{i+1}^{(t)})=:y_t.
\end{split}
\]
Let \(\gamma^{(t,\infty)}\) be the union, over all \(i=1,\dots,m_{[\gamma]}\), of the geodesic segments in \(f_i^{(\infty)}\) joining \(q_i^{(t, \infty)}\) to \(q_{i+1}^{(t,\infty)}\).

\begin{figure}[htp]
    \centering  \includegraphics[width=18.5cm]{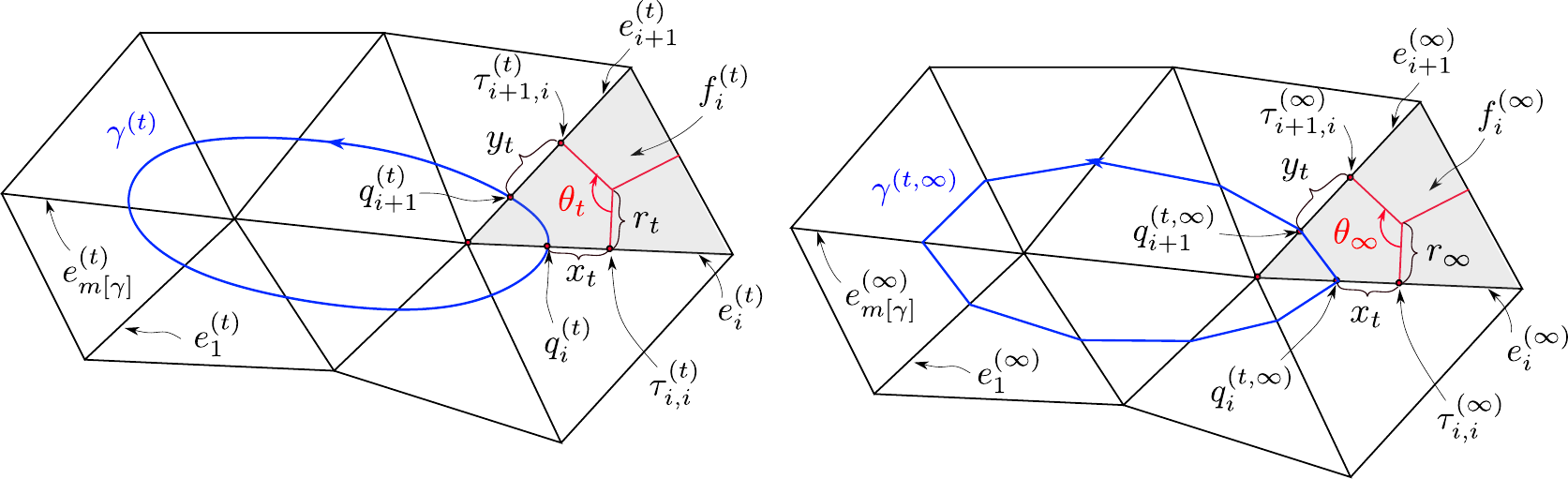}
    \caption{A comparison curve \(\gamma^{(t,\infty)}\) for \(\gamma^{(t)}\).}
    \label{fig:construct-curve}
\end{figure}

\medskip\noindent\emph{Step 3: Putting faces in canonical position.}
We now regard each triangle \(f_i^{(\infty)}\) and \(f_i^{(t)}\) in its canonical position in \(\bH^2\) (Definition~\ref{def:canonical-triangles}), as in Figure~\ref{fig:canonical-triangle}.
By Lemma~\ref{lm:estimate.triangle}, the angle between the two radial segments based at the center of \(f_i^{(\infty)}\) through \(\tau_{i,i}^{(\infty)}\) and \(\tau_{i+1,i}^{(\infty)}\) is
\[
\theta_{\infty}:=\frac{2\pi}{3},
\]
and the radius of the incircle of each ideal triangle \(f_i^{(\infty)}\) is
\[
r_{\infty}:={\rm arcosh}\Bigl(\frac{2\sqrt{3}}{3}\Bigr).
\]

\begin{figure}[htp]
    \centering  \includegraphics[width=17.5cm]{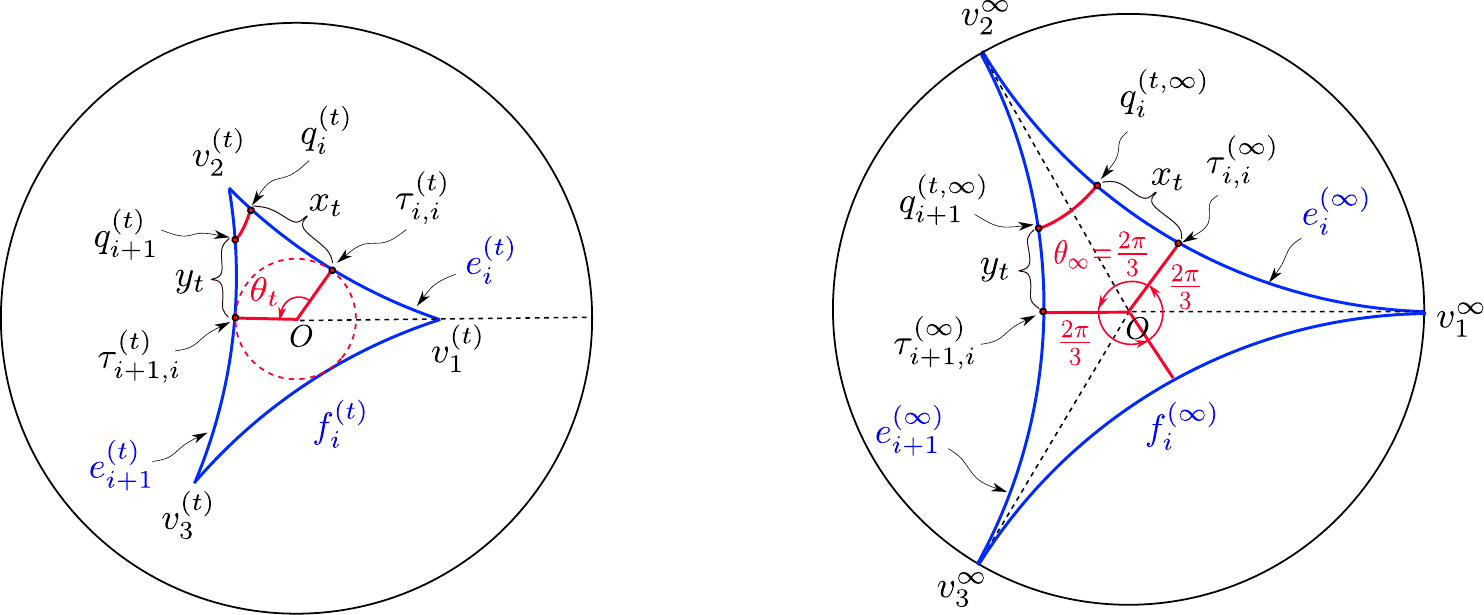}
    \caption{Canonical triangles \(f^{(t)}_i\) and \(f^{(\infty)}_i\).}
    \label{fig:canonical-triangle}
\end{figure}

After this normalization, the points \(q_i^{(t)}\), \(q_{i+1}^{(t)}\) (resp.\ \(q_{i}^{(t,\infty)}\), \(q_{i+1}^{(t,\infty)}\)) in \(f_i^{(t)}\) (resp.\ \(f_i^{(\infty)}\)) can be written in the form
\[
\begin{split}
q_i^{(t)} &= p(r_t,x_t), \qquad q_{i+1}^{(t)} = q(\theta_t, r_t,y_t),\\
q_i^{(t,\infty)} &= p(r_{\infty},x_{t}), \qquad q_{i+1}^{(t,\infty)} = q(\theta_{\infty}, r_{\infty}, y_{t}),
\end{split}
\]
where \(\theta_t\in (-\pi,\pi)\) is the angle from the radial segment based at the center of $f_i^{(t)}$ through \(\tau_{i,i}^{(t)}\) to that through \(\tau_{i+1,i}^{(t)}\), and \(r_t\) is the inradius of the triangle \(f^{(t)}_i\).

\medskip\noindent\emph{Step 4: Convergence.}
By Lemma~\ref{lm:estimate.triangle}, we have \(f_i^{(t)}\to f_i^{(\infty)}\) as \(t\to +\infty\), in canonical position. Thus \(r_t\rightarrow r_{\infty}\) and \(\theta_t\rightarrow \theta_{\infty}\) as \(t\rightarrow +\infty\). By Lemma~\ref{lm:dist-tangent-intersect}, there exists \(D>0\) such that \(|x_t|\leq D\) and  \(|y_t|\leq D\) for all sufficiently large \(t\). 

For any \(\epsilon>0\), applying Lemma~\ref{lm:distance-close} with \(r_0=r_{\infty}\), \(\theta_0=\theta_{\infty}\), \(M=D\) and \([\gamma]\) fixed, there exists \(T=T(\epsilon, m_{[\gamma]})>0\) such that for all \(t\geq T\) and each \(i=1,\dots,m_{[\gamma]}\),
\[
d_{X_{\infty}}(q_i^{(t,\infty)},q_{i+1}^{(t,\infty)})-\frac{\epsilon}{m_{[\gamma]}}
\leq d_{X_t}(q_i^{(t)},q_{i+1}^{(t)})
\leq d_{X_{\infty}}(q_i^{(t,\infty)},q_{i+1}^{(t,\infty)})+\frac{\epsilon}{m_{[\gamma]}}.
\]
Summing over \(i\) yields
\[
\begin{split}
  \ell_{X_t}([\gamma])=\ell_{X_t}(\gamma^{(t)})
  &=\sum_{i=1}^{m_{[\gamma]}}d_{X_t}(q_i^{(t)},q_{i+1}^{(t)})\\
  &\geq \sum_{i=1}^{m_{[\gamma]}}d_{X_{\infty}}(q_i^{(t,\infty)},q_{i+1}^{(t,\infty)})-\epsilon\\
  &\geq \ell_{X_{\infty}}([\gamma])-\epsilon.
\end{split}
\]

For the reverse inequality, start from the geodesic representative \(\gamma^{(\infty)}\) of \([\gamma]\) on \(X_{\infty}\) and define the corresponding distances between the tangent points \(\tau_{i,i}^{(\infty)}\), \(\tau_{i+1,i}^{(\infty)}\) and the intersection points \(q_i^{(\infty)}\), \(q_{i+1}^{(\infty)}\). As before, we define a comparison curve \(\gamma^{(\infty,t)}\) in \(X_t\), intersecting each face \(f_i^{(t)}\) at points \(q_{i}^{(\infty,t)}\), \(q_{i+1}^{(\infty, t)}\) such that
\[
\begin{split}
    d_{X_t}(\tau_{i,i}^{(t)}, q_{i}^{(\infty, t)})&=d_{X_{\infty}}(\tau_{i,i}^{(\infty)}, q_{i}^{(\infty)}),\\
    d_{X_t}(\tau_{i+1,i}^{(t)}, q_{i+1}^{(\infty, t)})&=d_{X_{\infty}}(\tau_{i+1,i}^{(\infty)}, q_{i+1}^{(\infty)}).
\end{split}
\]
Applying the same comparison argument as above yields
\[
  \ell_{X_{\infty}}([\gamma]) \geq \ell_{X_t}([\gamma])-\epsilon
\]
for all sufficiently large \(t\). Combining the two inequalities completes the proof of Theorem~\ref{thm:asym.behavior}.
\end{proof}

\begin{proof}[Proof of Theorem \ref{thm:asym.behavior1}]
Let \(X_{\infty}\) denote the hyperbolic circle-packed surface with centers at \(p_i\in \up\), whose radius coordinates with respect to the triangulation \(G\) are \(\cR_G(X_0)\), where \(\cR_G:=r_G\circ\cC_G\) (Definition~\ref{def:shear-radius}). By the definition of the interior anti-stretch deformation (Definition~\ref{def:part.stret.}), the shear coordinates of \(X_t\) tend to \(0\) as \(t\rightarrow -\infty\), while the radius coordinates of \(X_t\) remain constant for all \(t\leq 0\). 

By Corollary~\ref{cor:homeo.TS.shear.radius}, the shear--radius map \(\cSR_G\) gives a homeomorphism between \(\cT(G)\) and \(\Lambda_G\). The path \((X_t)_{t\leq 0}\) therefore converges to \(X_{\infty}\) as \(t\rightarrow -\infty\) in the shear--radius coordinates, hence also in Teichmüller space. The theorem follows.
\end{proof}

 \section{Maximal cone angles and universal triangulability}
\label{sec:max.cone.angle}

The deformation rays constructed in the previous section vary the cone angles, and
raise a natural extension problem: how far can one increase the angles before leaving
the domain where our triangulation-based coordinates are defined?
In the classical analytic theory, admissible cone-angle vectors (in a fixed conformal
class) are governed by the existence and uniqueness results of Heins, McOwen and
Troyanov for the Berger--Nirenberg problem \cite{Hei62,McO88,Tro91}.
Here we adopt a complementary viewpoint: for a fixed topological type \((g,n)\) and a
triangulation \(G\), we determine the maximal cone angles compatible with \(G\)-triangulability,
and, more generally, with the universally triangulable locus \(\cT_{\rm u.t.}\Sp\).

Fix a marked point \(p_i\in\up\).
Using the circular foliation coordinates of Theorem~\ref{thm:Teich.chart},
we address Question~\ref{q:maximal.angle} from the Introduction: we ask how
large the cone angle \(\theta_{p_i}(X)\) can be
\emph{(a)} inside a fixed chart \(\cT(G)\) for a given triangulation
\(G\in\TSp\), and
\emph{(b)} on the universally triangulable locus
\(\cT_{\rm u.t.}\Sp\), consisting of metrics that geodesically realizing every triangulation.
Both parts admit explicit answers in terms of the numbers of faces incident
to the marked points in geodesic triangulations of \((S,\up)\).

For each triangulation \(G\in\TSp\),
Proposition~\ref{prop:angle.face} computes the maximal cone angle
\(\theta_{p_i}(G)\) at \(p_i\) within the chart \(\cT(G)\).
Passing to the universally triangulable locus, we then consider the universally maximal cone angle (Definition~\ref{def:max.cone.angles}), determine its value (Proposition~\ref{prop:max.cone.angle}), and analyse
when this value is attained (Lemma~\ref{lem:achived}), and treat the
single-cone case \(n=1\), \(g\ge2\) separately
(Proposition~\ref{prop:n1ggeq2.cone.angle}), thereby proving
Theorem~\ref{thm:adm.cone.angle}.

For the reader’s convenience we briefly outline the structure of the section.
\begin{itemize}
  \item In Section~\ref{subsec:max.angle.single.chart} we bound the cone
  angle at a marked point inside a fixed chart \(\cT(G)\) and express
  \(\theta_{p_i}(G)\) in terms of the number of faces incident to \(p_i\)
  (Proposition~\ref{prop:angle.face}).
  \item In Section~\ref{subsec:universal.max.angle} we pass from a single
  chart to the universally triangulable locus and determine the corresponding
  maximal cone angles, obtaining Theorem~\ref{thm:adm.cone.angle} via
  Propositions~\ref{prop:max.cone.angle} and~\ref{prop:angle.face} together
  with Lemma~\ref{lem:achived}.
  \item In Section~\ref{subsec:case3} we deal with the single-cone case
  \(n=1\), \(g\ge2\), using a Dehn twist construction to show that in this case, those hyperbolic cone-metrics with cone angles strictly larger than \(\pi\) cannot be universally triangulable
  (Proposition~\ref{prop:n1ggeq2.cone.angle}).
\end{itemize}

\subsection{Maximal cone angles in a single chart}
\label{subsec:max.angle.single.chart}

We begin with part~\((a)\) of Question~\ref{q:maximal.angle}.
Fix a triangulation \(G\in\TSp\) and a marked point \(p\in\up\).
Our first goal is to understand the maximal cone angle at \(p\) that can be
realized by surfaces \(X\in\cT(G)\), and more generally under the joint
constraints coming from a family of triangulations.

\begin{definition}[Maximal cone angles] 
\label{def:max.cone.angles}
For \(G\in\TSp\) and \(p\in\up\), the \DEF{\em \(G\)-maximal cone angle at \(p\)}
is
\[
\theta_p(G):=\sup_{X\in\cT(G)}\theta_p(X),
\]
where \(\theta_p(X)\) denotes the cone angle at \(p\) of the hyperbolic
cone-surface \(X\).

For a non-empty subset \(\underline{G}\subset\TSp\), the
\DEF{\em \(\underline{G}\)-maximal cone angle at \(p\)} is
\[
\theta_p(\underline{G})
:=\sup_{X\in\cT(\underline{G})}\theta_p(X),
\qquad
\cT(\underline{G}):=\bigcap_{G\in\underline{G}}\cT(G).
\]
In particular, the \DEF{\em universally maximal cone angle at \(p\)} is
\[
\theta_p(\TSp)
:=\sup_{X\in\cT(\TSp)}\theta_p(X)
=\sup_{X\in\cT_{\rm u.t.}\Sp}\theta_p(X),
\]
and the \DEF{\em universally maximal cone angle} of \(\Sp\) is
\[
\Theta\Sp:=\max_{p\in\up}\theta_p(\TSp).
\]
\end{definition}

In order to answer Question~\ref{q:maximal.angle}(a), we now compute
\(\theta_p(G)\) explicitly for each triangulation \(G\in\TSp\).
The next proposition shows that the \(G\)-maximal cone angle at \(p\) is
determined purely by the number of faces of \(G\) incident to~\(p\).

\begin{proposition}[Maximal cone angle in a chart]
\label{prop:angle.face}
Let \(G\in\TSp\) be a triangulation and \(p\in\up\) a marked point.
Then
\[
\theta_p(G)=\pi\,|F_p(G)|,
\]
where \(|F_p(G)|\) is the number of faces of \(G\) incident to \(p\).
\end{proposition}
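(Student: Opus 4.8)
The plan is to compute the supremum by combining a per-triangle Gauss--Bonnet upper bound with an explicit degenerating family of edge lengths that realizes it. Throughout I work in the edge-length coordinates $\ell^{(G)}_{\cdot}:\cT(G)\to\Omega_G$ furnished by Lemma~\ref{lem:edge.length.Teich.biject.} and Proposition~\ref{prop:edge.leng.homeo.}, so that producing metrics in $\cT(G)$ reduces to choosing edge weights in the admissible cone $\Omega_G$ (Definition~\ref{def:tri.ineq.region}).

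First I would set up the angle bookkeeping. For $X\in\cT(G)$ the geodesic realization decomposes $X$ into the hyperbolic triangles $f(X)$, $f\in F(G)$, and the cone angle $\theta_p(X)$ is the sum of the interior angles at every corner meeting $p$. Grouping these corners by the face they belong to, I write $\theta_p(X)=\sum_{f\in F_p(G)}\sigma_f(p,X)$, where $\sigma_f(p,X)$ is the sum of the interior angles of $f(X)$ at those of its (at most three) corners sent to $p$. By Gauss--Bonnet for a hyperbolic geodesic triangle, the three interior angles of $f(X)$ sum to $\pi-\mathrm{Area}(f(X))$, so $\sigma_f(p,X)\le \pi-\mathrm{Area}(f(X))<\pi$ for every $f$. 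Summing over the $|F_p(G)|$ faces incident to $p$ yields $\theta_p(X)<\pi\,|F_p(G)|$, hence $\theta_p(G)\le \pi\,|F_p(G)|$.

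The core of the argument is the matching lower bound: I must produce a family $\ell^{(s)}\in\Omega_G$ along which $\sigma_f(p)\to\pi$ for every face incident to $p$ simultaneously. The mechanism is the hyperbolic law of cosines: a corner angle opposite an edge of length $b$ and adjacent to edges of lengths $a,a'$ tends to $\pi$ exactly as $b\uparrow a+a'$, and to $0$ as $b\downarrow|a-a'|$. Concentrating the angle of $f$ at its $p$-corners therefore calls for different degenerations depending on how $f$ meets $p$, so I would partition $E(G)$ into loops at $p$ (both endpoints $p$), ordinary edges incident to $p$ (exactly one endpoint $p$), edges opposite $p$ in some incident face, and edges disjoint from the star, assigning them lengths $\mu$, $1$, $2-\epsilon$, and $2$ respectively. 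A direct check of the strict triangle inequalities---every face incident to $p$ has sides among $\{\mu,1,2-\epsilon\}$ and every other face has sides in $\{2-\epsilon,2\}$---gives $\ell^{(s)}\in\Omega_G$ for all small $\mu,\epsilon>0$. As $\mu,\epsilon\to 0$, each ordinary $p$-corner (opposite a side $2-\epsilon$, adjacent to two sides of length $1$) opens to $\pi$; in a face with a loop at $p$ the far corner (opposite the side $\mu$) closes to $0$ while the two $p$-angles absorb the remaining $\pi$; and a fully looped face shrinks with angle sum $\to\pi$. In every case $\sigma_f(p)\to\pi$, so $\theta_p(X^{(s)})\to\pi\,|F_p(G)|$, proving $\theta_p(G)\ge\pi\,|F_p(G)|$ and hence equality.

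The step I expect to be the main obstacle is the global consistency of this degeneration: one must verify that the four edge classes genuinely partition $E(G)$, that shared edges receive a single consistent length, and that the prescribed limits make every incident face concentrate its angle at $p$ while keeping every remaining face nondegenerate, all at once inside $\Omega_G$. The delicate point is the treatment of loop edges and self-folded faces (once-punctured monogons), which require the loop degeneration $b\downarrow|a-a'|$ rather than the naive ``shrink the star'' limit (the latter would send ordinary $p$-corners to $\pi/3$ instead of $\pi$); it is the interaction between loop and non-loop edges that must be tracked carefully. The per-triangle Gauss--Bonnet identity is precisely what guarantees that, whatever the local combinatorics around $p$, each incident face contributes at most---and, along the family above, arbitrarily close to---$\pi$.
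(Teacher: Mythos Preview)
Your proof is correct and follows the same two-step strategy as the paper: bound each incident face's contribution by $\pi$, then exhibit a degenerating family in $\Omega_G$ realizing the bound. Your upper bound via Gauss--Bonnet is in fact more carefully stated than the paper's, since it handles faces with two or three corners at $p$ by bounding the \emph{sum} $\sigma_f(p)$ of $p$-angles in $f$ by the full angle sum $\pi-\mathrm{Area}(f)<\pi$, rather than appealing to ``each angle $<\pi$'' (which on its own would only give $2\pi$ or $3\pi$ for such faces).

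The one genuine difference is in how the degenerating family is produced. The paper constructs it on the foliation side: it prescribes a radius $1/n$ at $p$, radii $a_n$ at the other marked points, and interior edge intersections $1/n$, $1+1/n$, or $2$ according to incidence at $p$, then pushes this through $(\iG_{\cdot})^{-1}$ and $\cC_G^{-1}$. After unwinding, this yields three edge-length classes ($3/n$, $a_n+1+2/n$, $2a_n+2$), and the $a_n$ absorb the slack needed for the strict triangle inequalities. You instead work directly in $\Omega_G$ with four length classes $\mu,1,2-\epsilon,2$, using the extra ``opposite $p$'' class to create the slack by hand. Your route is more elementary---it uses only Lemma~\ref{lem:edge.length.Teich.biject.} and the law of cosines, not the circular foliation machinery---while the paper's route illustrates that the shear--radius/foliation coordinates are well suited to producing such families. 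Both lead to the same per-face limits $\sigma_f(p)\to\pi$ and the same case analysis on the number of $p$-corners.
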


\begin{remark}
Proposition~\ref{prop:angle.face} shows that, for each triangulation
\(G\in\TSp\), the answer to Question~\ref{q:maximal.angle}(a) is purely
combinatorial: the maximal cone angle at \(p\) within \(\cT(G)\) depends only
on the number \(|F_p(G)|\) of faces of \(G\) incident to \(p\), via the
formula \(\theta_p(G)=\pi\,|F_p(G)|\).
\end{remark}

\begin{proof}
For any \(X\in\cT(G)\), the cone angle at \(p\) is the sum of the interior
angles at \(p\) of the faces in \(F_p(G)\).
Each of these angles is strictly less than \(\pi\) in a hyperbolic triangle,
so
\[
\theta_p(X)<\pi\,|F_p(G)|.
\]
Taking the supremum over all \(X\in\cT(G)\) gives the upper bound
\(\theta_p(G)\leq \pi\,|F_p(G)|\).

To see that this bound is sharp, we construct a sequence of hyperbolic
cone-surfaces \(X_n\in\cT(G)\) for which \(\theta_p(X_n)\to\pi\,|F_p(G)|\).
We do this on the foliation side and then transfer to metrics via the
circular foliation map \(\cC_G\).

By the product decomposition of \(\cMFp\) in~\eqref{map:prod.str.MF}, the
interior component and the peripheral components of a foliation class
\([F]\in\cMFp\) can be chosen independently.
Fix \(n\ge1\) and define a class \([F_n]\in\cMFp\) as follows.

\begin{itemize}
  \item For the peripheral components, set the radius of the component
  around \(p:=p_i\) to be \(r_i([F_n])=1/n\), while for \(p_j\neq p\) we fix
  \(r_j([F_n])=a_n>0\) (arbitrary but independent of the face adjacent to~\(p\)).

  \item For the interior component \([F_n^{(0)}]\), prescribe the edge
  intersection numbers
  \[
  i\big([F^{(0)}_n],e\big)
  =
  \begin{cases}
    1/n,&\text{if \(e\) has both endpoints at \(p\)},\\[0.2em]
    1+1/n,&\text{if \(e\) has exactly one endpoint at \(p\)},\\[0.2em]
    2,&\text{otherwise.}
  \end{cases}
  \]
\end{itemize}

For each face \(f\in F(G)\) with edges \(e_1,e_2,e_3\) and opposite vertices \(v_1,v_2,v_3\), let \(w_l^{(n)}:=i([F^{(0)}_n],e_l)\) and \(r_l^{(n)}:=r_{v_l}([F_n])\). 
Then the total edge intersections can be expressed as
\[
\ell_i^{(n)}:=i([F_n],e_i)=w_i^{(n)}+r_j^{(n)}+r_k^{(n)},\qquad\{i,j,k\}=\{1,2,3\}.
\]
A direct case-by-case check on the possible vertex types of \(f\)
(according to how many vertices are equal to \(p\)) shows that
\(\ell_i^{(n)}+\ell_j^{(n)}-\ell_k^{(n)} = w_i^{(n)}+w_j^{(n)}-w_k^{(n)}+2r_k^{(n)}\) is bounded below by a positive multiple of \(1/n\) in every case (see also below for more details), and hence the strict triangle
inequalities are satisfied for all faces.
Thus the edge data \((\ell_e)_{e\in E(G)}\) lies in \(\Omega_G\).

By Proposition~\ref{prop:edge.length.MF.biject.}, each such choice of edge
weights determines a unique foliation class \([F_n]\in\cMFp\), and by
Theorem~\ref{thm:Teich.chart} (via the circular foliation map \(\cC_G\)) it
corresponds to a unique cone-metric \(X_n\in\cT(G)\) whose edge lengths
agree with the edge intersection numbers of~\([F_n]\).

Now fix a face \(f\in F_p(G)\).  We claim that the angle contributed by \(f\) at \(p\) on \(X_n\)
tends to \(\pi\) as \(n\to\infty\). There are two combinatorial cases.

\smallskip\noindent
\emph{Case 1: \(f\) has exactly one vertex at \(p\).}
Write the vertices of \(f\) as \((p,q,r)\) with \(p\not= q\), \(p\not=r\) and opposite edges as \(qr\), \(rp\) and \(pq\). Let \(w^{(n)}_e:=i([F^{(0)}_n],e)\) and \(r^{(n)}_v:=r_v([F_n])\).
By construction,
\[
\ell_{qr}^{(n)}=w^{(n)}_{qr}+r^{(n)}_q+r^{(n)}_r,\qquad
\ell_{rp}^{(n)}=w^{(n)}_{rp}+r^{(n)}_r+r^{(n)}_p,\qquad
\ell_{pq}^{(n)}=w^{(n)}_{pq}+r^{(n)}_p+r^{(n)}_q,
\]
where \(w^{(n)}_{pq}=w^{(n)}_{rp}=1+1/n\), \(w^{(n)}_{qr}=2\), and \(r^{(n)}_p=1/n\), \(r^{(n)}_q=r^{(n)}_r=a_n\).
Hence
\[
\ell_{pq}^{(n)}+\ell_{rp}^{(n)}-\ell_{qr}^{(n)} = \frac{4}{n}>0,
\]
so the triangle \(f\) becomes asymptotically degenerate with
\(\ell_{qr}^{(n)}\to \ell_{pq}^{(\infty)}+\ell_{rp}^{(\infty)}\).
By the hyperbolic law of cosines, this implies that the interior angle of \(f\) at \(p\)
tends to \(\pi\) from below, while the other two angles tend to \(0\).

\smallskip\noindent
\emph{Case 2: \(f\) has an edge with both endpoints at \(p\).}
Then \(f\) is a self-folded face with vertices \((p,p,q)\).
Let \(e\) be the edge with both endpoints at \(p\), and let \(e'\) be the edge connecting \(p\) to \(q\)
(which appears twice in the boundary of \(f\)).
By construction, \(\ell_{X_n}(e)=w^{(n)}_e+2r^{(n)}_p=3/n\to0\) as \(n\to\infty\), whereas \(\ell_{X_n}(e')=a_n+1+2/n\) stays bounded away from \(0\).
Thus \(f\) is an isosceles triangle with a base tending to \(0\), so the angle at \(q\) tends to \(0\),
and the two base angles tend to \(\pi/2\). Therefore the total angle contributed by \(f\) at \(p\)
(the sum of these two base angles) tends to \(\pi\) from below.

\smallskip
In either case, the angle contribution of \(f\) at \(p\) tends to \(\pi\).
Since this holds for each \(f\in F_p(G)\), we have
\[
\lim_{n\to\infty}\theta_p(X_n)=\pi\,|F_p(G)|.
\]

Combining this with the upper bound proves that
\(\theta_p(G)=\pi\,|F_p(G)|\), as claimed.
\end{proof}

The following is an immediate consequence of
Proposition~\ref{prop:angle.face}.

\begin{corollary}
\label{cor:angle.supremum}
Let \(\underline{G}\subset\TSp\) be non-empty and \(p\in\up\).
Then
\[
\theta_p(\underline{G})
\leq \pi\,\min_{G\in\underline{G}}|F_p(G)|.
\]
\end{corollary}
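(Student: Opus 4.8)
The plan is to exploit the monotonicity of suprema over nested domains together with the exact value of the single-chart maximal cone angle supplied by Proposition~\ref{prop:angle.face}. The key structural observation is that the $\underline{G}$-triangulable locus is an intersection, $\cT(\underline{G}) = \bigcap_{G \in \underline{G}} \cT(G)$, so it is contained in each individual chart domain: for every fixed $G_0 \in \underline{G}$ we have $\cT(\underline{G}) \subseteq \cT(G_0)$.

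First I would record the elementary inequality that a supremum over a subset cannot exceed the supremum over the ambient set. Concretely, fixing any $G_0 \in \underline{G}$ and using $\cT(\underline{G}) \subseteq \cT(G_0)$ gives
\[
\theta_p(\underline{G})
= \sup_{X \in \cT(\underline{G})} \theta_p(X)
\leq \sup_{X \in \cT(G_0)} \theta_p(X)
= \theta_p(G_0).
\]
Next I would invoke Proposition~\ref{prop:angle.face}, which evaluates the right-hand side as $\theta_p(G_0) = \pi\,|F_p(G_0)|$. This yields $\theta_p(\underline{G}) \leq \pi\,|F_p(G_0)|$ for \emph{every} choice of $G_0 \in \underline{G}$.

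Finally, since the bound holds uniformly over all $G_0 \in \underline{G}$, I would take the infimum (a minimum, as $\underline{G}$ need only be non-empty and the quantities $|F_p(G_0)|$ are positive integers) over $G_0 \in \underline{G}$ on the right-hand side, obtaining
\[
\theta_p(\underline{G}) \leq \pi\,\min_{G \in \underline{G}} |F_p(G)|,
\]
as claimed. There is no genuine obstacle here: the statement is a formal consequence of the set inclusion $\cT(\underline{G}) \subseteq \cT(G)$ and the already-established formula of Proposition~\ref{prop:angle.face}, so the only point requiring any care is simply to note that the inequality is valid for each $G \in \underline{G}$ separately before passing to the minimum.
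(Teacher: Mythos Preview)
Your proof is correct and follows essentially the same approach as the paper: use the inclusion \(\cT(\underline{G})\subseteq\cT(G)\) to bound \(\theta_p(\underline{G})\le\theta_p(G)\) for each \(G\in\underline{G}\), invoke Proposition~\ref{prop:angle.face}, and then take the minimum over \(G\).
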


\begin{proof}
By definition,
\[
\theta_p(\underline{G})
=\sup_{X\in\cT(\underline{G})}\theta_p(X)
=\sup_{X\in\bigcap_{G\in\underline{G}}\cT(G)}\theta_p(X)
\leq \sup_{X\in\cT(G)}\theta_p(X)
=\theta_p(G)
\]
for every \(G\in\underline{G}\).
Taking the minimum over \(G\in\underline{G}\) and applying
Proposition~\ref{prop:angle.face} yields
\[
\theta_p(\underline{G})
\leq \min_{G\in\underline{G}}\theta_p(G)
=\pi\,\min_{G\in\underline{G}}|F_p(G)|,
\]
as required.
\end{proof}

For Question~\ref{q:maximal.angle}(b), the case \(\underline{G}=\TSp\) in
Corollary~\ref{cor:angle.supremum} provides a universal upper bound on the
cone angle at \(p\) along the universally triangulable locus
\(\cT_{\rm u.t.}(S,\up)\).
The rest of the section is devoted to showing that this bound is sharp and
to analysing when it can be attained.

 \subsection{Universally maximal cone angles}
\label{subsec:universal.max.angle}

In this subsection we address (b) of Question~\ref{q:maximal.angle}, namely the maximal cone angles on the universally triangulable locus \(\cT_{\rm u.t.}(S,\up)\).
The computation of the universally maximal cone angle \(\Theta(S,\up)\) combines the combinatorial upper bound from Corollary~\ref{cor:angle.supremum} with a case-by-case analysis of when this bound is sharp and whether it can be attained.
As a preparation, we first describe the universally triangulable locus in the once-marked torus case \((g,n)=(1,1)\).

\begin{lemma}
\label{lem:n1g1.adm.eq.TS}
For \(g=n=1\), we have \(\cTp=\cT_{\rm u.t.}(S,\up)\).
\end{lemma}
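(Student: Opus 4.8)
The plan is to deduce the non-trivial inclusion $\cTp\subseteq\cT_{\rm u.t.}(S,\up)=\bigcap_{G\in\TSp}\cT(G)$ by showing that, when $g=n=1$, every chart domain $\cT(G)$ is a nonempty subset of $\cTp$ that is simultaneously open and closed; since $\cTp$ will be seen to be connected, this forces $\cT(G)=\cTp$ for each $G\in\TSp$, and intersecting over all $G$ gives $\cT_{\rm u.t.}(S,\up)=\cTp$.

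First I would isolate the special combinatorics of the once-marked torus. Here $|E(G)|=3$ and $|F(G)|=2$, and since there is a single marked point $p$, both faces of $G$ are bounded by all three edges of $G$ (no self-folded/monogon face occurs, as no essential loop at $p$ bounds a once-punctured monogon on the torus). Thus for any $X\in\cT(G)$ the two geodesic faces are hyperbolic triangles with the \emph{same} side-length triple $\ell^{(G)}(X)=(\ell_1,\ell_2,\ell_3)$, hence isometric. Combining this with the Gauss--Bonnet identity $\mathrm{Area}(X)=2\pi-\theta_p(X)$ for cone-tori, each face has area
\[
A(X)=\tfrac12\,\mathrm{Area}(X)=\pi-\tfrac12\,\theta_p(X)\in(0,\pi).
\]
In particular, the area of the triangle with side lengths $\ell^{(G)}(X)$ is a continuous function on $\OG$ equal to $\pi-\tfrac12\theta_p(X)$.

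For the topological inputs: $\cT(G)$ is open and homeomorphic to $\OG$ via $\ell^{(G)}$ (Proposition~\ref{prop:edge.leng.homeo.}), and $\OG$ is a nonempty open convex cone, hence connected; so $\cT(G)$ is nonempty and connected. To see that $\cTp$ is connected, fix one cone-metric $X_0$ with cone angle $<\pi$; by Lemma~\ref{lem:adm_universal_triang} we have $X_0\in\cT(G)$ for \emph{every} $G$, so all the connected sets $\cT(G)$ share the common point $X_0$, and as they cover $\cTp$ (Remark~\ref{rem:charts_cover_space}) their union $\cTp$ is connected. The crux is closedness of $\cT(G)$ in $\cTp$. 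Take $X_k\in\cT(G)$ with $X_k\to X_\infty\in\cTp$. Continuity of the length functions gives $\ell^{(G)}(X_k)\to L:=(\ell_{X_\infty}(e_i))_i$ with all coordinates finite, and the weak triangle inequalities hold for $L$. Since $\mathrm{Area}$ (hence $\theta_p$) is continuous on $\cTp$, the single-triangle areas satisfy $A(X_k)=\pi-\tfrac12\theta_p(X_k)\to\pi-\tfrac12\theta_p(X_\infty)$, which is strictly positive because $\theta_p(X_\infty)<2\pi$ by Gauss--Bonnet. As the triangle-area function on $\OG$ tends to $0$ when the (bounded) edge data approach the boundary of the cone, the limit $L$ cannot lie on $\partial\OG$; therefore $L\in\OG$. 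Finally, $\ell^{(G)}$ being a homeomorphism, $X_k=(\ell^{(G)})^{-1}(\ell^{(G)}(X_k))\to(\ell^{(G)})^{-1}(L)\in\cT(G)$, and uniqueness of limits in the Hausdorff space $\cTp$ yields $X_\infty=(\ell^{(G)})^{-1}(L)\in\cT(G)$. Hence $\cT(G)$ is clopen and nonempty in the connected space $\cTp$, so $\cT(G)=\cTp$, and therefore $\cT_{\rm u.t.}(S,\up)=\bigcap_G\cT(G)=\cTp$.

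The main obstacle is precisely this closedness step. In general, letting a face degenerate (so the edge data reach $\partial\OG$) is a genuine way to exit $\cT(G)$ while staying in $\cTp$, and it is the torus-specific rigidity—both faces congruent, so that the total area $2\pi-\theta_p$ splits evenly and each face keeps area bounded away from $0$ as long as $\theta_p$ stays below $2\pi$—that rules out degeneration. I would therefore take care to verify the combinatorial claim that both faces carry all three edge lengths, and to confirm continuity of $\theta_p$ (equivalently of $\mathrm{Area}$) on $\cTp$, since both underpin the area dichotomy.
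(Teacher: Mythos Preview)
Your argument is correct and follows a genuinely different route from the paper. The paper works directly: the hyperelliptic involution is an isometry of any cone-metric $X$ swapping the two faces of $G$, so they have equal angle sums; since these sum to $\theta_{p_1}(X)<2\pi$, each is $<\pi$ and both faces are non-degenerate, giving $X\in\cT(G)$. You use the same torus-specific rigidity---both faces carry all three edges, hence are congruent with area $\pi-\tfrac12\theta_{p_1}$ each---but feed it into a clopen argument: the face-area stays bounded away from zero along any convergent sequence in $\cTp$, so the edge-length vector cannot reach $\partial\Omega_G$, whence $\cT(G)$ is closed and equals the connected space $\cTp$. The paper's approach is shorter and more self-contained; yours has the virtue of never having to verify directly that the geodesic edge representatives on the limit surface have disjoint interiors or avoid the cone point, but it leans on two facts you should spell out: continuity of $\theta_{p_1}$ on $\cTp$ (which follows because on any chart $\cT(G')$ containing the limit it is a smooth function of the $G'$-edge lengths), and Hausdorffness of $\cTp$ for the uniqueness-of-limits step (this is not stated in the paper but follows from its results: if $\ell^{(\cAC)}(X)=\ell^{(\cAC)}(X')$ then, the topology being initial for $\ell^{(\cAC)}$, $X'$ lies in every open neighbourhood of $X$, in particular in any chart $\cT(G')\ni X$, and injectivity of $\ell^{(G')}$ there by Lemma~\ref{lem:edge.length.Teich.biject.} forces $X=X'$).
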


\begin{proof}
Let \(\iota\) denote the hyperelliptic involution of the torus \(S\) with one marked point \(p_1\) (as shown in Figure~\ref{fig:hypellip_involution}).
For each \(X\in\cTp\), the map \(\iota:X\to X\) is biholomorphic; applying the Schwarz–Pick lemma to both \(\iota\) and \(\iota^{-1}\) shows that \(\iota\) is an isometry of the hyperbolic cone metric on \(X\), hence it sends geodesics to geodesics.

In the once-marked torus case, the mapping class of \(\iota\) coincides with element \(-I\in\mathrm{SL}(2,\mathbb{Z})\cong\mathrm{Mod}(S_{1,1})\).
In particular, \(\iota\) reverses the orientation of each oriented simple arc based at \(p_1\) and preserves its underlying unoriented isotopy class.
Therefore \(\iota\) preserves the underlying (unoriented) geodesic edge set of any geodesic triangulation of \(X\).

\begin{figure}[htp]
    \centering
    \includegraphics[width=6cm]{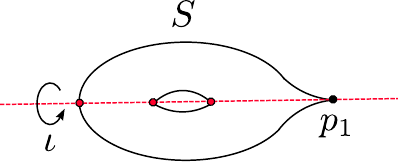}
    \caption{The hyperelliptic involution \(\iota\) of the torus \(S\) with one marked point \(p_1\) (a rotation of angle \(\pi\) around the dashed axis, which passes through four points of \(S\), one of which is \(p_1\)).}
    \label{fig:hypellip_involution}
\end{figure}

Let \(G\in\TSp\).
Then \(G\) is determined by three mutually distinct homotopy classes of simple arcs \(\alpha,\beta,\gamma\in\cA(S,\up)\), all based at \(p_1\).
These three arcs decompose \(S\) into two triangles, say \(\Delta_1\) and \(\Delta_2\), whose common edges are exactly the arcs \(\alpha,\beta,\gamma\).
By the discussion above, \(\iota\) sends the geodesic realization \(\Delta_1(X)\) of \(\Delta_1\) to the geodesic realization \(\Delta_2(X)\) of \(\Delta_2\), and sends the edges of \(\Delta_1(X)\) corresponding to \(\alpha,\beta,\gamma\) to the corresponding edges of \(\Delta_2(X)\) with opposite orientations.
Consequently, the interior angles of \(\Delta_1(X)\) and \(\Delta_2(X)\) satisfy
\[
\theta_{1,\alpha}=\theta_{2,\alpha},\quad
\theta_{1,\beta}=\theta_{2,\beta},\quad
\theta_{1,\gamma}=\theta_{2,\gamma},
\]
where \(\theta_{i,\alpha}\) (\(i=1,2\)) denotes the interior angle of \(\Delta_i(X)\) opposite to the edge coming from cutting along \(\alpha\), and similarly for \(\theta_{i,\beta}\) and \(\theta_{i,\gamma}\); see Figure~\ref{fig:torus_angle_eq}.

\begin{figure}[htp]
    \centering
    \includegraphics[width=7cm]{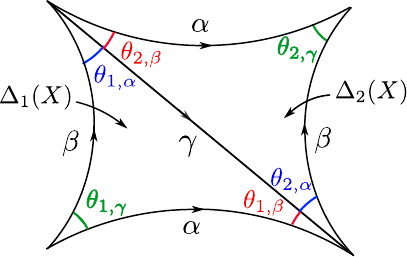}
    \caption{The interior angles of the geodesic triangles \(\Delta_i(X)\) for \(i=1,2\).}
    \label{fig:torus_angle_eq}
\end{figure}

Let \(A_i\) be the sum of the three interior angles of \(\Delta_i(X)\) for \(i=1,2\).
From the equalities above we have \(A_1=A_2\).
On the other hand, by construction the cone angle \(\theta_1(X)\) at \(p_1\) is the sum of the angle sums of \(\Delta_1(X)\) and \(\Delta_2(X)\), namely
\[
\theta_1(X)=A_1+A_2=2A_1.
\]
By the Gauss–Bonnet formula \eqref{eq:Gauss-Bonnet condition}, we have \(\theta_1(X)<2\pi\), so \(A_1=A_2<\pi\).
Thus both \(\Delta_1(X)\) and \(\Delta_2(X)\) are non-degenerate hyperbolic triangles, and $G$ has a geodesic realization on $X$.
Hence \(X\in\cT(G)\) for every \(G\in\TSp\), and therefore
\[
X\in\bigcap_{G\in\TSp}\cT(G)=\cT_{\rm u.t.}(S,\up).
\]

Since \(\cT_{\rm u.t.}\Sp\subset\cTp\) by definition, we conclude that \(\cTp=\cT_{\rm u.t.}\Sp\), as required.
\end{proof}

We can now compute the universally maximal cone angle \(\Theta(S,\up)\) appearing in Question~\ref{q:maximal.angle}(b).

\begin{proposition}
\label{prop:max.cone.angle}
If \(n=g=1\), then \(\Theta\Sp = 2\pi\). Otherwise, \(\Theta\Sp = \pi\).
\end{proposition}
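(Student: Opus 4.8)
The plan is to compute $\Theta\Sp=\max_{p\in\up}\theta_p(\TSp)$ by pairing the combinatorial upper bound of Corollary~\ref{cor:angle.supremum} with a lower bound produced by small-angle cone-metrics, and to isolate the once-marked torus as an exceptional case. Throughout I use that $\theta_p(\TSp)=\sup_{X\in\cT_{\rm u.t.}(S,\up)}\theta_p(X)$ and that $\cT_{\rm u.t.}(S,\up)\subset\cT(G)$ for every $G\in\TSp$, so that $\theta_p(\TSp)\le\theta_p(G)=\pi\,|F_p(G)|$ by Proposition~\ref{prop:angle.face}.

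\emph{The once-marked torus $g=n=1$.} Every triangulation $G\in\TSp$ then has $|F(G)|=4g-4+2n=2$ faces, and since $p_1$ is the only vertex, both faces are incident to it; hence $|F_{p_1}(G)|=2$ for all $G$. By Lemma~\ref{lem:n1g1.adm.eq.TS} we have $\cTp=\cT_{\rm u.t.}(S,\up)$, so $\theta_{p_1}(\TSp)=\sup_{X\in\cTp}\theta_{p_1}(X)$. The Gauss--Bonnet condition~\eqref{eq:Gauss-Bonnet condition} reduces here to $\theta_1<2\pi$, while the existence theory recalled in Remark~\ref{rmk:ang.class.Teich.space} produces cone-metrics with $\theta_1$ arbitrarily close to $2\pi$; the supremum is therefore $2\pi$, giving $\Theta\Sp=2\pi$.

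\emph{Lower bound $\Theta\Sp\ge\pi$ in the remaining cases.} I would fix one marked point $p$ and prescribe an angle vector with $\theta_p$ close to $\pi$ and all other angles close to $0$. Since $2g-2+n\ge1$ for every hyperbolic type, the Gauss--Bonnet inequality $\sum_i\theta_i/(2\pi)<2g-2+n$ holds as soon as $\theta_p<\pi$, so the existence theory yields such cone-metrics with \emph{all} cone angles $<\pi$. By Lemma~\ref{lem:adm_universal_triang} these lie in $\cT_{(0,\pi)}\Sp\subset\cT_{\rm u.t.}(S,\up)$, and letting $\theta_p\to\pi^-$ shows $\theta_p(\TSp)\ge\pi$, whence $\Theta\Sp\ge\pi$.

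\emph{Upper bound $\Theta\Sp\le\pi$ in the remaining cases.} This requires $\theta_p(\TSp)\le\pi$ for every $p$, and splits according to $n$. If $n\ge2$, then for each $p$ I construct a triangulation $G\in\TSp$ in which $p$ is incident to exactly one face: choosing another marked point $q$, take a loop at $q$ cutting off a once-punctured monogon containing $p$, fill it with the self-folded triangle on the radius $qp$, and extend arbitrarily to a triangulation of the complement. Then $|F_p(G)|=1$, and Corollary~\ref{cor:angle.supremum} gives $\theta_p(\TSp)\le\pi\,|F_p(G)|=\pi$. The case $n=1$, $g\ge2$ is the genuine obstacle, and I expect it to be the hard part: here $p_1$ is the only vertex, so \emph{every} face meets $p_1$ and the face-counting estimate only yields $\theta_{p_1}(\TSp)\le\pi(4g-2)$, which is far from sharp. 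For this case I invoke Proposition~\ref{prop:n1ggeq2.cone.angle}, whose Dehn-twist construction shows that any cone-metric with $\theta_{p_1}>\pi$ fails to be universally triangulable; hence $\theta_{p_1}(X)\le\pi$ for all $X\in\cT_{\rm u.t.}(S,\up)$, so $\theta_{p_1}(\TSp)\le\pi$. Combining the two bounds gives $\Theta\Sp=\pi$ in all non-torus cases, with the essential extra (topological) input confined to the single-cone case.
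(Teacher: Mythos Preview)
Your proof is correct and follows essentially the same three-case structure as the paper (once-marked torus via Lemma~\ref{lem:n1g1.adm.eq.TS}; the self-folded triangle giving $|F_p(G)|=1$ for $n\ge2$; Proposition~\ref{prop:n1ggeq2.cone.angle} for $n=1,\,g\ge2$). The only noteworthy difference is in the lower bound: you appeal directly to the analytic existence theory of Remark~\ref{rmk:ang.class.Teich.space} to produce cone-metrics in $\cT_{(0,\pi)}\Sp$ with one angle near $\pi$, whereas the paper reuses the explicit sequences $(X_n)$ from the proof of Proposition~\ref{prop:angle.face} (with peripheral radii tuned so that all other angles collapse) before invoking Lemma~\ref{lem:adm_universal_triang}; and in the case $n=1,\,g\ge2$ the paper goes further and exhibits a surface with cone angle \emph{equal} to $\pi$ that is universally triangulable, which your argument does not need for this proposition but which the paper recycles in Lemma~\ref{lem:achived}.
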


\begin{proof}
Applying Corollary~\ref{cor:angle.supremum} with \(\underline{G} = \TSp\) gives, for every marked point \(p \in \up\),
\begin{equation}\label{eq:upbd.adm.max.cone.angle}
\theta_p(\TSp)\leq \pi\min_{G\in\TSp}\{|F_p(G)|\}.
\end{equation}

To show that this upper bound is sharp, fix a triangulation \(G\in\TSp\) and a marked point \(p\in\up\).
Let \((X_n)\) be the sequence in \(\cT(G)\) constructed in the proof of Proposition~\ref{prop:angle.face}, now with the peripheral radii \(a_n\) at points \(p_j\neq p\) chosen to be \(a_n = n\).
As \(n\to\infty\), the construction yields
\begin{equation}\label{eq:cone.angle.at.p}
\theta_p(X_n)\rightarrow\pi|F_p(G)|~\text{(from below),}
\qquad 
\theta_{p_j}(X_n)\rightarrow 0
\quad\text{for all } p_j\neq p.
\end{equation}

Assume for the moment that the sequence \((X_n)\) lies in the universally triangulable locus \(\cT_{\rm u.t.}\Sp\).
Then by the definition of \(\theta_p(\TSp)\) and \eqref{eq:cone.angle.at.p} we obtain
\[
\theta_p(\TSp)
=\sup_{X\in\cT_{\rm u.t.}\Sp}\theta_p(X)
\geq \sup_{n\in\bN^+}\theta_p(X_n)
=\pi|F_p(G)|
\geq\pi\min_{G\in\TSp}\{|F_p(G)|\}.
\]
Combined with the upper bound \eqref{eq:upbd.adm.max.cone.angle}, this shows that
\begin{equation}\label{eq:adm.max.cone.angle}
\theta_p(\TSp)
=\pi\min_{G\in\TSp}\{|F_p(G)|\}.
\end{equation}

We now analyze \(\Theta\Sp = \max_{p\in\up}\theta_p(\TSp)\) in three cases.

\medskip\noindent\textbf{Case 1: \(n\geq 2\).}
Here \(\up\) contains at least two distinct points.
Fix \(p:=p_i\in\up\) and choose an oriented arc \(\alpha\) connecting \(p\) to some \(p_j\neq p_i\).
Let \(\alpha'\) be an oriented loop based at \(p_j\) that is homotopic to \(\alpha^{-1}\cdot\alpha\).
Then \(\alpha,\alpha'\), and \(\alpha^{-1}\) bound a topological triangle \(\Delta\) (see Figure~\ref{fig:Delta}).
Choose a triangulation \(G_0\in\TSp\) that contains the interior of \(\Delta\) as a face.
By construction, \(|F_p(G_0)|=1\).

Taking \(G=G_0\) in the construction above, \eqref{eq:cone.angle.at.p} gives a sequence \((X_n)\subset\cT(G_0)\) with
\(\theta_p(X_n)\) tending to \(\pi\) from below and \(\theta_{p_j}(X_n)\to0\) for all \(p_j\neq p\).
Thus \(X_n\in\cT_{(0,\pi)}(S,\up)\) for all \(n\), and hence \(X_n\in\cT_{\rm u.t.}\Sp\) by Lemma~\ref{lem:adm_universal_triang}.
Together with \eqref{eq:adm.max.cone.angle} and \(\min_{G\in\TSp}|F_p(G)| = 1\), we obtain
\[
\theta_p(\TSp)=\pi
\qquad\text{for every }p\in\up.
\]
Therefore,
\[
\Theta\Sp
=\max_{p\in\up}\theta_p(\TSp)
=\pi.
\]

\begin{figure}[htp]
    \centering
\includegraphics[width=4cm]{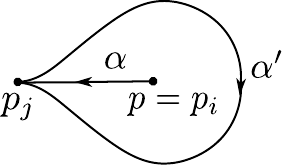}
    \caption{The construction of a triangle with a vertex equal to \(p\).}
\label{fig:Delta}
\end{figure}

\medskip\noindent\textbf{Case 2: \(n=g=1\).}
In this case every triangulation \(G\in\TSp\) has three edges and decomposes \(S\) into two triangles, each with all vertices at the single marked point \(p_1\).
By Lemma~\ref{lem:n1g1.adm.eq.TS} we have \(\cTp = \cT_{\rm u.t.}\Sp\), so the sequence \((X_n)\) constructed in \eqref{eq:cone.angle.at.p} is universally triangulable.
Applying \eqref{eq:adm.max.cone.angle} gives
\begin{equation}
\label{eq:n1g1.leq}
\theta_{p_1}(\TSp)
= \pi\min_{G\in\TSp}\{|F_{p_1}(G)|\}
= 2\pi,
\end{equation}
and hence
\[
\Theta\Sp
=\max_{p\in\up}\theta_p(\TSp)
=2\pi.
\]

\medskip\noindent\textbf{Case 3: \(n=1, g\geq 2\).}
Here there is a single marked point \(p_1\).
Proposition~\ref{prop:n1ggeq2.cone.angle} (proved in Subsection~\ref{subsec:case3}) states that for any
\(X\in\cT_{\rm u.t.}\Sp\) one has \(\theta_{p_1}(X)\leq\pi\), so
\begin{equation}
\label{eq:n1g.geq2.leq}
\Theta\Sp
=\sup_{X\in\cT_{\rm u.t.}\Sp}\theta_{p_1}(X)
\leq \pi.
\end{equation}

We now show that this upper bound is attained by some \(X'\in\cTp\) with cone angle \(\theta_{p_1}(X') = \pi\).
The Gauss--Bonnet constraint~\eqref{eq:Gauss-Bonnet condition} implies that hyperbolic cone-metrics with a single cone point of angle \(\pi\) exist on \((S,\up)\) (see \cite{McO88,Tro91}), so such \(X'\) indeed exists.
For any triangulation \(G\in\TSp\), the geodesic realization \(G_{X'}\) decomposes \((S,\up)\) into \(4g-4+2n = 4g-2 \geq 6\) geodesic triangles.
Since the total cone angle at \(p_1\) is \(\pi\), each of these triangles has angle sum \(<\pi\), and in particular the angle at each vertex is \(<\pi\).
Thus every triangle of \(G_{X'}\) is non-degenerate, so \(X'\in\cT(G)\) for every \(G\in\TSp\), i.e. \(X'\in\cT_{\rm u.t.}\Sp\).
Hence
\begin{equation}
\label{eq:n1g.geq2.geq}
\Theta\Sp
=\sup_{X\in\cT_{\rm u.t.}\Sp}\theta_{p_1}(X)
\geq\theta_{p_1}(X')
=\pi.
\end{equation}
Combining \eqref{eq:n1g.geq2.leq} and \eqref{eq:n1g.geq2.geq} gives \(\Theta\Sp = \pi\) in this case.

The three cases together complete the proof of Proposition~\ref{prop:max.cone.angle}.
\end{proof}

Beyond the value of \(\Theta(S,\up)\), Question~\ref{q:maximal.angle}(b) also asks whether this maximal cone angle is realized by some universally triangulable surface.
The next lemma records the attainability.

\begin{lemma}
\label{lem:achived}
The universally maximal cone angle \(\Theta\Sp\) is attained by a surface in \(\cT_{\rm u.t.}\Sp\) if and only if \(n=1\) and \(g\geq 2\).
\end{lemma}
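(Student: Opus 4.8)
The plan is to separate the three cases $(g,n)=(1,1)$, $n\ge 2$, and $(n=1,\,g\ge2)$ that already organize the proof of Proposition~\ref{prop:max.cone.angle}, and in each case decide whether the supremum defining $\Theta\Sp$ (Definition~\ref{def:max.cone.angles}) is attained, i.e.\ whether there exist $X\in\cT_{\rm u.t.}\Sp$ and $p\in\up$ with $\theta_p(X)=\Theta\Sp$. First I would handle the once-marked torus $g=n=1$, where Proposition~\ref{prop:max.cone.angle} gives $\Theta\Sp=2\pi$. Here $\chi(\dot S)=-1$, so the Gauss--Bonnet inequality \eqref{eq:Gauss-Bonnet condition} forces $\theta_1(X)<2\pi$ for every hyperbolic cone-metric $X$. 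Since $\cT_{\rm u.t.}\Sp=\cTp$ by Lemma~\ref{lem:n1g1.adm.eq.TS}, no universally triangulable surface realizes $2\pi$, so $\Theta\Sp$ is \emph{not} attained in this case.

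Next I would treat $n\ge 2$, where $\Theta\Sp=\pi$. The crucial observation is that for each marked point $p\in\up$ there is a triangulation $G_0\in\TSp$ with $|F_p(G_0)|=1$: joining $p$ to a second marked point $p_j$ by an arc and completing the triangle $\Delta$ of Figure~\ref{fig:Delta} to a triangulation exactly as in Case~1 of Proposition~\ref{prop:max.cone.angle} makes $p$ incident to the single face $\Delta$. Any $X\in\cT_{\rm u.t.}\Sp$ lies in $\cT(G_0)$, so $\theta_p(X)$ equals the interior angle at $p$ of the non-degenerate hyperbolic triangle $\Delta(X)$, which is strictly less than $\pi$. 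As this holds for every $p$, we obtain $\max_{p\in\up}\theta_p(X)<\pi=\Theta\Sp$ for all $X\in\cT_{\rm u.t.}\Sp$, so $\Theta\Sp$ is again \emph{not} attained.

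Finally, in the case $n=1$, $g\ge 2$, again $\Theta\Sp=\pi$, and here I would exhibit an explicit maximizer by reusing the surface $X'$ constructed in Case~3 of the proof of Proposition~\ref{prop:max.cone.angle}. The Gauss--Bonnet inequality \eqref{eq:Gauss-Bonnet condition} permits a hyperbolic cone-metric $X'$ with the single cone angle $\theta_{p_1}(X')=\pi$; since $n=1$, every geodesic triangulation of $X'$ has all of its $12g-6$ corners at $p_1$ and total corner angle equal to $\theta_{p_1}(X')=\pi$, forcing each corner angle to be $<\pi$ and hence each of the $4g-2$ triangles to be non-degenerate. Thus $X'\in\cT(G)$ for every $G\in\TSp$, i.e.\ $X'\in\cT_{\rm u.t.}\Sp$, and $\theta_{p_1}(X')=\pi=\Theta\Sp$ is attained. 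Assembling the three cases yields the claimed equivalence.

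I expect the main obstacle to be the attainment step in the case $n=1$, $g\ge2$: the maximizer $X'$ has cone angle exactly $\pi$, which sits on the boundary of the strict-angle range $(0,\pi)$ where the collar lemma and the $\mathrm{CAT}(-1)$ triangulability argument of Lemma~\ref{lem:adm_universal_triang} are available, so one cannot simply invoke that lemma. The argument must instead exploit the special feature $n=1$ --- that all triangle corners are forced to $p_1$, so the single constraint $\theta_{p_1}(X')=\pi$ already pins down the non-degeneracy of every face of every geodesic triangulation --- and pair it with Proposition~\ref{prop:n1ggeq2.cone.angle}, which guarantees that no universally triangulable surface with a single cone point can exceed angle $\pi$, confirming that $\pi$ is genuinely the value being attained.
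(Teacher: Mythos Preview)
Your proposal is correct and follows essentially the same three-case split as the paper's proof. The arguments in each case match: Gauss--Bonnet obstruction for $(g,n)=(1,1)$, the single-face triangulation $G_0$ from Figure~\ref{fig:Delta} for $n\ge2$ (you phrase it directly rather than by contradiction, but it is the same observation), and reuse of the surface $X'$ with cone angle exactly $\pi$ from Case~3 of Proposition~\ref{prop:max.cone.angle} for $n=1$, $g\ge2$.
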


\begin{proof}
We again distinguish three cases.

\medskip\noindent\textbf{Case 1: \(n\geq 2\).}
Here \(\Theta\Sp=\pi\) by Proposition~\ref{prop:max.cone.angle}.
We claim that this value cannot be achieved at any marked point.
Suppose otherwise that there exist \(p\in\up\) and \(X_0\in\cT_{\rm u.t.}\Sp\) with \(\theta_p(X_0)=\pi\).
Let \(G\in\TSp\) be the triangulation constructed in Case~1 of the proof of Proposition~\ref{prop:max.cone.angle} (see Figure~\ref{fig:Delta}), so that \(p\) is contained in exactly one face of \(G\).
Then the geodesic triangle of \(G_{X_0}\) containing \(p\) has interior angle \(\pi\) at \(p\) and hence degenerates to a geodesic segment.
Thus \(X_0\notin\cT(G)\), contradicting the assumption that \(X_0\in\cT_{\rm u.t.}\Sp\).

\medskip\noindent\textbf{Case 2: \(n=g=1\).}
In this case Proposition~\ref{prop:max.cone.angle} gives \(\Theta\Sp=2\pi\).
The Gauss--Bonnet condition~\eqref{eq:Gauss-Bonnet condition} implies that no hyperbolic cone-metric on a once-marked torus can have cone angle \(2\pi\) at the marked point, so \(\Theta\Sp\) is not attained.

\medskip\noindent\textbf{Case 3: \(n=1, g\geq 2\).}
As shown in Case~3 of the proof of Proposition~\ref{prop:max.cone.angle}, there exists \(X'\in\cT_{\rm u.t.}\Sp\) with \(\theta_{p_1}(X')=\pi\).
Thus \(\Theta\Sp=\pi\) is attained in this case.

The three cases together show that \(\Theta\Sp\) is attained if and only if \(n=1\) and \(g\geq 2\).
\end{proof}

We now combine Proposition~\ref{prop:angle.face} with the description of the universally triangulable locus to complete the proof of Theorem~\ref{thm:adm.cone.angle}.

\begin{proof}[Proof of Theorem \ref{thm:adm.cone.angle}]
Combining Lemma~\ref{lem:n1g1.adm.eq.TS}, Proposition~\ref{prop:max.cone.angle} and Lemma~\ref{lem:achived} yields both the value of \(\Theta(S,\up)\) and its attainability in all topological types.
This completes the proof of Theorem~\ref{thm:adm.cone.angle} and answers Question~\ref{q:maximal.angle}(b), modulo the special case \(n=1, g\ge 2\) which will be treated in Subsection~\ref{subsec:case3}.
\end{proof}

Finally, Corollary~\ref{cor:once.punctured.torus.global} follows directly from Theorem~\ref{thm:adm.cone.angle} together with the shear--radius coordinates of Corollary~\ref{cor:homeo.TS.shear.radius}.

\begin{proof}[Proof of Corollary~\ref{cor:once.punctured.torus.global}]
Assume that \(g = n = 1\).
By Theorem~\ref{thm:adm.cone.angle} we have \(\Theta\Sp = 2\pi\) and \(\cTp = \cT_{\rm u.t.}\Sp\).
By definition, \(\cT_{\rm u.t.}\Sp = \bigcap_{G \in \TSp} \cT(G) \subset \cTp\), so the equality \(\cTp = \cT_{\rm u.t.}\Sp\) implies that \(\cT(G) = \cTp\) for every triangulation \(G \in \TSp\).
On the other hand, Corollary~\ref{cor:homeo.TS.shear.radius} states that, for each \(G\), the shear--radius map
\(\cSR_G : \cT(G) \longrightarrow \Lambda_G\) is a homeomorphism onto \(\Lambda_G\).
Combining these two facts shows that, for every \(G \in \TSp\),
\(\cSR_G : \cTp \to \Lambda_G\) is a homeomorphism, which is exactly the content of Corollary~\ref{cor:once.punctured.torus.global}.
\end{proof}

 \subsection{The case of a single cone point and $g \geq 2$}
\label{subsec:case3}
In this subsection we assume \(n=1\) and \(g\ge 2\).
According to Proposition~\ref{prop:max.cone.angle} and
Lemma~\ref{lem:achived}, this is the only topological type in which the
universally maximal cone angle is achieved.
To complete the proof of Theorem~\ref{thm:adm.cone.angle} and
Question~\ref{q:maximal.angle}(b), it remains to prove the following
upper bound.

\begin{proposition}\label{prop:n1ggeq2.cone.angle}
  Let \(n=1\) and \(g\geq 2\). For any \(X\in\cT_{\rm u.t.}\Sp\), the cone angle at \(p_1\) is at most \(\pi\).  
\end{proposition}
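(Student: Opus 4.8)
The plan is to argue by contradiction. Suppose $X\in\cT_{\rm u.t.}\Sp$ has cone angle $\theta:=\theta_{p_1}(X)>\pi$ at its unique marked point. Since $X$ is universally triangulable, \emph{every} $G\in\TSp$ has a geodesic realization on $X$; the goal is therefore to manufacture one triangulation $G$ for which this fails, i.e.\ for which some face of $G$ cannot be realized as a non-degenerate geodesic triangle on $X$. Recall (as in the proof of Proposition~\ref{prop:angle.face}) that a face fails to be realizable precisely when an interior angle at $p_1$ of its geodesic realization reaches $\pi$, or when the geodesic edges cease to bound an embedded triangle. The point to keep in mind is that, in contrast to Case~1 of Proposition~\ref{prop:max.cone.angle}, for $n=1$ we cannot isolate $p_1$ in a single face, so the combinatorial bound $\theta_{p_1}(\TSp)\le\pi\,\min_{G}|F_{p_1}(G)|=\pi(4g-2)$ is far from sharp and a genuinely geometric input is required.

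\emph{Base data and the twist family.} Using $g\ge 2$, I would first fix a non-separating simple closed curve $c$ together with a simple arc $\eta$ based at $p_1$ meeting $c$ transversely once, and complete $\{\eta\}$ to a triangulation $G_0\in\TSp$ having $\eta$ as an edge. Writing $T_c$ for the Dehn twist about $c$, set $G_k:=T_c^{k}(G_0)\in\TSp$ for $k\ge 1$; each $G_k$ is again a triangulation, so universal triangulability forces $X\in\cT(G_k)$, hence a geodesic triangulation of $X$ combinatorially isomorphic to $G_k$ with all interior angles at $p_1$ lying in $(0,\pi)$.

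\emph{Asymptotic geometry of the twisted edges.} The next step is to analyse, as $k\to\infty$, the geodesic representatives $(e_k)_X$ of the twisted edges $e_k:=T_c^{k}(\eta)$. Let $\hat c$ be the geodesic representative of $c$ on $X$; for $\theta<2\pi$ it is smooth and disjoint from $p_1$, while for $\theta\ge 2\pi$ one may choose $c$ so that $\hat c$ avoids $p_1$ (the possibility that $\hat c$ passes through $p_1$ only makes the obstruction easier). The arcs $(e_k)_X$ spiral $k$ times along $\hat c$, so their two ends at $p_1$ converge, in direction, to the geodesic rays issuing from $p_1$ that spiral onto $\hat c$. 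Using the collar neighbourhood of $\hat c$ (Lemma~\ref{lm:Collar-lemma-for-hyperbolic-cone-surfaces}) together with the canonical-triangle estimates of Subsection~\ref{subsec:asymp.triang.} (Lemma~\ref{lm:estimate.triangle}) and the uniform control of Lemma~\ref{lm:dist-tangent-intersect}, I would show that the angular separation at $p_1$ subtended by the relevant pair of ends stabilizes to a definite limit.

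\emph{The degeneration estimate (main obstacle).} The crux is to prove that when $\theta>\pi$ this limiting angular separation at $p_1$ is at least $\pi$, so that for all large $k$ a face of $G_k$ incident to $e_k$ has its interior angle at $p_1$ forced into $[\pi,\theta)$; such a face cannot be realized as a non-degenerate hyperbolic triangle (or its geodesic edges cross), whence $X\notin\cT(G_k)$, the desired contradiction. This is where $g\ge 2$ enters essentially: in the exceptional case $g=n=1$ the hyperelliptic symmetry of Lemma~\ref{lem:n1g1.adm.eq.TS} pairs the two faces and keeps every interior angle below the degeneration threshold (allowing $\theta$ up to $2\pi$), whereas for $g\ge 2$ no such symmetry is available and the spiraling ends, subtending more than a half-turn at the wide cone point, are forced to close up a degenerate or non-embedded triangle. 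I expect this quantitative comparison — relating the cone angle $\theta$ at $p_1$ to the limiting opening angle of the spiraling geodesic loop — to be the main difficulty, as it requires hyperbolic-trigonometric control of geodesics near a cone point of angle $>\pi$ (cf.\ \cite{CHK00,Mon10}). Granting it, the steps above complete the proof that $\theta_{p_1}(X)\le\pi$, establishing Proposition~\ref{prop:n1ggeq2.cone.angle}.
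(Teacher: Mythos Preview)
Your proposal has the right overall strategy --- produce, via iterated Dehn twists along a simple closed curve, a sequence of triangulations for which some face develops an interior angle at \(p_1\) of size at least \(\pi\) --- and this is indeed how the paper proceeds.  However, the step you correctly identify as ``the main obstacle'' is a genuine gap, and two of your choices actively work against closing it.

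The paper's argument differs from yours in two essential respects.  First, the twist curve is taken to be \emph{separating}, with \(p_1\) lying on one side \(C_1\).  Second, rather than twisting an entire triangulation, one twists a single explicit triangle \(\Delta\) whose three sides \(\alpha_1,\alpha_2,\alpha_3\) are loops at \(p_1\) generating a handle in the component \(C_2\) \emph{not} containing \(p_1\).  The point of separating is that every one of the six half-edges at \(p_1\) (two per \(\alpha_i\)) must approach \(\gamma_X\) from the \(C_1\)-side; after applying \({\rm Twist}_\gamma^m\), an angle-of-parallelism computation (Lemma~\ref{lem:limit.angle.const}) shows that all six tangent directions at \(p_1\) converge to a \emph{single} ray.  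Consequently five of the six sector angles at \(p_1\) tend to \(0\), and the remaining sector --- which, by the combinatorics of \(\Delta\), is precisely the interior angle \(\Theta^{(m)}\) of \(\Delta^{(m)}\) --- absorbs the full cone angle: \(\Theta^{(m)}(X)\to\theta_{p_1}(X)>\pi\) (Corollary~\ref{cor:triang.zero}).  Extending \(\Delta^{(m_0)}\) to a triangulation \(G\) then gives \(X\notin\cT(G)\).

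With your non-separating curve \(c\) and an arc \(\eta\) crossing \(c\) exactly once, the two ends of \(T_c^{k}(\eta)\) approach the annulus around \(\hat c\) from \emph{opposite} sides, so their limiting directions at \(p_1\) are two \emph{distinct} spiraling rays.  The limiting angular separation is then some fixed constant determined by the position of \(p_1\) relative to \(\hat c\), with no evident relation to \(\theta_{p_1}(X)\) and no reason to exceed \(\pi\).  Thus the difficulty is not merely quantitative: your setup does not produce the collapse-to-one-direction phenomenon that drives the paper's argument, and it is unclear how to repair this without switching to a separating curve.  Finally, the lemmas you invoke from Subsections~\ref{subsec:asymp.triang.} and~\ref{subsec:bounding-geodesics} are specific to the peripheral-stretch deformation (radii \(\to\infty\)); in particular Lemma~\ref{lm:Collar-lemma-for-hyperbolic-cone-surfaces} requires all cone angles to be \(<\pi\), which is precisely the opposite of your hypothesis, and Lemma~\ref{lm:dist-tangent-intersect} controls geodesics along a stretch ray, not under Dehn twists on a fixed metric.
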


To see this, we need the following preparations.

\begin{lemma}[Existence of curves avoiding cone singularity]
\label{lem:dist.bt.zero}
Let \(X\in\cT\Sp\) with \(\pi<\theta_{p_1}:=\theta_{p_1}(X)<2\pi\).
Then there exists a separating curve class \([\gamma]\in\cS\Sp\) such that
\[
d_X(p_1,\gamma_X)>0,
\]
where \(\gamma_X\) is the length-minimizing representative of \([\gamma]\) for \(X\).
\end{lemma}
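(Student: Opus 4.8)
The plan is to produce the required class by minimizing length on the \emph{closed} surface $S$ (where $p_1$ is an ordinary point through which loops may pass), rather than in a fixed homotopy class on $\dot S$. The point of working in $S$ is that a loop may be pushed off $p_1$ without leaving its $S$-isotopy class, whereas in $\dot S$ such a push-off would change the winding around $p_1$; this is exactly the freedom that lets the cone point ($\theta_{p_1}<2\pi$) repel the minimizer.

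First I would use $g\ge 2$ to fix a separating simple closed curve $\gamma$ on the closed surface $S$ cutting it into two subsurfaces of positive genus (for instance a one-holed torus and its complement). Regarding $\gamma$ as a free homotopy class of loops on the compact cone-surface $X$ (forgetting the marked point $p_1$), I would invoke the standard existence of a shortest representative $\mu$: the metric completion of $X$ is a compact geodesic metric space and the class is nontrivial, so Arzel\`a--Ascoli together with lower semicontinuity of length yields a length-minimizing closed geodesic $\mu$ of the cone-metric, a priori possibly passing through $p_1$.

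The crux is to show that $\mu$ avoids $p_1$, and this is where $\theta_{p_1}<2\pi$ enters. Suppose $\mu$ passed through $p_1$; locally $\mu$ consists of two geodesic segments meeting at $p_1$ and splitting the cone neighbourhood into two sectors of angles $\alpha,\beta$ with $\alpha+\beta=\theta_{p_1}<2\pi$, so $\min(\alpha,\beta)<\pi$. Replacing the corner by a short geodesic shortcut across the sector of angle ${<}\,\pi$ produces a strictly shorter loop (strict triangle inequality in a hyperbolic sector of angle ${<}\,\pi$) that avoids $p_1$ and is still isotopic to $\gamma$ in $S$, contradicting the minimality of $\mu$. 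I expect this corner-cutting step, together with the bookkeeping that the local modification does not alter the $S$-isotopy class, to be the main technical point of the argument.

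Finally I would conclude. Since $\mu$ avoids $p_1$, it is an honest closed geodesic of the smooth hyperbolic locus $\dot S$, and being a shortest representative of the simple class $\gamma$ it is simple. As $\mu$ separates $S$ into two positive-genus pieces it also separates $\dot S$, and it is essential and non-peripheral, so its $\dot S$-homotopy class $[\gamma']\in\cS\Sp$ is separating. Because this $\dot S$-class is contained in the $S$-free-homotopy class of $\gamma$, the infimum defining $\ell_X([\gamma'])$ equals the length of $\mu$ and is realized by it; indeed every minimizer of $[\gamma']$ is a minimizer of the larger $S$-class and hence, by the previous paragraph, avoids $p_1$. Therefore $\gamma'_X$ is a compact set disjoint from $p_1$, and $d_X(p_1,\gamma'_X)=d_X(p_1,\mu)>0$, as required.
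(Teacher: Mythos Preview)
Your approach is correct and takes a genuinely different route from the paper. The paper argues by contradiction with \emph{two} separating $\dot S$-classes $[\gamma],[\gamma']$ that cobound an annulus around $p_1$: if both length-minimizers pass through $p_1$, then corner-cutting toward the exterior component (which preserves each $\dot S$-class) forces each exterior sector at $p_1$ to have angle $\ge\pi$, and disjointness of the two exteriors yields $\theta_{p_1}\ge 2\pi$, a contradiction. You instead construct the desired class directly, by minimizing length in a single free homotopy class on the \emph{closed} surface $S$; because the $S$-class is insensitive to crossing $p_1$, you may cut the corner into whichever sector has angle $<\pi$, so the hypothesis $\theta_{p_1}<2\pi$ alone expels the minimizer from the cone point. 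Your route is more constructive and needs only one curve, and your final bookkeeping (that any $[\gamma']$-minimizer is automatically an $S$-class minimizer, hence also avoids $p_1$) is a nice touch. The paper's route, by contrast, stays entirely in the $\dot S$-category and needs no auxiliary facts about minimizers. The one step you pass over quickly is that an $S$-length-minimizer in a simple free homotopy class is itself simple; this is a theorem (Freedman--Hass--Scott) rather than a tautology, though once you know $\mu$ avoids $p_1$ the usual inessential-self-intersection argument applies at each (smooth) crossing and goes through unchanged for the cone metric.
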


\begin{proof}
Suppose for contradiction that \(d_X(p_1,\gamma_X)=0\) for every separating
curve class \([\gamma]\in\cS\Sp\).

Consider two separating curve classes \([\gamma]\) and \([\gamma']\) such that they
bound an annulus \(A\) containing \(p_1\) in its interior.
By our assumption, both geodesic representatives \(\gamma_X\) and \(\gamma'_X\)
pass through \(p_1\).

For any separating curve \(\eta\), the complement \(X \setminus \eta\) consists of two connected components. 
Exactly one of these components contains \(p_1\).
We define the \DEF{exterior component}, denoted by \(X_{\mathrm{ext}}(\eta)\), to be the component of \(X \setminus \eta\) that does not contain \(p_1\). 
If the length-minimizing geodesic representative \(\eta_X\) of \([\eta]\) passes through \(p_1\), we define the \DEF{exterior angle}, \(\theta_{\mathrm{ext}}(\eta)\),
to be the angle of the sector at \(p_1\) subtended by \(X_{\mathrm{ext}}(\eta)\). 

If \(\theta_{\mathrm{ext}}(\eta) < \pi\), one can deform the curve \(\eta\) slightly into the region \(X_{\mathrm{ext}}(\eta)\) (rounding the corner). 
This deformation is a valid isotopy because it moves the curve away from \(p_1\) without crossing it,
and it strictly decreases the length.
Therefore, for \(\gamma_X\) and \(\gamma'_X\) to be length-minimizing, their exterior angles must satisfy:
\[
\theta_{\mathrm{ext}}(\gamma) \ge \pi \quad \text{and} \quad \theta_{\mathrm{ext}}(\gamma') \ge \pi.
\]

Since \([\gamma]\) and \([\gamma']\) bound an annulus containing \(p_1\), their exterior components \(X_{\mathrm{ext}}(\gamma)\) and \(X_{\mathrm{ext}}(\gamma')\) are disjoint subsurfaces of \(X\). 
Consequently, the sectors at \(p_1\) corresponding to these components are disjoint. 
The total cone angle \(\theta_{p_1}\) is the sum of the angles of all sectors at \(p_1\), so we must have:
\[
\theta_{p_1} \ge \theta_{\mathrm{ext}}(\gamma) + \theta_{\mathrm{ext}}(\gamma').
\]
Combining the inequalities yields:
\[
\theta_{p_1} \ge \pi + \pi = 2\pi.
\]
This contradicts the assumption that \(\theta_{p_1} < 2\pi\).
The lemma follows. 
\end{proof}

\begin{construction}[Dehn–twisted triangles and associated angles]
\label{constr:triang.degen.}
Let \(S\) be an oriented closed surface of genus \(g\ge 2\) with a single
marked point \(p_1\) and let \(X\in\cT\Sp\) with
\(\pi<\theta_{p_1}(X)<2\pi\).
By Lemma~\ref{lem:dist.bt.zero} there exists a separating curve class
\([\gamma]\in\cS\Sp\) such that the geodesic representative \(\gamma_X\) of
\([\gamma]\) has positive distance from \(p_1\):
\[
d_X(p_1,\gamma_X)>0.
\]
The curve \(\gamma_X\) separates \(X\) into two components \(C_1\) and \(C_2\);
we assume that \(C_1\) contains \(p_1\) and that \(p_1\) lies on the left-hand
side of \(\gamma_X\) (with respect to the chosen orientation of \(\gamma_X\)).
Since \(\gamma_X\) is essential and \(g\ge 2\), the component \(C_2\) contains
at least one handle.

Choose two based loops \(\alpha_1,\alpha_2\) on \(\Sp\) with both endpoints at
\(p_1\) whose homotopy classes generate the fundamental group of a handle in
\(C_2\), and set
\[
\alpha_3 := \alpha_2^{-1}\alpha_1^{-1}.
\]
The three arcs \(\alpha_1,\alpha_2,\alpha_3\) bound a topological triangle
\(\Delta\subset S\), as illustrated in Figure~\ref{fig:contrad.triangle}.
For each \(m\in\mathbb N\), let
\[
\alpha_i^{(m)} := {\rm Twist}^m_{\gamma}(\alpha_i),
\qquad i=1,2,3,
\]
be the \(m\)-times right Dehn twist of \(\alpha_i\) along \(\gamma\).
The curves \(\alpha_1^{(m)},\alpha_2^{(m)},\alpha_3^{(m)}\) again bound a
triangle, denoted \(\Delta^{(m)}\).
We write \(\Delta^{(m)}(X)\) for the geodesic representative of
\(\Delta^{(m)}\) with respect to \(X\) and denote by \(\Theta^{(m)}(X)\) the
interior angle of \(\Delta^{(m)}(X)\) opposite to the geodesic representative
\(\alpha_3^{(m)}(X)\).

To analyse the angles at \(p_1\) we introduce the following notation
(see Figure~\ref{fig:alpha1}):
\begin{itemize}
  \item Let \(\eta_X\) be the unique shortest geodesic arc on \(X\) connecting
  \(p_1\) to \(\gamma_X\); it is perpendicular to \(\gamma_X\) at a point
  \(q\), and its length is
  \[
  d_X(p_1,q)=d_X(p_1,\gamma_X).
  \]
  \item For each \(i\in\{1,2,3\}\), write \(\alpha_i(X)\) for the geodesic
  representative of \(\alpha_i\) on \(X\).
  Let \(\alpha_{i,j}(X)\) be the \(j\)-th connected component of
  \(\alpha_i(X)\setminus\gamma_X\) that lies on $C_2$ (which lies on the right-hand side of $\gamma_X$),
  ordered along the orientation of \(\alpha_i(X)\), for
  \(j=1,\dots,k_i\), where \(k_i\) is the number of such components.
  Let \(\alpha_{i,j}'(X)\) be the shortest geodesic arc connecting
  \(\gamma_X\) to itself in the homotopy class of \(\alpha_{i,j}(X)\); we
  denote its endpoints by \(q_{i,j,+}\) and \(q_{i,j,-}\) along the
  orientation of \(\alpha_{i,j}'(X)\) inherited from \(\alpha_{i,j}(X)\).
  By construction (see Figure~\ref{fig:contrad.triangle}), we have
  \[
  k_1=k_2=1, \qquad k_3=2.
  \]
  \item For each \(i\in\{1,2,3\}\) and \(m\in\mathbb N\), let
  \(\zeta^{(m)}_{i,-}\) (resp.\ \(\zeta^{(m)}_{i,+}\)) be the angle at
  \(p_1\) (chosen in \((0,\pi)\)) bounded by \(\eta_X\) and the sub-arc of
  \(\alpha_i^{(m)}(X)\) with one endpoint at \(p_1\) and with orientation pointing
  outwards \(p_1\) (resp.\ towards \(p_1\)).
\end{itemize}
\end{construction}

\begin{figure}[htp]
    \centering
    \includegraphics[width=11cm]{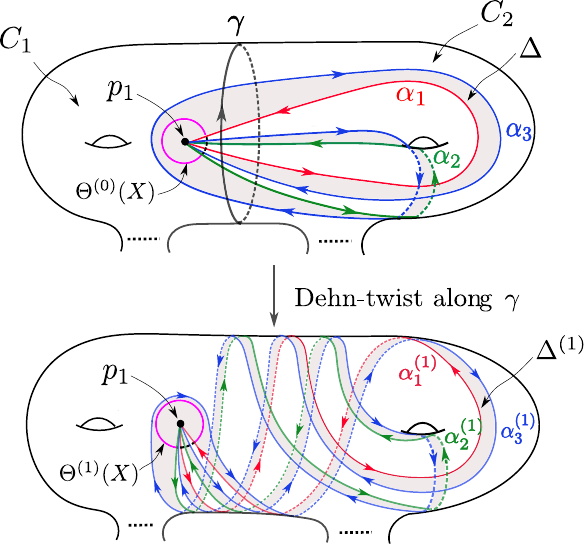}
    \caption{Examples of the curves \(\alpha_1^{(1)}, \alpha_2^{(1)}\) and \(\alpha_3^{(1)}\) constructed on \((S,p_1)\).}
    \label{fig:contrad.triangle}
\end{figure}

\begin{figure}[htp]
    \centering
    \includegraphics[width=10cm]{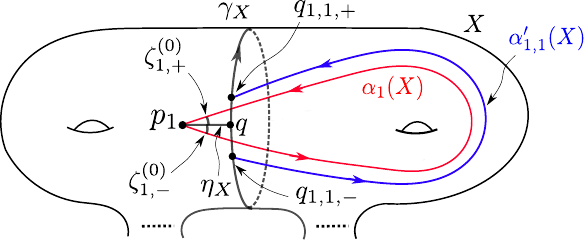}
    \caption{Example of \(\alpha_i(X)\) and \(\alpha'_i(X)\) for \(i=1\) (with \(d_X(p_1,\gamma_X)>0\)).}
    \label{fig:alpha1}
\end{figure}

The following lemma is classical in the smooth hyperbolic setting (angle of parallelism).
We include a short proof both for completeness and to fix notation.

\begin{lemma}[Angle of parallelism limit]
\label{lem:limit.angle.const}
Keep the setup and notation of Construction~\ref{constr:triang.degen.}.
Then for each \(i=1,2,3\), both angles \(\zeta^{(m)}_{i,+}\) and
\(\zeta^{(m)}_{i,-}\) tend to the same constant
\[
\zeta_{\infty}
:= \arcsin\bigl(1/\cosh d_X(p_1,\gamma_X)\bigr)
\]
as \(m\to\infty\).
\end{lemma}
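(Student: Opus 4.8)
The plan is to reduce the whole statement to the classical angle-of-parallelism computation in $\bH^2$, applied to the point $p_1$ and the complete geodesic carrying $\gamma_X$. The only genuinely geometric input beyond hyperbolic trigonometry is that, as $m\to\infty$, the sub-arcs of $\alpha_i^{(m)}(X)$ issuing from $p_1$ converge to geodesic rays spiraling onto $\gamma_X$; once this is known, all four families of angles are forced to the same limit, because the limiting rays are asymptotic to the lift of $\gamma_X$ to which the lift of $\eta_X$ is perpendicular, and this distance is exactly $d_X(p_1,\gamma_X)$.

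First I would develop the relevant arcs into $\bH^2$. Lift $p_1$ to a point $\tilde p_1$ and let $\tilde\gamma$ be the lift of the complete geodesic carrying $\gamma_X$ for which the lift $\tilde\eta$ of $\eta_X$ is the perpendicular segment from $\tilde p_1$ to $\tilde\gamma$, with foot $\tilde q$ and $d_{\bH^2}(\tilde p_1,\tilde q)=d:=d_X(p_1,\gamma_X)$. Since $\eta_X$ realizes the distance from $p_1$ to $\gamma_X$ and is perpendicular to it, this is well-posed and $\tilde\gamma$ is the nearest lift. For each $m$ the outgoing (resp.\ incoming) sub-arc of the geodesic loop $\alpha_i^{(m)}(X)$, run from $p_1$ to its first (resp.\ last) crossing of $\gamma_X$, develops to a geodesic segment from $\tilde p_1$ to a point $\tilde z_m\in\tilde\gamma$, and $\zeta^{(m)}_{i,\pm}$ is the angle at $\tilde p_1$ between $\tilde\eta$ and this segment.

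Second, I would record the trigonometric identity in the right triangle $\tilde p_1\tilde q\tilde z_m$ (right angle at $\tilde q$): writing $s_m:=d_{\bH^2}(\tilde q,\tilde z_m)$, the angle $\phi(s_m)$ at $\tilde p_1$ satisfies
\[
\tan\phi(s_m)=\frac{\tanh s_m}{\sinh d},
\]
so that $\phi(s_m)\to\arctan(1/\sinh d)=\arcsin(1/\cosh d)$ as $s_m\to\infty$. This limit is exactly $\zeta_\infty$, and it depends only on $d=d_X(p_1,\gamma_X)$, hence is independent of $i$ and of the sign $\pm$. The two signs correspond to the two ideal endpoints of $\tilde\gamma$, which are symmetric about the perpendicular $\tilde\eta$ and therefore yield the same angle in $(0,\pi)$.

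The key step, and the main obstacle, is to prove that $s_m\to\infty$, i.e.\ that the crossing points recede along $\gamma_X$. This is where the Dehn twist enters: a right Dehn twist along $\gamma$ composes the relevant element of $\pi_1$ with a power of the class of $\gamma$, so the developed endpoint of the sub-arc is translated by $T^m$, where $T$ is the hyperbolic translation with axis $\tilde\gamma$ representing $\gamma$. Since $T^m$ pushes points to the attracting ideal fixed point of $T$ on $\partial\bH^2$, the developed crossing recedes to an endpoint of $\tilde\gamma$ and $s_m\to\infty$; equivalently, one invokes the standard fact that iterated Dehn twists of an arc along a simple closed geodesic converge to a ray spiraling onto that geodesic. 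I expect the care needed here to be in verifying that the first/last crossing develops onto the \emph{nearest} lift $\tilde\gamma$ (so that the relevant perpendicular distance is exactly $d$): this holds because the limiting ray is asymptotic to $\gamma_X$, so the configuration $\tilde p_1,\tilde q,\xi$ closes up into a right-angled triangle with one ideal vertex $\xi$ and angle of parallelism $\Pi(d)$ at $\tilde p_1$. Combining $s_m\to\infty$ with the trigonometric limit then gives $\zeta^{(m)}_{i,\pm}\to\zeta_\infty$ for every $i$, as claimed.
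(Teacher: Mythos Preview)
Your approach is essentially the paper's: lift to $\bH^2$, show that the developed geodesic from $\tilde p_1$ becomes asymptotic to the nearest lift $\tilde\gamma$ of $\gamma_X$ as $m\to\infty$, and read off the limit as the classical angle of parallelism $\arcsin(1/\cosh d)$. The one imprecision is your claim that the \emph{first} crossing develops onto $\tilde\gamma$ and is moved by $T^m$: since $\alpha_i$ crosses $\gamma_X$ several times, the Dehn twist inserts a power of $\gamma$ at each crossing, so the action on the developed endpoint is not a single translation along $\tilde\gamma$; the paper avoids this bookkeeping by tracking the terminal lift $\tilde p_1^{(m)}$ of the full loop (showing it tends to an ideal endpoint $\xi^-_\infty$ of $\tilde\gamma$) and the unique intersection $u^{(m)}$ of $[\tilde p_1,\tilde p_1^{(m)}]$ with $\tilde\gamma$, which is all the right-triangle computation needs. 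Your own ``equivalently'' clause---iterated Dehn twists converge to a ray spiraling onto $\gamma_X$---is exactly the right statement and makes the first-crossing issue moot.
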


\begin{proof}
Let \((\tilde{S},\tilde{\underline{p}})\) be the universal cover of \((S,\up)\),
endowed with the lifted hyperbolic cone metric from $X$. We denote this
metric simply by \(\tilde{X}\).
The geodesic \(\gamma_X\) lifts to a family of complete geodesics in \(\tilde{S}\);
choose one lift \(\tilde{\gamma}\) and a lift \(\tilde{p}_1\) of \(p_1\) such
that the perpendicular from \(\tilde{p}_1\) to \(\tilde{\gamma}\) has length
\(d:=d_X(p_1,\gamma_X)>0\), and denote its foot by \(\tilde{q}\).
We may isometrically identify a neighbourhood of \(\tilde{\gamma}\) in
\((\tilde{S},\tilde{X})\) with a neighbourhood of a geodesic in the upper
half-plane model \(\bH^2\), so that \(\tilde{\gamma}\) becomes the positive
\(y\)-axis and the segment \([\tilde{p}_1,\tilde{q}]\) is orthogonal to it and
has hyperbolic length \(d\), see Figure~\ref{fig:angle.const}.

For each \(i\) and \(j\), let \(\tilde{q}^{(m)}_{i,j,-}\) be the lift of the
point \(q_{i,j,-}\) obtained by following the deck transformation associated
to \(\gamma^{-m}\) from \(\tilde{q}\).
Then
\begin{equation}
\label{eq:twist.forward}
d_{\bH^2}(\tilde{q}, \tilde{q}^{(m)}_{i,j,-})
= m\cdot\ell_X(\gamma)+d_X(q,q_{i,j,-}).
\end{equation}
Similarly, let \(\tilde{\gamma}^{(m)}_{i,j}\) be the lift of \(\gamma_X\)
obtained by following the lift of \(\alpha_i^{(m)}(X)\) through the first
\(j\) components on the right-hand side of \(\gamma_X\), and denote by
\(\tilde{q}^{(m)}_{i,j,+}\) the endpoint on  \(\tilde{\gamma}^{(m)}_{i,j}\) of the common perpendicular between
\(\tilde{\gamma}^{(m)}_{i,j-1}\) and \(\tilde{\gamma}^{(m)}_{i,j}\) (where \(\tilde{\gamma}^{(m)}_{i,0}:=\tilde{\gamma}\)).
Finally, let \(\tilde{q}^{(m)}_i\) be the point on \(\tilde{\gamma}^{(m)}_{i,k_i}\)
obtained by following the lift of \(\gamma_X\) for \(m\) turns from
\(q_{i,k_i,-}\) back to \(q\).
Then
\begin{equation}
\label{eq:twist.back}
d_{\bH^2}(\tilde{q}^{(m)}_{i,k_i,+}, \tilde{q}^{(m)}_i)
= m\cdot \ell_{X}(\gamma)-d_X(q, q_{i,k_i,+}).
\end{equation}
(See Figure~\ref{fig:angle.const} for an illustration.)

\begin{figure}[htp]
    \centering
    \includegraphics[width=13cm]{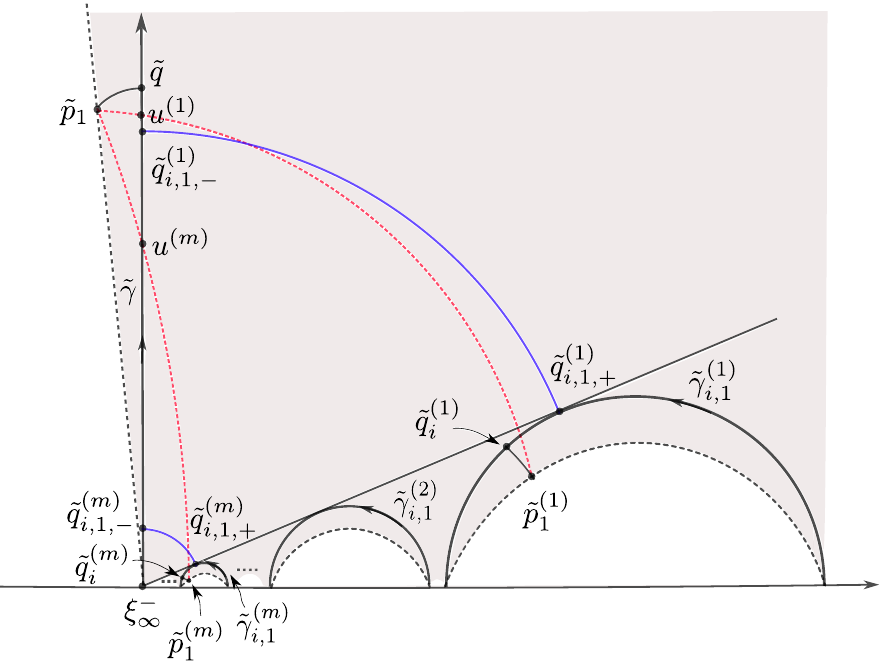}
    \caption{Lifts of \(\gamma_X\) and the arcs \(\alpha_i^{(m)}(X)\) in the universal cover (example for \(i=1\)).}
    \label{fig:angle.const}
\end{figure}

Equations~\eqref{eq:twist.forward} and~\eqref{eq:twist.back} show that, as
\(m\to\infty\), both
\(d_{\bH^2}(\tilde{q}, \tilde{q}^{(m)}_{i,k_i,-})\) and
\(d_{\bH^2}(\tilde{q}^{(m)}_{i,k_i,+},\tilde{q}^{(m)}_i)\) tend to \(+\infty\).
Hence the geodesics \(\tilde{\gamma}^{(m)}_{i,k_i}\) converge (in the
compactification \(\overline{\bH^2}\)) to a geodesic with one endpoint equal to
the repelling fixed point \(\xi^-_\infty\) of the deck transformation
associated to \(\gamma\).
The points \(\tilde{p}_1^{(m)}\), obtained by moving a distance \(d\) along the
perpendicular to \(\tilde{\gamma}^{(m)}_{i,k_i}\) from \(\tilde{q}^{(m)}_i\),
also converge to \(\xi^-_\infty\).
The geodesic segment joining \(\tilde{p}_1\) to \(\tilde{p}_1^{(m)}\) meets
\(\tilde{\gamma}\) at a point \(u^{(m)}\), and projects to \(\alpha_i^{(m)}(X)\).
Thus \(\zeta^{(m)}_{i,-}\) is exactly the angle at \(\tilde{p}_1\) of the
geodesic triangle with vertices \(\tilde{p}_1\), \(\tilde{q}\) and \(u^{(m)}\).

As \(m\to\infty\), the points \(u^{(m)}\) converge in \(\overline{\bH^2}\) to
\(\xi^-_\infty\), so these triangles converge to a right-angled hyperbolic
triangle with vertices \(\tilde{p}_1\), \(\tilde{q}\) and \(\xi^-_\infty\),
right angle at \(\tilde{q}\), and finite side length
\(d_{\bH^2}(\tilde{p}_1,\tilde{q})=d\).
In such a triangle, the angle \(\zeta_\infty\) at \(\tilde{p}_1\) is the
classical angle of parallelism at distance \(d\); standard hyperbolic
trigonometry (see e.g.\ \cite[Sec.~2.2]{Bus92}) gives
\[
\sin\zeta_\infty
= \frac{1}{\cosh d},
\]
i.e.
\[
\zeta_\infty
=\arcsin\!\bigg(\frac{1}{\cosh d}\bigg)
=\arcsin\!\bigg(\frac{1}{\cosh d_X(p_1,\gamma_X)}\bigg).
\]
Thus \(\zeta^{(m)}_{i,-}\to\zeta_\infty\) as \(m\to\infty\).

The same argument, applied to the lift of \(\alpha_i^{(m)}(X)\) oriented
towards \(p_1\), shows that the angles \(\zeta^{(m)}_{i,+}\) converge to the
same limit \(\zeta_\infty\).
This proves the lemma.
\end{proof}

\begin{corollary}\label{cor:triang.zero}
Keep the notation of Construction~\ref{constr:triang.degen.}. Then
\[
\Theta^{(m)}(X)\longrightarrow \theta_1(X) \quad\text{as } m\to\infty.
\]
\end{corollary}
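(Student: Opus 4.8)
The plan is to read off the limit of $\Theta^{(m)}(X)$ directly from the angle convergence in Lemma~\ref{lem:limit.angle.const}, combined with an accounting of the directions at $p_1$ of the six strands of the geodesic edges $\alpha_1^{(m)}(X),\alpha_2^{(m)}(X),\alpha_3^{(m)}(X)$ meeting there. Recall from Construction~\ref{constr:triang.degen.} that, with the boundary oriented so that $\alpha_3^{(m)}=(\alpha_2^{(m)})^{-1}(\alpha_1^{(m)})^{-1}$, the vertex opposite $\alpha_3^{(m)}$ is the corner $V_2=p_1$ at which $\alpha_1^{(m)}(X)$ arrives and $\alpha_2^{(m)}(X)$ departs. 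Thus $\Theta^{(m)}(X)$ is the sector of the tangent cone at $p_1$ (total angle $\theta_1(X)$) between the incoming strand of $\alpha_1^{(m)}(X)$ and the outgoing strand of $\alpha_2^{(m)}(X)$, taken on the side interior to the disk $\Delta^{(m)}$. Measuring angles from the reference direction of $\eta_X$, these two strands make angles $\zeta_{1,+}^{(m)}$ and $\zeta_{2,-}^{(m)}$ with $\eta_X$, respectively.

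First I would invoke Lemma~\ref{lem:limit.angle.const} to obtain $\zeta_{1,+}^{(m)}\to\zeta_\infty$ and $\zeta_{2,-}^{(m)}\to\zeta_\infty$. The refinement I need, already implicit in the proof of that lemma, is that both strands converge not merely in angular size but to the \emph{same} limiting ray at $p_1$: in the universal cover every strand of $\alpha_i^{(m)}(X)$ issuing from the chosen lift $\tilde p_1$ converges, as $m\to\infty$, to the unique geodesic ray from $\tilde p_1$ to the repelling fixed point $\xi_\infty^-$ of the deck transformation of $\gamma$. Projecting back, all six strands of $\alpha_1^{(m)}(X),\alpha_2^{(m)}(X),\alpha_3^{(m)}(X)$ at $p_1$ approach one common direction $\rho_\infty$, lying at angle $\zeta_\infty$ from $\eta_X$ on a fixed side. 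In particular incoming-$\alpha_1^{(m)}$ and outgoing-$\alpha_2^{(m)}$ approach $\rho_\infty$ from the same side of $\eta_X$, so the angular gap between them on the $\rho_\infty$-side tends to $0$.

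Next I would identify $\Theta^{(m)}(X)$ as the complementary (large) sector. Since all six strand-directions collapse to the single ray $\rho_\infty$, they cut the tangent cone of total angle $\theta_1(X)$ into six sub-sectors, five of which shrink to $0$ while the sixth — bounded by the two extreme strands of the cluster and meeting no strand — tends to $\theta_1(X)$. It remains to check that the interior of $\Delta^{(m)}$ at $V_2$ is exactly this large gap: because the three edges leave $p_1$ heading toward $\gamma_X$ (each crosses $\gamma_X$ to reach the handle in $C_2$) and the disk $\Delta^{(m)}$ encloses that handle, the interior sector at $V_2$ is the one disjoint from the clustered outgoing directions. Writing $\varepsilon_m$ for the total angular width of the cluster, we then have $\Theta^{(m)}(X)=\theta_1(X)-\varepsilon_m$ with $\varepsilon_m\to 0$, hence $\Theta^{(m)}(X)\to\theta_1(X)$.

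The main obstacle is the orientation bookkeeping in the third step: one must pin down the cyclic order of the six strands around $p_1$ and verify that incoming-$\alpha_1^{(m)}$ and outgoing-$\alpha_2^{(m)}$ are precisely the two strands bounding the large empty sector, and that this sector is interior rather than exterior to $\Delta^{(m)}$. This is exactly where the detailed geometry of Construction~\ref{constr:triang.degen.} and Figures~\ref{fig:contrad.triangle}--\ref{fig:angle.const} enters; everything else follows formally from the convergence $\zeta_{i,\pm}^{(m)}\to\zeta_\infty$ to a common ideal endpoint. Once the corollary is established, the payoff is immediate: since $\theta_1(X)>\pi$ by hypothesis, $\Theta^{(m)}(X)>\pi$ for all large $m$, so the geodesic realization of $\Delta^{(m)}$ degenerates and $X$ cannot be $\Delta^{(m)}$-triangulable, which supplies the contradiction driving Proposition~\ref{prop:n1ggeq2.cone.angle}.
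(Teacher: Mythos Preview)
Your proposal is correct and follows essentially the same route as the paper's proof: both arguments track the six strands \(\beta^{(m)}_{i,\pm}\) at \(p_1\), use Lemma~\ref{lem:limit.angle.const} to conclude that all pairwise angles between strands tend to \(0\), identify \(\Theta^{(m)}(X)\) as the sector between the incoming strand of \(\alpha_1^{(m)}\) and the outgoing strand of \(\alpha_2^{(m)}\), and write \(\theta_1(X)=\Theta^{(m)}(X)+\varphi^{(m)}(X)\) with the remaining five-sector sum \(\varphi^{(m)}(X)\to 0\). The orientation bookkeeping you flag as the main obstacle is treated in the paper at the same level of detail (i.e.\ by appeal to Construction~\ref{constr:triang.degen.} and the figures), so your caveat is well placed rather than a gap.
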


\begin{proof}
Choose a sufficiently small metric neighbourhood \(U_{p_1}\) of \(p_1\) in \(X\).
For each \(m\), the geodesics \(\alpha^{(m)}_1(X)\), \(\alpha^{(m)}_2(X)\),
\(\alpha^{(m)}_3(X)\) intersect \(U_{p_1}\) in six geodesic segments with one
endpoint at \(p_1\).  We denote these by
\(\beta^{(m)}_{i,\pm}(X)\) (\(i=1,2,3\)), where the sign indicates whether
the segment is oriented towards (\(+\)) or away from (\(-\)) \(p_1\) along
\(\alpha^{(m)}_i(X)\).

By Lemma~\ref{lem:limit.angle.const}, as \(m\to\infty\) the outgoing
segments \(\beta^{(m)}_{i,-}(X)\) (\(i=1,2,3\)) become asymptotically
parallel: their directions at \(p_1\) all converge to a single ray that
makes angle \(\zeta_\infty\) with the reference geodesic \(\eta_X\).
Similarly, the incoming segments \(\beta^{(m)}_{i,+}(X)\) also converge to a
single ray at angle \(\zeta_\infty\) from \(\eta_X\).
Therefore, for any fixed pair of indices \((i,\sigma)\) and \((j,\tau)\) with
\(\sigma,\tau\in\{+,-\}\), the angle at \(p_1\) between
\(\beta^{(m)}_{i,\sigma}(X)\) and \(\beta^{(m)}_{j,\tau}(X)\) (chosen in $(0,\pi$)) tends to \(0\)
as \(m\to\infty\). 

On a small metric circle around \(p_1\), the six segments
\(\beta^{(m)}_{i,\pm}(X)\) determine six intersection points arranged in
cyclic order, and the cone angle \(\theta_1(X)\) is the sum of the six
interior angles between consecutive segments.  By Construction~\ref{constr:triang.degen.}, the triangle angle
\(\Theta^{(m)}(X)\) is precisely the interior angle at \(p_1\) between
\(\beta_{2,-}^{(m)}(X)\subset\alpha^{(m)}_2(X)\) and \(\beta_{1,+}^{(m)}(X)\subset\alpha^{(m)}_1(X)\), that is, between one
outgoing segment and one incoming segment.

Let \(\varphi^{(m)}(X)\) be the sum of the remaining five angles between
consecutive segments at \(p_1\).  Then
\[
\theta_1(X)
= \Theta^{(m)}(X) + \varphi^{(m)}(X)
\]
for all \(m\).  The convergence above shows that each of these five angles
tends to \(0\) as \(m\to\infty\), hence \(\varphi^{(m)}(X)\to 0\).  It
follows that \(\Theta^{(m)}(X)\to \theta_1(X)\), as required.
\end{proof}

\begin{proof}[Proof of Proposition \ref{prop:n1ggeq2.cone.angle}]
Suppose, for contradiction, that there exists
\(X\in\cT_{\rm u.t.}(S,\underline{p})\) with
\(\theta_{p_1}(X)\in (\pi,2\pi)\).
By Lemma~\ref{lem:dist.bt.zero} we can choose a separating simple closed
curve class \([\gamma]\) whose geodesic representative \(\gamma_X\) has
positive distance from \(p_1\).
Applying Construction~\ref{constr:triang.degen.} with this \(\gamma\), we
obtain, for each \(m\ge 1\), a triangle \(\Delta^{(m)}\) bounded by three
arcs \(\alpha^{(m)}_1,\alpha^{(m)}_2,\alpha^{(m)}_3\) based at \(p_1\), and
we denote by \(\Theta^{(m)}(X)\) the interior angle of the geodesic
triangle \(\Delta^{(m)}(X)\) at \(p_1\) opposite to \(\alpha^{(m)}_3(X)\).

By Corollary~\ref{cor:triang.zero},
\(\Theta^{(m)}(X)\to\theta_1(X)\) as \(m\to\infty\).
Set \(\epsilon_0 := \theta_1(X)-\pi > 0\).
There exists \(m_0\) such that for any $m\geq m_0$, 
\[
\theta_1(X) - \Theta^{(m)}(X) < \epsilon_0,
\]
hence
\[
\Theta^{(m_0)}(X)
> \theta_1(X) - \epsilon_0
= \theta_1(X) - (\theta_1(X)-\pi)
= \pi.
\]
Thus the geodesic triangle \(\Delta^{(m_0)}(X)\) has an interior angle
\(> \pi\) at \(p_1\) and is therefore degenerate.

Now choose a triangulation \(G\in\TSp\) that has \(\Delta^{(m_0)}\) as one
of its faces.  Since \(\Delta^{(m_0)}(X)\) is degenerate, $X$ does not admit a geodesic realization of $G$, so \(X\notin\cT(G)\).  This contradicts the assumption that
\(X\in\cT_{\rm u.t.}(S,\underline{p}) = \bigcap_{G\in\TSp}\cT(G)\).
Hence no universally triangulable surface \(X\) can have
\(\theta_{p_1}(X) \in (\pi,2\pi)\), and the cone angle at \(p_1\) must be
at most \(\pi\).
\end{proof}

This completes the proof of Proposition~\ref{prop:n1ggeq2.cone.angle}.
Together with Lemma~\ref{lem:n1g1.adm.eq.TS},
Proposition~\ref{prop:max.cone.angle}, and Lemma~\ref{lem:achived}, it
finishes the analysis of Question~\ref{q:maximal.angle}(b) in the single-cone
case \(n=1, g\ge 2\).
In particular, the universally maximal cone angle \(\Theta(S,\underline{p})\)
and its attainability are now completely determined for all topological
types \((g,n)\).

\printbibliography
\end{document}